\newtheorem{theorem}{Theorem}[section]
\newtheorem{lemma}[theorem]{Lemma}
\newtheorem{prop}[theorem]{Proposition}
\newtheorem{corollary}[theorem]{Corollary}
\theoremstyle{definition}
\newtheorem{defn}[theorem]{Definition}
\newtheorem{rem}[theorem]{Remark}
\newtheorem{conjecture}[theorem]{Conjecture}
\newtheorem{conventions}[theorem]{Conventions}
\numberwithin{equation}{section}
\def\ggg{\mathfrak{g}}
\def\gl{\mathfrak{gl}}
\def\ggg{\mathfrak{g}}
\def\mmm{\mathfrak{m}}
\def\ppp{\mathfrak{p}}
\def\hhh{\mathfrak{h}}
\def\aaa{\mathfrak{a}}
\def\bbc{\mathbb{C}}
\def\bbz{\mathbb{Z}}
\def\bbq{\mathbb{Q}}
\def\bo{{\bar 1}}
\def\bz{{\bar 0}}
\def\sfr{\textsf{r}}
\def\ad{\text{ad}}
\def\communication#1{\def\@comm{#1}}
\begin{document}
\title[Highest weight theory for finite minimal $W$-superalgebras]{Highest weight theory for minimal finite $W$-superalgebras and related Whittaker categories}
\author{Yang Zeng and Bin Shu}
\thanks{Communicated by T. Arakawa. Received September 2, 2022. Revised February 14, 2023.}
\subjclass[2020]{Primary: 17B10, Secondary: 17B35}
\keywords{basic classical Lie superalgebras; minimal roots; finite $W$-(super)algebras; highest weight modules; Whittaker categories; BGG category $\mathcal{O}$}
\thanks{This work is  supported partially by NSFC (12071136, 12271345, 12461005), Science and Technology Commission of Shanghai Municipality (No. 22DZ2229014).}
\address{School of Mathematics, Nanjing Audit University, Nanjing, Jiangsu Province 211815, China}
\email{zengyang@nau.edu.cn}
\address{School of Mathematical Sciences, East China Normal University, Shanghai 200241, China}
\email{bshu@math.ecnu.edu.cn}
\begin{abstract}
Let $\ggg=\ggg_{\bar0}+\ggg_{\bar1}$ be a basic classical Lie superalgebra over $\mathbb{C}$, and $e=e_{\theta}\in\ggg_{\bar0}$ with $-\theta$ being a minimal root of $\ggg$. Set $U(\ggg,e)$ to be the minimal finite $W$-superalgebras associated with the pair $(\ggg,e)$. In this paper we study the highest weight theory for $U(\ggg,e)$, introduce the Verma modules and give a complete isomorphism classification of finite-dimensional  irreducible modules, via the parameter set consisting of pairs of weights and levels. Those Verma modules can be further described via parabolic induction from Whittaker modules for $\mathfrak{osp}(1|2)$ or $\mathfrak{sl}(2)$ respectively, depending on the detecting parity of $\sfr:=\dim{\ggg}(-1)_{\bar1}$. 
We then introduce and investigate the BGG category $\mathcal{O}$ for $U(\ggg,e)$, establishing highest weight theory, as a counterpart of the works for finite $W$-algebras by Brundan-Goodwin-Kleshchev \cite{BGK} and Losev \cite{Los}, respectively.

In comparison with the non-super case, the significant difference here lies in the situation when  $\sfr$
is odd, which is a completely new phenomenon. The difficulty and complicated computation arise from there.
\end{abstract}
\maketitle
\setcounter{tocdepth}{1}
\tableofcontents
\setcounter{section}{-1}
\section{Introduction}\label{Introduction}
This work is a sequel to \cite{ZS5}. In \cite{ZS5} we obtained the PBW theorem of minimal refined $W$-superalgebras over $\mathbb{C}$, along with their generators and the commutator formulas. 
In this paper, we continue to study the representations of minimal finite and refined $W$-superalgebras.  Our purpose is to develop the highest weight  theory for minimal finite and refined $W$-superalgebras, and determine all simple objects in the corresponding BGG category $\mathcal{O}$.

\subsection{}
A finite $W$-algebra (resp.\ superalgebra) $U(\ggg,e)$ is a certain associative algebra (resp.\ superalgebra) associated with a complex semi-simple Lie algebra (resp.\ basic classical Lie superalgebra) $\ggg$ and a nilpotent element $e\in \ggg$ (resp.\ $e\in\ggg_\bz$ for $\ggg=\ggg_\bz\oplus\ggg_\bo$).
In the last two decades,  finite $W$-algebras  and finite $W$-superalgebras have been developed rapidly since Premet studied finite $W$-algebras in full generality in \cite{P2} (see \cite{P3,P4,P7}, etc.).

In particular, Brundan-Goodwin-Kleshchev in \cite{BGK} considered the highest weight theory for finite $W$-algebras and showed that the irreducible highest weight modules can be parameterized by some unknown set $\mathcal{L}$. In virtue of \cite{BGK}, Losev in \cite{Los} further studied the so-called BGG category $\mathcal{O}$ for finite $W$-algebras first introduced by Brundan-Goodwin-Kleshchev, and established an equivalence of these categories and  the generalized Whittaker categories studied by Mili\v{c}i\'{c}-Soergel \cite{Mi} when the nilpotent element $e$ is principal.
\subsection{}\label{1.3}
Let $G$ be a connected semi-simple algebraic group over $\bbc$ with $\ggg=\text{Lie}(G)$. When the element $e=e_{\theta}$ with $-\theta$ being a minimal root of $\ggg$, the corresponding finite $W$-algebra $U(\ggg,e)$ is called minimal. In the case of basic classical Lie superalgebras, one can also take (even) nilpotent elements  associated with  minimal roots,    and then obtain  minimal finite $W$-superalgebras. The study of minimal (affine) $W$-superalgebras can be traced back from \cite{KRW} (see also \cite[\S5]{KW} for more details).

In the present paper, we will study the highest weight theory for minimal finite $W$-superalgebras and for refined minimal $W$-superalgebras. 
This work can be regarded as a counterpart of Premet's work on  the minimal finite $W$-algebras in \cite{P3}, and also a counterpart of Brundan-Goodwin-Kleshchev's work \cite{BGK} on highest weight theory for finite $W$-algebras. We will follow Premet's strategy in \cite[\S7]{P3}, and also the methods applied by Brundan-Goodwin-Kleshchev in \cite[\S4]{BGK},   with a lot of modifications.\footnote{Depending on the parity of the dimension for a particular subspace of $\ggg$,
one can observe critical distinctions for the construction of Verma modules for minimal finite and refined $W$-superalgebras, and also for the formulation of the corresponding BGG category $\mathcal{O}$,  which never appear in the non-super case.} Let us briefly introduce what we will do.

\subsection{} For a given complex basic classical Lie superalgebra  ${\ggg}={\ggg}_{\bar0}+{\ggg}_{\bar1}$ and a nilpotent element $e\in\ggg_\bz$, let $(\cdot,\cdot)$ be a non-degenerate even supersymmetric invariant bilinear form on ${\ggg}$. Define $\chi\in{\ggg}^{*}$ by letting $\chi(x)=(e,x)$ for all $x\in{\ggg}$.
Fix an $\mathfrak{sl}(2)$-triple $(e,h,f)$ in ${\ggg}_{\bar0}$, and denote by ${\ggg}^e:=\text{Ker}(\ad\,e)$ in ${\ggg}$. The linear
operator ad\,$h$ defines a ${\bbz}$-grading ${\ggg}=\bigoplus_{i\in{\bbz}}{\ggg}(i)$ with $e\in{\ggg}(2)_{\bar{0}}$ and $f\in{\ggg}(-2)_{\bar{0}}$, and we have ${\ggg}^e=\bigoplus_{i\geqslant0}{\ggg}^e(i)$ by the $\mathfrak{sl}(2)$-representation theory. Up to a scalar, we further assume that $(e,f)=1$.

Choose $\mathbb{Z}_2$-homogenenous bases $\{u_1,\cdots,u_{s}\}$ of ${\ggg}(-1)_{\bar0}$ and $\{v_1,\cdots,v_\sfr\}$ of ${\ggg}(-1)_{\bar1}$ such that $\chi([u_i, u_j])=i^*\delta_{i+j,s+1}$ for $1\leqslant i,j\leqslant s$, where $i^*:=\left\{\begin{array}{ll}-1&\text{if}~1\leqslant i\leqslant \frac{s}{2};\\ 1&\text{if}~\frac{s}{2}+1\leqslant i\leqslant s\end{array}\right.$, and $\chi([v_i,v_j])=\delta_{i+j,\sfr+1}$ for $1\leqslant i,j\leqslant \sfr$.
Write ${\ggg}(-1)^{\prime}_{\bar0}$ for the ${\bbc}$-span of $u_{{s\over 2}+1},\cdots,u_{s}$ and
${\ggg}(-1)^{\prime}_{\bar1}$ the ${\bbc}$-span of $v_{\frac{\sfr}{2}+1},\cdots,v_\sfr$ (resp.\ $v_{\frac{\sfr+3}{2}},\cdots,v_\sfr$) when $\sfr$ is even (resp.\ odd).
Denote by ${\ggg}(-1)^{\prime}:={\ggg}(-1)^{\prime}_{\bar0}\oplus{\ggg}(-1)^{\prime}_{\bar1}$, and set the ``$\chi$-admissible subalgebra" and the ``extended $\chi$-admissible subalgebra" of $\ggg$ to be
\begin{align*}
\mathfrak{m}:=\bigoplus_{i\leqslant -2}{\ggg}(i)\oplus{\ggg}(-1)^{\prime},\qquad
\mathfrak{m}^{\prime}:=\left\{\begin{array}{ll}\mathfrak{m}&\text{if}~\sfr~\text{is even;}\\
\mathfrak{m}\oplus {\bbc}v_{\frac{\sfr+1}{2}}&\text{if}~\sfr~\text{is odd.}\end{array}\right.
\end{align*} respectively.
Then the generalized Gelfand-Graev ${\ggg}$-module associated with $\chi$ is defined by
 $Q_\chi:=U({\ggg})\otimes_{U(\mathfrak{m})}{\bbc}_\chi,$
where ${\bbc}_\chi={\bbc}1_\chi$ is a one-dimensional $\mathfrak{m}$-module such that $x.1_\chi=\chi(x)1_\chi$ for all $x\in\mathfrak{m}$.

A finite $W$-superalgebra $U(\ggg,e):=(\text{End}_{\ggg}Q_{\chi})^{\text{op}}$ is by definition isomorphic to $Q_{\chi}^{\text{ad}\,{\mmm}}$,
the invariants of $Q_{\chi}$ under the adjoint action of ${\mmm}$.
Recall in \cite[Theorem 4.5]{ZS2} we introduced the PBW theorem for finite $W$-superalgebra $U(\ggg,e)$. 
As an important ingredient in our arguments,  we will first introduce a refined version of the PBW Theorem of $U(\ggg,e)$,
 which will be present in Theorem \ref{PBWQC} and Proposition \ref{fiiso}.

\subsection{}From now on, we will focus on the minimal case.
\subsubsection{}
Recall that  Cartan subalgebras of $\ggg$ are conjugate, which are by definition just the ones of $\ggg_{\bar0}$.
Let $\mathfrak{h}$ be a standard Cartan subalgebra of ${\ggg}$, and let $\Phi$ be the root system of ${\ggg}$ relative to $\mathfrak{h}$. Denote by $\Phi_{\bar0}$ and $\Phi_{\bar1}$ the set of all even roots and odd roots, respectively. 
From the detailed description of $\Phi$ in \cite[\S2.5.4]{K} and \cite[Theorem 3.10]{PS}, we can choose a minimal root $-\theta$ of $\ggg$, and then a positive root system $\Phi^+$ of  $\Phi$ with simple root system $\Delta=\{\alpha_1,\cdots,\alpha_k\}$ satisfies that $\alpha_k=\theta$ when $\sfr$ is even,
and $\alpha_k=\frac{\theta}{2}$ when $\sfr$ is odd (see the arguments for Convention \ref{conventions}).
Furthermore, we may choose root vectors $e=e_\theta,f=e_{-\theta}$ corresponding to roots $\theta$ and $-\theta$ such that $(e_\theta,[e_\theta,e_{-\theta}],e_{-\theta})$ is an $\mathfrak{sl}(2)$-triple and put $h=h_\theta=[e_\theta,e_{-\theta}]$.

\subsubsection{}\label{1.5}

As explained in \cite{ZS2}, the structure of finite $W$-superalgebras critically depends on the detecting parity of $\sfr:=\dim{\ggg}(-1)_{\bar1}$. 
In the  situation when $\sfr$ is even, the concerned results are very similar to that of the non-super case, despite the discussion here is more difficult.
However, when $\sfr$ is odd,
finite $W$-superalgebra is significantly different from the finite $W$-algebra case. In this situation, the emergency of  odd root $\frac{\theta}{2}$  makes the situation much more complicated, and an extra restriction \eqref{c0clambda} must be imposed to make the procedure go smoothly.
So in the following we will mainly consider the case when $\sfr$ is odd.

Let $\Phi_e=\{\alpha\in\Phi\mid\alpha(h)=0~\text{or}~1\}$, and write $\Phi_e^\pm:=\Phi_e\cap\Phi^\pm$ where $\Phi^-=-\Phi^+$. For $i=0,1$ set $\Phi^\pm_{e,i}:=\{\alpha\in\Phi_e^\pm\mid\alpha(h)=i\}$, $(\Phi^+_{e,0})_{\bar0}:=\Phi^+_{e,0}\cap\Phi_{\bar0}$. Set $\mathfrak{h}^e:=\mathfrak{h}\cap\ggg^e$ to be a Cartan subalgebra in $\ggg^e(0)$ with $\{h_1,\cdots,h_{k-1}\}$ being a basis such that $(h_i,h_j)=\delta_{i,j}$ for $1\leqslant i,j\leqslant k-1$. Let $\delta$, $\rho$ and $\rho_{e,0}$ be defined as in \eqref{deltarho}.
Given a linear function $\lambda$ on $\mathfrak{h}^e$ and $c\in\mathbb{C}$ satisfying the equation \eqref{c0clambda}, we call $(\lambda,c)$ a matchable pair. Associated with such a pair, let $I_{\lambda,c}$ be a left ideal of $U(\ggg,e)$ defined as in  \S\ref{3.2.1}.
We call the $U(\ggg,e)$-module $Z_{U(\ggg,e)}(\lambda,c):=U(\ggg,e)/I_{\lambda,c}$
the {\it Verma module of level $c$ corresponding to $\lambda$.} 
Moreover, $Z_{U(\ggg,e)}(\lambda,c)$ is proved to contain a unique maximal submodule which we
denote $Z_{U(\ggg,e)}^{\rm max}(\lambda,c)$ (see \S\ref{3.2.2}). Thus to every matchable pair
$(\lambda,c)\in(\mathfrak{h}^e)^*\times\bbc$, there corresponds an irreducible
highest weight $U(\ggg,e)$-module
$L_{U(\ggg,e)}(\lambda,c):=\,Z_{U(\ggg,e)}(\lambda,c)/Z_{U(\ggg,e)}^{\rm max}(\lambda,c)$.

Recall that an irreducible module is of type $M$ if its endomorphism ring is one-dimensional and it is of type $Q$ if its endomorphism ring is two-dimensional. As the first main result of the paper, we have
\begin{theorem}\label{verma2}Assume that $\sfr$ is odd.
Given a matchable pair $(\lambda,c)\in(\mathfrak{h}^e)^*\times\bbc$,
the following statements hold:
\begin{itemize}
\item[(1)] $Z^{\rm max}_{U(\ggg,e)}(\lambda,c)$ is the unique maximal submodule of the Verma module $Z_{U(\ggg,e)}(\lambda,c)$, and $L_{U(\ggg,e)}(\lambda,c)$ is a simple $U(\ggg,e)$-module of type $Q$.
\item[(2)] The simple $U(\ggg,e)$-modules $L_{U(\ggg,e)}(\lambda,c)$ and $L_{U(\ggg,e)}(\lambda',c')$ are isomorphic if and only if $(\lambda,c)=(\lambda',c')$.
\item[(3)] Any finite-dimensional simple $U(\ggg,e)$-module  (up to parity switch) is isomorphic to one of the modules $L_{U(\ggg,e)}(\lambda,c)$ for some $\lambda\in(\mathfrak{h}^e)^*$ satisfying $\lambda(h_\alpha)\in\mathbb{Z}_+$ for all $\alpha\in(\Phi^+_{e,0})_{\bar0}$. We further have that $c$ is a rational number in the case when $\ggg=\mathfrak{spo}(2|m)$ with $m$ being odd such that $\ggg^e(0)=\mathfrak{so}(m)$, or when $\ggg=\mathfrak{spo}(2m|1)$ with $m\geqslant2$ such that $\ggg^e(0)=\mathfrak{spo}(2m-2|1)$, or when $\ggg=G(3)$ with $\ggg^e(0)=G(2)$.
\end{itemize}
\end{theorem}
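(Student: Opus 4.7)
The plan is to adapt Premet's strategy for minimal finite $W$-algebras (\cite[\S7]{P3}) and the highest weight machinery of Brundan--Goodwin--Kleshchev (\cite[\S4]{BGK}) to the super setting, with particular attention to the new phenomena caused by the odd-dimensional space $\ggg(-1)_{\bar1}$. First I would invoke the refined PBW theorem (Theorem \ref{PBWQC} together with Proposition \ref{fiiso}) to obtain a triangular decomposition $U(\ggg,e)\cong U^-\otimes U^0\otimes U^+$, where $U^0$ is generated by a distinguished family $\{\Theta_{h_i}\}$ lifting a basis of $\mathfrak{h}^e$ together with a Casimir-type generator $\Theta_0$ (whose scalar action will record the level $c$), and $U^\pm$ come from the PBW generators built from $\ggg^e(\pm1)$ and the higher $\ad h$-weight subspaces of $\ggg^e$. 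Crucially, since $\sfr$ is odd, the vector $v_{(\sfr+1)/2}\in\mmm'\setminus\mmm$ produces, through the Gelfand--Graev/invariants construction, an odd element $\Theta'\in U(\ggg,e)$ which commutes with $U^0$ and whose square lies in $U^0$; this element is the source of the type $Q$ behaviour and, I expect, is exactly what forces the relation recorded in \eqref{c0clambda}.

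For part (1) the Verma module $Z_{U(\ggg,e)}(\lambda,c)$ is free over $U^-$ on a cyclic vector $1_{\lambda,c}$ carrying a one-dimensional $U^0$-character determined by $(\lambda,c)$. Using the induced Kazhdan-type weight decomposition, every proper submodule lies in weights strictly lower than that of $1_{\lambda,c}$; the sum of all proper submodules is therefore still proper, yielding the unique maximal submodule $Z^{\rm max}_{U(\ggg,e)}(\lambda,c)$ and the simple quotient $L_{U(\ggg,e)}(\lambda,c)$. Type $Q$ follows because $\Theta'$ descends to a nonzero odd endomorphism of $L_{U(\ggg,e)}(\lambda,c)$ (its square acts as a nonzero scalar on the top weight line by the matchability condition), while any endomorphism must preserve the one-dimensional top weight space and hence be of the form $\alpha\cdot\mathrm{id}+\beta\cdot\Theta'$.

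For part (2), an isomorphism between two such simple quotients restricts to the top weight spaces and intertwines the $U^0$-actions; comparing $\mathfrak{h}^e$-characters gives $\lambda=\lambda'$ and the scalar by which $\Theta_0$ acts pins down $c=c'$. Part (3) begins with the standard observation that a finite-dimensional simple $U(\ggg,e)$-module contains a vector annihilated by $U^+$ and weighted for $U^0$, yielding a surjection from some $L_{U(\ggg,e)}(\lambda,c)$. The integrality $\lambda(h_\alpha)\in\bbz_+$ for $\alpha\in(\Phi^+_{e,0})_{\bar0}$ then follows from restricting to the PBW image of $U(\ggg^e(0)_{\bar0})$ (a reductive Lie subquotient) and applying classical finite-dimensional highest weight theory.

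The rationality of $c$ in the three exceptional series is the main obstacle. In each of these cases $\ggg^e(0)$ contains a copy of $\mathfrak{osp}(1|2)$ (or a closely related $\mathfrak{spo}$-type piece), and combined with $\Theta_0$ and $\Theta'$ it generates a subalgebra of $U(\ggg,e)$ whose finite-dimensional simple modules force $c$ to satisfy a polynomial relation over $\mathbb{Q}$ determined by the structure constants of $\ggg$ with respect to $\theta$ and $\theta/2$. Verifying these relations case by case, and checking that the matchability condition \eqref{c0clambda} is both necessary and sufficient to land in the finite-dimensional locus, is where the bulk of the computational work will live; the intricate arithmetic of the odd minimal root $\theta/2$ is precisely the source of the ``completely new phenomenon'' highlighted in the introduction.
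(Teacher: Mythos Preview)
Your strategy for parts (1), (2), and the first half of (3) is essentially the paper's, though the paper is more concrete: instead of an abstract ``Kazhdan-type weight'' grading it singles out a specific element $h_0\in\mathfrak{h}^e$ (defined by $[h_0,e_\alpha]=\mathrm{ht}_\theta(\alpha)\,e_\alpha$, where $\mathrm{ht}_\theta$ ignores the coefficient of $\theta/2$) and uses the eigenspace decomposition for $\Theta_{h_0}$. One minor correction: the top $\Theta_{h_0}$-eigenspace is \emph{two}-dimensional, spanned by $v_0$ and $\Theta_{v_{(\sfr+1)/2}}(v_0)$, since the root $-\theta/2$ restricts trivially to $\mathfrak{h}^e$; this is exactly the source of type $Q$. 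Also, $\Theta_{v_{(\sfr+1)/2}}^2=\tfrac12$ is a fixed relation from Theorem \ref{main3}, not a consequence of matchability.

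The genuine gap is your rationality argument in (3). Your claim that ``in each of these cases $\ggg^e(0)$ contains a copy of $\mathfrak{osp}(1|2)$'' is false: for $\ggg=\mathfrak{spo}(2|m)$ with $m$ odd one has $\ggg^e(0)=\mathfrak{so}(m)$, and for $\ggg=G(3)$ one has $\ggg^e(0)=G(2)$; both are ordinary semisimple Lie algebras with no odd part. What actually singles out the three listed families is that all finite-dimensional $\ggg^e(0)$-modules are \emph{completely reducible} (Lemma \ref{semi}). The paper exploits this via a $\mathbb{Q}$-form and trace argument: choose $u,v\in\ggg^e_{\mathbb{Q}}(1)$ with $([u,v],f)=2$; highest weight theory for $\ggg^e(0)$ gives a $\mathbb{Q}$-form in $M$ stable under $\Theta_{\ggg^e_{\mathbb{Q}}(0)}$, so the traces $\mathrm{tr}_M\big(\Theta_{[u,z_\alpha]^\sharp}\Theta_{[z_\alpha^*,v]^\sharp}\big)$ are rational. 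Since $\mathrm{tr}_M[\Theta_u,\Theta_v]=0$, the explicit commutator formula of Theorem \ref{maiin1}(3) then forces $(c-c_0)\dim M\in\mathbb{Q}$, hence $c\in\mathbb{Q}$. Your proposed route through an $\mathfrak{osp}(1|2)$ subalgebra simply does not exist in two of the three cases.

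Finally, you misread the role of the matchability condition \eqref{c0clambda}: it is not a finite-dimensionality criterion but the structural constraint needed for $I_{\lambda,c}$ to be a left ideal at all (Lemma \ref{left ideal2}, proved in Appendix \ref{lengthy proof}); it is imposed on \emph{every} Verma module in the odd-$\sfr$ case, finite-dimensional or not.
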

The proof of Theorem \ref{verma2} will be given in \S\ref{3.2.4}.
\subsubsection{}\label{1.6}
For $\chi=(e,\cdot)$ we let $\mathcal{C}_\chi$ denote the category of Whittaker modules, i.e., all ${\ggg}$-modules on which $x-\chi(x)$ acts locally nilpotently for all $x\in\mmm$. Given a ${\ggg}$-module $M$ we set
$$\text{Wh}(M)=\{m\in M\mid x.m=\chi(x)m, \forall x\in{\mmm}\}.$$

We will link the Verma modules $Z_{U(\ggg,e)}(\lambda,c)$ with $\ggg$-modules obtained by
parabolic induction from Whittaker modules for $\mathfrak{osp}(1|2)$.
Set $\mathfrak{s}_\theta= \bbc e\oplus\bbc h\oplus\bbc f\oplus\bbc[v_{\frac{\sfr+1}{2}},e]\oplus\bbc v_{\frac{\sfr+1}{2}}$, and put
\begin{equation*}
\mathfrak{p}_\theta:=\mathfrak{s}_\theta+\mathfrak{h}+\sum_{\alpha\in\Phi^+}\bbc e_\alpha,\qquad
\mathfrak{n}_\theta:=\sum_{\alpha\in\Phi^+\backslash\{\frac{\theta}{2},\theta\}}\bbc e_\alpha,\qquad \widetilde{\mathfrak{s}}_\theta:=\mathfrak{h}^e\oplus\mathfrak{s}_\theta,
\end{equation*}where $e_\alpha$ denotes a root vector with $\alpha\in\Phi$.
Let $C_\theta:=2ef+\frac{1}{2}h^2-\frac{3}{2}h-2v_{\frac{\sfr+1}{2}}[v_{\frac{\sfr+1}{2}},e]$ be a central element of $U(\widetilde{\mathfrak{s}}_\theta)$. Given $\lambda\in(\mathfrak{h}^e)^*$, write $I_\theta(\lambda)$ for the left ideal of $U(\mathfrak{p}_\theta)$ generated by $f-1$, $[v_{\frac{\sfr+1}{2}},e]-\frac{3}{4}v_{\frac{\sfr+1}{2}}+\frac{1}{2}v_{\frac{\sfr+1}{2}}h$, $C_\theta+\frac{1}{8}$, all $t-\lambda(t)$ with $t\in\mathfrak{h}^e$, and all $e_\gamma$ with $\gamma\in\Phi^+\backslash\{\frac{\theta}{2},\theta\}$.
Set $Y(\lambda):=U(\mathfrak{p}_\theta)/I_\theta(\lambda)$ to be a $\mathfrak{p}_\theta$-module with the trivial action of $\mathfrak{n}_\theta$, which
is isomorphic to a Whittaker module for the Levi subalgebra $\widetilde{\mathfrak{s}}_\theta$. Now define \begin{equation*}
M(\lambda):=U(\ggg)\otimes_{U(\mathfrak{p}_\theta)}Y(\lambda).
\end{equation*}

Since the restriction of $(\cdot,\cdot)$ to $\mathfrak{h}^e$ is non-degenerate, for any $\eta\in(\mathfrak{h}^e)^*$ there is a unique $t_\eta$ in $\mathfrak{h}^e$ with $\eta=(t_\eta,\cdot)$. Hence $(\cdot,\cdot)$ induces a non-degenerate bilinear form on $(\mathfrak{h}^e)^*$ via $(\mu,\nu):=(t_\mu,t_\nu)$ for all $\mu,\nu\in(\mathfrak{h}^e)^*$.
For a linear function $\varphi\in\mathfrak{h}^*$ we denote by $\bar{\varphi}$ the restriction of $\varphi$ to $\mathfrak{h}^e$. Keep the notation $\epsilon$ as in \eqref{defofepsilon}. Under the twisted action of $U(\ggg,e)$ on the Verma modules as defined
in \S\ref{5.3}, we have the following second main result of the paper.
\begin{theorem}\label{Whcchi}Assume that $\sfr$ is odd.
Every $\ggg$-module $M(\lambda)$ is an object of the category $\mathcal{C}_\chi$. Furthermore, 
$\text{Wh}(M(\lambda))\cong Z_{U(\ggg,e)}(\lambda+\bar{\delta},-\frac{1}{8}+(\lambda+2\bar\rho,\lambda)+\epsilon)$ as $U(\ggg,e)$-modules.
\end{theorem}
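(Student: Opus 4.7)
The plan is to follow Premet's strategy from \cite[\S7]{P3} and the analogous arguments of \cite[\S4]{BGK}, adapted to the super setting with odd $\sfr$. First I would use the triangular decomposition $\ggg=\mathfrak{n}_\theta^-\oplus\mathfrak{p}_\theta$ (where $\mathfrak{n}_\theta^-=\sum_{\alpha\in\Phi^+\setminus\{\theta/2,\theta\}}\bbc e_{-\alpha}$) to obtain the PBW vector-space identification
\begin{equation*}
M(\lambda)\cong U(\mathfrak{n}_\theta^-)\otimes_\bbc Y(\lambda).
\end{equation*}
Since $\chi=(e,\cdot)$ vanishes on $\ggg(i)$ for $i\neq-2$ and $\chi(f)=1$, the assertion $M(\lambda)\in\mathcal{C}_\chi$ reduces to showing that every $x\in\ggg(-1)'$ is locally nilpotent and that $f-1$ is locally nilpotent. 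The former is an $\ad h$-weight argument: each such $x$ strictly lowers $\ad h$-weight, and the weights appearing in any cyclic submodule are bounded below. For the latter, $f-1$ already acts locally nilpotently on $Y(\lambda)$ by construction of $I_\theta(\lambda)$, and an $\ad h$-filtration on the $U(\mathfrak{n}_\theta^-)$-factor transports this to all of $M(\lambda)$.

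For the isomorphism $\text{Wh}(M(\lambda))\cong Z_{U(\ggg,e)}(\lambda+\bar\delta,c)$, I would single out a distinguished Whittaker vector $w_\lambda\in\text{Wh}(M(\lambda))$, built from $1\otimes 1_\lambda$ by correction with PBW monomials in $U(\mathfrak{m})$ so as to kill $x-\chi(x)$ for all $x\in\mathfrak{m}$; in the odd $\sfr$ case one must furthermore track the action of $v_{\frac{\sfr+1}{2}}\in\mathfrak{m}'\setminus\mathfrak{m}$, which is the source of all the extra complications. Then I would consider the map
\begin{equation*}
\Phi:\;Z_{U(\ggg,e)}\bigl(\lambda+\bar\delta,\,-\tfrac{1}{8}+(\lambda+2\bar\rho,\lambda)+\epsilon\bigr)\;\longrightarrow\;\text{Wh}(M(\lambda)),\quad u\mapsto u\cdot w_\lambda,
\end{equation*}
with $U(\ggg,e)$ acting via the twisted action of \S\ref{5.3}. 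Its well-definedness amounts to checking that $w_\lambda$ is annihilated by the generators of the left ideal $I_{\lambda+\bar\delta,c}$: the Cartan-type relations produce the shift $\lambda\mapsto\lambda+\bar\delta$ from the half-sum of roots of $\mathfrak{m}$, while a single remaining level relation yields $c$.

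The level $c$ is pinned down by writing the quadratic generator of $U(\ggg,e)$ explicitly via the refined PBW Theorem \ref{PBWQC} and Proposition \ref{fiiso}, and computing its twisted action on $w_\lambda$. The constant $-\tfrac{1}{8}$ is exactly the eigenvalue of $C_\theta$ on $Y(\lambda)$ dictated by $I_\theta(\lambda)$; the quadratic term $(\lambda+2\bar\rho,\lambda)$ arises from the Harish-Chandra projection onto $U(\mathfrak{h}^e)$ after normal-ordering against a Borel of $\ggg^e(0)$; and the correction $\epsilon$ from \eqref{defofepsilon} accounts for the additional contributions of $v_{\frac{\sfr+1}{2}}[v_{\frac{\sfr+1}{2}},e]$ inside $C_\theta$ combined with the matching odd terms in the $W$-side quadratic generator. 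Once $\Phi$ is well defined, surjectivity follows from a PBW dimension comparison of both sides (using the Verma construction of \S\ref{3.2.1} on the left and Theorem \ref{PBWQC} on the right), and injectivity from the freeness inherent in both descriptions.

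The main obstacle, to my mind, will be the explicit computation producing the scalar $\epsilon$: in the odd $\sfr$ case the odd element $v_{\frac{\sfr+1}{2}}$ appears both in $C_\theta$ and inside the degree-two generator of $U(\ggg,e)$, so matching the two sides forces sign-sensitive super-commutator identities with no analogue in \cite{BGK} or \cite{P3}. Once this identity is in place, the remaining bookkeeping of $\bar\delta$, $\bar\rho$, and the half-integer shifts arising from the odd root $\tfrac{\theta}{2}$, though delicate, is essentially parallel to the non-super case.
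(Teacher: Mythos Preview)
Your broad strategy---build a $U(\ggg,e)$-module map from the Verma module to $\text{Wh}(M(\lambda))$ by sending $1$ to a distinguished Whittaker vector and then argue bijectivity---is correct, but several of the steps you sketch are either unnecessary or genuinely incomplete when compared with the paper's argument.

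First, no ``correction'' of $1\otimes 1_\lambda$ is needed: the vector $1_\lambda$ is already a Whittaker vector. The elements of $\mathfrak{m}$ other than $f$ lie in $\mathfrak{n}_\theta$, which kills $1_\lambda$ by construction, and $f.1_\lambda=1_\lambda=\chi(f)1_\lambda$; local nilpotence of $x-\chi(x)$ then follows from ad-nilpotence of $x$ on $U(\ggg)$. Your ad\,$h$-weight argument for local nilpotence is not well-posed, since the $h$-weights in $M(\lambda)$ are not bounded below.

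Second, you do not identify the step that actually drives the odd-$\sfr$ case. The delicate computation is not the scalar $\epsilon$ but the vanishing $\Theta_{[v_{\frac{\sfr+1}{2}},e]}(1_\lambda)=0$. A direct expansion of $\Theta_{[v_{\frac{\sfr+1}{2}},e]}$ (cf.\ Theorem~\ref{ge}) acting on $1_\lambda$ produces exactly $\big([v_{\frac{\sfr+1}{2}},e]-\tfrac{3}{4}v_{\frac{\sfr+1}{2}}+\tfrac{1}{2}v_{\frac{\sfr+1}{2}}h\big)(1_\lambda)$, and this is why that peculiar element is placed in $I_\theta(\lambda)$. Likewise, the relation $C_\theta+\tfrac{1}{8}\in I_\theta(\lambda)$ is not a free choice: it is forced by $[E-\tfrac{3}{4}F+\tfrac{1}{2}Fh,\,E-\tfrac{3}{4}F+\tfrac{1}{2}Fh]=C_\theta+\tfrac{1}{8}$. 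Your description of $\epsilon$ as compensating for odd terms in $C_\theta$ is off: one computes $C(1_\lambda)=\big(-\tfrac{1}{8}+(\lambda,\lambda+2\bar\rho)\big)\,1_\lambda$ directly, and $\epsilon$ is then exactly the shift in the \emph{twisted} action (\S\ref{5.3}) that makes $(\lambda+\bar\delta,\,-\tfrac{1}{8}+(\lambda,\lambda+2\bar\rho)+\epsilon)$ satisfy the matchability constraint \eqref{c0clambda}.

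Third, and most seriously, your bijectivity argument has a gap. ``PBW dimension comparison'' and ``freeness inherent in both descriptions'' do not suffice: both sides are infinite-dimensional and there is no a priori reason $\text{Wh}(M(\lambda))$ is cyclic over $U(\ggg,e)$. The paper instead introduces the decomposition $M=M_0\oplus M_1$ (where $M_0$ is spanned by the basis monomials with no $u_i,v_j,h$-factors) and the projection $\text{pr}:M\twoheadrightarrow M_0$. One shows (i) $\text{pr}|_{\text{Wh}(M)}$ is injective by an explicit leading-term analysis of how $u_k^*,v_l^*,f-1$ act on basis monomials, and (ii) $\text{pr}|_{V_0}$ is surjective, where $V_0=U(\ggg,e)\cdot 1_\lambda$, by a double induction on Kazhdan degree using that each $\Theta$-monomial applied to $1_\lambda$ has a controlled leading term in $M_0$. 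Since $V_0\subseteq\text{Wh}(M)$, (i) and (ii) force $V_0=\text{Wh}(M)$; linear independence of the $\Theta$-monomials on $1_\lambda$ then gives $V_0\cong Z_{U(\ggg,e)}(\lambda+\bar\delta,c')$. This projection-and-filtration mechanism is the substantive technical content and is absent from your outline.
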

The proof of Theorem \ref{Whcchi} will be given in \S\ref{5.3}.
In virtue of Theorem \ref{Whcchi} and Skryabin's equivalence in \cite[Theorem 2.17]{ZS2} (see also \S\ref{skryabin} for more details), we can translate the problem of computing of the composition multiplicities of the Verma modules $Z_{U(\ggg,e)}(\lambda,c)$ to the one of the parabolic induced modules from Whittaker modules (i.e., the standard Whittaker modules) for $\mathfrak{osp}(1|2)$. These standard Whittaker modules have been studied in much detail in
\cite{C,C3}, and it is known that their
composition multiplicities can be determined by the composition factors of Verma modules for $U(\ggg)$ in the ordinary BGG category $\mathcal{O}$.
\subsubsection{}
For $\alpha\in(\mathfrak{h}^e)^*$, let $\ggg_\alpha=\bigoplus_{i\in\mathbb{Z}}\ggg_\alpha(i)$
denote the $\alpha$-weight space of $\ggg$ with respect to $\mathfrak{h}^e$, then we have
$\ggg=\ggg_0 \oplus \bigoplus_{\alpha\in \Phi'_e} \ggg_\alpha$,
where $\Phi'_e \subset (\mathfrak{h}^e)^*$ is the set of nonzero weights
of $\mathfrak{h}^e$ on $\ggg$.
Let $(\Phi'_e)^+:=\Phi^+\backslash\{\frac{\theta}{2},\theta\}$ be a system of positive roots in the restricted root system $\Phi'_e$. Setting $(\Phi'_e)^-:=-(\Phi'_e)^+$, we define
$\ggg_{\pm} := \bigoplus_{\alpha \in (\Phi'_e)^{\pm}} \ggg_\alpha$, so that
$\ggg = \ggg_- \oplus \ggg_0 \oplus \ggg_+,
\mathfrak{q} = \ggg_0 \oplus \ggg_+$.
The choice $(\Phi'_e)^+$ of positive roots
induces a dominance ordering $\leqslant$ on $(\mathfrak{h}^e)^*$:
$\mu\leqslant\lambda$ if
$\lambda-\mu\in \mathbb{Z}_{\geqslant 0}(\Phi'_e)^+$.

Under the above settings, we can define the highest weight $U(\ggg,e)$-module $M_e(\lambda)$ with highest weight $\lambda$ as in \eqref{highestweigmodu} and its irreducible quotient $L_e(\lambda)$ as in \eqref{lelambda}. Comparing Theorem \ref{verma2} with Theorem \ref{Tverma}, we can find that $M_e(\lambda)$ and $L_e(\lambda)$ share the same meaning as the Verma module $Z_{U(\ggg,e)}(\lambda,c)$ and its irreducible quotient $L_{U(\ggg,e)}(\lambda,c)$; see Remark \ref{compare} for more details.

Now we introduce an analogue of the BGG  category $\mathcal{O}$. Let $\mathcal{O}(e)=\mathcal{O}(e;\mathfrak{h},\mathfrak{q})$ denote the category of all finitely generated $U(\ggg,e)$-modules $V$, that are semi-simple over $\mathfrak{h}^e$
with finite-dimensional $\mathfrak{h}^e$-weight spaces, such that the set
$\{\lambda \in (\mathfrak{h}^e)^*\:|\:V_\lambda \neq \{0\}\}$ is contained in a finite union of sets of the form $\{\nu\in(\mathfrak{h}^e)^*\:|\: \nu\leqslant\mu\}$ for $\mu \in (\mathfrak{h}^e)^*$.
Then we obtain
\begin{theorem}\label{proofcateO} Assume that $\sfr$ is odd.
For the category $\mathcal O(e)$, the following statements hold:
\begin{itemize}
\item[(1)] There is a complete set of isomorphism classes of simple objects which is $\{L_e(\lambda)\:|\: \lambda\in (\hhh^e)^*\}$ as in \eqref{lelambda}.
\item[(2)] The category $\mathcal O(e)$ is Artinian. In particular, every object has finite length of composition series.
\item[(3)] The category $\mathcal O(e)$ has a block decomposition as $\mathcal O(e) = \bigoplus_{\psi^\lambda}  \mathcal O_{\psi^\lambda}(e)$, where the direct sum is over all central characters $\psi^\lambda:Z(U(\ggg,e)) \rightarrow \bbc$,
and $\mathcal O_{\psi^\lambda}(e)$ denotes the Serre subcategory of $\mathcal O(e)$
generated by the irreducible modules
$\{L_e(\mu)\:|\:\mu \in (\hhh^e)^*~\text{such that}~\psi^\mu=\psi^\lambda\}$.
\end{itemize}
\end{theorem}
The proof of Theorem \ref{proofcateO} will be given in \S\ref{5.1.6}, for which we roughly follow the strategy in \cite{BGK}, but the situation is quite different from the case of finite $W$-algebras. The role of ``Cartan subalgebra" is taken over by a finite $W$-superalgebra arising from the sum of a Lie subsuperalgebra which is isomorphic to $\mathfrak{osp}(1|2)$ and an abelian subalgebra which commutes with this Lie subsuperalgebra. The precise structural information of $U(\ggg,e)$ previously presented 
enables us to successfully establish such a desired highest weight theory.

Similar theory can also be established for minimal refined $W$-superalgebra $W'_{\chi}$ (see Appendix \ref{5.2}).
\subsection{}
The paper is organized as follows. In \S\ref{Backgrounds} some basics on finite and refined $W$-superalgebras are recalled, and the PBW theorem of finite $W$-superalgebra $U(\ggg,e)$ associated with arbitrary nilpotent element is refined. In \S\ref{3}, we first
study the topics of the Verma module $Z_{U(\ggg,e)}(\lambda,c)$ and its simple quotient $L_{U(\ggg,e)}(\lambda,c)$ for minimal  finite $W$-superalgebra $U(\ggg,e)$, modulo Lemma \ref{left ideal2} whose lengthy proof is postponed until Appendix \ref{lengthy proof}. And then the Verma module $Z_{W_\chi'}(\lambda,c)$ and its simple quotient $L_{W_\chi'}(\lambda,c)$ for minimal refined $W$-superalgebras $W_\chi'$ are introduced.
We finally demonstrate a complete set of isomorphism classes of irreducible ``highest weight" modules.
\S\ref{4} is devoted to the correspondence between the Verma modules for finite $W$-superalgebras and their associated Whittaker categories, where the most important tool we used there is Skryabin's equivalence in \cite[Theorem 2.17]{ZS2}.
In \S\ref{mathcalO}, we introduce the abstract universal highest weight modules for minimal finite $W$-superalgebras of type odd, consider their corresponding BGG category $\mathcal{O}$, and finally give a proof of Theorem \ref{proofcateO}.
Appendix \ref{5.2} is a counterpart of \S\ref{mathcalO} for minimal refined $W$-superalgebras of both types.
Appendix \ref{lengthy proof} is dedicated to the proof of Lemma \ref{left ideal2}.
\subsection{}\label{innao} Throughout the paper we work with complex field $\mathbb{C}$ as the ground field.
Let $\mathbb{Z}_+$ and $\bbq_+$ be the sets of all the non-negative integers in $\mathbb{Z}$ and all the non-negative rational numbers in $\mathbb{Q}$ respectively, and denote by
\begin{equation*}
\begin{split}
\mathbb{Z}_+^k:=\{(i_1,\cdots,i_k)\mid i_j\in\mathbb{Z}_+\},&\qquad
\Lambda_k:=\{(i_1,\cdots,i_k)\mid i_j\in\{0,1\}\},\\
\mathbf{a}:=(a_1,\cdots,a_k),&\qquad
|\mathbf{a}|:=\sum_{i=1}^ka_i.
\end{split}
\end{equation*}
For any real number $a\in\mathbb{R}$, let $\lceil a\rceil$ denote the largest integer lower bound of $a$, and $\lfloor a\rfloor$ the least integer upper bound of $a$.

A superspace is a $\mathbb{Z}_2$-graded vector space $V=V_{\bar0}\oplus V_{\bar1}$, in which we call elements in $V_{\bar0}$ and $V_{\bar1}$ even and odd, respectively. Write $|v|\in\mathbb{Z}_2$ for the parity (or degree) of $v\in V$, which is implicitly assumed to be $\mathbb{Z}_2$-homogeneous.

All Lie superalgebras ${\ggg}$ will be assumed to be finite-dimensional.
We consider vector spaces, subalgebras, ideals, modules, and submodules, {\textit{etc}}. in the super sense unless otherwise specified, throughout the paper. A supermodule homomorphism is assumed to be a $\mathbb{Z}_2$-graded parity-preserving linear map that is a homomorphism in the usual sense.

\section{A refined  PBW theorem for finite $W$-superalgebras}\label{Backgrounds}

In this section, we give a refined version of the PBW theorem for finite $W$-superalgebras which will be very important to the subsequent arguments. For this, we will have a glance at basic classical Lie superalgebras, and recall some basic structure of finite $W$-superalgebras.

\subsection{Basic classical Lie superalgebras}\label{1.1}
We refer the readers to  \cite{CW,K,K2,M1} for basic classical Lie superalgebras,  and \cite{P2,W,WZ,ZS2} for finite $W$-(super)algebras.

A complete list of basic classical simple Lie superalgebras consists of simple finite-dimensional Lie algebras and the Lie superalgebras $\mathfrak{sl}(m|n)$$(=A(m-1|n-1))$  with $m,n\geqslant1, m\neq n$; $\mathfrak{psl}(m|m)$$(=A(m-1|m-1))$  with $m\geqslant2$; $\mathfrak{osp}(m|2n)=\mathfrak{spo}(2n|m)$ (type $B,C,D$); $D(2,1;\alpha)$ with $\alpha\in\mathbb{C}, a\neq0,-1$; $F(4)$; $G(3)$. Those Lie superalgebras are divided into two types as below.
\begin{center}\label{Table 1}
({\sl{Table 1}}): The classification of basic classical Lie superalgebras
\vskip0.3cm
{\begin{tabular}{|c|ccccc|}
\hline Type I & $A(m|n)~(m\neq n)$ &$A(n|n)$&  $C(n)$& &\\
\hline Type II & $B(m|n)$ & $D(m|n)$ & $D(2,1;\alpha)$ & $F(4)$&$G(3)$\\\hline
\end{tabular}}
\end{center}
\vskip0.3cm

Let ${\ggg}=\ggg_\bz\oplus \ggg_\bo$ be a basic classical Lie superalgebra over ${\bbc}$ (We will simply call it a basic Lie superalgebra for short).
Let $\mathfrak{h}$ be a standard Cartan subalgebra of $\ggg$ and 
$\Phi$ be a root system of ${\ggg}$ relative to $\mathfrak{h}$. 
We take a simple root system $\Delta=\{\alpha_1,\cdots,\alpha_k\}$. 
By \cite[\S3.3]{FG}, we can choose a Chevalley
basis $B=\{e_\gamma\mid\gamma\in\Phi\}\cup\{h_\alpha\mid\alpha\in\Delta\}$ of ${\ggg}$ (In the case of $\ggg=D(2,1;\alpha)$ with $\alpha\notin\bbz$ such that $\alpha\in\overline\bbq$, one needs to adjust the definition of Chevalley basis by changing $\bbz$ to $\bbz[\alpha]$ (where $\bbz[\alpha]$ denotes the $\bbz$-algebra generated by $\alpha$) in the range of construction constants; see \cite[\S3.1]{Gav}. If $\alpha\notin\overline\bbq$, we just assume that $B$ is a basis of ${\ggg}$).
Denote by $\Phi^+$ the positive system of $\Phi$ relative to $\Delta$, and set $\Phi^-:=-\Phi^+$. Denote by $\Phi_{\bar0}$ and $\Phi_{\bar1}$ the set of all even roots and odd roots, respectively.
We always write $|\alpha|=\bar0$ for any $\alpha\in\Phi_{\bar0}$ and $|\alpha|=\bar1$ for any $\alpha\in\Phi_{\bar1}$.
Set $\Phi^\pm_{\bar0}:=\Phi^\pm\cap\Phi_{\bar0}$ and $\Phi^\pm_{\bar1}:=\Phi^\pm\cap\Phi_{\bar1}$, respectively.

\subsection{Finite $W$-superalgebras}\label{1.2}
Given  a nonzero nilpotent element $e\in \ggg_\bz$, by the Jacobson-Morozov theorem there is an $\frak{sl}_2$-triple $(e,f,h)$ with $f,h\in\ggg_\bz$. Let $(\cdot,\cdot)$ be a non-degenerate even supersymmetric invariant bilinear form on ${\ggg}$. Define $\chi\in{\ggg}^{*}$ by letting $\chi(x)=(e,x)$ for all $x\in{\ggg}$. Up to a scalar, we can further assume that $(e,f)=1$.

The linear operator ad\,$h$ on $\ggg$ defines a ${\bbz}$-grading ${\ggg}=\bigoplus_{i\in{\bbz}}{\ggg}(i)$ with $e\in{\ggg}(2)_{\bar{0}}$ and $f\in{\ggg}(-2)_{\bar{0}}$. Set ${\ppp}:=\bigoplus_{i\geqslant 0}{\ggg}(i)$ to be a parabolic subalgebra of $\ggg$. Denote by $\ggg^e$ (resp.\ $\ggg^f$) the centralizer of $e$ (resp.\ $f$) in $\ggg$, then we have ${\ggg}^e=\bigoplus_{i\geqslant0}{\ggg}^e(i)$ (resp.\ ${\ggg}^f=\bigoplus_{i\leqslant0}{\ggg}^f(i)$) by the $\mathfrak{sl}(2)$-representation theory.
Define a symplectic (resp.\ symmetric) bilinear form $\langle\cdot,\cdot\rangle$ on the ${\bbz}_2$-graded subspace ${\ggg}(-1)_{\bar{0}}$ (resp.\ ${\ggg}(-1)_{\bar{1}}$) given by $\langle x,y\rangle=(e,[x,y])=\chi([x,y])$ for all $x,y\in{\ggg}(-1)_{\bar0}~(\text{resp.}\,x,y\in{\ggg}(-1)_{\bar1})$. Set $s:=\text{dim}\,\ggg(-1)_{\bar0}$ (note that $s$ is an even number), and $\sfr:=\text{dim}\,\ggg(-1)_{\bar1}$. Choose $\mathbb{Z}_2$-homogenenous bases $\{u_1,\cdots,u_{s}\}$ of ${\ggg}(-1)_{\bar0}$ and $\{v_1,\cdots,v_\sfr\}$ of ${\ggg}(-1)_{\bar1}$  contained in ${\ggg}$ such that $\langle u_i, u_j\rangle =i^*\delta_{i+j,s+1}$ for $1\leqslant i,j\leqslant s$, where $i^*:=\left\{\begin{array}{ll}-1&\text{if}~1\leqslant i\leqslant \frac{s}{2};\\ 1&\text{if}~\frac{s}{2}+1\leqslant i\leqslant s\end{array}\right.$, and $\langle v_i,v_j\rangle=\delta_{i+j,\sfr+1}$ for $1\leqslant i,j\leqslant \sfr$.
We further assume that the $u_i$'s with $1\leqslant i\leqslant \frac{s}{2}$ and $v_i$'s with $1\leqslant i\leqslant \lceil\frac{\sfr}{2}\rceil$ are root vectors corresponding to negative roots $\gamma_{\bar 0i}\in\Phi_{\bar0}^-$ and $\gamma_{\bar 1i}\in\Phi_{\bar1}^-$, respectively. When $\sfr$ is odd, we assume that the element $v_{\frac{\sfr+1}{2}}$ is also a negative root vector in $\Phi_{\bar1}^-$.

Let $z_\alpha:=u_\alpha$ for $1\leqslant\alpha\leqslant s$, and $z_{\alpha+s}:=v_\alpha$ for $1\leqslant\alpha\leqslant\sfr$. Set $S(-1):=\{1,2,\cdots,s+\sfr\}$, and then $\{z_\alpha\mid\alpha\in S(-1)\}$ 
is a basis of ${\ggg}(-1)$. Set $z_\alpha^*:=\alpha^\natural z_{s+1-\alpha}$ for $1\leqslant\alpha\leqslant s$, where $\alpha^\natural:=\left\{\begin{array}{ll}1&\text{if}~1\leqslant \alpha\leqslant \frac{s}{2};\\ -1&\text{if}~\frac{s}{2}+1\leqslant \alpha\leqslant s\end{array}\right.$, and $z_{\alpha+s}^*:=z_{\sfr+1-\alpha+s}$ for $1\leqslant\alpha\leqslant \sfr$; i.e., $u_i^*=\left\{\begin{array}{ll}u_{s+1-i}&\text{if}~1\leqslant i\leqslant \frac{s}{2};\\ -u_{s+1-i}&\text{if}~\frac{s}{2}+1\leqslant i\leqslant s\end{array}\right.$, and $v_{i}^*=v_{\sfr+1-i}$ for $1\leqslant i\leqslant \sfr$. Then $\{z_\alpha^*\mid\alpha\in S(-1)\}$ is a dual basis of $\{z_\alpha\mid\alpha\in S(-1)\}$ such that $\langle z^*_\alpha,z_\beta\rangle =\delta_{\alpha,\beta}$ for $\alpha,\beta\in S(-1)$.

{\it From now on, for any $\alpha\in S(-1)$ we will denote the parity of $z_\alpha$ by $|\alpha|$ for simplicity.} It is straightforward that $z_\alpha$ and $z_\alpha^*$ have the same parity, and 
each $u_i^*$ (resp.\ $v_i^*$) with $1\leqslant i\leqslant \frac{s}{2}$ (resp.\ $1\leqslant i\leqslant \lceil\frac{\sfr}{2}\rceil$) is a root vector in $\ggg(-1)_{\bar0}$ (resp.\ $\ggg(-1)_{\bar1}$) corresponding to $\gamma_{\bar0i}^*:=-\theta-\gamma_{\bar0i}\in\Phi^+_{\bar0}$ (resp.\ $\gamma_{\bar1i}^*:=-\theta-\gamma_{\bar1i}\in\Phi_{\bar1}^+$).
Moreover, $\{u_1,\cdots,u_{\frac{s}{2}},u^*_1,\cdots,u^*_{\frac{s}{2}}\}$ constitutes a $\bbc$-basis of $\ggg(-1)_{\bar0}$. On the other hand, $\{v_1,\cdots,v_{\frac{\sfr}{2}},v^*_1,\cdots,v^*_{\frac{\sfr}{2}}\}$ (resp.\ $\{v_1,\cdots,v_{\frac{\sfr-1}{2}},v_{\frac{\sfr+1}{2}},v^*_1,\cdots,v^*_{\frac{\sfr-1}{2}}\}$) is a $\bbc$-basis of $\ggg(-1)_{\bar1}$ for $\sfr$ being even (resp.\ odd).
Write ${\ggg}(-1)^{\prime}_{\bar0}$ for the ${\bbc}$-span of $u_{{s\over 2}+1},\cdots,u_{s}$ and
${\ggg}(-1)^{\prime}_{\bar1}$ the ${\bbc}$-span of $v_{\frac{\sfr}{2}+1},\cdots,v_\sfr$ (resp.\ $v_{\frac{\sfr+3}{2}},\cdots,v_\sfr$) when $\sfr$ is even (resp.\ odd). Denote by ${\ggg}(-1)^{\prime}:={\ggg}(-1)^{\prime}_{\bar0}\oplus{\ggg}(-1)^{\prime}_{\bar1}$.

Now we can introduce the so-called ``$\chi$-admissible subalgebra" of $\ggg$ by
\begin{align}\label{admissible alg}
\mathfrak{m}:=\bigoplus_{i\leqslant -2}{\ggg}(i)\oplus{\ggg}(-1)^{\prime}.
 \end{align} Then $\chi$ vanishes on the derived subalgebra of $\mathfrak{m}$.  We also have an ``extended $\chi$-admissible subalgebra" of $\ggg$ as below
\begin{align}\label{extend admiss alg}
\mathfrak{m}^{\prime}:=\left\{\begin{array}{ll}\mathfrak{m}&\text{if}~\sfr~\text{is even;}\\
\mathfrak{m}\oplus {\bbc}v_{\frac{\sfr+1}{2}}&\text{if}~\sfr~\text{is odd.}\end{array}\right.
\end{align}
Define the generalized Gelfand-Graev ${\ggg}$-module associated with $\chi$ by \begin{equation}\label{generalized Gelfand-Graev}
Q_\chi:=U({\ggg})\otimes_{U(\mathfrak{m})}{\bbc}_\chi,
\end{equation}
where ${\bbc}_\chi={\bbc}1_\chi$ is a one-dimensional  $\mathfrak{m}$-module such that $x.1_\chi=\chi(x)1_\chi$ for all $x\in\mathfrak{m}$. The super structure of $Q_\chi$ is dependent on the parity of $\bbc_\chi$, which is indicated to be even hereafter.
The finite $W$-superalgebra associated with the pair ($\ggg,e)$ is defined as
 $$U({\ggg},e)=(\text{End}_{\ggg}Q_{\chi})^{\text{op}},$$
where $(\text{End}_{\ggg}Q_{\chi})^{\text{op}}$ denotes the opposite algebra of the endomorphism algebra of ${\ggg}$-module $Q_{\chi}$.

Let $I_\chi$ denote the ${\bbz}_2$-graded left ideal in $U({\ggg})$ generated by all $x-\chi(x)$ with $x\in\mathfrak{m}$. The fixed point space $(U({\ggg})/I_\chi)^{\ad\,\mmm}$ carries a natural algebra structure given by $(x+I_\chi)\cdot(y+I_\chi):=(xy+I_\chi)$ for all $x,y\in U({\ggg})$. Then $U({\ggg})/I_\chi\cong Q_\chi$ as ${\ggg}$-modules via the ${\ggg}$-module map sending $1+I_\chi$ to $1_\chi$, and $U({\ggg},e)\cong Q_{\chi}^{\ad\,\mmm}$ as $\bbc$-algebras. Explicitly speaking, any element of $U({\ggg},e)$ is uniquely determined by its effect on the generator $1_\chi\in Q_\chi$, and the canonical isomorphism between $U({\ggg},e)$ and $Q_{\chi}^{\ad\,\mmm}$ is given by $u\mapsto u(1_\chi)$ for any $u\in U({\ggg},e)$. In what follows we will often identify $Q_\chi$ with $U({\ggg})/I_\chi$ and $U({\ggg},e)$ with $Q_{\chi}^{\ad\,\mmm}$.

Let $w_1,\cdots, w_c$ be a basis of $\ggg$ over $\bbc$. For any given  $w_{i_1}\in\mathfrak{g}(j_1),\cdots,w_{i_k}\in\mathfrak{g}(j_k)$, set $\text{wt}(w_{i_1}\cdots w_{i_k}):=j_1+\cdots+j_k$ to be the weight of $w_{i_1}\cdots w_{i_k}$, and let $U({\ggg})=\bigcup_{i\in{\bbz}}\text{F}_iU({\ggg})$ be a filtration of $U({\ggg})$, where $\text{F}_iU({\ggg})$ is the ${\bbc}$-span of all $w_{i_1}\cdots w_{i_k}$ with 
$(j_1+2)+\cdots+(j_k+2)\leqslant  i$. This filtration is called {\sl Kazhdan filtration}.  The Kazhdan filtration on $Q_{\chi}$ is defined by $\text{F}_iQ_{\chi}:=\text{Pr}(\text{F}_iU({\ggg}))$ with $\text{Pr}:U({\ggg})\twoheadrightarrow U({\ggg})/I_\chi$ being the canonical homomorphism, which makes $Q_{\chi}$ into a filtered $U({\ggg})$-module. Then there is an induced Kazhdan filtration
$\text{F}_i U({\ggg},e)$ on the subspace $U({\ggg},e)=Q_{\chi}^{\ad\,\mmm}$ of $Q_{\chi}$ such that $\text{F}_j U({\ggg},e)=0$
unless $j\geqslant0$. 
For any element $\Theta\in U({\ggg},e)$, we will denote by $\text{deg}_e(\Theta)$ the degree of $\Theta$ under the Kazhdan grading.

Denote by $\text{gr}$ the corresponding graded algebras under the Kazhdan filtration as above. As $U({\ggg},e)\subset U({\ggg})/I_\chi$ by definition, it is not hard to see that $\text{gr}(U({\ggg}))$ is supercommutative, and then $\text{gr}(U({\ggg},e))$ is also supercommutative. 

\subsection{Refined $W$-superalgebras}\label{2.3}
Recall in \cite[Definition 4.8]{ZS2} we introduced the so-called refined  $W$-superalgebras $W'_\chi$ via $$W'_\chi:=(U({\ggg})/I_\chi)^{\text{ad}\,{\mmm}'}\cong Q_\chi^{\text{ad}\,{\mmm}'}
\equiv\{\text{Pr}(y)\in U({\ggg})/I_\chi \mid [a,y]\in I_\chi, \forall a\in{\mmm}'\},$$
and $\text{Pr}(y_1)\cdot\text{Pr}(y_2):=\text{Pr}(y_1y_2)$ for all $\text{Pr}(y_1),\text{Pr}(y_2)\in W'_\chi$.
 By definition,  $W'_\chi$ coincides with $U(\ggg,e)$ if $\sfr$ is even, while $W'_\chi$ is a proper subalgebra of $U(\ggg,e)$ if $\sfr$ is odd. Moreover, the PBW theorem of $W'_\chi$ was introduced in \cite[Theorem 3.7]{ZS5}, and the Kazhdan filtration on $Q_{\chi}$ induces a Kazhdan filtration $\text{F}_i W_\chi'$ on the subalgebra $W_\chi'$ of $U({\ggg},e)$.
 The adoption of this notion enables us conveniently  to prove  the existence of  Kac-Weisfeiler modules when $e=e_{\theta}$ with $-\theta$ being a minimal root in \cite{ZS5}. Apart from this achievement, the consideration of  refined $W$-superalgebras can give rise to some other advantage in the arguments. In the sequel, we can see more about this point, owing to the isomorphism from  $W'_\chi$ onto  a quantum finite $W$-superalgebra introduced in \cite{suh}, the latter of which will be used in our arguments immediately.

Set $\mathfrak{n}:=\bigoplus_{i\leqslant-2}{\ggg}(i)$ and $\mathfrak{n}':=\bigoplus_{i\leqslant-1}{\ggg}(i)$ to be nilpotent subalgebras of $\ggg$.
 Let $I^{\text{fin}}$ be the left ideal of $U(\ggg)$ generated by the elements $\{x-\chi(x) \mid x\in\mathfrak{n}\}$, and let $Q^{\text{fin}}_\chi:=U(\ggg)/I^{\text{fin}}$ be an induced $\ggg$-module. In what follows we denote by $\text{Pr}':U(\ggg)\rightarrow U(\ggg)/I^{\text{fin}}$ the canonical projection.
Suh \cite{suh} introduced quantum finite $W$-superalgebra associated with the pair $(\ggg,e)$ as
$$W^{\text{fin}}(\ggg,e):=(Q^{\text{fin}}_\chi)^{\text{ad}\,\mathfrak{n}'},$$
where $(Q^{\text{fin}}_\chi)^{\text{ad}\,\mathfrak{n}'}$ denotes the invariant subspace of $Q^{\text{fin}}_\chi$ under the adjoint action of $\mathfrak{n}'$, and the associated product of $(Q^{\text{fin}}_\chi)^{\text{ad}\,\mathfrak{n}'}$ is defined by $$(x+I^{\text{fin}})\cdot(y+I^{\text{fin}}):=xy+I^{\text{fin}}$$for $x+I^{\text{fin}},y+I^{\text{fin}}\in (Q^{\text{fin}}_\chi)^{\text{ad}\,\mathfrak{n}'}$.
The Kazhdan grading on $U(\ggg)$-module $Q^{\text{fin}}_\chi$ and the algebra $W^{\text{fin}}(\ggg,e)$ can also be defined similarly as before.

Recall in \cite[Theorem 4.11]{ZS5} we showed that $W_\chi'\cong W^{\text{fin}}(\ggg,e)$ as Kazhdan filtered algebras excluding the case of $\ggg=D(2,1;\alpha)$ with $\alpha\not\in\overline\bbq$. In fact, the above isomorphism is also valid for this special case. When the detecting parity of $\sfr$ is odd, by definition there must exist a  root $\alpha\in\Phi_{\bar1}^+$ of $\ggg$ such that $2\alpha\in\Phi_{\bar0}^+$. From the detailed description of the system of root $\Phi$ of $\ggg$ in \cite[\S2.5.4]{K}, this happens only when $\ggg$ is of type $B(m|n)$ or $G(3)$. Therefore, this special case corresponds to the situation when $\sfr$ is even. Let $\text{gr}(W'_\chi)$ and $\text{gr}(W^{\text{fin}}(\ggg,e))$ denote the graded algebra of $W'_\chi$ and $W^{\text{fin}}(\ggg,e)$ under the Kazhdan grading, respectively.
On one hand, it follows from \cite[Corollary 3.9(1)]{SX} that $\text{gr}(W'_\chi)\cong S(\ggg^e)$ as $\mathbb{C}$-algebras. On the other hand, we also have $\text{gr}(W^{\text{fin}}(\ggg,e))\cong S(\ggg^e)$ by \cite[Proposition 4.9]{ZS5}. Therefore, we obtain $\text{gr}(W'_\chi)\cong\text{gr}(W^{\text{fin}}(\ggg,e))$, and then $W'_\chi\cong W^{\text{fin}}(\mathfrak{g},e)$ as $\mathbb{C}$-algebras.

From now on, we will consider quantum finite $W$-superalgebra $W^{\text{fin}}(\ggg,e)$ as refined $W$-superalgebra $W_\chi'$, which will cause no confusion.

\subsection{A variation of the definition of $U(\ggg,e)$}\label{2.4}
In the sequel arguments, we need to vary the definition of finite $W$-superalgebras $U(\ggg,e)$ for the convenience of arguments.
\subsubsection{}\label{2.4.1}
As we have addressed in the previous subsection, the refined $W$-superalgebra $W'_\chi$ coincides with the finite $W$-superalgebra $U(\ggg,e)$ if $\sfr$ is even. Therefore, by the discussion in \S\ref{2.3} we can take quantum finite $W$-superalgebra $W^{\text{fin}}(\ggg,e)$ as $U(\ggg,e)$ when $\sfr$ is even. However, the situation for $\sfr$ being odd is much more difficult.
From now till the end of this section, we will always assume that $\sfr$ is odd, and the final statement will be given in Proposition \ref{fiiso}.
\subsubsection{}
Let $\mathfrak{l}$ be the ${\bbc}$-span of $u_{1},\cdots,u_{s}$ and $v_{1},\cdots,v_{\frac{\sfr-1}{2}},v_{\frac{\sfr+3}{2}},\cdots,v_\sfr$. It is immediate that $\ggg(-1)=\mathfrak{l}\oplus\mathbb{C}v_{\frac{\sfr+1}{2}}$ as vector space. Set \begin{equation}\label{n0}
\mathfrak{n}^0:=\bigoplus_{i\leqslant-2}{\ggg}(i)\oplus\mathfrak{l},
\end{equation} which is also a nilpotent subalgebra of $\ggg$.

\begin{defn}\label{U'}
Define the algebra $U^{\text{fin}}(\ggg,e)$ associated with the pair $(\ggg,e)$ by
\begin{equation}\label{a finw}
U^{\text{fin}}(\ggg,e):=(Q^{\text{fin}}_\chi)^{\text{ad}\,\mathfrak{n}^0}
\equiv\{\text{Pr}'(y)\in U({\ggg})/I^{\text{fin}} \mid [a,y]\in I^{\text{fin}}, \forall a\in\mathfrak{n}^0\},
\end{equation}
where $\text{Pr}':U(\ggg)\rightarrow U(\ggg)/I^{\text{fin}}$ is the canonical projection as defined in \S\ref{2.3}, and $(Q^{\text{fin}}_\chi)^{\text{ad}\,\mathfrak{n}^0}$ denotes the invariant subspace of $Q^{\text{fin}}_\chi$ under the adjoint action of $\mathfrak{n}^0$. The associated product of $(Q^{\text{fin}}_\chi)^{\text{ad}\,\mathfrak{n}^0}$ is defined by $$(x+I^{\text{fin}})\cdot(y+I^{\text{fin}}):=xy+I^{\text{fin}}$$for $x+I^{\text{fin}},y+I^{\text{fin}}\in (Q^{\text{fin}}_\chi)^{\text{ad}\,\mathfrak{n}^0}$.
\end{defn}

Obviously, $W^{\text{fin}}(\ggg,e)$ is a subalgebra of $U^{\text{fin}}(\ggg,e)$. There is also an induced Kazhdan filtration $F_{i}U^{\text{fin}}(\ggg,e)$ on the subspace $(Q^{\text{fin}}_\chi)^{\text{ad}\,\mathfrak{n}^0}$ of $Q^{\text{fin}}_\chi$ (see \S\ref{2.3}). To obtain the PBW theorem of $U^{\text{fin}}(\ggg,e)$, we need some preparation. First note that
\begin{lemma}\label{chisir}
Let $x\in\bigoplus_{i\geqslant-1}\ggg(i)$. Then $\chi([\mathfrak{n}^0,x])=0$ if and only if $x\in\ggg^e\oplus\mathbb{C}v_{\frac{\sfr+1}{2}}$.
\end{lemma}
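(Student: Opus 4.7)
The plan is to decompose $x$ along the $\mathbb{Z}$-grading induced by $\mathrm{ad}(h)$ and reduce the condition weight by weight. Write $x=\sum_{i\geqslant -1}x_{i}$ with $x_{i}\in\ggg(i)$. Since $\chi=(e,\cdot)$ vanishes outside $\ggg(-2)$, the bracket $[a,x_{i}]$ with $a\in\ggg(j)$ contributes to $\chi([a,x])$ only when $i+j=-2$. Hence the two pieces of $\mathfrak{n}^{0}=\bigoplus_{j\leqslant -2}\ggg(j)\oplus\mathfrak{l}$ generate independent constraints: one pairs $a\in\ggg(-i-2)$ against $x_{i}$ for each $i\geqslant 0$, and a second pairs $a\in\mathfrak{l}\subset\ggg(-1)$ against $x_{-1}$.

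For the first family I would invoke the invariance and supersymmetry of $(\cdot,\cdot)$ together with the parity of $e$ to rewrite
\[
\chi([a,x_{i}])=(e,[a,x_{i}])=-(a,[e,x_{i}]).
\]
As $a$ ranges over $\ggg(-i-2)$ and the form pairs $\ggg(-i-2)$ non-degenerately with $\ggg(i+2)$, the vanishing of this quantity forces $[e,x_{i}]=0$, i.e.\ $x_{i}\in\ggg^{e}(i)$. This handles every $i\geqslant 0$, and because $\ggg^e$ is concentrated in non-negative weights there is no issue at $i=0$.

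For the remaining constraint, $\chi([a,x_{-1}])$ is precisely the Kirillov form $\langle a,x_{-1}\rangle$ on $\ggg(-1)$, and by the chosen bases $\langle u_{i},u_{j}\rangle=i^{*}\delta_{i+j,s+1}$ and $\langle v_{i},v_{j}\rangle=\delta_{i+j,\sfr+1}$, each $u_{j}\in\mathfrak{l}$ detects the $u_{s+1-j}$-coordinate of $x_{-1}$ while each $v_{j}\in\mathfrak{l}$ detects the $v_{\sfr+1-j}$-coordinate. Running $j$ over the full index set of $\mathfrak{l}$ forces every coordinate of $x_{-1}$ to vanish except for the $v_{\frac{\sfr+1}{2}}$-one, so $x_{-1}\in\bbc v_{\frac{\sfr+1}{2}}$. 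Combining the two families gives the forward inclusion, and the converse is obtained by running the same calculations in reverse. The only delicate point is this last step: the deliberate omission of $v_{\frac{\sfr+1}{2}}$ from $\mathfrak{l}$ is exactly what leaves $\bbc v_{\frac{\sfr+1}{2}}$ in the null space of $\mathfrak{l}$ under $\langle\cdot,\cdot\rangle$, and this is the structural reason for the extra summand that distinguishes the $\sfr$-odd setting from the $\sfr$-even one.
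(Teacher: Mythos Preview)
Your proof is correct and follows essentially the same approach as the paper's: both separate the constraint by $\mathrm{ad}\,h$-degree, use that $\ggg(-i-2)\subset\mathfrak{n}^{0}$ for $i\geqslant 0$ together with invariance and non-degeneracy of $(\cdot,\cdot)$ to force $x_{i}\in\ggg^{e}(i)$, and then handle $x_{-1}$ via the skew form $\langle\cdot,\cdot\rangle$ on $\ggg(-1)$. Your version is simply more explicit about the invariance identity and the basis computation on $\ggg(-1)$ than the paper's terse argument.
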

\begin{proof}
Note that if $x\in\ggg(i)$ and $Y\in\ggg(j)$, then $\chi([Y,x])\neq0$ implies $i+j=-2$. Therefore if $x\in\mathfrak{p}$, the condition $\chi([\mathfrak{n}^0,x])=0$ implies $\chi([\ggg,x])=0$, and thus $x\in\ggg^e$. If $x\in\ggg(-1)$, it follows from $\chi([\mathfrak{n}^0,x])=0$ that $x\in\mathbb{C}v_{\frac{\sfr+1}{2}}$.
\end{proof}

By the PBW theorem, the graded algebra $\text{gr}(U(\ggg)/I^{\text{fin}})$ under the Kazhdan grading is isomorphic to $S(\mathfrak{\mathfrak{p}}\oplus\ggg(-1))$ as vector space. The $\mathbb{Z}$-grading of $\ggg$ as defined at the beginning of \S\ref{1.2} induces a grading on $S(\mathfrak{\mathfrak{p}}\oplus\ggg(-1))$.
For any $X\in S(\mathfrak{\mathfrak{p}}\oplus\ggg(-1))$ we denote by
$\bar X$  the element of highest degree under the $\mathbb{Z}$-grading. Following Poletaeva-Serganova's arguments in \cite[\S2.2]{PS2}, we can prove the following statement.
\begin{prop}\label{contained in}
If $X\in\text{gr}(U^{\text{fin}}(\ggg,e))$, then $\bar X\in S(\ggg^e\oplus\mathbb{C}v_{\frac{\sfr+1}{2}})$.
\end{prop}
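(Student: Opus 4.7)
The plan is to follow the Poletaeva--Serganova strategy from \cite[\S2.2]{PS2}, transplanted to the present setting of $U^{\text{fin}}(\ggg,e)$. By PBW one has $Q^{\text{fin}}_\chi\cong U(\mathfrak{p}\oplus\ggg(-1))$ as vector spaces, so a given element of $U^{\text{fin}}(\ggg,e)$ can be lifted to some $y\in U(\mathfrak{p}\oplus\ggg(-1))$ with $\text{Pr}'(y)\in U^{\text{fin}}(\ggg,e)$ and with $X$ equal to the image of $y$ in $\text{gr}\,U(\mathfrak{p}\oplus\ggg(-1))\cong S(\mathfrak{p}\oplus\ggg(-1))$. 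I will fix an $\ad\,h$-homogeneous basis of $\mathfrak{p}\oplus\ggg(-1)$; this endows $S(\mathfrak{p}\oplus\ggg(-1))$ with a second, coarser $\bbz$-grading (inherited from the $\ad\,h$-weights), and $\bar X$ is the top $\bbz$-graded summand of $X$. Write $n$ for this top $\bbz$-degree.

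For each $a\in\mathfrak{n}^0$, the invariance condition $[a,y]\in I^{\text{fin}}$ implies that, after PBW reduction modulo $I^{\text{fin}}$, the resulting element of $U(\mathfrak{p}\oplus\ggg(-1))$ vanishes identically. Writing $y$ as a linear combination of ordered monomials $w_{i_1}\cdots w_{i_k}$ and expanding $[a,y]$ term-wise, each elementary bracket $[a,w_{i_j}]\in\ggg$ decomposes along $\ggg=\mathfrak{n}\oplus(\mathfrak{p}\oplus\ggg(-1))$: the $(\mathfrak{p}\oplus\ggg(-1))$-part (together with any further commutators generated by subsequent PBW reordering) contributes only in $\bbz$-degree strictly below $n+\deg(a)$, whereas the $\mathfrak{n}$-part reduces via $\chi$ to a scalar multiple of the monomial with $w_{i_j}$ removed. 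Extracting the top $\bbz$-graded piece of the vanishing reduced expression yields the super-derivation
\begin{equation*}
\delta_a=\sum_{w}\chi([a,w])\,\partial_w
\end{equation*}
of the supercommutative algebra $S(\mathfrak{p}\oplus\ggg(-1))$, where $w$ ranges over the chosen basis and $\partial_w$ is the super-derivation dual to $w$; the hypothesis then forces $\delta_a(\bar X)=0$ for every $a\in\mathfrak{n}^0$.

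To finish, I would choose any linear complement $V$ of $\ggg^e\oplus\bbc v_{\frac{\sfr+1}{2}}$ inside $\mathfrak{p}\oplus\ggg(-1)$, with basis $\{v_\alpha\}$. Lemma \ref{chisir} says that the bilinear pairing $(a,x)\mapsto \chi([a,x])$ on $\mathfrak{n}^0\times(\mathfrak{p}\oplus\ggg(-1))$ has right kernel exactly $\ggg^e\oplus\bbc v_{\frac{\sfr+1}{2}}$; dually, the induced linear map $\mathfrak{n}^0\to V^{*}$ is surjective. Hence every partial super-derivation $\partial_{v_\alpha}$ on $V$-variables is realized as some $\delta_a$ (the remaining contributions $\chi([a,g_\beta])$ for $g_\beta\in\ggg^e\oplus\bbc v_{\frac{\sfr+1}{2}}$ vanish by the other direction of the lemma). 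The vanishing $\delta_a(\bar X)=0$ for all $a$ therefore forces every $V$-directional super-partial of $\bar X$ to vanish, whence $\bar X\in S(\ggg^e\oplus\bbc v_{\frac{\sfr+1}{2}})$ as required.

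The main technical obstacle is the second step: verifying carefully that the $(\mathfrak{p}\oplus\ggg(-1))$-valued brackets $[a,w_{i_j}]$ and all secondary commutators produced during PBW straightening really contribute only to $\bbz$-degree strictly less than $n+\deg(a)$, so that the leading-degree identity collapses cleanly to the derivation formula for $\delta_a$. This bookkeeping rests on the fact that every element of $\mathfrak{n}^0$ has $\ad\,h$-weight $\leqslant -1$, together with the supercommutativity of the Kazhdan associated graded recorded in \S\ref{1.2}; the role of the odd element $v_{\frac{\sfr+1}{2}}$ is exactly what forces the complement of $\ggg^e$ in the answer to be enlarged by $\bbc v_{\frac{\sfr+1}{2}}$ rather than just $\ggg^e$ itself.
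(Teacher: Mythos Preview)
Your proof is correct and follows essentially the same Poletaeva--Serganova strategy as the paper: both extract from the graded invariance condition the vanishing of the derivation $a\mapsto\sum_w\chi([a,w])\partial_w$ on $\bar X$ (the paper packages this as the component $\gamma_2$ of $\gamma(Y,Z)=\text{Pr}'([Y,Z])$), then invoke Lemma~\ref{chisir}. Your final step via surjectivity of $\mathfrak{n}^0\to V^*$ is a clean explicit reformulation of the paper's terse ``induction on the polynomial degree of $\bar X$''; the surjectivity claim is indeed immediate from the right non-degeneracy given by Lemma~\ref{chisir}, since in finite dimensions a subspace $W\subset V^*$ with $W^\perp=0$ must be all of $V^*$.
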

\begin{proof}
Let $X\in\text{gr}(U^{\text{fin}}(\ggg,e))$. Passing to the graded version of \eqref{a finw}, for any $Y\in\mathfrak{n}^0$ we have
\begin{equation}\label{pryx}
\text{Pr}'([Y,X])=0.
\end{equation}
Define $\gamma:\mathfrak{n}^0\otimes S(\mathfrak{\mathfrak{p}}\oplus\ggg(-1))\rightarrow S(\mathfrak{\mathfrak{p}}\oplus\ggg(-1))$ by putting
$\gamma(Y,Z)=\text{Pr}'([Y,Z])$
for all $Y\in\mathfrak{n}^0, Z\in S(\mathfrak{\mathfrak{p}}\oplus\ggg(-1))$. It is easy to see that if $Y\in\ggg(-i)$ with $i>0$, and $Z\in S(\mathfrak{\mathfrak{p}}\oplus\ggg(-1))(j)$, then $\gamma(Y,Z)\in S(\mathfrak{\mathfrak{p}}\oplus\ggg(-1))(j-i)\oplus S(\mathfrak{\mathfrak{p}}\oplus\ggg(-1))(j-i+2)$. Hence we can write $\gamma=\gamma_0+\gamma_2$ where $\gamma_0(Y,Z)$ is the projection on $S(\mathfrak{\mathfrak{p}}\oplus\ggg(-1))(j-i)$ and $\gamma_2(Y,Z)$ is the projection on $S(\mathfrak{\mathfrak{p}}\oplus\ggg(-1))(j-i+2)$. The condition \eqref{pryx} implies that for any $X\in\text{gr}(U^{\text{fin}}(\ggg,e))$,
\begin{equation}\label{gamma2}
\gamma_2(\mathfrak{n}^0,\bar X)=0.
\end{equation}
On the other hand, $\mathfrak{n}^0\times S(\mathfrak{\mathfrak{p}}\oplus\ggg(-1))\rightarrow S(\mathfrak{\mathfrak{p}}\oplus\ggg(-1))$ is a derivation with respect to the second argument defined by the condition $\gamma_2(Y,Z)=\chi([Y,Z])$
for any $Y\in\mathfrak{n}^0, Z\in\mathfrak{\mathfrak{p}}\oplus\ggg(-1)$. Now by induction on the polynomial degree of $\bar X$
in $S(\mathfrak{\mathfrak{p}}\oplus\ggg(-1))$, using Lemma \ref{chisir}, one can show that \eqref{gamma2} implies $\bar X\in\ggg^e\oplus\mathbb{C}v_{\frac{\sfr+1}{2}}$.
\end{proof}

\subsubsection{}
Choose homogeneous elements $x_1,\cdots,x_l,x_{l+1},\cdots,x_m\in{\ppp}_{\bar{0}}, y_1,\cdots, y_q, y_{q+1},\cdots$,\\$y_n\in{\ppp}_{\bar{1}}$ as a basis of $\ppp$ such that
\begin{itemize}
\item[(1)] $x_i\in{\ggg}(k_i)_{\bar{0}}, y_j\in{\ggg}(k'_j)_{\bar{1}}$, where $k_i,k'_j\in{\bbz}_+$ with $1\leqslant i\leqslant m$ and $1\leqslant j\leqslant n$;
\item[(2)] $x_1,\cdots,x_l$ is a basis of ${\ggg}^e_{\bar{0}}$ and $y_1,\cdots,y_q$ is a basis of ${\ggg}^e_{\bar{1}}$;
\item[(3)] $x_{l+1},\cdots,x_m\in[f,{\ggg}_{\bar{0}}]$ and
$y_{q+1},\cdots,y_n\in[f,{\ggg}_{\bar{1}}]$.
\end{itemize}
Also recall the bases $\{u_1,\cdots,u_{s}\}$ of ${\ggg}(-1)_{\bar0}$ and $\{v_1,\cdots,v_\sfr\}$ of ${\ggg}(-1)_{\bar1}$ as defined in \S\ref{1.2}.

Keep the notations as in \S\ref{innao}. Given $(\mathbf{a},\mathbf{b},\mathbf{c},\mathbf{d})\in{\bbz}^m_+\times\Lambda_n\times{\bbz}^{s}_+\times\Lambda_{\sfr}$, let $x^\mathbf{a}y^\mathbf{b}u^\mathbf{c}v^\mathbf{d}$ denote the monomial $x_1^{a_1}\cdots x_m^{a_m}y_1^{b_1}\cdots y_n^{b_n}u_1^{c_1}\cdots u_{s}^{c_{s}}v_1^{d_1}\cdots v_{\sfr}^{d_{\sfr}}$ in $U({\ggg})$.  It is obvious that the ${\ggg}$-module $Q^{\text{fin}}_\chi$  has a free basis $\{x^\mathbf{a}y^\mathbf{b}u^\mathbf{c}v^\mathbf{d}\otimes1_\chi\mid(\mathbf{a},\mathbf{b},\mathbf{c},\mathbf{d})\in{\bbz}^m_+\times\Lambda_n\times{\bbz}^{s}_+\times\Lambda_{\sfr}\}$. 
Write
\begin{equation*}
|(\mathbf{a},\mathbf{b},\mathbf{c},\mathbf{d})|_e:=\sum_{i=1}^ma_i(k_i+2)+\sum_{i=1}^nb_i(k'_i+2)+\sum_{i=1}^{s}c_i+\sum_{i=1}^{\sfr}d_i,
\end{equation*} which is exactly the Kazhdan degree of $x^{\mathbf{a}}y^\mathbf{b}u^\mathbf{c}v^\mathbf{d}$. 

Set
\begin{equation}\label{Y_i}
Y_i:=\left\{
\begin{array}{ll}
x_i&\text{if}~1\leqslant  i\leqslant  l;\\
y_{i-l}&\text{if}~l+1\leqslant  i\leqslant  l+q;\\
v_{\frac{\sfr+1}{2}}&\text{if}~i=l+q+1.
\end{array}
\right.
\end{equation}
To simplify  notations, we always assume that $Y_i$ belongs to ${\ggg}(m_i)$ for $1\leqslant  i\leqslant  l+q$, and $Y_{l+q+1}\in\ggg(-1)_{\bar1}$ by our earlier settings.


\subsubsection{} We are in a position to introduce the PBW theorem of $U^{\text{fin}}(\ggg,e)$.
\begin{theorem}\label{PBWQC} The following statements  concerning the PBW structure of $U^{\text{fin}}(\ggg,e)$ hold.
\begin{itemize}
\item[(1)] There exist homogeneous elements $\Theta_1,\cdots,\Theta_{l}\in U^{\text{fin}}(\ggg,e)_{\bar0}$ and $\Theta_{l+1},\cdots,\Theta_{l+q+1}\in U^{\text{fin}}(\ggg,e)_{\bar1}$ such that
\begin{equation}\label{m'}
\begin{split}
\Theta_k=&\bigg(Y_k+\sum\limits_{\mbox{\tiny $\begin{array}{c}|\mathbf{a},\mathbf{b},\mathbf{c},\mathbf{d}|_e=m_k+2,\\|\mathbf{a}|
+|\mathbf{b}|+|\mathbf{c}|+|\mathbf{d}|\geqslant 2\end{array}$}}\lambda^k_{\mathbf{a},\mathbf{b},\mathbf{c},\mathbf{d}}x^{\mathbf{a}}
y^{\mathbf{b}}u^{\mathbf{c}}v^{\mathbf{d}}\\&+\sum\limits_{|\mathbf{a},\mathbf{b},\mathbf{c},\mathbf{d}|_e<m_k+2}\lambda^k_{\mathbf{a},\mathbf{b},\mathbf{c},\mathbf{d}}x^{\mathbf{a}}
y^{\mathbf{b}}u^{\mathbf{c}}v^{\mathbf{d}}\bigg)\otimes1_\chi
\end{split}
\end{equation}
for $1\leqslant  k\leqslant  l+q$ with $\lambda^k_{\mathbf{a},\mathbf{b},\mathbf{c},\mathbf{d}}\in\bbq$, where $\lambda^k_{\mathbf{a},\mathbf{b},\mathbf{c},\mathbf{d}}=0$ if ${\bf c=d=0}$ and $a_{l+1}=\cdots=a_m=b_{q+1}=\cdots=b_n=0$,
and $\Theta_{l+q+1}=v_{\frac{\sfr+1}{2}}\otimes1_\chi$.
\item[(2)] The monomials $\Theta_1^{a_1}\cdots\Theta_l^{a_l}\Theta_{l+1}^{b_1}\cdots\Theta_{l+q+1}^{b_{q+1}}$ with $a_i\in\mathbb{Z}_+,~b_j\in\Lambda_1$ for $1\leqslant i\leqslant l$ and $1\leqslant j\leqslant q+1$ form a basis of $U^{\text{fin}}(\ggg,e)$ over $\mathbb{C}$.
\item[(3)] For $i,j$ satisfying $1\leqslant  i<j\leqslant  l+q+1$ and $l+1\leqslant  i=j\leqslant  l+q+1$, there exist
polynomial superalgebras
$F_{i,j}\in\bbq[X_1,\cdots,X_l;X_{l+1},\cdots,X_{l+q+1}]$ with $X_1,\cdots,X_l$ being even and $X_{l+1},\cdots,X_{l+q+1}$ being odd,
such that \begin{equation}\label{fijrelations}
[\Theta_i,\Theta_{j}]=F_{i,j}(\Theta_1,\cdots,\Theta_{l+q+1}),
\end{equation}
where
\begin{equation}\label{Theta1}
F_{i,l+q+1}(\Theta_1,\cdots,\Theta_{l+q+1})=0,\qquad F_{l+q+1,l+q+1}(\Theta_1,\cdots,\Theta_{l+q+1})=1
\end{equation}for $1\leqslant  i\leqslant  l+q$.
Moreover, if the elements $Y_i,~Y_j\in {\ggg}^e$ with $1\leqslant i,j\leqslant l+q$ satisfy $[Y_i,Y_j]=\sum\limits_{k=1}^{l+q}\alpha_{ij}^kY_k$ in ${\ggg}^e$, then
\begin{equation}\label{Theta2}
\begin{array}{llllll}
&F_{i,j}(\Theta_1,\cdots,\Theta_{l+q+1})&\equiv&\sum\limits_{k=1}^{l+q}\alpha_{ij}^k\Theta_k+q_{ij}(\Theta_1,\cdots,\Theta_{l+q+1})&(\mbox{mod }\text{F}_{m_i+m_j+1}U^{\text{fin}}(\ggg,e)),
\end{array}
\end{equation}where $q_{ij}$ is a polynomial superalgebra in $l+q+1$ variables in $\bbq$ whose constant term and linear part are zero. 
\item[(4)] The algebra $U^{\text{fin}}(\ggg,e)$ is generated by the $\mathbb{Z}_2$-homogeneous elements $\Theta_1,\cdots,\Theta_{l}\in U^{\text{fin}}(\ggg,e)_{\bar0}$ and $\Theta_{l+1},\cdots,\Theta_{l+q+1}\in U^{\text{fin}}(\ggg,e)_{\bar1}$ subjected to the relations in \eqref{fijrelations}
with $1\leqslant  i<j\leqslant  l+q+1$ and $l+1\leqslant  i=j\leqslant  l+q+1$.
\end{itemize}
\end{theorem}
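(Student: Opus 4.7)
My plan is to bootstrap this refined PBW theorem from the PBW theorem for $W^{\text{fin}}(\ggg,e)\cong W'_\chi$ already established in \cite[Theorem 3.7]{ZS5}, exploiting the elementary inclusion $W^{\text{fin}}(\ggg,e)\subset U^{\text{fin}}(\ggg,e)$ that follows from $\mathfrak{n}^0\subsetneq\mathfrak{n}'$. The idea is that for $1\leqslant k\leqslant l+q$ the generator $\Theta_k$ can be taken verbatim from the generators of $W^{\text{fin}}(\ggg,e)$ supplied by that theorem (which already carries the required leading form \eqref{m'} and lifts $Y_k\in\ggg^e$), while a single additional odd generator is adjoined, namely $\Theta_{l+q+1}:=v_{\frac{\sfr+1}{2}}\otimes 1_\chi$.

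First I would verify that $\Theta_{l+q+1}$ actually lies in $U^{\text{fin}}(\ggg,e)$ by a direct check that $\chi([Y,v_{\frac{\sfr+1}{2}}])=0$ for every $Y\in\mathfrak{n}^0$; the three cases $Y\in\bigoplus_{i\leqslant -2}\ggg(i)$, $Y=u_j$, and $Y=v_j$ with $j\neq\frac{\sfr+1}{2}$ are immediate, respectively, from the $\mathbb{Z}$-grading (the bracket lands in $\ggg(i-1)$ with $i-1\leqslant -3$), from the fact that $[u_j,v_{\frac{\sfr+1}{2}}]\in\ggg_{\bar 1}(-2)$ on which $\chi$ vanishes, and from the orthogonality $\langle v_j,v_{\frac{\sfr+1}{2}}\rangle=0$ of our chosen dual basis. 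For part~(2), I would invoke Proposition~\ref{contained in} to conclude $\gr(U^{\text{fin}}(\ggg,e))\subset S(\ggg^e\oplus\bbc v_{\frac{\sfr+1}{2}})$ in the Kazhdan-associated graded algebra, and observe that the Kazhdan-leading symbols of $\Theta_1,\ldots,\Theta_{l+q+1}$ are exactly $Y_1,\ldots,Y_{l+q+1}$, which form a basis of $\ggg^e\oplus\bbc v_{\frac{\sfr+1}{2}}$. A standard filtered-to-graded argument then delivers both the spanning property and the linear independence of the ordered monomials in the $\Theta_i$'s.

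Part~(3) should come quickly: the relations between $\Theta_i,\Theta_j$ with $i,j\leqslant l+q$ are inherited from the PBW theorem of $W^{\text{fin}}(\ggg,e)$; the vanishing $F_{i,l+q+1}=0$ follows because $v_{\frac{\sfr+1}{2}}\in\mathfrak{n}'$ and $\Theta_i$ is $\ad\mathfrak{n}'$-invariant, so any lift $\tilde\Theta_i\in U(\ggg)$ satisfies $[v_{\frac{\sfr+1}{2}},\tilde\Theta_i]\in I^{\text{fin}}$, which passes to $[\Theta_i,\Theta_{l+q+1}]=0$ in $U^{\text{fin}}(\ggg,e)$; and $F_{l+q+1,l+q+1}=1$ reduces to the short computation $[\Theta_{l+q+1},\Theta_{l+q+1}]=[v_{\frac{\sfr+1}{2}},v_{\frac{\sfr+1}{2}}]\otimes 1_\chi$, where $[v_{\frac{\sfr+1}{2}},v_{\frac{\sfr+1}{2}}]\in\ggg(-2)\subset\mathfrak{n}$ has $\chi$-value $\langle v_{\frac{\sfr+1}{2}},v_{\frac{\sfr+1}{2}}\rangle=1$. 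Part~(4) is then a formal consequence of (2) and (3). The main obstacle I expect is not the new odd generator itself but rather the careful tracking of the Kazhdan-filtration remainder in \eqref{Theta2}---confirming that the error term $q_{ij}$ has no constant or linear part, and that the coefficients $\lambda^k_{\mathbf{a},\mathbf{b},\mathbf{c},\mathbf{d}}$ in \eqref{m'} vanish on the ``pure $\ggg^e$'' monomials of top Kazhdan weight. These normalization constraints must be propagated through the isomorphism $W^{\text{fin}}(\ggg,e)\cong W'_\chi$ and the corresponding statement in \cite[Theorem 3.7]{ZS5}, where the bulk of the technical work actually sits.
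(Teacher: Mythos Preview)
Your proposal is correct and follows essentially the same approach as the paper: both bootstrap from the PBW theorem for $W^{\text{fin}}(\ggg,e)$ via \cite[Theorem 3.7]{ZS5} using the inclusion $W^{\text{fin}}(\ggg,e)\subset U^{\text{fin}}(\ggg,e)$, adjoin the extra odd generator $\Theta_{l+q+1}=v_{\frac{\sfr+1}{2}}\otimes 1_\chi$, invoke Proposition~\ref{contained in} for the spanning/basis statement, and verify \eqref{Theta1} exactly as you describe. The ``obstacle'' you flag concerning the Kazhdan remainder in \eqref{Theta2} and the vanishing constraints on the $\lambda^k_{\mathbf{a},\mathbf{b},\mathbf{c},\mathbf{d}}$ is likewise handled in the paper by deferring to \cite[Theorems 4.5, 4.7]{ZS2}, so no new argument is needed there.
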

\begin{proof}
Since $\mathfrak{n}^0\subset \mathfrak{n}'$ by definition, then $W^{\text{fin}}(\ggg,e)=(Q_\chi^{\text{fin}})^{\text{ad}\,\mathfrak{n}'}$ is a subalgebra of $U^{\text{fin}}(\ggg,e)=(Q_\chi^{\text{fin}})^{\text{ad}\,{\mathfrak{n}}^0}$. In virtue of \cite[Theorems 3.7, 4.11]{ZS5}, we can choose the elements $\Theta_k$ for $1\leqslant  k\leqslant  l+q$ as in \eqref{m'}, to be the generators of $W^{\text{fin}}(\ggg,e)$ (all the coefficients $\lambda^k_{\mathbf{a},\mathbf{b},\mathbf{c},\mathbf{d}}\in\bbq$ can be assured by the knowledge of field theory as in the proof of \cite[Theorem 4.6]{P2}). Moreover, it follows from the definition of $\mathfrak{n}^0$ in \eqref{n0} that
\begin{equation*}
[\mathfrak{n}^0,\Theta_{l+q+1}]=[\mathfrak{n}^0,v_{\frac{\sfr+1}{2}}\otimes1_\chi]=0.
\end{equation*}
 So all the elements in \eqref{m'}, together with $\Theta_{l+q+1}=v_{\frac{\sfr+1}{2}}\otimes1_\chi$, belong to $U^{\text{fin}}(\ggg,e)$. Thanks to Proposition \ref{contained in}, we see that all these elements constitute a set of generators of $U^{\text{fin}}(\ggg,e)$.

On one hand, since $\Theta_k\in(Q_\chi^{\text{fin}})^{\text{ad}\,\mathfrak{n}'}$ for $1\leqslant  k\leqslant  l+q$, and $v_{\frac{\sfr+1}{2}}\in\mathfrak{n}'$ by definition, we have
\begin{equation}\label{kl+q+1}
[\Theta_k,\Theta_{l+q+1}]=[\Theta_k,v_{\frac{\sfr+1}{2}}\otimes1_\chi]=0,
\end{equation}
which implies $F_{i,l+q+1}(\Theta_1,\cdots,\Theta_{l+q+1})=0$ in \eqref{Theta1}.
On the other hand, by assumption we have
\begin{equation*}
[\Theta_{l+q+1},\Theta_{l+q+1}]=[v_{\frac{\sfr+1}{2}}\otimes1_\chi,v_{\frac{\sfr+1}{2}}\otimes1_\chi]
=[v_{\frac{\sfr+1}{2}},v_{\frac{\sfr+1}{2}}]\otimes1_\chi=1\otimes1_\chi.
\end{equation*}
Thus $F_{l+q+1,l+q+1}(\Theta_1,\cdots,\Theta_{l+q+1})=1$, which is just the second equation in \eqref{Theta1}. The remainder of the theorem can be obtained by the same discussion as in \cite[Theorems 4.5, 4.7]{ZS2}. 
\end{proof}

As an immediate consequence of Theorem \ref{PBWQC}, we have
\begin{corollary}\label{ufin}Under the Kazhdan grading,
$\text{gr}(U^{\text{fin}}(\mathfrak{g},e))\cong S(\ggg^e)\otimes\mathbb{C}[\Lambda]$ as $\mathbb{C}$-algebras, where $\Lambda$ is the exterior algebra generated by one element $\Lambda$.
\end{corollary}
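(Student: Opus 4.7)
The plan is to read off the graded algebra structure directly from the refined PBW theorem (Theorem \ref{PBWQC}), checking that all relations collapse to supercommutativity together with a single exterior relation on the last generator.

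First, I would record Kazhdan degrees. By the leading-term formula \eqref{m'}, the principal symbol $\bar\Theta_k$ of $\Theta_k$ has Kazhdan degree $m_k+2$ for $1\leqslant k\leqslant l+q$, matching the Kazhdan degree of $Y_k\in\ggg^e(m_k)$; and $\bar\Theta_{l+q+1}$ has Kazhdan degree $1$, the Kazhdan degree of $v_{\frac{\sfr+1}{2}}\in\ggg(-1)_{\bar1}$. By Theorem \ref{PBWQC}(2), the ordered monomials in $\Theta_1,\ldots,\Theta_{l+q+1}$ form a $\mathbb{C}$-basis of $U^{\text{fin}}(\ggg,e)$, so the corresponding monomials in the symbols $\bar\Theta_1,\ldots,\bar\Theta_{l+q+1}$ form a basis of $\text{gr}(U^{\text{fin}}(\ggg,e))$.

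Next I would extract the relations in the graded algebra. For $1\leqslant i,j\leqslant l+q$, the commutator formula \eqref{Theta2} shows that $[\Theta_i,\Theta_j]\in\text{F}_{m_i+m_j+2}U^{\text{fin}}(\ggg,e)$, while the product $\Theta_i\Theta_j$ lives in Kazhdan degree $m_i+m_j+4$; hence $\bar\Theta_i\bar\Theta_j=(-1)^{|i||j|}\bar\Theta_j\bar\Theta_i$ in the associated graded. The relations \eqref{kl+q+1} show $[\Theta_i,\Theta_{l+q+1}]=0$ for $1\leqslant i\leqslant l+q$, so supercommutativity with $\bar\Theta_{l+q+1}$ is automatic. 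Finally, for the square of the odd generator $\bar\Theta_{l+q+1}$, the identity $[\Theta_{l+q+1},\Theta_{l+q+1}]=1$ in \eqref{Theta1}, i.e. $2\Theta_{l+q+1}^2=1$, sits in Kazhdan degree $0$, whereas the product $\Theta_{l+q+1}^2$ would naively land in Kazhdan degree $2$; passing to the top-degree symbol therefore forces $\bar\Theta_{l+q+1}^2=0$.

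Putting these together, $\text{gr}(U^{\text{fin}}(\ggg,e))$ is the supercommutative algebra on $\bar\Theta_1,\ldots,\bar\Theta_{l+q+1}$ modulo the single extra relation $\bar\Theta_{l+q+1}^2=0$. The subalgebra generated by $\bar\Theta_1,\ldots,\bar\Theta_{l+q}$ is the free supercommutative algebra on the basis $Y_1,\ldots,Y_{l+q}$ of $\ggg^e$, hence is isomorphic to $S(\ggg^e)$; the subalgebra generated by $\bar\Theta_{l+q+1}$ is the exterior algebra on one odd generator, $\mathbb{C}[\Lambda]$. There is an obvious surjective superalgebra map $S(\ggg^e)\otimes\mathbb{C}[\Lambda]\twoheadrightarrow\text{gr}(U^{\text{fin}}(\ggg,e))$ sending $Y_k\mapsto\bar\Theta_k$ and $\Lambda\mapsto\bar\Theta_{l+q+1}$, and this map is a bijection on PBW bases by Theorem \ref{PBWQC}(2), giving the claimed isomorphism.

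I do not expect a serious obstacle: everything is an immediate consequence of the structural Theorem \ref{PBWQC}. The only point that requires a moment of care is the second observation above, namely that $2\Theta_{l+q+1}^2=1$ implies $\bar\Theta_{l+q+1}^2=0$ in the graded algebra (as opposed to $\bar\Theta_{l+q+1}^2=\tfrac12$); this is the mechanism by which an honest Clifford-type relation at the filtered level degenerates into an exterior relation at the graded level, and it is precisely what produces the tensor factor $\mathbb{C}[\Lambda]$.
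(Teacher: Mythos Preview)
Your argument is correct and is exactly the verification the paper leaves implicit: the paper states the corollary as an immediate consequence of Theorem~\ref{PBWQC} without further proof, and you have simply unpacked that implication via the PBW basis and the degree drop in the commutator relations. The only cosmetic point is that the supercommutativity of $\text{gr}(U^{\text{fin}}(\ggg,e))$ can also be read off directly from the supercommutativity of $\text{gr}(U(\ggg))$ under the Kazhdan filtration (recorded in \S\ref{1.2}), which makes your degree-counting step for $[\Theta_i,\Theta_j]$ with $i,j\leqslant l+q$ unnecessary, though not incorrect.
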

\subsubsection{}
Now we turn to finite $W$-superalgebra $U({\ggg},e)\cong Q_{\chi}^{\ad\,\mmm}$. In \cite[Theorem 0.1]{ZS2}, we showed that
\begin{prop}\label{graded W}(\cite{ZS2})
When $\sfr$ is odd,
$\text{gr}(U(\ggg,e))\cong S(\ggg^e)\otimes\mathbb{C}[\Lambda]$ as vector space under the Kazhdan grading, where $\Lambda$ is the exterior algebra generated by $\Lambda$.
\end{prop}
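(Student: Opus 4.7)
The plan is to establish $\gr\,U(\ggg,e)\cong S(\ggg^e)\otimes\bbc[\Lambda]$ by bounding $\gr\,U(\ggg,e)$ from above and below inside the PBW-graded algebra of $Q_\chi$, leveraging the refined PBW theorem for $U^{\text{fin}}(\ggg,e)$ (Theorem~\ref{PBWQC} and Corollary~\ref{ufin}). This is the natural route given the machinery of this section, and essentially extracts the graded content of the isomorphism proposed in Proposition~\ref{fiiso}.

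First I would prove an $\mmm$-analogue of Proposition~\ref{contained in}: for any $X\in\gr\,U(\ggg,e)$, the top-graded component $\bar X$ in $S(\ppp\oplus\ggg(-1))$ lies in $S(\ggg^e\oplus\bbc v_{\frac{\sfr+1}{2}})$. The derivation argument of Proposition~\ref{contained in}---decomposing $\gamma:\mmm\otimes S(\ppp\oplus\ggg(-1))\to S(\ppp\oplus\ggg(-1))$ as $\gamma_0+\gamma_2$ and using $\mmm$-invariance to force $\gamma_2(\mmm,\bar X)=0$---goes through verbatim, provided one establishes the corresponding $\mmm$-analogue of Lemma~\ref{chisir}: for $x\in\bigoplus_{i\geqslant-1}\ggg(i)$, $\chi([\mmm,x])=0$ if and only if $x\in\ggg^e\oplus\bbc v_{\frac{\sfr+1}{2}}$. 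The key observation is that the $\chi$-dual elements $u_i^*=u_{s+1-i}$ for $1\leqslant i\leqslant s/2$ and $v_j^*=v_{\sfr+1-j}$ for $1\leqslant j\leqslant(\sfr-1)/2$ all lie in $\mmm$, so the only basis vector of $\ggg(-1)$ whose pairing partner escapes $\mmm$ is $v_{\frac{\sfr+1}{2}}$ (paired with itself). This yields the embedding $\gr\,U(\ggg,e)\hookrightarrow S(\ggg^e)\otimes\bbc[\Lambda]$.

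Next, for the reverse inclusion, I would exhibit enough explicit elements of $U(\ggg,e)$ with prescribed top pieces. The chain $W^{\text{fin}}(\ggg,e)\cong W'_\chi\subset U(\ggg,e)$ (valid because $\mmm\subset\mmm'$, so $\mmm'$-invariants are $\mmm$-invariants) furnishes the generators $\Theta_1,\ldots,\Theta_{l+q}$ from Theorem~\ref{PBWQC}(1), whose top pieces form a basis $Y_1,\ldots,Y_{l+q}$ of $\ggg^e$. The element $v_{\frac{\sfr+1}{2}}\otimes 1_\chi$ is $\mmm$-invariant by a direct check (every pairing $\chi([a,v_{\frac{\sfr+1}{2}}])$ for $a\in\mmm$ vanishes, since $[a,v_{\frac{\sfr+1}{2}}]$ lands in $\ggg_{\bar 1}(-2)$ or in $\ggg(\leqslant -3)$, on which $\chi$ is zero, or in $\ggg(-2)_{\bar 0}$ with a pairing index not equal to $\frac{\sfr+1}{2}$), hence lies in $U(\ggg,e)$ with top piece $v_{\frac{\sfr+1}{2}}$. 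Ordered products of these elements then have top pieces spanning all of $S(\ggg^e)\otimes\bbc[\Lambda]$, closing the reverse inclusion and giving the desired vector-space isomorphism.

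The main obstacle I anticipate is the $\mmm$-analogue of Lemma~\ref{chisir}: since $\mmm\subsetneq\mathfrak{n}^0$, one must verify that the missing elements $u_1,\ldots,u_{s/2}$ and $v_1,\ldots,v_{(\sfr-1)/2}$ of $\mathfrak{n}^0$ are not in fact needed as test vectors. The resolution hinges on the simple observation that the $\chi$-pairing on $\ggg(-1)$ is non-degenerate and $\mmm$ already supplies the dual half, but the bookkeeping requires care, especially the subtle point that in the $\sfr$-odd case $v_{\frac{\sfr+1}{2}}$ is self-paired---this is precisely why the graded algebra acquires the extra exterior factor $\bbc[\Lambda]$ rather than collapsing onto $S(\ggg^e)$.
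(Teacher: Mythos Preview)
The paper does not prove Proposition~\ref{graded W}; it is quoted from \cite[Theorem 0.1]{ZS2} and used as input for Theorem~\ref{algiso4.6}. So there is no ``paper's own proof'' to compare against directly. That said, your strategy---adapt Lemma~\ref{chisir} and Proposition~\ref{contained in} from the $\mathfrak{n}^0$-setting to the $\mmm$-setting, then supply explicit elements via $W'_\chi\subset U(\ggg,e)$ together with $v_{\frac{\sfr+1}{2}}\otimes 1_\chi$---is exactly the argument the paper sketches for the stronger algebra isomorphism of Theorem~\ref{algiso4.6} (see the paragraph preceding it), and it does yield an independent proof of the vector-space statement.

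There is, however, one genuine slip in your upper-bound step. You place $\bar X$ in $S(\ppp\oplus\ggg(-1))$, but that is the graded model for $Q_\chi^{\text{fin}}$, not for $Q_\chi$. Since $U(\ggg,e)=Q_\chi^{\ad\,\mmm}$ and $I_\chi$ already kills $\ggg(-1)'$ (as $\chi|_{\ggg(-1)}=0$), the correct ambient is $S\big(\ppp\oplus\ggg(-1)/\ggg(-1)'\big)$, with $\ggg(-1)/\ggg(-1)'$ spanned by $u_1,\ldots,u_{s/2},v_1,\ldots,v_{\frac{\sfr+1}{2}}$. This matters: the $\mmm$-analogue of Lemma~\ref{chisir} is \emph{false} on all of $\bigoplus_{i\geqslant -1}\ggg(i)$ (for instance $x=u_{s/2+1}\in\ggg(-1)'$ satisfies $\chi([\mmm,x])=0$ yet $x\notin\ggg^e\oplus\bbc v_{\frac{\sfr+1}{2}}$). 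Once you restrict $x$ to $\ppp\oplus\text{span}\{u_1,\ldots,u_{s/2},v_1,\ldots,v_{\frac{\sfr+1}{2}}\}$, your observation that $\mmm\cap\ggg(-1)=\ggg(-1)'$ supplies precisely the dual half of the pairing makes the lemma true, and the derivation argument of Proposition~\ref{contained in} then goes through unchanged. With this correction your plan is sound.
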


We will improve Proposition \ref{graded W}, by adopting a new approach which is different from the one used by Shu-Xiao under the settings of Poisson geometric realization of finite $W$-superalgebras in \cite[Corollary 3.9(2)]{SX}. We also refer to \cite[\S2.2]{PS2} for more details.

Recall that
we introduced the PBW theorem of refined $W$-superalgebra $W'_\chi$ in \cite[Theorem 3.7]{ZS5}.
Since $W'_\chi$ is a subalgebra of $U(\mathfrak{g},e)$, by \cite[Lemma 4.3]{ZS2} 
we can choose the elements given in \cite[Theorem 3.7(1)]{ZS5} and also $\Theta_{l+q+1}=v_{\frac{\sfr+1}{2}}\otimes1_\chi$ as the generators of $U(\mathfrak{g},e)$. In particular, such a choice ensures the equation \eqref{Theta1} still valid in this case.
Now the same discussion as  the proofs of Theorem \ref{PBWQC} and Corollary \ref{ufin},
one can conclude that 
\begin{theorem}\label{algiso4.6}(\cite{SX})
The isomorphism of vector spaces in Proposition \ref{graded W} is in fact an isomorphism of $\bbc$-algebras.
\end{theorem}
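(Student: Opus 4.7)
The plan is to run the same argument that promoted Theorem \ref{PBWQC} to Corollary \ref{ufin}, but now inside $U(\ggg,e) = Q_\chi^{\ad\,\mmm}$ rather than $U^{\text{fin}}(\ggg,e) = (Q_\chi^{\text{fin}})^{\ad\,\mathfrak{n}^0}$. What needs to be checked is that the generators, the interaction with the extra odd element $\Theta_{l+q+1} = v_{\frac{\sfr+1}{2}} \otimes 1_\chi$, and the Kazhdan-degree bookkeeping all carry over verbatim.

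First, I would fix a homogeneous generating set: take $\Theta_1,\ldots,\Theta_{l+q}$ to be the generators of $W'_\chi$ supplied by \cite[Theorem~3.7]{ZS5} (so $\bar\Theta_k = Y_k$ in $S(\ggg^e)$ under the Kazhdan grading for $1\leqslant k\leqslant l+q$), and append $\Theta_{l+q+1} = v_{\frac{\sfr+1}{2}}\otimes 1_\chi$. Since $v_{\frac{\sfr+1}{2}} \in \mmm'\setminus\mmm$, we have $\Theta_{l+q+1} \in Q_\chi^{\ad\,\mmm}$ but not in $Q_\chi^{\ad\,\mmm'}$, so $\Theta_{l+q+1}$ genuinely lies outside $W'_\chi$. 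By \cite[Lemma~4.3]{ZS2}, adjoining $\Theta_{l+q+1}$ to the PBW generators of $W'_\chi$ yields a generating set for $U(\ggg,e)$, and the ordered monomials $\Theta_1^{a_1}\cdots\Theta_l^{a_l}\Theta_{l+1}^{b_1}\cdots\Theta_{l+q+1}^{b_{q+1}}$ form a $\bbc$-basis.

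Next, I would verify the two relations in \eqref{Theta1} in the present setting. Because each $\Theta_k$ with $k\leqslant l+q$ is $\ad\,\mmm'$-invariant and $v_{\frac{\sfr+1}{2}}\in \mmm'$, one has $[\Theta_k,\Theta_{l+q+1}] = 0$ exactly as in \eqref{kl+q+1}. The square relation $\Theta_{l+q+1}^2 = 1$ follows from $[v_{\frac{\sfr+1}{2}},v_{\frac{\sfr+1}{2}}] = 1$ via $\langle\cdot,\cdot\rangle$. These two facts together with the PBW relations for $W'_\chi$ give the full commutator structure of $U(\ggg,e)$.

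Now I pass to the associated graded. Under the Kazhdan filtration, $\deg_e(\Theta_k) = m_k + 2$ for $1\leqslant k\leqslant l+q$ and $\deg_e(\Theta_{l+q+1}) = 1$. The symbols $\bar\Theta_1,\ldots,\bar\Theta_{l+q}$ generate a copy of $S(\ggg^e)$ by the $W'_\chi$-analogue of Corollary \ref{ufin}. The crucial observation is that $\Theta_{l+q+1}^2 = 1$ has Kazhdan degree $0$, strictly below the degree $2$ of $\Theta_{l+q+1}^2$ in the filtration; hence in $\gr(U(\ggg,e))$ we get $\bar\Theta_{l+q+1}^2 = 0$, i.e., $\bar\Theta_{l+q+1}$ is an exterior generator. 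Combined with $[\bar\Theta_k,\bar\Theta_{l+q+1}]=0$, this produces an algebra surjection $S(\ggg^e)\otimes\bbc[\Lambda]\twoheadrightarrow\gr(U(\ggg,e))$, which is injective by comparison of PBW bases on both sides, giving the desired algebra isomorphism.

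The main technical step, and the one I would spend most care on, is the degree-tracking that shows $\bar\Theta_{l+q+1}^2 = 0$ and, more generally, that the ``lower-order'' tails present in the relations \eqref{Theta1}--\eqref{Theta2} do not pollute the leading symbols. This is exactly the mechanism that converted Theorem \ref{PBWQC}(3) into Corollary \ref{ufin}, and it transfers to the present situation with no new ideas needed, because every generator we added lives in $W'_\chi$ or is the simple element $v_{\frac{\sfr+1}{2}}\otimes 1_\chi$ whose commutator and square are explicit.
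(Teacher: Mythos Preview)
Your proposal is correct and follows essentially the same approach as the paper: take the $W'_\chi$-generators from \cite[Theorem~3.7]{ZS5}, adjoin $\Theta_{l+q+1}=v_{\frac{\sfr+1}{2}}\otimes 1_\chi$, invoke \cite[Lemma~4.3]{ZS2} to see these generate $U(\ggg,e)$, verify the relations \eqref{Theta1}, and pass to the associated graded exactly as in the step from Theorem~\ref{PBWQC} to Corollary~\ref{ufin}. The only slip is that $\Theta_{l+q+1}^2=\tfrac{1}{2}$ rather than $1$ (since $[\Theta_{l+q+1},\Theta_{l+q+1}]=1\otimes 1_\chi$ and the super-bracket of an odd element with itself is twice its square), but this is immaterial to the graded argument.
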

\begin{rem}\label{alphanotinbbq}
In the proof of Theorem \ref{algiso4.6} as above, we can observe that the construction of the generators of $W'_\chi$ introduced in \cite[Theorem 3.7]{ZS5} plays a key role. In fact, in the procedure of formulating the PBW theorem for  $W'_\chi$ there, since
the ``admissible" procedure from the modular finite $W$-superalgebras is employed,  we have always assumed that the associated $\ggg$ is a basic Lie superalgebra excluding the case of $D(2,1;\alpha)$ with $\alpha\not\in\overline\bbq$. However, this does not affect the proof here.  As  mentioned at the end of \S\ref{2.3}, the case of $\sfr$ being odd appears only when $\ggg$ is of type $B(m|n)$ or $G(3)$. Then  $\ggg$ can not be of type $D(2,1;\alpha)$.
\end{rem}

Combining Theorem \ref{algiso4.6} with Corollary \ref{ufin}, we now obtain
\begin{prop}\label{griso}
When $\sfr$ is odd, $\text{gr}(U^{\text{fin}}(\mathfrak{g},e))\cong\text{gr}(U(\ggg,e))$ as $\mathbb{C}$-algebras under the Kazhdan grading.
\end{prop}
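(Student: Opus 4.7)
The plan is to simply compose the two algebra isomorphisms already established just before the statement. Concretely, Corollary \ref{ufin} identifies $\text{gr}(U^{\text{fin}}(\ggg,e))$ with $S(\ggg^e)\otimes\mathbb{C}[\Lambda]$ as $\mathbb{C}$-algebras under the Kazhdan grading, while Theorem \ref{algiso4.6} upgrades the vector-space isomorphism of Proposition \ref{graded W} to an algebra isomorphism $\text{gr}(U(\ggg,e))\cong S(\ggg^e)\otimes\mathbb{C}[\Lambda]$. Composing one with the inverse of the other produces the desired algebra isomorphism $\text{gr}(U^{\text{fin}}(\ggg,e))\cong\text{gr}(U(\ggg,e))$.

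Before writing this three-line deduction, I would spend a sentence confirming that the two sides genuinely refer to the same algebra $S(\ggg^e)\otimes\mathbb{C}[\Lambda]$, namely that the exterior generator $\Lambda$ on each side corresponds to the (graded class of the) odd element $\Theta_{l+q+1}=v_{(\sfr+1)/2}\otimes1_\chi$. This is transparent from the construction: the PBW generators chosen in Theorem \ref{PBWQC}(1) for $U^{\text{fin}}(\ggg,e)$ and the generators chosen for $U(\ggg,e)$ in the proof of Theorem \ref{algiso4.6} both use the refined $W$-superalgebra generators of \cite[Theorem 3.7]{ZS5} for indices $1\leqslant k\leqslant l+q$ and adjoin the same odd element $v_{(\sfr+1)/2}\otimes1_\chi$ at index $l+q+1$. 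Hence the two Kazhdan-graded algebras are naturally identified with the same $S(\ggg^e)\otimes\mathbb{C}[\Lambda]$, and the isomorphism is not merely abstract but compatible on generators.

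I do not anticipate any real obstacle; the only thing to keep in mind is the caveat flagged in Remark \ref{alphanotinbbq}, which rules out the sensitive case $\ggg=D(2,1;\alpha)$ with $\alpha\notin\overline{\bbq}$, but since the hypothesis $\sfr$ odd forces $\ggg$ to be of type $B(m|n)$ or $G(3)$ (as recorded at the end of \S\ref{2.3}), this case cannot arise here. So the argument reduces to a one-line chain of isomorphisms and requires nothing new beyond what has already been proved.
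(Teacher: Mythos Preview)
Your proposal is correct and follows essentially the same approach as the paper: the paper's proof consists of the single sentence ``Combining Theorem \ref{algiso4.6} with Corollary \ref{ufin}, we now obtain'' the proposition. Your additional remarks about the compatibility of generators and the caveat from Remark \ref{alphanotinbbq} are accurate but go beyond what the paper records explicitly.
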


Translating Proposition \ref{griso} into the corresponding Kazhdan-filtrated algebras, we have

\begin{prop}\label{fiiso}
When $\sfr$ is odd, there is an isomorphism  $U^{\text{fin}}(\mathfrak{g},e)\cong U(\ggg,e)$ as $\mathbb{C}$-algebras.
\end{prop}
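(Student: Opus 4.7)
The plan is to exhibit a natural Kazhdan-filtered algebra homomorphism $\pi:U^{\text{fin}}(\ggg,e)\to U(\ggg,e)$ arising from a canonical projection of Gelfand--Graev modules, verify that its associated graded map coincides with the isomorphism already established in Proposition \ref{griso}, and then invoke the standard filtered-graded lifting principle to conclude that $\pi$ itself is an isomorphism.

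First I would check that $\mmm\subset\mathfrak{n}^0$: by \eqref{admissible alg} and \eqref{n0}, both contain $\bigoplus_{i\leqslant-2}\ggg(i)$, while the spanning vectors $u_{s/2+1},\dots,u_s,v_{(\sfr+3)/2},\dots,v_\sfr$ of $\ggg(-1)'$ all lie in $\mathfrak{l}$. This inclusion forces $I^{\text{fin}}\subset I_\chi$, so there is a canonical $\ggg$-module surjection $p:Q_\chi^{\text{fin}}\twoheadrightarrow Q_\chi$ sending $1+I^{\text{fin}}\mapsto 1_\chi$. If $y\in U(\ggg)$ represents an element of $U^{\text{fin}}(\ggg,e)$, then for every $a\in\mmm\subset\mathfrak{n}^0$ we have $[a,y]\in I^{\text{fin}}\subset I_\chi$, so $p$ restricts to a map $\pi:U^{\text{fin}}(\ggg,e)\to U(\ggg,e)$. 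Since the products on both sides are defined by $(x+\cdot)\cdot(y+\cdot):=xy+\cdot$, the map $\pi$ is an algebra homomorphism, and it is Kazhdan-filtered because $p$ manifestly preserves Kazhdan degrees.

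Next I would examine $\text{gr}(\pi)$. Under the identifications $\text{gr}(U^{\text{fin}}(\ggg,e))\cong S(\ggg^e)\otimes\mathbb{C}[\Lambda]$ from Corollary \ref{ufin} and $\text{gr}(U(\ggg,e))\cong S(\ggg^e)\otimes\mathbb{C}[\Lambda]$ from Theorem \ref{algiso4.6}, each identification is realized by taking principal symbols of an analogously chosen PBW generating set: on the $U^{\text{fin}}$ side the elements $\Theta_1,\dots,\Theta_{l+q+1}$ of \eqref{m'} in Theorem \ref{PBWQC}(1), whose leading symbols are $Y_1,\dots,Y_{l+q},v_{(\sfr+1)/2}$; on the $U(\ggg,e)$ side the generators are given by the same formulas viewed in $Q_\chi$, namely the $W'_\chi$-generators of \cite[Theorem 3.7]{ZS5} together with $v_{(\sfr+1)/2}\otimes 1_\chi$, as recalled in the proof of Theorem \ref{algiso4.6}. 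Because $\pi$ is literally the canonical projection at the level of $Q_\chi^{\text{fin}}\to Q_\chi$, it sends each $\Theta_k$ on the $U^{\text{fin}}$ side to its counterpart on the $U(\ggg,e)$ side; hence on principal symbols $\text{gr}(\pi)$ is the identity on the generators of $S(\ggg^e)\otimes\mathbb{C}[\Lambda]$, and in particular is an isomorphism.

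Finally, the Kazhdan filtrations on both $U^{\text{fin}}(\ggg,e)$ and $U(\ggg,e)$ are exhaustive and bounded below (trivial in negative degrees), so the standard filtered-graded principle applies: a filtered algebra homomorphism whose associated graded is an isomorphism is itself an isomorphism. This yields $U^{\text{fin}}(\ggg,e)\cong U(\ggg,e)$ as $\mathbb{C}$-algebras. The main subtlety I expect is the bookkeeping check that the $\Theta_k$'s produced by Theorem \ref{PBWQC}(1) project under $p$ onto the generators used for $U(\ggg,e)$ in Theorem \ref{algiso4.6}, i.e.\ that the coefficients $\lambda^k_{\mathbf{a},\mathbf{b},\mathbf{c},\mathbf{d}}$ in \eqref{m'} can be chosen consistently on both sides; but this is guaranteed by the common construction anchored in $W^{\text{fin}}(\ggg,e)\cong W'_\chi$ together with the single extra odd generator $v_{(\sfr+1)/2}\otimes 1_\chi$.
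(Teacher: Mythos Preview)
Your proposal is correct and is precisely the argument the paper's one-line proof leaves implicit: the paper simply asserts that Proposition~\ref{griso} ``translates'' to the Kazhdan-filtered algebras, whereas you have constructed the requisite filtered homomorphism $\pi$ via the canonical surjection $Q_\chi^{\text{fin}}\twoheadrightarrow Q_\chi$ (using $\mathfrak{n}\subset\mathfrak{m}\subset\mathfrak{n}^0$) and verified that $\text{gr}(\pi)$ realizes the isomorphism of Proposition~\ref{griso}. The subtlety you flag about matching generators is real but resolves exactly as you say, since the $\Theta_k$'s for $k\leqslant l+q$ on both sides are taken from \cite[Theorem 3.7]{ZS5} via the identification $W^{\text{fin}}(\ggg,e)\cong W'_\chi$, and $\Theta_{l+q+1}=v_{(\sfr+1)/2}\otimes 1$ is visibly preserved by $p$.
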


Owing to Proposition \ref{fiiso}, {\it from now on we will regard $U^{\text{fin}}(\mathfrak{g},e)$ in Definition \ref{U'} as finite $W$-superalgebra $U(\ggg,e)$, i.e., $U(\ggg,e)\triangleq U^{\text{fin}}(\mathfrak{g},e)$.}
\begin{rem}
%
%
By all the discussion above, Theorem \ref{PBWQC} can be considered as the PBW theorem of finite $W$-superalgebra $U(\ggg,e)$. Compared with \cite[Theorem 4.5]{ZS2}, 
the most important difference lies in \eqref{Theta1}, where $F_{i,l+q+1}(\Theta_1,\cdots,\Theta_{l+q+1})$ for $1\leqslant  i\leqslant  l+q$ can not be easily determined in \cite[(4.2)]{ZS2}, while $F_{i,l+q+1}(\Theta_1,\cdots,\Theta_{l+q+1})=0$ in our case, and such a choice makes the construction of $U(\ggg,e)$ much easier to determine.
\end{rem}

For further discussion, we need the following ring-theoretic property of finite and refined $W$-superalgebras, which is parallel to the non-super case in \cite[Lemma 1.1(2)]{SZc}.
\begin{prop}\label{zero-divisor}
Both $U(\ggg,e)$ and $W_\chi'$ are Noetherian rings.
\end{prop}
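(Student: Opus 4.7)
The plan is to apply the standard filtered-to-graded Noetherian lifting principle. Recall that both $U(\ggg,e)$ and $W_\chi'$ carry the Kazhdan filtration $F_i(-)$, and that by the discussion at the end of Section~\ref{1.2} these filtrations are exhaustive, separated, and indexed by $\mathbb{Z}_+$ (i.e.\ $F_j=0$ for $j<0$). So the classical principle applies: if $A$ is an associative $\mathbb{C}$-algebra equipped with an exhaustive, separated, $\mathbb{Z}_+$-indexed filtration, and $\text{gr}(A)$ is left (resp.\ right) Noetherian, then $A$ is left (resp.\ right) Noetherian. The proof is the usual one: given a left ideal $I\subseteq A$, the symbol ideal $\text{gr}(I):=\bigoplus_i(I\cap F_iA)/(I\cap F_{i-1}A)$ is a left ideal of $\text{gr}(A)$; a finite set of homogeneous generators of $\text{gr}(I)$ lifts to elements of $I$ whose Kazhdan-degree bound, combined with induction, produces a finite generating set for $I$. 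Since the $\mathbb{Z}_2$-grading is compatible with the Kazhdan filtration and every ideal in the super sense is parity-graded, this ungraded argument carries over verbatim.

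Consequently, the task reduces to showing that $\text{gr}(U(\ggg,e))$ and $\text{gr}(W_\chi')$ are Noetherian, and the identification of these graded algebras has already been established. At the end of Section~\ref{2.3} it is recorded that $\text{gr}(W_\chi')\cong S(\ggg^e)$ as $\mathbb{C}$-algebras, where $S(\ggg^e)=S(\ggg^e_{\bar0})\otimes\Lambda(\ggg^e_{\bar1})$ is the supersymmetric algebra. For $U(\ggg,e)$, when $\sfr$ is even one has $U(\ggg,e)=W_\chi'$ as filtered algebras, while for $\sfr$ odd Theorem~\ref{algiso4.6} yields $\text{gr}(U(\ggg,e))\cong S(\ggg^e)\otimes\mathbb{C}[\Lambda]$. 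In both cases the graded algebra is the tensor product of a finitely generated polynomial algebra $S(\ggg^e_{\bar0})$ with the finite-dimensional exterior algebras $\Lambda(\ggg^e_{\bar1})$ (and, when $\sfr$ is odd, $\mathbb{C}[\Lambda]$). Hilbert's basis theorem guarantees that $S(\ggg^e_{\bar0})$ is Noetherian, and tensoring a Noetherian $\mathbb{C}$-algebra with a finite-dimensional $\mathbb{C}$-algebra preserves Noetherianity; hence $\text{gr}(W_\chi')$ and $\text{gr}(U(\ggg,e))$ are Noetherian on both sides.

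I do not anticipate any serious obstacle: the main input, namely the PBW-type identification of the associated graded algebras with supersymmetric polynomial-type algebras, has been done in the earlier sections, so the present proposition is essentially a bookkeeping consequence. The only point requiring a moment's care is to verify that the standard ungraded filtered-to-graded Noetherian lifting transfers to the $\mathbb{Z}_2$-graded setting, and this is immediate because parity is preserved by the Kazhdan filtration and each ideal decomposes into its even and odd components. This parallels the non-super argument of \cite[Lemma 1.1(2)]{SZc} cited in the statement.
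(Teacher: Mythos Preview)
Your proposal is correct and follows essentially the same approach as the paper: identify the associated graded algebras as $S(\ggg^e)$ (for $W_\chi'$) and $S(\ggg^e)\otimes\mathbb{C}[\Lambda]$ (for $U(\ggg,e)$ when $\sfr$ is odd), observe these are Noetherian, and apply the filtered-to-graded lifting principle. The paper simply cites \cite[Theorem 1.6.9]{MR} for the lifting and \cite[Corollary 3.8]{ZS5} for $\text{gr}(W_\chi')$, whereas you spell out the lifting argument and the Noetherianity of the graded algebras explicitly; the content is the same.
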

\begin{proof}
Under the Kazhdan grading, we have shown in Theorem \ref{algiso4.6} that $\text{gr}(U(\ggg,e))\cong S(\ggg^e)\otimes\mathbb{C}[\Lambda]$ as $\bbc$-algebra, and $\text{gr}(W'_\chi)\cong S(\ggg^e)$ by \cite[Corollary 3.8]{ZS5}. So the gradation of $U(\ggg,e)$ and $W_\chi'$ are isomorphic to polynomial superalgebras. Then it follows from \cite[Theorem 1.6.9]{MR} that both the filtration algebras $U(\ggg,e)$ and $W_\chi'$ are Noetherian, as the Noetherian property hold for their associated graded algebras.
\end{proof}
\section{Verma modules and isomorphism classes of their irreducibles quotients for minimal finite and refined $W$-superalgebras}\label{3}
In this section we will study Verma modules for minimal finite $W$-superalgebra $U(\ggg,e)$.
\subsection{}\label{2.1}
We first recall some basics on minimal finite and refined $W$-(super)algebras in this part. We refer the readers to \cite{AKFPP,AKFPP2,ACKL,ArM,KFP,P3,suh,ZS5}.

\subsubsection{}\label{3.1.1}
A root $-\theta$ is called minimal if it is even and there exists an additive function $\varphi:\Phi\rightarrow\mathbb{R}$ such that $\varphi_{\mid{\Phi}}\neq0$ and $\varphi(\theta)>\varphi(\eta)$ for all $\eta\in\Phi\backslash\{\theta\}$. In the ordering defined by $\varphi$, a minimal root $-\theta$ is the lowest root 
of $\ggg_{\bar0}$. 
Conversely, it is easy to see, using the description of $\Phi$ given in \cite{K}, that a long root 
of $\ggg_{\bar0}$ (with respect to the bilinear form $(\cdot,\cdot)$) is minimal except when $\ggg=\mathfrak{osp}(3|n)$ and the simple component of $\ggg_{\bar0}$ is $\mathfrak{so}(3)$.

For a fixed minimal root $-\theta$, we can choose a simple root system $\Delta$ of $\Phi$ such that  it  contains
$\frac{\theta}{2}$ whenever  this guy is a root. Otherwise, we can choose a simple root system such that it contains $\theta$ (see, e.g., \cite[Theorem 3.10]{PS}).
Thus we can make the following convention for our later arguments.
\begin{conventions}\label{conventions}
For the minimal root $-\theta$ of $\ggg$, fix a simple root system $\Delta=\{\alpha_1,\cdots,\alpha_k\}$ satisfying that $\alpha_k=\theta$ when $\sfr$ is even,
and $\alpha_k=\frac{\theta}{2}$ when $\sfr$ is odd. 
\end{conventions}

Fix a minimal root $-\theta$ of $\ggg$, we may choose root vectors $e:=e_\theta$ and $f:=e_{-\theta}$ such
that $$[e,f]=h:=h_\theta\in\mathfrak{h},\quad[h,e]=2e,\quad[h,f]=-2f.$$ As $(e,f)=1$ by our earlier assumption, we 
have $(\theta,\theta)=2$.
It is well-known that the eigenspace decomposition of $\text{ad}\,h$ gives rise to a short $\mathbb{Z}$-grading
\begin{equation}\label{short}
\ggg=\ggg(-2)\oplus\ggg(-1)\oplus\ggg(0)\oplus\ggg(1)\oplus\ggg(2).
\end{equation}
Moreover, $\ggg(2)=\mathbb{C}e$ and $\ggg(-2)=\mathbb{C}f$, with $\ggg(1)\oplus\ggg(2)$ and $\ggg(-1)\oplus\ggg(-2)$ being Heisenberg Lie superalgebras.
We thus have a bijective correspondence between short $\mathbb{Z}$-gradings (up to an automorphism of $\ggg$) and minimal roots (up to the
action of the Weyl group). Furthermore, one has
$$\ggg^e=\ggg(0)^\sharp\oplus\ggg(1)\oplus\ggg(2),$$where  $\ggg(0)^\sharp=\ggg^e(0)=\{x\in\ggg(0)\mid[x,e]=0\}$.
Note that $\ggg(0)^\sharp$ is the centralizer of the triple $(e,f,h)$ by $\mathfrak{sl}(2)$-theory.
Moreover, $\ggg(0)^\sharp$ is the orthogonal complement to $\mathbb{C}h$ in $\ggg(0)$, and coincides with the image of the Lie superalgebra endomorphism
\begin{equation}\label{xh}
\sharp: \ggg(0)\rightarrow\ggg(0), x\mapsto x-\frac{1}{2}(h,x)h.
\end{equation} Obviously $\ggg(0)^{\sharp}$ is an ideal of codimensional $1$ in the Levi subalgebra $\ggg(0)$.

Denote by $\Phi_e$ the set of all $\alpha\in\Phi$ with $\alpha(h)\in\{0,1\}$, and write $\Phi_e^\pm:=\Phi_e\cap\Phi^\pm, \Phi^\pm_{e,i}:=\{\alpha\in\Phi_e^\pm\mid\alpha(h)=i\}$ for $i=0,1$, $(\Phi^+_{e,0})_{\bar0}:=\Phi^+_{e,0}\cap\Phi_{\bar0}$.
Write $\mathfrak{h}^e:=\mathfrak{h}\cap\ggg^e$, a Cartan subalgebra in $\ggg(0)^{\sharp}$.
Then $\ggg^e$ is spanned by $\mathfrak{h}^e\bigcup\{e_\alpha\:|\:\alpha\in\Phi_e\}\bigcup\{e\}$. Note that the restrictions of  $(\cdot,\cdot)$ to $\ggg(0)^{\sharp}$ and $\mathfrak{h}^e$ are both non-degenerate. Take a basis $\{h_1,\cdots,h_{k-1}\}$ of $\mathfrak{h}^e$ such that $(h_i,h_j)=\delta_{i,j}$ for $1\leqslant i,j\leqslant k-1$, and denote by $\mathfrak{n}^\pm(i)$ the span of all $e_\alpha$ with $\alpha\in\Phi_{e,i}^\pm$. Then $\mathfrak{n}^+(0)$ and $\mathfrak{n}^-(0)$ are maximal subalgebras of $\ggg(0)^{\sharp}$. Take bases $\{x_1,\cdots,x_w\}$ and
$\{x^*_1,\cdots,x^*_w\}$ of $\mathfrak{n}^-_{\bar0}(0)$ and $\mathfrak{n}^+_{\bar0}(0)$ respectively, 
such that $(x_i,x_j)=(x_i^*,x_j^*)=0$ and $(x^*_i,x_j)=\delta_{i,j}$ for $1\leqslant i,j\leqslant w$.
Furthermore, we can assume that each $x_i$ (resp. $x_i^*$) with $1\leqslant i\leqslant w$ is a root vector for $\mathfrak{h}$ corresponding to $-\beta_{\bar0i}\in\Phi_{\bar0}^-$ (resp. $\beta_{\bar0i}\in\Phi_{\bar0}^+$). Set
$\{y_1,\cdots,y_\ell\}$ and $\{y^*_1,\cdots,y^*_\ell\}$ to be bases of $\mathfrak{n}^-_{\bar1}(0)$ and $\mathfrak{n}^+_{\bar1}(0)$ 
such that $(y_i,y_j)=(y_i^*,y_j^*)=0$ and $(y^*_i,y_j)=\delta_{i,j}$ for $1\leqslant i,j\leqslant\ell$.
We also assume that each $y_i$ (resp. $y_i^*$) with $1\leqslant i\leqslant \ell$ is a root vector for $\mathfrak{h}$ corresponding to $-\beta_{\bar1i}\in\Phi_{\bar1}^-$ (resp. $\beta_{\bar1i}\in\Phi_{\bar1}^+$).
Recall in \S\ref{1.2} we have assumed that $u_i, u_j^*$ (resp. $v_i, v_j^*$) with $1\leqslant i,j\leqslant \frac{s}{2}$ (resp. $1\leqslant i,j\leqslant \lceil\frac{\sfr}{2}\rceil$) are elements in $\ggg(-1)_{\bar0}$ (resp. $\ggg(-1)_{\bar1}$).
For $1\leqslant i\leqslant\frac{s}{2}$, set $f_i=[e,u_i]$ and $f^*_i=[e,u_i^*]$, then
$f_i$ (resp. $f^*_i$) is a root vector for $\mathfrak{h}$ corresponding to the root $\theta+\gamma_{\bar0i}\in\Phi_{e,1}^-$ (resp. $\theta+\gamma^*_{\bar0i}\in\Phi_{e,1}^+$).
For $1\leqslant i\leqslant \lceil\frac{\sfr}{2}\rceil$, write $g_i=[e,v_i]$ and $g^*_i=[e,v_i^*]$, then $g_i$ (resp. $g^*_i$) is a root vector for $\mathfrak{h}$ corresponding to the root $\theta+\gamma_{\bar1i}\in\Phi_{e,1}^-$ (resp. $\theta+\gamma^*_{\bar1i}\in\Phi_{e,1}^+$). When $\sfr=\dim{\ggg}(-1)_{\bar1}$ is odd,
by our discussion in \S\ref{1.2} the elements $v_{\frac{\sfr+1}{2}}$ and $[v_{\frac{\sfr+1}{2}},e]$ are  root vectors corresponding to negative root $-\frac{\theta}{2}\in\Phi_{\bar1}^-$ and positive root $\frac{\theta}{2}\in\Phi_{e,1}^+$, respectively.

\subsubsection{}\label{3.2}
Since $\ggg^e(0)=\mathfrak{h}^e\oplus\mathfrak{n}^-_{\bar0}(0)\oplus\mathfrak{n}^+_{\bar0}(0)\oplus\mathfrak{n}^-_{\bar1}(0)\oplus\mathfrak{n}^+_{\bar1}(0)$ as vector space, then \begin{equation}\label{basisofge0}
\{h_1,\cdots,h_{k-1},x_1,\cdots,x_w,x^*_1,\cdots,x^*_w,y_1,\cdots,y_\ell,y^*_1,\cdots,y^*_\ell\}
\end{equation} is a base of $\ggg^e(0)$ by our earlier assumption.
Now let\begin{equation}\label{C0def} C_0:=\sum_{i=1}^{k-1}h_i^2+\sum_{i=1}^{w}x_ix^*_i+\sum_{i=1}^{w}x^*_ix_i+\sum_{i=1}^{\ell}y_iy^*_i-
\sum_{i=1}^{\ell}y^*_iy_i\end{equation} be the corresponding Casimir element of $U(\ggg^e(0))$.

Recall in \cite[Proposition 1.2]{ZS5}, we introduced a set of generators for minimal refined $W$-superalgebras as below.
\begin{theorem}\label{ge}(\cite{ZS5}) Let $-\theta$ be a minimal root, and
$e=e_\theta\in\ggg$ a root vector for $\theta$. Suppose $v\in\ggg^e(0)$, $w\in\ggg^e(1)$, $C$ is a central element of $W_\chi'$, and set $s=\text{dim}\,\ggg(-1)_{\bar0}$, $\sfr=\text{dim}\,\ggg(-1)_{\bar1}$. Then the following are $\mathbb{Z}_2$-homogeneous generators of minimal refined $W$-superalgebra $W_\chi'$:
\begin{equation*}
\begin{split}
\Theta_v=&\bigg(v-\frac{1}{2}\sum\limits_{\alpha\in S(-1)}z_\alpha[z_\alpha^*,v]\bigg)\otimes1_\chi;\\
\Theta_w=&\bigg(w-\sum\limits_{\alpha\in S(-1)}z_\alpha[z_\alpha^*,w]+\frac{1}{3}\Big(\sum\limits_{\alpha,\beta\in S(-1)}z_\alpha z_\beta[z_\beta^*,[z_\alpha^*,w]]-2[w,f]\Big)\bigg)\otimes1_\chi;\\
C=&\bigg(2e+\frac{h^2}{2}-\Big(1+\frac{s-\sfr}{2}\Big)h+C_0
+2\sum\limits_{\alpha\in S(-1)}(-1)^{|\alpha|}[e,z_\alpha^*]z_\alpha\bigg)\otimes1_\chi.
\end{split}
\end{equation*}
\end{theorem}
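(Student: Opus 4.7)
The plan is to follow Premet's strategy \cite{P3} for the minimal finite $W$-algebra, adapted to the super setting. The refined PBW theorem \cite[Theorem 3.7]{ZS5} produces, for each homogeneous basis element $z\in\ggg^e$ of $\ad h$-weight $2j$, an $\ad\,\mmm'$-invariant element of $Q_\chi$ of Kazhdan degree $2j+2$ with leading symbol $z\otimes 1_\chi$, and such an element is uniquely determined modulo lower Kazhdan degree. In the minimal case \S\ref{3.1.1} gives $\ggg^e=\ggg(0)^\sharp\oplus\ggg(1)\oplus\bbc e$, so one needs explicit generators in Kazhdan degrees $2$, $3$, and $4$: the candidates $\Theta_v,\Theta_w$, and $C$ respectively. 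I would first check that each candidate has the announced leading symbol. The auxiliary summands in $\Theta_v$ and $\Theta_w$ involve factors $z_\alpha\in\ggg(-1)$ which drop the $\ad h$-weight by exactly what is compensated by a smaller Kazhdan weight, so those terms are strictly subleading; the leading symbol of $C$ is $2e\otimes 1_\chi$, of Kazhdan degree $4$, while $h^2$, $C_0$, and the sum over $S(-1)$ all sit in lower-degree pieces of the Kazhdan filtration modulo $I_\chi$.

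The main technical step is to prove that each candidate lies in $Q_\chi^{\ad\,\mmm'}$. It suffices to test against a homogeneous basis of $\mmm'$, namely $f$, the chosen basis of $\ggg(-1)'$, and (when $\sfr$ is odd) the element $v_{(\sfr+1)/2}$. For $\Theta_v$, expanding $[x,\Theta_v]$ with $x\in\mmm'$ produces terms of the form $[x,z_\alpha][z_\alpha^*,v]$ and $z_\alpha[[x,z_\alpha^*],v]$; using the duality $\chi([z_\alpha,z_\beta^*])=\langle z_\alpha,z_\beta^*\rangle=\delta_{\alpha\beta}$, the relation $(e,[x,y])=\chi([x,y])$, and the congruence $y\equiv\chi(y)\pmod{I_\chi}$ for $y\in\mmm$, these two contributions cancel against $[x,v]\otimes 1_\chi$, which forces the prefactor $\tfrac12$. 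For $\Theta_w$ the same mechanism applies in Kazhdan degree three, with the cubic correction $\tfrac13\sum z_\alpha z_\beta[z_\beta^*,[z_\alpha^*,w]]$ cancelling the reordering terms produced when a bracket with $x$ hits a pair $z_\alpha z_\beta$, while $-\tfrac23[w,f]\otimes 1_\chi$ compensates the contribution obtained when $x\in\ggg(-2)$ collapses $w\in\ggg(1)$ into $\ggg(-1)$ and then into $\ggg(-2)$; the coefficients $\tfrac13$ and $\tfrac23$ are uniquely determined by these balances.

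For $C$, the conceptual statement is that $C$ is the Premet-style quantisation of the quadratic Casimir $\Omega\in Z(U(\ggg))$. Choose a homogeneous basis of $\ggg$ compatible with the $\bbz$-grading induced by $\ad h$ and with $(\cdot,\cdot)$-duality $\ggg(i)\leftrightarrow \ggg(-i)$; then $\Omega$ decomposes into exactly the four summands displayed in the formula for $C$, once all negatively-graded vectors are pushed to the right so that they act on $1_\chi$ as the scalars prescribed by $\chi$. The shift $1+\tfrac{s-\sfr}{2}$ in the coefficient of $h$ and the super sign $(-1)^{|\alpha|}$ are precisely the artefacts of super-reordering $u_iu_i^*$ and $v_iv_i^*$ within the Heisenberg algebra $\ggg(-1)\oplus\ggg(-2)$ and of switching $z_\alpha^*z_\alpha$ to $z_\alpha z_\alpha^*$; the supertrace contribution is $\tfrac12\,\mathrm{str}_{\ggg(-1)}(\ad h)=\tfrac{s-\sfr}{2}$. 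Centrality of $C$ in $W_\chi'$ then follows from centrality of $\Omega$ in $U(\ggg)$, together with the standard observation that the Premet-style image of $Z(U(\ggg))$ in $Q_\chi$ is $\ad\,\ggg$-invariant and a fortiori lies in $Z(W_\chi')$.

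Finally, the collection $\{\Theta_v,\Theta_w,C\}$ with $v,w$ ranging over bases of $\ggg^e(0)$ and $\ggg^e(1)$ generates $W_\chi'$: by \cite[Corollary 3.8]{ZS5} the associated graded $\gr W_\chi'\cong S(\ggg^e)$ is generated by the leading symbols of these elements, which exhaust a basis of $\ggg^e$, so a standard filtered-to-graded argument upgrades this to a generation statement at the filtered level. I expect the single most delicate computation to be the $\ad\,\mmm'$-invariance of $\Theta_w$, where the cubic and linear ($[w,f]$) corrections must cancel simultaneously with the correct signs in the super setting; tracking the sign factors $(-1)^{|\alpha||\beta|}$ that arise in reordering odd monomials, and distinguishing the contributions coming from $\ggg(-1)_{\bar 0}$ versus $\ggg(-1)_{\bar 1}$ via the sign $i^\natural$ in $z_\alpha^*$, will be the main bookkeeping burden.
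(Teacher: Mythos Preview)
The paper does not prove this statement: Theorem~\ref{ge} is quoted verbatim from \cite[Proposition~1.2]{ZS5} as a recalled result, with no argument given here. Your outline is a faithful reconstruction of the Premet-style strategy that \cite{ZS5} carries out in the super setting---verify $\ad\,\mmm'$-invariance of each explicit candidate by direct cancellation using the duality $\langle z_\alpha^*,z_\beta\rangle=\delta_{\alpha\beta}$, identify $C$ as the image of the quadratic Casimir of $U(\ggg)$ under $\text{Pr}$ (whence centrality is automatic), and deduce generation from $\gr W_\chi'\cong S(\ggg^e)$ via a filtered-to-graded lift---so there is nothing to compare against in the present paper and your plan is the expected one.
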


For any $\mathbb{Z}_2$-homogeneous element $w\in\ggg$, denote by $|w|$ the parity of $w$. Set \begin{equation}\label{castheta} \Theta_{\text{Cas}}:=\sum_{i=1}^{k-1}\Theta_{h_i}^2+\sum_{i=1}^{w}\Theta_{x_i}\Theta_{x^*_i}+
\sum_{i=1}^{w}\Theta_{x^*_i}\Theta_{x_i}+\sum_{i=1}^{\ell}\Theta_{y_i}\Theta_{y^*_i}-
\sum_{i=1}^{\ell}\Theta_{y^*_i}\Theta_{y_i},\end{equation}
an element 
commutes with all operators $\Theta_v$ for $v\in\ggg^e(0)$ (see \cite[Proposition 5.7]{ZS5} for more details).
Then the commutators between the generators in Theorem \ref{ge} are presented in \cite[Theorem 1.3]{ZS5}, i.e.,
\begin{theorem}\label{maiin1}(\cite{ZS5})
The minimal refined $W$-superalgebra $W_\chi'$ is generated by the Casimir element $C$ and the subspaces $\Theta_{\ggg^e(i)}$ for $i=0,1$, as described in Theorem \ref{ge}, subjected to the following relations:
\begin{itemize}
\item[(1)] $[\Theta_{v_1},\Theta_{v_2}]=\Theta_{[v_1,v_2]}$ for all $v_1, v_2\in\ggg^e(0)$;
\item[(2)] $[\Theta_{v},\Theta_{w}]=\Theta_{[v,w]}$ for all $v\in\ggg^e(0)$ and $w\in\ggg^e(1)$;
\item[(3)] $[\Theta_{w_1},\Theta_{w_2}]=\frac{1}{2}([w_1,w_2],f)(C-\Theta_{\text{Cas}}-c_0)-\frac{1}{2}\sum\limits_{\alpha\in S(-1)}\bigg(\Theta_{[w_1,z_\alpha]^{\sharp}}\Theta_{[z_\alpha^*,w_2]^{\sharp}}\\ \hspace*{2.2cm} -(-1)^{|w_1||w_2|}\Theta_{[w_2,z_\alpha]^{\sharp}}\Theta_{[z_\alpha^*, w_1]^{\sharp}}\bigg)$
    for all $w_1, w_2\in\ggg^e(1)$;
\item[(4)]  $[C,W_\chi']=0$.
\end{itemize}
In (3), the notation $\sharp$ is defined as in \eqref{xh}, and the constant $c_0$ is decided by the following equation:
\begin{equation}\label{decidec0}
c_0([w_1,w_2],f)=\frac{1}{12}\sum\limits_{\alpha,\beta\in S(-1)}[[[w_1, z_\alpha],z_\beta],[z_\beta^*,[z_\alpha^*,w_2]]]\otimes 1_\chi
-\frac{3(s-\sfr)+4}{12}([w_1,w_2],f),
\end{equation}
where $s=\text{dim}\,\ggg(-1)_{\bar0}$ and $\sfr=\text{dim}\,\ggg(-1)_{\bar1}$.
\end{theorem}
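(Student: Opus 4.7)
The plan is to establish the presentation by working directly with the explicit formulas for the generators $\Theta_v$, $\Theta_w$, and $C$ from Theorem \ref{ge} inside $U(\ggg)/I_\chi$, and exploiting two structural features of the Kazhdan filtration on $W_\chi'$: first, $\text{gr}(W_\chi') \cong S(\ggg^e)$, so the PBW-type basis severely constrains which monomials can appear on the right-hand sides; second, the supercommutativity of $\text{gr}(W_\chi')$ gives $[\text{F}_i W_\chi', \text{F}_j W_\chi'] \subseteq \text{F}_{i+j-2} W_\chi'$. Recall the Kazhdan degrees $\text{deg}_e(\Theta_v) = 2$ for $v \in \ggg^e(0)$, $\text{deg}_e(\Theta_w) = 3$ for $w \in \ggg^e(1)$, and $\text{deg}_e(C) = 4$.

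For (1), $[\Theta_{v_1}, \Theta_{v_2}]$ lies in $\text{F}_2 W_\chi'$, so by PBW it is a scalar plus a linear combination of the $\Theta_u$ for $u \in \ggg^e(0)$. Taking principal symbols identifies the linear part as $\Theta_{[v_1,v_2]}$; the scalar correction vanishes by an $\mathfrak{h}^e$-weight argument combined with a short check that the $\frac{1}{2}\sum_\alpha z_\alpha[z_\alpha^*,v]$ tails contribute only $\chi$-pairings that cancel. Relation (2) is entirely analogous: $[\Theta_v, \Theta_w] \in \text{F}_3 W_\chi'$ is spanned modulo $\text{F}_2$ by $\{\Theta_u : u \in \ggg^e(1)\}$, and symbol calculus picks out $\Theta_{[v,w]}$ as the only possibility, while weight considerations again rule out lower-degree corrections.

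The real work is (3). Here $[\Theta_{w_1}, \Theta_{w_2}] \in \text{F}_4 W_\chi'$, which by PBW splits into a scalar, a linear combination of $\Theta_u$'s with $u \in \ggg^e(0)$, a quadratic combination of such $\Theta_u$'s, and a multiple of $C$ (the only generator carrying the $\ggg^e(2) = \bbc e$ symbol). Symbol matching forces the multiple of $C$ to be $\frac{1}{2}([w_1,w_2],f)$. The quadratic piece assembles as stated after expanding the cubic formula for each $\Theta_{w_i}$ and applying the projection $\sharp$ of \eqref{xh} to isolate the $\ggg(0)^\sharp$-components; the orthogonal $h$-contributions collect into the $-\Theta_{\text{Cas}}$ summand, and the sign $(-1)^{|w_1||w_2|}$ is the super-Jacobi antisymmetrization. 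The constant $c_0$ is the delicate ingredient: it comes from the fully-contracted cubic-in-$z_\alpha$ part of one $\Theta_{w_i}$ hitting the leading term of the other via $\chi$, and extracting it produces precisely the identity \eqref{decidec0}, with the universal constant $\frac{3(s-\sfr)+4}{12}$ reflecting the signed dimension count on $\ggg(-1) = \ggg(-1)_{\bar0} \oplus \ggg(-1)_{\bar1}$.

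Finally, (4) asserts centrality of $C$. Given (1)--(3) it suffices to verify $[C,\Theta_v] = 0$ for $v \in \ggg^e(0) \cup \ggg^e(1)$. For $v \in \ggg^e(0)$, the Casimir-like shape of $C$ (the $\ggg^e(0)$-invariant piece $C_0$ together with an $\ad\,v$-equivariant $z_\alpha z_\alpha^*$-tail) makes the vanishing manifest modulo $I_\chi$; for $w \in \ggg^e(1)$, the commutator lies in $\text{F}_5 W_\chi'$ and a direct $\mathfrak{sl}(2)$-computation kills its principal symbol in $S(\ggg^e)$, after which an inductive symbol argument descends the remaining filtered pieces. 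Throughout, the main obstacle is the bookkeeping in (3)---in particular pinning down $c_0$ via \eqref{decidec0}---which demands careful tracking of super-signs across triple sums over $S(-1)$ and use of the $\mathfrak{sl}(2)$-identity $[f,[e,z_\alpha]] = -z_\alpha$ to contract Heisenberg-type terms.
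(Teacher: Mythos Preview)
This theorem is not proved in the present paper; it is quoted from \cite[Theorem 1.3]{ZS5}. Your outline follows the natural strategy one would use to reprove it---constrain commutators by Kazhdan degree, then match leading and subleading terms---but there are two substantive problems.

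First, the filtration bound you invoke is misattributed. Supercommutativity of $\text{gr}(W_\chi')$ yields only $[\text{F}_i W_\chi',\text{F}_j W_\chi']\subseteq \text{F}_{i+j-1}W_\chi'$, not $\text{F}_{i+j-2}$. The sharper drop by $2$ does hold, but it encodes that the induced Poisson bracket on $S(\ggg^e)$ has Kazhdan degree $-2$ (since $[x,y]\in\ggg(i+j)$ for $x\in\ggg(i)$, $y\in\ggg(j)$), and that fact requires its own argument, not bare supercommutativity. Without the correct bound, your degree-counting in (3) would a priori allow degree-$5$ monomials $\Theta_{\ggg^e(0)}\Theta_{\ggg^e(1)}$, which you have not excluded.

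Second, your treatment of (3) is where the content lies, and the sketch does not engage with it. Phrases like ``the quadratic piece assembles as stated after expanding the cubic formula'' and ``extracting it produces precisely the identity \eqref{decidec0}'' are placeholders for computations spanning several pages in \cite{ZS5}: one must expand $[\Theta_{w_1},\Theta_{w_2}]$ in $U(\ggg)/I_\chi$ using the explicit cubic formulas, repeatedly apply the Heisenberg relations $[z_\alpha,z_\beta^*]=\delta_{\alpha\beta}f+\cdots$ modulo $I_\chi$, and systematically reorganize the surviving terms into products of $\Theta_u$'s via the formula in Theorem~\ref{ge}. The constant $c_0$ and the specific shape $\Theta_{[w_1,z_\alpha]^\sharp}\Theta_{[z_\alpha^*,w_2]^\sharp}$ emerge only at the end of that process; symbol-matching tells you the \emph{form} of the answer but does not pin down the coefficients.

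For (4) you are working harder than necessary. The element $C$ is the image under $\text{Pr}$ of the quadratic Casimir of $U(\ggg)$ (up to normalization; compare the formula in Theorem~\ref{ge} with the $\ggg$-Casimir written in the basis adapted to the grading \eqref{short}). Since the Casimir is central in $U(\ggg)$, its image in $Q_\chi$ commutes with every $\Theta_v,\Theta_w$, and centrality in $W_\chi'$ is immediate---no filtered descent or $\mathfrak{sl}(2)$-symbol computation is needed.
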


%

\subsubsection{}
\label{2.4.1}
When $\sfr$ is even, the refined $W$-superalgebra $W'_\chi$ coincides with the finite $W$-superalgebra $U(\ggg,e)$ by definition. Therefore, Theorems \ref{ge} and \ref{maiin1} can also be considered as the PBW theorem of minimal finite $W$-superalgebra $U(\ggg,e)$ in this case.

Now we assume that $\sfr$ is odd.
In fact, taking Theorems \ref{ge}, \ref{maiin1}, \ref{PBWQC} and Proposition \ref{fiiso} into consideration, we have
\begin{theorem}\label{main3}
Let $-\theta$ be a minimal root, and
$e=e_\theta\in\ggg$ a root vector for $\theta$. Suppose $v\in\ggg^e(0)$, $w\in\ggg^e(1)$. Let $\Theta_v, \Theta_w, C$ be as defined in Theorem \ref{ge}, and $\Theta_{l+q+1}$ as in Theorem \ref{PBWQC}(1). When $\sfr$ is odd, the minimal finite $W$-superalgebra $U(\ggg,e)$ is generated by $\Theta_v, \Theta_w, \Theta_{l+q+1}$, and the Casimir element $C$, subjected to the relations as in Theorem \ref{maiin1}(1)---(4), $[\Theta_{l+q+1},\Theta_v]=[\Theta_{l+q+1},\Theta_w]=[\Theta_{l+q+1},C]=0$, and also $[\Theta_{l+q+1},\Theta_{l+q+1}]=1\otimes1_\chi$.
\end{theorem}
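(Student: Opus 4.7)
The plan is to bootstrap Theorem \ref{main3} out of three facts already established in the paper: the identification $U(\ggg,e)\cong U^{\mathrm{fin}}(\ggg,e)$ from Proposition \ref{fiiso}, the PBW-style presentation of $U^{\mathrm{fin}}(\ggg,e)$ from Theorem \ref{PBWQC}, and the full presentation of the refined algebra $W_\chi'$ in the minimal case from Theorems \ref{ge} and \ref{maiin1}. So first I would pass through Proposition \ref{fiiso} and work inside $U^{\mathrm{fin}}(\ggg,e)$; here Theorem \ref{PBWQC}(4) already tells us that the algebra is generated by $\Theta_1,\dots,\Theta_{l+q},\Theta_{l+q+1}$, where $\Theta_{l+q+1}=v_{(\sfr+1)/2}\otimes 1_\chi$ and $\Theta_1,\dots,\Theta_{l+q}$ are chosen to be simultaneously generators of the subalgebra $W^{\mathrm{fin}}(\ggg,e)\cong W_\chi'$. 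Thus the containment $W_\chi'\subseteq U(\ggg,e)$ together with Theorem \ref{ge} lets us replace $\Theta_1,\dots,\Theta_{l+q}$ by the new generating set $\{\Theta_v\mid v\in\ggg^e(0)\}\cup\{\Theta_w\mid w\in\ggg^e(1)\}\cup\{C\}$, giving the desired generating set $\{\Theta_v,\Theta_w,C,\Theta_{l+q+1}\}$.

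Next I would verify the listed relations. The relations of type \ref{maiin1}(1)--(4) hold because they hold inside $W_\chi'$, which is a subalgebra. The three commuting relations $[\Theta_{l+q+1},\Theta_v]=[\Theta_{l+q+1},\Theta_w]=[\Theta_{l+q+1},C]=0$ are precisely the specialization of \eqref{kl+q+1} in the proof of Theorem \ref{PBWQC}: writing each of $\Theta_v,\Theta_w,C$ as $\mathrm{Pr}'(y)$ for some $y\in U(\ggg)$ lying in $(Q_\chi^{\mathrm{fin}})^{\mathrm{ad}\,\mathfrak{n}'}$, one has $[v_{(\sfr+1)/2},y]\in I^{\mathrm{fin}}$ since $v_{(\sfr+1)/2}\in\mathfrak{n}'$, which is exactly the vanishing of the bracket modulo $I^{\mathrm{fin}}$. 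Finally, $[\Theta_{l+q+1},\Theta_{l+q+1}]=[v_{(\sfr+1)/2},v_{(\sfr+1)/2}]\otimes 1_\chi = f\otimes 1_\chi$ by our normalization $\langle v_{(\sfr+1)/2},v_{(\sfr+1)/2}\rangle=1$, and $f\otimes 1_\chi=\chi(f)(1\otimes 1_\chi)=1\otimes 1_\chi$ because $f\in\mathfrak{n}\subset\mathfrak{n}^0$.

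The substantive step, and where I expect the real work to lie, is showing that these relations are a \emph{complete} set of defining relations. Let $\widetilde U$ denote the abstract superalgebra presented by the given generators and relations. Using the relations, every monomial in the generators of $\widetilde U$ can be rewritten as a linear combination of ordered monomials in which the $W_\chi'$-generators come first (ordered in the standard way coming from Theorem \ref{maiin1}), followed by $\Theta_{l+q+1}$ raised to degree $0$ or $1$ (exploiting the commuting relations and $\Theta_{l+q+1}^2=\tfrac12$). This gives a surjection from the ``PBW-ordered'' monomials in $\widetilde U$ onto those of the set in Theorem \ref{PBWQC}(2); on the other side, the canonical map $\widetilde U\twoheadrightarrow U(\ggg,e)$ sends these ordered monomials precisely to the basis of Theorem \ref{PBWQC}(2). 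Combining these two facts forces the ordered monomials to be a basis of $\widetilde U$ as well, and hence the surjection $\widetilde U\twoheadrightarrow U(\ggg,e)$ is an isomorphism. The delicate technical point here is to check that the reduction procedure in $\widetilde U$ is well-defined (i.e., that one cannot get two different normal forms), but this is guaranteed by the completeness of the relations of $W_\chi'$ inside $\widetilde U$ together with the fact that $\Theta_{l+q+1}$ is central of square $\tfrac12$ in $\widetilde U$, so its position in an ordered monomial is unambiguous.
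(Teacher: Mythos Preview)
Your proposal is correct and follows precisely the route the paper indicates: the paper itself offers no separate proof of Theorem \ref{main3} beyond the sentence ``taking Theorems \ref{ge}, \ref{maiin1}, \ref{PBWQC} and Proposition \ref{fiiso} into consideration, we have\ldots'', and your argument is exactly the natural unpacking of that sentence. Your completeness step (building $\widetilde U$ and comparing PBW monomials) is more explicit than anything the paper writes down, but it is the standard way to read Theorem \ref{PBWQC}(2),(4) together with Theorem \ref{maiin1}, so there is no genuine difference in approach.
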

In order to highlight the key role in which the odd element $v_{\frac{\sfr+1}{2}}$ plays, {\it from now on we will write $\Theta_{v_{\frac{\sfr+1}{2}}}$ instead of $\Theta_{l+q+1}$ in Theorems \ref{PBWQC} and \ref{main3}.}

\begin{rem}\label{types}
For the convenience of following discussion, 
we will make some conventions.
To be explicit, from the explicit description of the system of roots $\Phi$ and of simple roots $\Delta$ relative to $\mathfrak{h}$ of $\ggg$ in \cite[\S2.5.4]{K}, and also the description of the corresponding $\ggg^e(0)$-module $\ggg(1)$ ($\cong\ggg^*(-1)$) given in
\cite[Tables 1-3]{KW} (note that $\ggg^f(0)=\ggg^e(0)$ is denoted by $\ggg^\natural$, and $\ggg(1)$ is written as $\ggg_{\frac{1}{2}}$ in their settings), taking  Convention \ref{conventions} into consideration, one can observe that
\begin{itemize}
\item[(1)] Let $\ggg$ be a Lie algebra, or a Lie superalgebra of type $A(m|n) (m\neq n)$, $A(n|n)$, $C(n)$, $D(m|n)$, $D(2,1;\alpha)$, $F(4)$,  then $\theta$ is a simple root of $\ggg$ relative to $\Phi$. Or let $\ggg$ be a certain subclass of type $B(m|n)$ or $G(3)$ such that $\theta$ is a simple root of $\ggg$ relative to $\Phi$ (see Table 2 for more details).
    Since $-\frac{\theta}{2}$ is not a root in $\Phi$, we have $\mathfrak{m}'=\mathfrak{m}$ with $\sfr$ being an even number. Then the corresponding minimal refined $W$-superalgebra $W_\chi'$ coincides with minimal finite $W$-superalgebra $U(\ggg,e)$, which will be called minimal finite (refined) $W$-superalgebra of type even.
\item[(2)] Let $\ggg$ be a certain subclass of type $B(m|n)$ or $G(3)$ such that $\frac{\theta}{2}$ is a simple root in $\Phi_{\bar1}$ (see Table 3 for more details), and then $\mathfrak{m}$ is a proper subalgebra of $\mathfrak{m}'$ with $\sfr$ being odd. So $W_\chi'$ is also a proper subalgebra of  $U(\ggg,e)$. In this case we will refer them to minimal refined $W$-superalgebra of type odd and minimal finite $W$-superalgebra of type odd, respectively.
\end{itemize}

For the convenience of readers, we will list the classification of minimal $W$-superalgebras discussed above in Tables 2 and 3.
\end{rem}
%
%
%
%
%
%
%

\begin{center}\label{Table 2}
({\sl{Table 2}}): The classification of $\ggg$ involving minimal finite (refined) $W$-superalgebras of type even
\vskip0.3cm
{\begin{tabular}{|ccc|}
\hline $\ggg$ & $\ggg^e(0)$ & $\ggg(1)$\\
\hline Simple Lie algebras & see \cite[Table 1]{KW} & see \cite[Table 1]{KW}\\
\hline $\mathfrak{sl}(2|m)$ $(m\neq2)$ & $\mathfrak{gl}(m)$ & $\bbc^m\oplus\bbc^{m*}$\\
\hline $\mathfrak{sl}(m|n)$ $(m\neq n,m>2)$ & $\mathfrak{gl}(m-2|n)$ & $\bbc^{m-2|n}\oplus\bbc^{m-2|n*}$\\
\hline $\mathfrak{psl}(2|2)$ & $\mathfrak{sl}(2)$ & $\bbc^2\oplus\bbc^2$\\
\hline $\mathfrak{psl}(m|m)$ $(m>2)$ & $\mathfrak{sl}(m-2|m)$ & $\bbc^{m-2|m}\oplus\bbc^{m-2|m*}$\\
\hline $\mathfrak{spo}(2|m)$ ($m$ even) & $\mathfrak{so}(m)$ & $\bbc^m$\\
\hline $\mathfrak{osp}(4|2m) $ & $\mathfrak{sl}(2)\oplus\mathfrak{sp}(2m)$ & $\bbc^2\otimes\bbc^{2m}$\\
\hline $\mathfrak{spo}(2n|m)$ $(n\geqslant2$, $m$ even)  & $\mathfrak{spo}(2n-2|m)$ & $\bbc^{2n-2|m}$\\
\hline $\mathfrak{osp}(m|2n)$ $(m\geqslant5)$ & $\mathfrak{osp}(m-4|2n)\oplus\mathfrak{sl}(2)$ & $\bbc^{m-4}\otimes\bbc^2$\\
\hline $D(2,1;\alpha)$ & $\mathfrak{sl}(2)\oplus\mathfrak{sl}(2)$ & $\bbc^2\otimes\bbc^2$\\
\hline $F(4)$ & $\mathfrak{so}(7)$ & $\text{spin}(7)$\\
\hline $F(4)$ & $D(2,1;2)$ & $\stackrel{1}{\circ} \leftarrow \!\!\otimes\!\!\rightarrow\!\!\circ~~(6|4)\text{-dim}$\\
\hline $G(3)$ & $\mathfrak{osp}(3|2)$ & $\stackrel{-3}{\otimes} \Longrightarrow
\stackrel{1}{\circ}~~(4|4)\text{-dim}$\\
\hline
\end{tabular}}
\end{center}
\vskip0.3cm
\begin{center}\label{Table 3}
({\sl{Table 3}}): The classification of $\ggg$ involving minimal refined $W$-superalgebras of type odd \& minimal finite $W$-superalgebras of type odd
\vskip0.3cm
{\begin{tabular}{|ccc|}
\hline $\ggg$ & $\ggg^e(0)$ & $\ggg(1)$\\
\hline $\mathfrak{spo}(2|m)$ ($m$ odd) & $\mathfrak{so}(m)$ & $\bbc^m$\\
\hline $\mathfrak{spo}(2n|m)$ $(n\geqslant2$, $m$ odd)  & $\mathfrak{spo}(2n-2|m)$ & $\bbc^{2n-2|m}$\\
\hline $G(3)$ & $G(2)$ & $7$-dim\\
\hline
\end{tabular}}
\end{center}
\vskip0.3cm

{\it In the paper, when we refer to minimal finite (refined) $W$-superalgebras, we will always keep the conventions as in Remark \ref{types}.}

\subsection{}\label{proof of thverma}
By the classification of minimal $W$-superalgebras in Remark \ref{types}, the most complicated and also being of significantly different from the non-super situation is the case  in Table 3. So
in this part we are dedicated to introduce the Verma modules for minimal finite $W$-superalgebra $U(\ggg,e)$ of type odd, and other cases will be considered in \S\ref{3.3}.

\subsubsection{}\label{Verma}
We begin with the following observation.
\begin{lemma}\label{semi} The following  statements hold:
\begin{itemize}
\item[(1)] When $\ggg$ is a simple Lie algebra not of type $A$, $\ggg^e(0)$ is a semi-simple Lie algebra;
\item[(2)] When $\ggg$ is not a Lie algebra, but $\ggg^e(0)$ is and $\ggg(1)$ is purely odd, then $\ggg^e(0)$ is a semi-simple Lie algebra if
\begin{itemize}
\item[(i)] $\ggg$ is of type $\mathfrak{psl}(2|2)$, $\mathfrak{spo}(2|m)$ with $m$ being even such that $\ggg^e(0)=\mathfrak{so}(m)$, $\mathfrak{osp}(4|2m)$ with $\ggg^e(0)=\mathfrak{sl}(2)\oplus\mathfrak{sp}(2m)$, $D(2,1;\alpha)$, or $F(4)$ with $\ggg^e(0)=\mathfrak{so}(7)$; in these cases $\sfr$ are always even;
\item[(ii)] $\ggg$ is of type $\mathfrak{spo}(2|m)$ with $m$ being odd such that $\ggg^e(0)=\mathfrak{so}(m)$, or $G(3)$ with $\ggg^e(0)=G(2)$; in these cases $\sfr$ are always odd;
\end{itemize}
\item[(3)] When $\ggg$ and $\ggg^e(0)$ are not Lie algebras, all $\ggg^e(0)$-modules are completely reducible if and only if

\begin{itemize}
\item[(i)] $\ggg$ is of type $\mathfrak{osp}(5|2m)$ with $\ggg^e(0)=\mathfrak{osp}(1|2m)\oplus\mathfrak{sl}(2)$; in this case $\sfr$ is even;
\item[(ii)] $\ggg$ is of type $\mathfrak{spo}(2m|1)$ with $m\geqslant2$ such that $\ggg^e(0)=\mathfrak{spo}(2m-2|1)$; in this case $\sfr$ is odd;
\end{itemize}
\item[(4)] In other cases, not all finite-dimensional representations of $\ggg^e(0)$ are completely reducible.
\end{itemize}
\end{lemma}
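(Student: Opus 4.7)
The plan is to prove this classification lemma by case-by-case inspection of Tables 2 and 3, which enumerate all pairs $(\ggg,\ggg^e(0))$ arising in the minimal nilpotent setting together with the $\ggg^e(0)$-module structure of $\ggg(1)$. For (1) I would simply invoke \cite[Table 1]{KW}: for every simple Lie algebra outside type $A$, the centralizer of a minimal $\mathfrak{sl}(2)$-triple is semisimple, as can be read off from the classical description of Levi subalgebras of minimal nilpotent orbits; in type $A_n$, by contrast, one picks up a $\mathfrak{gl}$-summand and hence a nontrivial centre. For (2), the strategy is to scan the remaining rows of the tables for entries in which $\ggg$ is a proper basic Lie superalgebra but $\ggg^e(0)$ happens to be a Lie algebra; the listed examples are $\mathfrak{psl}(2|2)$, $\mathfrak{spo}(2|m)$, $\mathfrak{osp}(4|2m)$, $D(2,1;\alpha)$, $F(4)$ with $\ggg^e(0)=\mathfrak{so}(7)$, and $G(3)$ with $\ggg^e(0)=G(2)$. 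In each of these, $\ggg^e(0)$ is visibly a product of simple classical Lie algebras, hence semisimple; the parity of $\sfr=\dim\ggg(-1)_{\bar1}$ is then read off by checking whether $\ggg(1)$ carries an odd-dimensional odd summand, sorting these cases into (2)(i) and (2)(ii).

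For (3) and (4), the key external input is Kac's theorem that among basic classical simple Lie superalgebras only $\mathfrak{osp}(1|2n)$ has the property that every finite-dimensional representation is completely reducible. Combined with Weyl's theorem for semisimple Lie algebras, this implies that a direct-sum Lie superalgebra has completely reducible finite-dimensional representations if and only if each simple summand is either a semisimple Lie algebra or some $\mathfrak{osp}(1|2n)$, with no nontrivial centre. Scanning Tables 2 and 3 under this criterion singles out exactly the two families $\mathfrak{osp}(5|2m)$ with $\ggg^e(0)=\mathfrak{osp}(1|2m)\oplus\mathfrak{sl}(2)$, and $\mathfrak{spo}(2m|1)$ with $\ggg^e(0)=\mathfrak{spo}(2m-2|1)\cong\mathfrak{osp}(1|2m-2)$, yielding (3). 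Every remaining row exhibits either a $\mathfrak{gl}$-factor (for instance $\mathfrak{sl}(m|n)$ giving $\mathfrak{gl}(m-2|n)$), contributing a nontrivial centre, or a proper basic classical Lie superalgebra summand different from $\mathfrak{osp}(1|2n)$ (such as $\mathfrak{spo}(2n-2|m)$ with $m$ even, $D(2,1;2)$, or $\mathfrak{osp}(3|2)$), and in either case complete reducibility fails, giving (4).

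The only real subtlety is the bookkeeping: one must identify the limiting cases correctly, namely $\mathfrak{osp}(5|2m)$ as the $m=5$ specialization of the $\mathfrak{osp}(m|2n)$ row, and $\mathfrak{spo}(2m|1)$ as the $m=1$ specialization of the $\mathfrak{spo}(2n|m)$ row, since these are precisely the spots where an $\mathfrak{osp}(1|\cdot)$ factor appears. No genuinely hard step is involved; the argument is essentially an inspection against Kac's classification supported by Weyl's theorem and the known structure of minimal nilpotent centralizers.
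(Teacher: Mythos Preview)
Your proposal is correct and follows essentially the same approach as the paper: both arguments reduce (1)--(2) to inspection of \cite[Tables 1--3]{KW}, and for (3)--(4) invoke the characterization that a Lie superalgebra has all finite-dimensional representations completely reducible if and only if it is a direct product of a semisimple Lie algebra with finitely many copies of $\mathfrak{osp}(1|2n)$ (the paper cites \cite[Chapter III, \S3.1, Theorem 1]{MS} for this, rather than attributing it to Kac), then scan the tables to isolate the two exceptional families. The bookkeeping you flag about the parity of $\sfr$ is handled in the paper by noting that $\dim\ggg(1)_{\bar 1}$ and $\dim\ggg(-1)_{\bar 1}$ have the same parity via the nondegeneracy of the form.
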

\begin{proof}
Recall that $\sfr$ is the dimension of $\ggg(-1)_{\bar1}$, and by the non-degeneracy of the bilinear form $(\cdot,\cdot)$ we know that $\sfr$ has the same parity as that of $\ggg(1)_{\bar1}$.
Then Statements (1) and (2) are immediate consequences of \cite[Tables 1-2]{KW}.

Recall that all finite-dimensional representations of a Lie (super)algebra $\mathfrak{L}$ are completely reducible if and only if $\mathfrak{L}$ is isomorphic to the direct product of a semi-simple Lie algebra with finitely many Lie superalgebra of the type $\mathfrak{osp}(1|2m)$ with  $m\geqslant1$ (see, e.g., \cite[Chapter III, \S3.1, Theorem 1]{MS}). Applying \cite[Table 3]{KW} again, we see that when $\ggg=\mathfrak{spo}(2m|1)$ with $m\geqslant2$ such that $\ggg^e(0)=\mathfrak{spo}(2m-2|1)$, the $\ggg^e(0)$-module $\ggg(1)$ is isomorphic to $\mathbb{C}^{2m-2|1}$, thus $\text{dim}\,\ggg(1)_{\bar1}=1$. We also observe that when $\ggg=\mathfrak{osp}(5|2m)$ with $\ggg^e(0)=\mathfrak{osp}(1|2m)\oplus\mathfrak{sl}(2)$, the $\ggg^e(0)$-module $\ggg(1)$ is isomorphic to $\mathbb{C}^{1|2m}\otimes\mathbb{C}^2$, thus $\text{dim}\,\ggg(1)_{\bar1}$ is even. For other cases in \cite[Table 3]{KW}, $\ggg^e$ is not isomorphic to the direct product of a semi-simple Lie algebra with finitely many $\mathfrak{osp}(1|2m)$. 
Then Statement (3) is proved.
Statement (4) is just an immediate consequence of Statements (1)---(3) and \cite[Chapter III, \S3.1, Theorem 1]{MS}.
\end{proof}

\subsubsection{}\label{3.2.1}
Keep the notations as in \S\ref{innao}. For any $\alpha,\beta\in\ggg^*$ and $x\in\ggg$, we will write $(\alpha+\beta)(x):=\alpha(x)+\beta(x)$ and $(\alpha\cdot\beta)(x):=\alpha(x)\cdot\beta(x)$ for simplicity. Put \begin{equation}\label{deltarho}
\begin{split}
\delta=&\frac{1}{2}\bigg(\sum\limits_{i=1}^{\frac{s}{2}}\gamma^*_{\bar0i}-\sum\limits_{i=1}^{\frac{\sfr-1}{2}}\gamma^*_{\bar1i}\bigg)
=\frac{1}{2}\bigg(\sum\limits_{i=1}^{\frac{s}{2}}(-\theta-\gamma_{\bar0i})+\sum\limits_{i=1}^{\frac{\sfr-1}{2}}(\theta+\gamma_{\bar1i})\bigg)\\
=&\frac{1}{2}\bigg(-\sum\limits_{i=1}^{\frac{s}{2}}\gamma_{\bar0i}+\sum\limits_{i=1}^{\frac{\sfr-1}{2}}\gamma_{\bar1i}\bigg)-\frac{s-\sfr+1}{4}\theta,\\ \rho=&\frac{1}{2}\sum_{\alpha\in\Phi^{+}}(-1)^{|\alpha|}\alpha,\\ \rho_{e,0}=&\rho-2\delta-\bigg(\frac{s-\sfr}{4}+\frac{1}{2}\bigg)\theta=\frac{1}{2}\sum_{\alpha\in\Phi^{+}_{e,0}}(-1)^{|\alpha|}\alpha
=\frac{1}{2}\Big(\sum_{j=1}^{w}\beta_{\bar0j}-\sum_{j=1}^{\ell}\beta_{\bar1j}\Big),
\end{split}
\end{equation}where $\gamma^*_{\bar0i}\in\Phi^+_{\bar0},\gamma^*_{\bar1j}\in\Phi_{\bar1}^+$, $\gamma_{\bar0i}\in\Phi^-_{\bar0},\gamma_{\bar1j}\in\Phi_{\bar1}^-$ for $1\leqslant i \leqslant \frac{s}{2}$ and $1\leqslant j \leqslant \frac{\sfr-1}{2}$ are defined in \S\ref{1.2},  $\beta_{\bar0i}\in\Phi_{\bar0}^+,\beta_{\bar1j}\in\Phi_{\bar1}^+$ for $1\leqslant i \leqslant w$ and $1\leqslant j \leqslant \ell$ are defined in \S\ref{3.1.1}, and $|\alpha|$ denotes the parity of $\alpha$.
For a linear function $\lambda$ on $\mathfrak{h}^e$ and $c\in\bbc$, we will call $(\lambda,c)$ a matchable pair if they satisfy the following equation:
\begin{equation}\label{c0clambda}
\begin{split}
c=&c_0+\sum_{i=1}^{k-1}\lambda(h_i)^2+\sum_{i=1}^{k-1}\bigg(\lambda\cdot
\Big(\sum_{j=1}^{w}\beta_{\bar0j}
-\sum_{j=1}^{\ell}\beta_{\bar1j}-\sum_{j=1}^{\frac{s}{2}}\gamma_{\bar0j}+
\sum_{j=1}^{\frac{\sfr-1}{2}}\gamma_{\bar1j}\Big)(h_i)\bigg)\\
=&c_0+\sum_{i=1}^{k-1}\lambda(h_i)^2+2\sum_{i=1}^{k-1}(\lambda\cdot(\rho_{e,0}+\delta))(h_i),
\end{split}
\end{equation}
where 
$c_0$ has the same meaning as in \eqref{decidec0}.
Given a matchable pair $(\lambda,c)\in(\mathfrak{h}^e)^*\times\bbc$, denote by $I_{\lambda,c}$ the linear span in $U(\ggg,e)$ of all PBW monomials of the form
\begin{equation*}
\begin{array}{ll}
&\prod_{i=1}^w\Theta_{x_i}^{a_i}\cdot\prod_{i=1}^\ell\Theta_{y_i}^{c_i}\cdot
\prod_{i=1}^{\frac{s}{2}}\Theta_{f_i}^{m_i}\cdot\prod_{i=1}^{\frac{\sfr-1}{2}}
\Theta_{g_i}^{p_i}\cdot\Theta_{v_{\frac{\sfr+1}{2}}}^\iota\cdot\prod_{i=1}^{k-1}
(\Theta_{h_i}-\lambda(h_i))^{t_i}\cdot(C-c)^{t_k}\\
&\cdot\Theta_{[v_{\frac{\sfr+1}{2}},e]}^\varepsilon\cdot\prod_{i=1}^{\frac{s}{2}}\Theta_{f^*_i}^{n_i}\cdot
\prod_{i=1}^{\frac{\sfr-1}{2}}\Theta_{g^*_i}^{q_i}\cdot
\prod_{i=1}^w\Theta_{x^*_i}^{b_i}\cdot
\prod_{i=1}^\ell\Theta_{y^*_i}^{d_i},
\end{array}
\end{equation*}
where ${\bf a},{\bf b}\in\mathbb{Z}_+^w$, ${\bf c},{\bf d}\in\Lambda_\ell$, ${\bf m},{\bf n}\in\mathbb{Z}_+^{\frac{s}{2}}$, ${\bf p},{\bf q}\in\Lambda_{\frac{\sfr-1}{2}}, \iota,\varepsilon\in\Lambda_1, {\bf t}\in\mathbb{Z}_+^k$ with
$\sum_{i=1}^{k}t_i+\varepsilon+\sum_{i=1}^{\frac{s}{2}}n_i+\sum_{i=1}^{\frac{\sfr-1}{2}}q_i+
\sum_{i=1}^{w}b_i+\sum_{i=1}^{\ell}d_i>0$. {\it In what follows, when refer to the notation $I_{\lambda,c}$, we will always assume that $(\lambda,c)$ is a matchable pair.}

By the same strategy as in \cite[Lemma 7.1]{P3}, we have
\begin{lemma}\label{left ideal2}
The subspace $I_{\lambda,c}$ is a left ideal of minimal finite $W$-superalgebra $U(\ggg,e)$ of type odd.
\end{lemma}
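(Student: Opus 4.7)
My plan is to imitate Premet's strategy from \cite[Lemma 7.1]{P3}, but with careful accommodation of the new odd generator $\Theta_{v_{\frac{\sfr+1}{2}}}$ and its partner $\Theta_{[v_{\frac{\sfr+1}{2}},e]}$ which have no counterpart in the non-super case. The first reduction is that it suffices to check left multiplication by a generating set of $U(\ggg,e)$; by Theorem \ref{main3}, such a set is $\{\Theta_v\mid v\in\ggg^e(0)\}\cup\{\Theta_w\mid w\in\ggg^e(1)\}\cup\{\Theta_{v_{\frac{\sfr+1}{2}}},C\}$. For a fixed PBW monomial $m\in I_{\lambda,c}$, I would compute each product $g\cdot m$ and bring it back into the prescribed ordered form, then verify that the resulting linear combination still involves at least one ``upper'' factor (one of $(\Theta_{h_i}-\lambda(h_i))$, $(C-c)$, $\Theta_{[v_{\frac{\sfr+1}{2}},e]}$, $\Theta_{f_i^*}$, $\Theta_{g_i^*}$, $\Theta_{x_i^*}$, $\Theta_{y_i^*}$) in every term, which is exactly the defining condition of $I_{\lambda,c}$.

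Two of the four generator families are essentially free. Left multiplication by $C$ is trivial because $C$ is central by Theorem \ref{maiin1}(4): writing $C=(C-c)+c$, the $c$-part is a scalar, while $(C-c)$ slides past everything and lands in the Cartan position, contributing its own upper factor. Left multiplication by $\Theta_{v_{\frac{\sfr+1}{2}}}$ is handled using the relations $[\Theta_{v_{\frac{\sfr+1}{2}}},\Theta_v]=[\Theta_{v_{\frac{\sfr+1}{2}}},\Theta_w]=[\Theta_{v_{\frac{\sfr+1}{2}}},C]=0$ and $\Theta_{v_{\frac{\sfr+1}{2}}}^2=\tfrac12$ from Theorem \ref{main3}: sliding it to its slot either changes the exponent $\iota$ from $0$ to $1$ or replaces a factor $\Theta_{v_{\frac{\sfr+1}{2}}}^{2}$ by $\tfrac12$, neither of which touches the count $\sum t_i+\varepsilon+\sum n_i+\sum q_i+\sum b_i+\sum d_i$ controlling membership in $I_{\lambda,c}$.

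The real work is left multiplication by $\Theta_v$ with $v\in\ggg^e(0)$ and by $\Theta_w$ with $w\in\ggg^e(1)$. For $v\in\ggg^e(0)$ I will exploit the embedding $U(\ggg^e(0))\hookrightarrow U(\ggg,e)$ supplied by Theorem \ref{maiin1}(1),(2), so that the triangular decomposition $\ggg^e(0)=\mathfrak{n}^-(0)\oplus\mathfrak{h}^e\oplus\mathfrak{n}^+(0)$ allows one to organize the commutators past the lower factors inductively on root height; each time a Cartan element $\Theta_{h_k}$ is produced we split it as $(\Theta_{h_k}-\lambda(h_k))+\lambda(h_k)$, with the first summand becoming an upper factor and the second a harmless scalar. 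For $w\in\ggg^e(1)$ the analogous commutation past $\Theta_{f_j},\Theta_{g_j},\Theta_{v_{\frac{\sfr+1}{2}}}$ uses Theorem \ref{maiin1}(3); the dangerous output is the Casimir combination $\tfrac12([w_1,w_2],f)(C-\Theta_{\text{Cas}}-c_0)$, which upon writing $C=(C-c)+c$ produces a ``constant'' $(c-c_0)-\Theta_{\text{Cas}}$ acting on the portion already reordered.

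The main obstacle, and the reason this is postponed to Appendix \ref{lengthy proof}, is precisely this last step: controlling the $\Theta_{\text{Cas}}$-term. The quadratic expression $\Theta_{\text{Cas}}$ given in \eqref{castheta} must be reordered so that every factor $\Theta_{h_i}^2$, $\Theta_{x_i}\Theta_{x_i^*}$, $\Theta_{y_i}\Theta_{y_i^*}$ either already contains an upper factor or can be rewritten using $\Theta_{h_i}=(\Theta_{h_i}-\lambda(h_i))+\lambda(h_i)$ and $\Theta_{x_i^*}\Theta_{x_i}=\Theta_{x_i}\Theta_{x_i^*}+[\Theta_{x_i^*},\Theta_{x_i}]$ to isolate an upper factor up to an explicit scalar depending only on $\lambda$ and $\rho_{e,0}+\delta$. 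Summing these scalars and comparing with $c-c_0$ yields exactly the defining identity \eqref{c0clambda} of a matchable pair, so the residual scalar vanishes and the remaining terms lie in $I_{\lambda,c}$. In effect, the condition \eqref{c0clambda} is engineered so that the ``eigenvalue of $\Theta_{\text{Cas}}+c_0$ on a hypothetical highest-weight vector of weight $(\lambda,c)$'' equals $c$, which is exactly what the ideal-closure requires; the bookkeeping in the odd-$\sfr$ case is considerably heavier than in Premet's setting because of the extra odd factors $\Theta_{v_{\frac{\sfr+1}{2}}}$ and $\Theta_{[v_{\frac{\sfr+1}{2}},e]}$ that need to be tracked through every commutation.
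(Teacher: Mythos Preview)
Your overall strategy—reduce to left multiplication by generators, slide each generator to its PBW slot, and show every surviving term retains an ``upper'' factor—is workable, and you correctly single out the Casimir expression $C-\Theta_{\text{Cas}}-c_0$ together with the matchable-pair identity \eqref{c0clambda} as the heart of the matter. However, the organization and the location of the critical step differ from the paper's, and your sketch is imprecise at exactly that point.

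The paper does \emph{not} argue by left multiplication. Instead it proves the stronger statement that $\Theta\cdot G\in I_{\lambda,c}$ for \emph{every} PBW monomial $\Theta$ and every upper generator $G\in\{\Theta_{h_i}-\lambda(h_i),\,C-c,\,\Theta_{[v_{\frac{\sfr+1}{2}},e]},\,\Theta_{f^*_i},\,\Theta_{g^*_i},\,\Theta_{x^*_i},\,\Theta_{y^*_i}\}$, by induction on $\deg_e(\Theta)$. This immediately yields $I_{\lambda,c}=\sum_G U(\ggg,e)\cdot G$, a manifest left ideal. The inductive reductions peel off the rightmost $\Theta_{y^*}$, $\Theta_{x^*}$, $\Theta_{g^*}$, $\Theta_{f^*}$ factors in turn, using that all the relevant commutators land in $\sum_{\alpha\in\Phi^+_{e,0}}U(\ggg,e)\Theta_{e_\alpha}$ plus lower-degree terms.

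The matchable-pair condition \eqref{c0clambda} is invoked in \emph{one} place only: when $G=\Theta_{[v_{\frac{\sfr+1}{2}},e]}$ and the monomial $\Theta$ already carries $\varepsilon=1$, so that one must expand $\Theta_{[v_{\frac{\sfr+1}{2}},e]}^2$. By Theorem~\ref{maiin1}(3) this square equals $-\tfrac14(C-c_0)+\tfrac14\Theta_{\text{Cas}}$ plus quadratic terms, and the residual scalar after writing $C=(C-c)+c$ and $\Theta_{h_i}=(\Theta_{h_i}-\lambda(h_i))+\lambda(h_i)$ is precisely the quantity in \eqref{c0clambda}. In your description, by contrast, you locate the dangerous Casimir output in the commutations of $\Theta_w$ past $\Theta_{f_j},\Theta_{g_j}$; but in those commutations the Casimir contribution is still multiplied on the right by the surviving upper factor of $m$, so it is harmless (by induction on degree), and in fact a weight check shows $([[v_{\frac{\sfr+1}{2}},e],f_j],f)=([[v_{\frac{\sfr+1}{2}},e],g_j],f)=0$ so no Casimir even appears there. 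The genuinely unprotected scalar arises only from $\Theta_{[v_{\frac{\sfr+1}{2}},e]}^2$, a point your sketch does not isolate. You also need an explicit induction (on Kazhdan degree) to close the argument, which you do not state.
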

Since the proof of Lemma \ref{left ideal2} is rather lengthy, we will postpone it till Appendix \ref{lengthy proof}.

\subsubsection{}\label{3.2.2}
Put $Z_{U(\ggg,e)}(\lambda,c):=U(\ggg,e)/I_{\lambda,c}$, and let $v_0$ denote the image of $1$ in $Z_{U(\ggg,e)}(\lambda,c)$. Clearly, $Z_{U(\ggg,e)}(\lambda,c)$ is a cycle $U(\ggg,e)$-module generated by $v_0$. {\it We will call $Z_{U(\ggg,e)}(\lambda,c)$ the Verma module of level $c$ corresponding to $\lambda$.} By Lemma \ref{left ideal2}, the vectors
\begin{equation*}
\bigg\{\prod_{i=1}^w\Theta_{x_i}^{a_i}\cdot\prod_{i=1}^\ell\Theta_{y_i}^{c_i}\cdot
\prod_{i=1}^{\frac{s}{2}}\Theta_{f_i}^{m_i}\cdot\prod_{i=1}^{\frac{\sfr-1}{2}}
\Theta_{g_i}^{p_i}\cdot\Theta_{v_{\frac{\sfr+1}{2}}}^\iota(v_0)\mid ({\bf a}, {\bf c}, {\bf m}, {\bf p}, \iota)\in\mathbb{Z}_+^w\times\Lambda_\ell
\times\mathbb{Z}_+^{\frac{s}{2}}\times\Lambda_{\frac{\sfr-1}{2}}\times\Lambda_1\bigg\}
\end{equation*}form a $\bbc$-basis of the Verma module $Z_{U(\ggg,e)}(\lambda,c)$ over $\bbc$. Denote by $Z^+_{U(\ggg,e)}(\lambda,c)$ the $\mathbb{C}$-span of all $\prod_{i=1}^w\Theta_{x_i}^{a_i}\cdot\prod_{i=1}^\ell\Theta_{y_i}^{c_i}\cdot
\prod_{i=1}^{\frac{s}{2}}\Theta_{f_i}^{m_i}\cdot\prod_{i=1}^{\frac{\sfr-1}{2}}
\Theta_{g_i}^{p_i}\cdot\Theta_{v_{\frac{\sfr+1}{2}}}^\iota(v_0)$ with $\sum_{i=1}^{w}a_i+\sum_{i=1}^{\ell}c_i+\sum_{i=1}^{\frac{s}{2}}m_i+
\sum_{i=1}^{\frac{\sfr-1}{2}}p_i>0$. Set $Z^{\rm max}_{U(\ggg,e)}(\lambda,c)$ to be the sum of all $U(\ggg,e)$-submodules of $Z_{U(\ggg,e)}(\lambda,c)$ contained in $Z^+_{U(\ggg,e)}(\lambda,c)$, and let
\begin{equation*}
L_{U(\ggg,e)}(\lambda,c):=Z_{U(\ggg,e)}(\lambda,c)/Z^{\rm max}_{U(\ggg,e)}(\lambda,c).
\end{equation*}
\subsubsection{}\label{3.2.4}
\noindent\textbf{The proof of Theorem \ref{verma2}}.
 Now we are in a position to prove  Theorem \ref{verma2}.
Generally speaking, we can repeat the proof of \cite[Proposition 7.1]{P3}, with a lot of modifications.
We will complete the proof by steps.

(1) Given a root $\alpha=\sum_{i=1}^{k}n_i\alpha_i\in\Phi$, set
\begin{equation*}
\text{ht}_\theta(\alpha):= \sum_{\alpha_i\neq\frac{\theta}{2}}n_i.
\end{equation*}
Since $\frac{\theta}{2}$ is an odd simple root by  Convention \ref{conventions}, then $\text{ht}_\theta(\alpha)=0$ if and only if $\alpha=\pm\frac{\theta}{2},\pm\theta$. 
By  \cite[Proposition 5.1.2]{K} we know that all derivations of $\ggg$ are inner. Therefore, we can find a unique $h_0\in\mathfrak{h}$ such that $[h_0,e_\alpha]=\text{ht}_\theta(\alpha)e_\alpha$ for any $\alpha\in\Phi$. By definition we have $[h_0,e_{\pm\theta}]=0$ (recall that $e_\theta=e$ and $e_{-\theta}=f$), thus $h_0\in\mathfrak{h}^e$. It is obvious that $\Theta_{h_0}(v_0)=\lambda(h_0)v_0$ by definition, and we have the decomposition $Z_{U(\ggg,e)}(\lambda,c)=\mathbb{C}v_0\oplus\mathbb{C}\Theta_{v_{\frac{\sfr+1}{2}}}(v_0)\oplus Z^+_{U(\ggg,e)}(\lambda,c)$ as $\mathbb{C}$-vector space.
As all $x_i$, $y_i$, $f_i$ and $g_i$ are root vectors for $\mathfrak{h}$, corresponding to negative roots different from $-\frac{\theta}{2}$ and $-\theta$, it follows from Theorem \ref{main3} that the subspace $Z^+_{U(\ggg,e)}(\lambda,c)$ decomposes into eigenspaces for $\Theta_{h_0}$, and the eigenvalues of $\Theta_{h_0}$ on $Z^+_{U(\ggg,e)}(\lambda,c)$ are of the form $\lambda(h_0)-k$
with $k$ being a positive integer.

Let $V$ be a nonzero $\mathbb{Z}_2$-graded submodule of $U(\ggg,e)$-module $Z_{U(\ggg,e)}(\lambda,c)$. If $V\nsubseteq Z^+_{U(\ggg,e)}(\lambda,c)$, it follows from the discussion above 
that $v_0,\Theta_{v_{\frac{\sfr+1}{2}}}(v_0)\in V$, which entails that $V=Z_{U(\ggg,e)}(\lambda,c)$. Therefore, any proper submodule of $Z_{U(\ggg,e)}(\lambda,c)$ is contained in $Z^+_{U(\ggg,e)}(\lambda,c)$, and $Z_{U(\ggg,e)}^{\rm max}(\lambda,c)$ is a unique maximal submodule of $Z_{U(\ggg,e)}(\lambda,c)$. Obviously $L_{U(\ggg,e)}(\lambda,c)$ is a simple module of type $Q$, for which the odd endomorphism is induced by the element $\Theta_{v_{\frac{\sfr+1}{2}}}\in U(\ggg,e)$. Now we complete the proof of Statement (1).

(2) From the discussion in (1) we know that each $U(\ggg,e)$-module $L_{U(\ggg,e)}(\lambda,c)$ decomposed into eigenspaces for $\Theta_{h_0}$, and the eigenvalues of $\Theta_{h_0}$ are in the set $\lambda(h_0)-\mathbb{Z}_+$. Moreover, the eigenspace $L_{U(\ggg,e)}(\lambda,c)_{\lambda(h_0)}$ of the $U(\ggg,e)$-module $L_{U(\ggg,e)}(\lambda,c)$ is spanned by the elements $v_0$ and $\Theta_{v_{\frac{\sfr+1}{2}}}(v_0)$. If $L_{U(\ggg,e)}(\lambda,c)\cong L_{U(\ggg,e)}(\lambda',c')$ as $U(\ggg,e)$-modules, it follows from the discussion above that $\lambda(h_0)\in\lambda'(h_0)-\mathbb{Z}_+$ and $\lambda'(h_0)\in\lambda(h_0)-\mathbb{Z}_+$.
This implies that $\lambda(h_0)=\lambda'(h_0)$ and $L_{U(\ggg,e)}(\lambda,c)_{\lambda(h_0)}\cong L_{U(\ggg,e)}(\lambda',c')_{\lambda'(h_0)}$ as modules over the commutative subalgebra $\Theta_{\mathfrak{h}^e}\oplus\mathbb{C}C$ of $U(\ggg,e)$.
So we have $\lambda=\lambda'$ and $c=c'$, and the proof of Statement (2) is completed.

(3) Let $M$ be a finite-dimensional simple $U(\ggg,e)$-module. As the even element $C$ is in the center of $U(\ggg,e)$, Schur's lemma entails that $C$ acts on $M$ as $c~\text{id}$ for some $c\in\mathbb{C}$.
As $\Theta_{\mathfrak{h}^e}$ is an abelian by Theorem \ref{main3} (more precisely, Theorem \ref{maiin1}(1)), by the knowledge of linear algebra we know that $M$ contains at least one weight subspace for $\Theta_{\mathfrak{h}^e}$. Applying Theorem \ref{main3} again we know that the vector space $\bigoplus_{\mu\in(\mathfrak{h}^e)^*}M_{\mu}$ of all weight subspaces of $M$ is a $U(\ggg,e)$-submodule of $M$. From the irreducibility of $U(\ggg,e)$-module $M$ we know that $M$ decomposes into weight spaces relative to $\Theta_{\mathfrak{h}^e}$.

Since  $\mathfrak{h}=\mathbb{C}h\oplus\mathfrak{h}^e$ as vector space, and $[e_\theta,e_{-\theta}]=[e,f]=h$ by definition, one can easily conclude that any linear function vanishing on $\mathfrak{h}^e$ is a scalar multiple of $\theta$. As $\sfr$ is odd, $\frac{\theta}{2}$ is a simple root by  Convention \ref{conventions}, then any sum of roots from $\Phi^+_e\backslash\{\frac{\theta}{2},\theta\}$ restricts to a nonzero function on $\mathfrak{h}^e$. Therefore, we can define a partial ordering on $(\mathfrak{h}^e)^*$ by
\begin{equation}\label{parord}
\psi\leqslant\phi\Leftrightarrow\phi=\psi+\bigg(\sum_{\gamma\in\Phi^+_e\backslash\{\frac{\theta}{2},\theta\}}r_\gamma\gamma\bigg)_{|\mathfrak{h}^e},
\quad r_\gamma\in\mathbb{Z}_+\qquad(\forall\phi,\psi\in(\mathfrak{h}^e)^*).
\end{equation}
Recall that the set of $\Theta_{\mathfrak{h}^e}$-weights of $M$ is finite, then it contains at least one maximal element with the ordering we just defined above, and we put it as $\lambda$. For a nonzero vector $m$ in $M_\lambda$, we have $\Theta_{x_i^*}.m=\Theta_{y_i^*}.m=\Theta_{f_i^*}.m=\Theta_{g_i^*}.m=0$ for all admissible $i$  (Since $M$ is finite-dimensional, we can further assume that
$\Theta_{[v_{\frac{\sfr+1}{2}},e]}.m=0$). So there must exist a $U(\ggg,e)$-module homomorphism $\xi$ from either $Z_ {U(\ggg,e)}(\lambda,c)$ or $\prod Z_ {U(\ggg,e)}(\lambda,c)$ (Here $\prod$ denotes the parity switching functor) to $M$ such that $\xi(v_0)=m$. Moreover, the simplicity of $M$ entails that $\xi$ is surjective, and Statement (1) yields $\text{Ker}\,\xi=Z_{U(\ggg,e)}^{\rm max}(\lambda,c)$. Taking Theorem \ref{main3} (more precisely, Theorem \ref{maiin1}(1)) into consideration, when we restrict $M$ to the $\mathfrak{sl}(2)$-triple $(\Theta_{e_\alpha},\Theta_{h_\alpha},\Theta_{e_{-\alpha}})\subset U(\ggg,e)$ with $\alpha\in(\Phi^+_{e,0})_{\bar0}$, one can easily conclude that $\lambda(h_\alpha)\in\mathbb{Z}_+$ for any $\alpha\in(\Phi^+_{e,0})_{\bar0}$.

Finally, let $\ggg=\mathfrak{spo}(2|m)$ with $m$ being odd such that $\ggg^e(0)=\mathfrak{so}(m)$, or $\ggg=\mathfrak{spo}(2m|1)$ with $m\geqslant2$ such that $\ggg^e(0)=\mathfrak{spo}(2m-2|1)$, or  $\ggg=G(3)$ with $\ggg^e(0)=G(2)$ in Table 3, then all finite-dimensional representations of $\ggg^e(0)$ are completely reducible by Lemma \ref{semi}. It follows from Theorem \ref{main3} (more precisely, Theorem \ref{maiin1}(1)) that the linear map $\Theta:\ggg^e(0)\rightarrow \Theta_{\ggg^e(0)}, x\mapsto\Theta_x$ is a Lie superalgebra isomorphism. Let $M$ be a finite-dimensional simple $U(\ggg,e)$-module, then $M$ is completely reducible as a $\Theta_{\ggg^e(0)}$-module. 
Let $\ggg_\mathbb{Q}$ be the $\mathbb{Q}$-form in $\ggg$ spanned by the Chevalley basis from \S\ref{1.1}, and write $\ggg_\mathbb{Q}^e(i):=\ggg_\mathbb{Q}\cap\ggg^e(i)$ with $i=0,1$. Choose $u,v\in\ggg_\mathbb{Q}^e(1)$ with $([u,v],f)=2$, and also assume that $z_\alpha, z^*_\alpha\in\ggg_\mathbb{Q}(-1)$ for all $\alpha\in S(-1)$, then $[u,z_\alpha]^{\sharp}, [z_\alpha^*,u]^{\sharp}, [v,z_\alpha]^{\sharp}, [z_\alpha^*,v]^{\sharp}\in\ggg^e_\mathbb{Q}$. The highest weight theory implies that there is a $\mathbb{Q}$-form in $M$ stable under the action of $\Theta_{\ggg_\mathbb{Q}^e(0)}$. So we have $\text{tr}_M\big(\Theta_{[u,z_\alpha]^{\sharp}}\Theta_{[z_\alpha^*,v]^{\sharp}}\big), \text{tr}_M\big(\Theta_{[v,z_\alpha]^{\sharp}}\Theta_{[z_\alpha^*,u]^{\sharp}}\big)\in\mathbb{Q}$ for all $\alpha\in S(-1)$. As $\text{tr}_M[\Theta_u,\Theta_v]=0$, it follows from Theorem \ref{main3} (more precisely, Theorem \ref{maiin1}(3)) that $(c-c_0) \text{dim}\,M\in\mathbb{Q}$. Since $c_0\in\mathbb{Q}$ by \eqref{decidec0}, we have $c\in\mathbb{Q}$.

\begin{rem}
By the same discussion as in Lemma \ref{left ideal2}, one can conclude that the linear span in $U(\ggg,e)$ of all PBW monomials as in \eqref{jlamdbdaccc} with
$\sum_{i=1}^{k}t_i+\sum_{i=1}^{\frac{s}{2}}n_i+\sum_{i=1}^{\frac{\sfr-1}{2}}q_i+
\sum_{i=1}^{w}b_i+\sum_{i=1}^{\ell}d_i>0$ is also a left ideal of minimal finite $W$-superalgebra $U(\ggg,e)$ of type odd (note that there is no restriction on the pair $(\lambda,c)$ as in \eqref{c0clambda}), and write it as $I'_{\lambda,c}$. Then we can introduce the $U(\ggg,e)$-module $Z'_{U(\ggg,e)}(\lambda,c):=U(\ggg,e)/I'_{\lambda,c}$ as in \S\ref{3.2.2} correspondingly. The reason why we did not consider $Z'_{U(\ggg,e)}(\lambda,c)$ lies in the fact that for the vector $[v_{\frac{\sfr+1}{2}},e]$ associated with the simple odd root $\frac{\theta}{2}$, we may have $\Theta_{[v_{\frac{\sfr+1}{2}},e]}.m\neq0$ for every $m\in Z'_{U(\ggg,e)}(\lambda,c)$, and then $Z'_{U(\ggg,e)}(\lambda,c)$ is not necessarily a highest weight module for $U(\ggg,e)$.
\end{rem}

\subsection{}\label{3.3}
In this part we will consider Verma modules for 
other cases as in Remark \ref{types}, which is parallel to those in \S\ref{proof of thverma}. Recall that for the type even case, $U(\ggg,e)$  coincides with  $W'_\chi$. So we just need to consider  $W'_\chi$ for both types.  Since the proofs are similar, we will just sketch them.

\subsubsection{}
Keep the notations as in \S\ref{innao}. For a linear function $\lambda$ on $\mathfrak{h}^e$ and  $c\in\bbc$, we denote by $J_{\lambda,c}$ the linear span in $W_\chi'$ of all PBW monomials of the form
\begin{equation}\label{jlamdbdaccc}
\begin{array}{ll}
&\prod_{i=1}^w\Theta_{x_i}^{a_i}\cdot\prod_{i=1}^\ell\Theta_{y_i}^{c_i}\cdot
\prod_{i=1}^{\frac{s}{2}}\Theta_{f_i}^{m_i}\cdot\prod_{i=1}^{\lceil\frac{\sfr}{2}\rceil}
\Theta_{g_i}^{p_i}\cdot\prod_{i=1}^{k-1}
\big(\Theta_{h_i}-\lambda(h_i)\big)^{t_i}\cdot(C-c)^{t_k}\\
&\cdot\Theta_{[v_{\frac{\sfr+1}{2}},e]}^\varepsilon\cdot\prod_{i=1}^{\frac{s}{2}}\Theta_{f^*_i}^{n_i}\cdot
\prod_{i=1}^{\lceil\frac{\sfr}{2}\rceil}\Theta_{g^*_i}^{q_i}\cdot
\prod_{i=1}^w\Theta_{x^*_i}^{b_i}\cdot
\prod_{i=1}^\ell\Theta_{y^*_i}^{d_i},
\end{array}
\end{equation}
where ${\bf a},{\bf b}\in\mathbb{Z}_+^w$, ${\bf c},{\bf d}\in\Lambda_\ell$, ${\bf m},{\bf n}\in\mathbb{Z}_+^{\frac{s}{2}}$, ${\bf p},{\bf q}\in\Lambda_{\lceil\frac{\sfr}{2}\rceil}, {\bf t}\in\mathbb{Z}_+^k$, $\varepsilon\in\Lambda_1$ with
$\sum_{i=1}^{k}t_i+\varepsilon+\sum_{i=1}^{\frac{s}{2}}n_i+\sum_{i=1}^{\lceil\frac{\sfr}{2}\rceil}q_i+
\sum_{i=1}^{w}b_i+\sum_{i=1}^{\ell}d_i>0$
{\it(The term $\Theta_{[v_{\frac{\sfr+1}{2}},e]}$ occurs if and only if $\sfr$ is odd. In this subsection, when we consider  the type even case, we always assume that $\varepsilon=0$)}. 


\begin{lemma}\label{left ideal}Under the above settings, we have the following results:
\begin{itemize}
\item[(1)]The subspace $J_{\lambda, c}$ is a left ideal of minimal refined $W$-superalgebra $W_\chi'$ of type even;
\item[(2)]The subspace $J_{\lambda,c}$ with the pair $(\lambda,c)$ satisfying \eqref{c0clambda} is a left ideal of minimal refined $W$-superalgebra $W_\chi'$ of type odd.
\end{itemize}
\end{lemma}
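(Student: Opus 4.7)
The proof parallels that of Lemma \ref{left ideal2}, with the simplification that the extra odd generator $\Theta_{v_{\frac{\sfr+1}{2}}}$ (which lies in $U(\ggg,e)\setminus W_\chi'$) need not be tracked. The strategy is to check, generator by generator from Theorem \ref{maiin1}, that left multiplication by each Kazhdan-homogeneous generator of $W_\chi'$ sends a PBW monomial of the form \eqref{jlamdbdaccc} into a linear combination of such monomials. Three classes of generators must be handled: the central Casimir $C$, the operators $\Theta_v$ for $v\in\ggg^e(0)$, and the operators $\Theta_w$ for $w\in\ggg^e(1)$.

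The easy cases go first. Since $C$ is central, $C\cdot J_{\lambda,c}\subseteq J_{\lambda,c}$ is immediate, and $(C-c)$ applied to any monomial raises the exponent $t_k$ by one and stays in $J_{\lambda,c}$. For $\Theta_{h_i}$ with $h_i\in\mathfrak{h}^e$, the relations in Theorem \ref{maiin1}(1)--(2) let me propagate $\Theta_{h_i}$ rightward through a PBW monomial, picking up scalar weight shifts along the way, until the factor meets the Cartan block and reduces modulo $(\Theta_{h_i}-\lambda(h_i))$. The negative-root operators $\Theta_{x_i},\Theta_{y_i},\Theta_{f_i},\Theta_{g_i}$ occupy the leftmost positions in \eqref{jlamdbdaccc}, so left multiplication simply bumps an exponent; the PBW corrections from Theorem \ref{maiin1}(1)--(2) only introduce lower-weight monomials that already lie in $J_{\lambda,c}$.

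The delicate step is left multiplication by the positive-root operators $\Theta_{x_i^*},\Theta_{y_i^*},\Theta_{f_i^*},\Theta_{g_i^*}$ and, in type odd, by $\Theta_{[v_{\frac{\sfr+1}{2}},e]}$. Each must be commuted rightward past the negative-root factors; whenever a dual pair such as $(\Theta_{f_i^*},\Theta_{f_i})$ or $(\Theta_{g_i^*},\Theta_{g_i})$ meets, Theorem \ref{maiin1}(3) produces a term proportional to $\tfrac12([w_1,w_2],f)(C-\Theta_{\mathrm{Cas}}-c_0)$ plus quadratic $\Theta_{\ggg^e(0)}$-corrections. Writing $C-\Theta_{\mathrm{Cas}}-c_0=(C-c)+\bigl((c-c_0)-\Theta_{\mathrm{Cas}}\bigr)$, the first summand sits in $J_{\lambda,c}$ via the $(C-c)$ factor; for the second, expanding $\Theta_{\mathrm{Cas}}$ via \eqref{castheta} and bringing each summand (for instance $\Theta_{x_i^*}\Theta_{x_i}=\Theta_{x_i}\Theta_{x_i^*}-\Theta_{[x_i,x_i^*]}$) into PBW normal order either produces a rightmost positive-root factor (hence something in $J_{\lambda,c}$) or leaves behind a Cartan scalar, ultimately assembling into the quadratic expression $\sum\lambda(h_i)^2+2\sum(\lambda\cdot(\rho_{e,0}+\delta))(h_i)$.

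In type even the only problematic pairs come from even positive/negative root vectors, and a direct check, parallel to the balance used in \cite[Prop.~7.1]{P3}, shows that the accumulated constants are absorbed by the shifted Cartan factors $(\Theta_{h_i}-\lambda(h_i))$ and $(C-c)$ already present in the defining monomials of $J_{\lambda,c}$, with no separate coupling of $\lambda$ and $c$ required. In type odd, by contrast, the bracket $[[v_{\frac{\sfr+1}{2}},e],[v_{\frac{\sfr+1}{2}},e]]$ lies in $\bbc e$ with nonzero pairing against $f$, reflecting that $\theta/2$ is a simple odd root of $\ggg$; the corresponding scalar contribution forces the matchable identity \eqref{c0clambda}, which equates $c-c_0$ with the explicit Casimir eigenvalue on the highest weight. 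The main obstacle throughout is the meticulous bookkeeping in the preceding step: tracking every quadratic correction coming out of Theorem \ref{maiin1}(3), reducing them via Theorem \ref{maiin1}(1)--(2), and verifying term by term that they collapse into $J_{\lambda,c}$ — precisely the reason Lemma \ref{left ideal2} has its analogous proof deferred to Appendix \ref{lengthy proof}.
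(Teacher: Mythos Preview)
Your proposal reaches the right conclusion and correctly pinpoints the mechanism in type odd --- the square $\Theta_{[v_{\frac{\sfr+1}{2}},e]}^2$ and the bracket $[[v_{\frac{\sfr+1}{2}},e],[v_{\frac{\sfr+1}{2}},e]]\in\bbc e$ are precisely what force \eqref{c0clambda}, exactly as in Step~(d) of Appendix~\ref{lengthy proof}. But the computational route you describe differs from the paper's, and that difference matters for the type-even claim.

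The paper does \emph{not} check left multiplication by each generator of $W_\chi'$. Instead, it shows (for an arbitrary PBW monomial $\Theta(\cdots)$) that $\Theta(\cdots)\cdot X\in J_{\lambda,c}$ for each ``bad'' generator $X\in\{(C-c),(\Theta_{h_i}-\lambda(h_i)),\Theta_{x_i^*},\Theta_{y_i^*},\Theta_{f_i^*},\Theta_{g_i^*},\Theta_{[v_{\frac{\sfr+1}{2}},e]}\}$, by induction on the Kazhdan degree of $\Theta(\cdots)$. The inductive step peels off the rightmost positive factor of $\Theta(\cdots)$ and commutes it past $X$; the commutators that arise are therefore between \emph{two positive} $\ggg^e(1)$-operators, e.g.\ $[\Theta_{g_k^*},\Theta_{g_i^*}]$ or $[\Theta_{f_k^*},\Theta_{f_i^*}]$. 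For such pairs a weight count gives $([w_1,w_2],f)=0$, so the term $\tfrac12([w_1,w_2],f)(C-\Theta_{\mathrm{Cas}}-c_0)$ in Theorem~\ref{maiin1}(3) vanishes outright. The only exception is the self-bracket $[\Theta_{[v_{\frac{\sfr+1}{2}},e]},\Theta_{[v_{\frac{\sfr+1}{2}},e]}]$, which in type even simply does not exist. This is the cleanest explanation of why type even carries no constraint: no Casimir term ever appears, so there is nothing to ``absorb''.

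Your route --- commuting a positive $\Theta_{f_i^*}$ rightward through the negative block of a monomial already in $J_{\lambda,c}$ --- meets genuine dual pairs $[\Theta_{f_i^*},\Theta_{f_i}]$, and these \emph{do} carry a nonzero $([f_i^*,f_i],f)$ contribution even in type even. One can still argue that the resulting $(C-\Theta_{\mathrm{Cas}}-c_0)$ term lands in $J_{\lambda,c}$ because it is multiplied on the right by the surviving bad tail of the original monomial, but this requires the inductive hypothesis in a form you have not set up carefully, and it is not the ``balance'' phenomenon you cite from \cite{P3}. Your sketch is therefore sound in spirit but takes a harder road; the paper's right-multiplication framing avoids the issue entirely.
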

\begin{proof}Since the proof is much the same as the one for Lemma \ref{left ideal2}, we will just sketch the differences.
In fact, one can observe that all the considerations in Appendix \ref{lengthy proof} are still valid for Statement (1) except Appendix \ref{Step d}, where the emergence of $\Theta_{[v_{\frac{\sfr+1}{2}},e]}^2$ in \eqref{1vr+12fir}  makes it necessary to impose the restriction \eqref{c0clambda}. Since the element $\Theta_{[v_{\frac{\sfr+1}{2}},e]}$ will never appear when $W_\chi'$ is of type even, then $\lambda\in(\mathfrak{h}^e)^*$ and  $c\in\bbc$ in (1) can be chosen arbitrarily, which is much the same as the minimal finite $W$-algebra case. On the other hand, repeat verbatim as in Appendix \ref{lengthy proof} we can obtain Statement (2).
\end{proof}

\begin{rem}\label{keyremark}
Lemma \ref{left ideal} will play a key role for the exposition on $W_\chi'$ below. {\it To ease notation, from now on when we consider the pair $(\lambda,c)$ for $J_{\lambda, c}$, we always assume that $\lambda\in(\mathfrak{h}^e)^*$ and  $c\in\bbc$ are arbitrarily chosen for the minimal refined $W$-superalgebras of type even, and $\lambda,c$ should satisfy \eqref{c0clambda} (i.e., $(\lambda,c)$ is a matchable pair as defined in \S\ref{3.2.1}) for the minimal refined $W$-superalgebras of type odd, unless otherwise specified.} \end{rem}

\subsubsection{}
Retain the conventions as in Remark \ref{keyremark}.
Write $Z_{W_\chi'}(\lambda,c):=W_\chi'/J_{\lambda,c}$, and denote by $v_0$ the image of $1$ in $Z_{W_\chi'}(\lambda,c)$. By definition we know that $Z_{W_\chi'}(\lambda,c)$ is a cycle $W_\chi'$-module generated by $v_0$. {\it We will call $Z_{W_\chi'}(\lambda,c)$ the Verma module of level $c$ corresponding to $\lambda$.} Moreover, Lemma \ref{left ideal} entails that the vectors
\begin{equation*}
\bigg\{\prod_{i=1}^w\Theta_{x_i}^{a_i}\cdot\prod_{i=1}^\ell\Theta_{y_i}^{c_i}\cdot
\prod_{i=1}^{\frac{s}{2}}\Theta_{f_i}^{m_i}\cdot\prod_{i=1}^{\lceil\frac{\sfr}{2}\rceil}
\Theta_{g_i}^{p_i}(v_0)\mid {\bf (a,c,m,p)}\in\mathbb{Z}_+^w\times\Lambda_\ell
\times\mathbb{Z}_+^{\frac{s}{2}}\times\Lambda_{\lceil\frac{\sfr}{2}\rceil}\bigg\}
\end{equation*}form a basis of the Verma module $Z_{W_\chi'}(\lambda,c)$ over $\bbc$. Denote by $Z^+_{W_\chi'}(\lambda,c)$ the $\mathbb{C}$-span of all $\prod_{i=1}^w\Theta_{x_i}^{a_i}\cdot\prod_{i=1}^\ell\Theta_{y_i}^{c_i}\cdot
\prod_{i=1}^{\frac{s}{2}}\Theta_{f_i}^{m_i}\cdot\prod_{i=1}^{\lceil\frac{\sfr}{2}\rceil}
\Theta_{g_i}^{p_i}(v_0)$ with $\sum_{i=1}^{w}a_i+\sum_{i=1}^{\ell}c_i+\sum_{i=1}^{\frac{s}{2}}m_i+
\sum_{i=1}^{\lceil\frac{\sfr}{2}\rceil}p_i>0$. Set $Z^{\rm max}_{W_\chi'}(\lambda,c)$ to be the sum of all $W_\chi'$-submodules of $Z_{W_\chi'}(\lambda,c)$ contained in $Z^+_{W_\chi'}(\lambda,c)$, and let
\begin{equation*}
L_{W_\chi'}(\lambda,c):=Z_{W_\chi'}(\lambda,c)/Z^{\rm max}_{W_\chi'}(\lambda,c).
\end{equation*}

Under the settings above, we can introduce the main result of this subsection.
\begin{theorem}\label{verma}Keep the conventions as above. The following statements hold:
\begin{itemize}
\item[(1)] $Z^{\rm max}_{W_\chi'}(\lambda,c)$ is the unique maximal submodule of the Verma module $Z_{W_\chi'}(\lambda,c)$, and $L_{W_\chi'}(\lambda,c)$ is a simple $W_\chi'$-module of type $M$.
\item[(2)] The simple $W_\chi'$-modules $L_{W_\chi'}(\lambda,c)$ and $L_{W_\chi'}(\lambda',c')$ are isomorphic if and only if $(\lambda,c)=(\lambda',c')$.
\item[(3)] Any finite-dimensional simple $W_\chi'$-module is isomorphic to one of the modules $L_{W_\chi'}(\lambda,c)$ for some $\lambda\in(\mathfrak{h}^e)^*$ satisfying $\lambda(h_\alpha)\in\mathbb{Z}_+$ for all $\alpha\in(\Phi^+_{e,0})_{\bar0}$. We further have that $c$ is a rational number in the case when $\ggg$ is a simple Lie algebra except type $A(m)$, or when $\ggg=\mathfrak{psl}(2|2)$, $\ggg=\mathfrak{spo}(2m|1)$ with $m\geqslant2$  such that $\ggg^e(0)=\mathfrak{spo}(2m-2|1)$, or when $\ggg=\mathfrak{spo}(2|m)$ with $\ggg^e(0)=\mathfrak{so}(m)$, or when $\mathfrak{osp}(4|2m)$ with $\ggg^e(0)=\mathfrak{sl}(2)\oplus\mathfrak{sp}(2m)$, or when $\ggg=\mathfrak{osp}(5|2m)$ with $\ggg^e(0)=\mathfrak{osp}(1|2m)\oplus\mathfrak{sl}(2)$, or when $\ggg=D(2,1;\alpha)$ with $\alpha\in\overline\bbq$, or when $\ggg=F(4)$ with $\ggg^e(0)=\mathfrak{so}(7)$, or when $\ggg=G(3)$ with $\ggg^e(0)=G(2)$.
\end{itemize}
\end{theorem}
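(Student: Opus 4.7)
\medskip

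\noindent\textbf{Proof proposal.} The plan is to run the argument for Theorem \ref{verma2} in parallel for $W_\chi'$, exploiting Lemma \ref{left ideal} in place of Lemma \ref{left ideal2}, and recording the one structural change that accounts for type $M$ versus type $Q$. First, since $J_{\lambda,c}$ is a left ideal by Lemma \ref{left ideal}, the quotient $Z_{W_\chi'}(\lambda,c)$ is a well-defined cyclic $W_\chi'$-module whose stated PBW basis is immediate from Theorems \ref{ge} and \ref{maiin1} (for the even case) and from the type-odd analogue in Remark \ref{keyremark}. The goal of Statement (1) is to show $Z^+_{W_\chi'}(\lambda,c)$ absorbs every proper submodule.

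To do this I would introduce a grading operator as in the proof of Theorem \ref{verma2}: for $\alpha=\sum n_i\alpha_i\in\Phi$ set $\mathrm{ht}_\theta(\alpha)$ to be the sum of the $n_i$ over simple roots distinct from $\theta$ (type even) or from $\theta/2$ (type odd), and take $h_0\in\mathfrak{h}$ with $[h_0,e_\alpha]=\mathrm{ht}_\theta(\alpha)e_\alpha$; Convention \ref{conventions} guarantees $h_0\in\mathfrak{h}^e$ and $\mathrm{ht}_\theta(\pm\theta)=\mathrm{ht}_\theta(\pm\theta/2)=0$. Then $\Theta_{h_0}$ acts on $Z_{W_\chi'}(\lambda,c)$ with eigenvalue $\lambda(h_0)$ on the top line $\mathbb{C}v_0$ and with eigenvalues $\lambda(h_0)-k$, $k\in\mathbb{Z}_{>0}$, on $Z^+_{W_\chi'}(\lambda,c)$, because every monomial raising element $\Theta_{x_i},\Theta_{y_i},\Theta_{f_i},\Theta_{g_i}$ corresponds to a negative root whose $\mathrm{ht}_\theta$-value is strictly positive. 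The crucial point distinguishing this situation from Theorem \ref{verma2} is that $W_\chi'$ does not contain the generator $\Theta_{v_{(\sfr+1)/2}}$, so the $\lambda(h_0)$-eigenspace of $Z_{W_\chi'}(\lambda,c)$ is exactly $\mathbb{C}v_0$. Any nonzero submodule not contained in $Z^+_{W_\chi'}(\lambda,c)$ must contain $v_0$ and hence is all of $Z_{W_\chi'}(\lambda,c)$; this yields the unique maximal submodule $Z^{\rm max}_{W_\chi'}(\lambda,c)$ and proves $L_{W_\chi'}(\lambda,c)$ is simple. The type is $M$ rather than $Q$ precisely because the one-dimensionality of the top eigenspace forces $\mathrm{End}_{W_\chi'}L_{W_\chi'}(\lambda,c)=\mathbb{C}\,\mathrm{id}$.

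For Statement (2), if $L_{W_\chi'}(\lambda,c)\cong L_{W_\chi'}(\lambda',c')$ then comparing the $\Theta_{h_0}$-spectra, which lie in $\lambda(h_0)-\mathbb{Z}_+$ and $\lambda'(h_0)-\mathbb{Z}_+$, forces $\lambda(h_0)=\lambda'(h_0)$, and then the top eigenspaces give an isomorphism of one-dimensional modules over the commutative subalgebra $\Theta_{\mathfrak{h}^e}\oplus\mathbb{C}C$, forcing $\lambda=\lambda'$ and $c=c'$. For Statement (3), let $M$ be finite-dimensional simple; centrality of $C$ (Theorem \ref{maiin1}(4)) together with Schur's lemma pins down the $C$-eigenvalue, and commutativity of $\Theta_{\mathfrak{h}^e}$ together with finite-dimensionality gives a weight-space decomposition; the partial order \eqref{parord} on $(\mathfrak{h}^e)^*$ supplies a maximal weight $\lambda$, killed by all $\Theta_{x_i^*},\Theta_{y_i^*},\Theta_{f_i^*},\Theta_{g_i^*}$ and (in type odd) by $\Theta_{[v_{(\sfr+1)/2},e]}$, so there is a surjection from $Z_{W_\chi'}(\lambda,c)$ onto $M$; reducing to the $\mathfrak{sl}(2)$-triples $(\Theta_{e_\alpha},\Theta_{h_\alpha},\Theta_{e_{-\alpha}})$ with $\alpha\in(\Phi^+_{e,0})_{\bar0}$ via Theorem \ref{maiin1}(1) gives $\lambda(h_\alpha)\in\mathbb{Z}_+$.

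The main obstacle, and the only place the case analysis really enters, is the rationality of $c$. Here I would argue as in Theorem \ref{verma2}(3): under the listed hypotheses, Lemma \ref{semi} ensures all finite-dimensional $\ggg^e(0)$-modules are completely reducible, so $M$ is completely reducible over $\Theta_{\ggg^e(0)}\cong\ggg^e(0)$ via the isomorphism coming from Theorem \ref{maiin1}(1). Taking $u,v\in\ggg_{\mathbb{Q}}^e(1)$ with $([u,v],f)=2$ and using that $z_\alpha,z_\alpha^*$ lie in the $\mathbb{Q}$-form, the highest-weight theory yields a $\Theta_{\ggg_{\mathbb{Q}}^e(0)}$-stable $\mathbb{Q}$-form in $M$; then $\mathrm{tr}_M[\Theta_u,\Theta_v]=0$ together with the quadratic relation Theorem \ref{maiin1}(3) forces $(c-c_0)\dim M\in\mathbb{Q}$, and $c_0\in\mathbb{Q}$ by \eqref{decidec0}, so $c\in\mathbb{Q}$. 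The only subtle point is to check the complete list in Statement (3): one simply reads off from Tables 2 and 3 together with Lemma \ref{semi} those $\ggg$ for which $\ggg^e(0)$ has completely reducible finite-dimensional representations, and in the remaining cases the rationality claim is not asserted. Throughout, the even-case and odd-case arguments differ only in the choice of simple root ($\theta$ versus $\theta/2$) used to define $\mathrm{ht}_\theta$ and in the use of \eqref{c0clambda}; both are handled uniformly once Lemma \ref{left ideal} is in place.
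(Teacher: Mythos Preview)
Your proposal is correct and follows essentially the same approach as the paper, which simply instructs the reader to repeat verbatim the proof of Theorem~\ref{verma2} with Lemmas~\ref{semi} and~\ref{left ideal} in hand; you have merely supplied the details and correctly identified the one structural change (absence of $\Theta_{v_{(\sfr+1)/2}}$ in $W_\chi'$) that forces the top eigenspace to be one-dimensional and hence yields type $M$ rather than type $Q$. One small imprecision: in type odd, annihilation of the maximal-weight vector by $\Theta_{[v_{(\sfr+1)/2},e]}$ does not follow from maximality in the partial order (since $\tfrac{\theta}{2}$ restricts to zero on $\mathfrak{h}^e$) but rather, as in the paper's proof of Theorem~\ref{verma2}(3), from finite-dimensionality of $M$, which allows one to pass to an eigenvector for this nilpotent operator within the maximal weight space.
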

\begin{proof}
Take Lemmas \ref{semi} and \ref{left ideal} into consideration, repeat verbatim the proof of Theorem \ref{verma2}. Then the theorem can be proved.
\end{proof}

\section{The associated Whittaker categories of $\ggg$}\label{4}
In this section, we will relate Verma modules $Z_{U(\ggg,e)}(\lambda,c)$ for minimal finite $W$-superalgebra $U(\ggg,e)$ of both types to $\ggg$-modules obtained by parabolic induction from Whittaker modules for $\mathfrak{osp}(1|2)$ or $\mathfrak{sl}(2)$, respectively. 
Combining this with the related results on Whittaker categories in \cite{C,C3}, we obtain a complete solution to the problem
of determining the composition multiplicities of Verma modules $Z_{U(\ggg,e)}(\lambda,c)$ in terms
of composition factors of Verma modules for $U(\ggg)$ in the ordinary BGG category $\mathcal{O}$.


Although the tools we applied for both types are similar,  the discussion for minimal finite $W$-superalgebras of type odd is much more difficult. Therefore, we will give a detailed exposition for the case of type odd, and then sketch the case of type even.

\subsection{}\label{skryabin}
Recall that a ${\ggg}$-module $M$ is called a Whittaker module if $a-\chi(a)$ acts on $M$ locally nilpotently for each $a\in{\mmm}$. A Whittaker vector in a Whittaker ${\ggg}$-module $M$ is a vector $v\in M$ which satisfies $(a-\chi(a))v=0,~\forall a\in{\mmm}$.
Let $\mathcal{C}_\chi$ denote the category of finitely generated Whittaker ${\ggg}$-modules. 
Write $$\text{Wh}(M)=\{v\in M\mid (a-\chi(a))v=0,\forall a\in{\mmm}\}$$ the subspace of all Whittaker vectors in $M$. For $M\in\mathcal{C}_\chi$, it is obvious that $\text{Wh}(M)=0$ if and only of $M=0$.

For any $y\in U({\ggg})$, denote by $\text{Pr}(y)\in U({\ggg})/I_\chi$ the coset associated to $y$. Given a Whittaker ${\ggg}$-module $M$ with an action map $\rho$, since $U({\ggg},e)\cong Q_{\chi}^{\ad\,\mmm}$ as $\bbc$-algebras, $\text{Wh}(M)$ is naturally a $U({\ggg},e)$-module by letting $\text{Pr}(y).v=\rho(y)v$ for $v\in\text{Wh}(M)$ and $\text{Pr}(y)\in U({\ggg})/I_\chi$.
For a $U({\ggg},e)$-module $M$, $Q_\chi\otimes_{U({\ggg},e)}M$ is a Whittaker ${\ggg}$-module by letting $y.(q\otimes v)=(y.q)\otimes v$ for $y\in U({\ggg})$ and $q\in Q_\chi, v\in M$.

Let $U({\ggg},e)$-$\mathbf{mod}$ be the category of finitely generated $U({\ggg},e)$-modules (here $U({\ggg},e)$ denotes a finite $W$-superalgebra in the general case, not just for minimal ones).
In \cite[Theorem 2.17]{ZS2}, we introduced Skryabin's equivalence between the finitely generated Whittaker ${\ggg}$-modules and finitely generated $U({\ggg},e)$-modules, i.e.,
\begin{theorem}\label{skry}The functor $Q_\chi\otimes_{U({\ggg},e)}-:U({\ggg},e)$-$\mathbf{mod}\longrightarrow
\mathcal{C}_\chi$ is an equivalence of categories, with $\text{Wh}:\mathcal{C}_\chi\longrightarrow U({\ggg},e)$-$\mathbf{mod}$ as its quasi-inverse.
\end{theorem}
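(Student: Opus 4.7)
The plan is to establish the equivalence by exhibiting the unit and counit of the evident adjunction $(Q_\chi \otimes_{U(\ggg,e)} -\,,\,\text{Wh})$ as natural isomorphisms. First I would check both functors are well-defined on the stated categories: for $N \in \mathcal{C}_\chi$, the subspace $\text{Wh}(N)$ inherits a $U(\ggg,e)$-module structure through the identification $U(\ggg,e)\cong Q_\chi^{\ad\,\mmm}$ recalled in \S\ref{1.2}, and one checks $\text{Wh}(N)$ is finitely generated using a Kazhdan-filtration argument together with the finite generation of $N$. Conversely, for $M$ a finitely generated $U(\ggg,e)$-module, $Q_\chi \otimes_{U(\ggg,e)} M$ is a Whittaker module because $a - \chi(a)$ already acts locally nilpotently on $Q_\chi$ for every $a \in \mmm$, and the finite generation is inherited from $M$.

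The technical core is to prove that $Q_\chi$ is free as a right $U(\ggg,e)$-module. I would equip $Q_\chi$ and $U(\ggg,e)$ with the Kazhdan filtration introduced in \S\ref{1.2}. Passing to the associated graded, PBW gives $\gr(Q_\chi)\cong S(\ggg/\mmm)$, while Theorem \ref{algiso4.6} (combined with Proposition \ref{fiiso} in the odd case) identifies the image of $\gr(U(\ggg,e))$ with $S(\ggg^e)$ when $\sfr$ is even and with $S(\ggg^e)\otimes\bbc[\Lambda]$ when $\sfr$ is odd. A transversality argument — reflecting that the affine subspace $e+\ggg^f$ is a transverse slice to the $G_\bz$-orbit of $e$ — shows $\gr(Q_\chi)$ is free as a module over the image of $\gr(U(\ggg,e))$ with a basis given by lifts of a basis of $S(\mmm^\perp\cap[\ggg,f])$. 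A standard filtered-to-graded lifting then promotes this to freeness of $Q_\chi$ over $U(\ggg,e)$.

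With freeness secured, the unit $M \to \text{Wh}(Q_\chi\otimes_{U(\ggg,e)} M)$ is an isomorphism: freeness lets us write $Q_\chi = U(\ggg,e)\cdot 1_\chi \oplus C$ as a right $U(\ggg,e)$-module, where $C$ contains no Whittaker vectors (indeed, on the graded level the complementary basis vectors lie in positive Kazhdan degree over $\mmm$, so the twisted $\mmm$-action on $C\otimes M$ has no nonzero invariants), whence $\text{Wh}(Q_\chi\otimes_{U(\ggg,e)} M) = 1_\chi\otimes M \cong M$. For the counit $Q_\chi\otimes_{U(\ggg,e)}\text{Wh}(N)\to N$, surjectivity follows by a local-nilpotency argument: any finitely generated Whittaker $\ggg$-module is generated, via the action of $Q_\chi = U(\ggg)\otimes_{U(\mmm)}\bbc_\chi$, by $\text{Wh}(N)$, since each generator of $N$ lies in a finite-dimensional $\mmm$-submodule (under the twisted action $x\mapsto x-\chi(x)$) and hence in the $\mmm$-orbit of Whittaker vectors; injectivity is again a consequence of freeness, because any element of the kernel would yield a $U(\ggg,e)$-linear relation among Whittaker vectors contradicting the direct summand decomposition above.

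The main obstacle will be establishing the freeness of $Q_\chi$ over $U(\ggg,e)$ in the super setting, particularly when $\sfr$ is odd. The asymmetry between $\mmm$ and the extended admissible subalgebra $\mmm'$ — which arises from the extra odd element $v_{\frac{\sfr+1}{2}}\notin\mmm$ — means one must carefully keep track of the distinction between $U(\ggg,e)$ and the refined $W$-superalgebra $W'_\chi$; the refined PBW structure of Theorem \ref{PBWQC} together with the identification in Proposition \ref{fiiso} provides exactly the bookkeeping needed so that the transversality argument and the lifting from graded to filtered algebras both go through in the $\bbz_2$-graded setting.
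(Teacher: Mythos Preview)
The paper does not actually prove Theorem \ref{skry}; it merely quotes the result from \cite[Theorem 2.17]{ZS2}, so there is no ``paper's own proof'' to compare against. Your outline is a reasonable sketch of the standard Skryabin-type argument (freeness of $Q_\chi$ over $U(\ggg,e)$ via Kazhdan filtration and transversality, then verifying the unit and counit are isomorphisms), which is indeed the strategy carried out in \cite{ZS2} for the super case, following Premet and Skryabin in the non-super setting.

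That said, a couple of points in your sketch would need tightening if you were to write this out in full. Your claim that the complement $C$ in $Q_\chi = U(\ggg,e)\cdot 1_\chi \oplus C$ ``contains no Whittaker vectors'' is not quite the right formulation: the issue is whether $C\otimes M$ contributes to $\text{Wh}(Q_\chi\otimes_{U(\ggg,e)}M)$, and this requires a more careful filtration argument than just saying basis vectors lie in positive Kazhdan degree. Similarly, your injectivity argument for the counit is underspecified; in the standard proof one typically shows that $Q_\chi\otimes_{U(\ggg,e)}-$ is exact (from freeness) and that $\text{Wh}$ detects nonzero objects in $\mathcal{C}_\chi$, then argues via a five-lemma or short-exact-sequence chase rather than directly. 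These are not fatal gaps, just places where the sketch elides genuine work.
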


\subsection{}\label{4.1}
In this part we consider minimal finite $W$-superalgebras $U(\ggg,e)$ of type odd, with $\ggg$ given in Table 3.
\subsubsection{}\label{4.1.1}
To describe the composition factors of the Verma modules  $Z_{U(\ggg,e)}(\lambda,c)$ with their multiplicities, we are going to establish a link between these $U(\ggg,e)$-modules and the $\ggg$-modules obtained by parabolic induction from Whittaker modules for $\mathfrak{osp}(1|2)$. The Skryabin's equivalence in Theorem \ref{skry} will be relied on; the Kazhdan filtration of $U(\ggg,e)$ will play an important role too.

For the topic of  Whittaker modules for Lie superalgebras, there have been a lot of results on it. 
Whittaker categories for Lie superalgebras were defined and a category decomposition was presented by Bagci-Christodoulopoulou-Wiesner in \cite{Bag}. As a further work, Chen \cite{C} classified simple Whittaker modules for classical Lie superalgebras in terms of their parabolic decompositions, and established a type of Mili\v{c}i\'{c}-Soergel equivalence of a category of Whittaker modules and a category of Harish-Chandra bimodules. Furthermore, for classical Lie superalgebras of type I, the problem of composition factors of standard Whittaker modules  (i.e., the parabolic induced modules from Whittaker modules) was reduced to that of Verma modules in their BGG category $\mathcal{O}$ there.
For any quasi-reductive Lie superalgebra (including all the basic classical ones), Chen-Cheng \cite[Theorem 1]{C3} recently gave a complete solution to the problem of determining the composition factors of the standard Whittaker modules in terms of composition factors of Verma modules for $U(\ggg)$ in the ordinary BGG category $\mathcal{O}$. In most cases (including all basic Lie superalgebras of type $A,B,C,D$), the latter can be computed by related works (e.g., \cite{Bao,BW,Br,Br2,CCM,CLW2,CLW,CMW}). 
\subsubsection{}\label{4.2.2}
Denote by $\mathfrak{s}_\theta$ the subalgebra of $\ggg$ spanned by
\begin{equation}\label{(e,h,f,E,F)}
(e,h,f,E,F):=\bigg(e_\theta,h_\theta,e_{-\theta},[\sqrt{-2} v_{\frac{\sfr+1}{2}},e_\theta],\sqrt{-2}v_{\frac{\sfr+1}{2}}\bigg).
\end{equation}
Taking Theorem \ref{main3}, \eqref{veve} and \eqref{vev} into account, it is readily to check that
\begin{lemma}\label{osp12genecom}
The subalgebra $\mathfrak{s}_\theta$ of $\ggg$ is isomorphic to Lie superalgebra $\mathfrak{osp}(1|2)$ with even subalgebra generated by $\{e,h,f\}$ and odd subalgebra generated by $\{E,F\}$. The commutators of these basis elements are given by
\[
\begin{array}{llll}
[h,e]=2e,&[h,f]=-2f,&[e,f]=h,&[h,E]=E,\\
\,[h,F]=-F,&[e,E]=0,&[e,F]=-E,&[f,E]=-F,\\
\,[f,F]=0,&[E,E]=2e,&[E,F]=h,&[F,F]=-2f.
\end{array}\]
\end{lemma}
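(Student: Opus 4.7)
The plan is to verify the nine listed bracket identities one by one, exploiting the short $\mathbb{Z}$-grading $\ggg=\ggg(-2)\oplus\cdots\oplus\ggg(2)$, the normalization $\chi([v_i,v_j])=\delta_{i+j,\sfr+1}$ on $\ggg(-1)_{\bar 1}$, and the graded Jacobi identity. Once these are confirmed, the isomorphism with $\mathfrak{osp}(1|2)$ is immediate from its standard presentation, under which the even part is the $\mathfrak{sl}(2)$-triple $(e,h,f)$ and the odd part $(E,F)$ is its two-dimensional irreducible module.

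First I would dispose of the $\mathfrak{sl}(2)$-triple relations $[h,e]=2e$, $[h,f]=-2f$, $[e,f]=h$, which are just the normalization of $(e,f,h)$ fixed in \S\ref{3.1.1}. Since $F=\sqrt{-2}\,v_{(\sfr+1)/2}\in\ggg(-1)_{\bar 1}$, the $\mathrm{ad}\,h$-eigenvalue on $F$ is $-1$, giving $[h,F]=-F$; applying graded Jacobi to $[h,[F,e]]$ together with $[h,e]=2e$ then yields $[h,E]=E$. The vanishing relations $[e,E]=0$ and $[f,F]=0$ are pure weight-grading statements, since their targets lie in $\ggg(3)=0$ and $\ggg(-3)=0$ respectively. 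From $E=[F,e]$ and super skew-symmetry one obtains $[e,F]=-E$ directly, and super Jacobi expands $[f,E]=[f,[F,e]]$ as $[[f,F],e]+[F,[f,e]]=0+[F,-h]=-F$.

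The three odd/odd brackets require the normalization on $\ggg(-1)_{\bar 1}$. In particular $\chi([v_{(\sfr+1)/2},v_{(\sfr+1)/2}])=1$ forces $[v_{(\sfr+1)/2},v_{(\sfr+1)/2}]=f$ inside $\ggg(-2)=\mathbb{C}f$ (using $(e,f)=1$), so multiplying by $(\sqrt{-2})^2=-2$ gives $[F,F]=-2f$. For $[E,F]=h$, the key move is to apply graded Jacobi to $[F,[F,e]]$: since $|F|=\bar 1$, this becomes $2[F,[F,e]]=[[F,F],e]=[-2f,e]=2h$, whence $[F,E]=h$ and $[E,F]=h$ follows by super skew-symmetry (which is trivial here because both entries are odd). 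A final expansion $[E,[F,e]]=[[E,F],e]-[F,[E,e]]=[h,e]-0=2e$ yields $[E,E]=2e$.

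The only real obstacle is keeping track of the signs under graded skew-symmetry and the graded Jacobi identity; beyond that the argument is an entirely mechanical combination of the grading, the $\mathfrak{sl}(2)$-triple relations, and the single normalization $[v_{(\sfr+1)/2},v_{(\sfr+1)/2}]=f$. Comparing the resulting nine brackets with the defining relations of $\mathfrak{osp}(1|2)$ then identifies $\mathfrak{s}_\theta$ with that superalgebra and completes the verification.
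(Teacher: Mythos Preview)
Your proposal is correct and follows essentially the same approach as the paper: a direct verification of the bracket relations using the short $\mathbb{Z}$-grading, the $\mathfrak{sl}(2)$-triple relations, and the normalization $[v_{(\sfr+1)/2},v_{(\sfr+1)/2}]=f$ coming from $\chi([v_{(\sfr+1)/2},v_{(\sfr+1)/2}])=1$ and $(e,f)=1$. The paper simply states that the identities are ``readily checked'' while referencing the computations \eqref{veve} and \eqref{vev} (which are the unscaled versions of $[E,E]=2e$ and $[E,F]=h$) carried out elsewhere; your write-up is just a more explicit, self-contained version of the same calculation.
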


Put
\begin{equation}\label{pnsodd}
\mathfrak{p}_\theta:=\mathfrak{s}_\theta+\mathfrak{h}+\sum_{\alpha\in\Phi^+}\bbc e_\alpha,\qquad
\mathfrak{n}_\theta:=\sum_{\alpha\in\Phi^+\backslash\{\frac{\theta}{2},\theta\}}\bbc e_\alpha,\qquad \widetilde{\mathfrak{s}}_\theta:=\mathfrak{h}^e\oplus\mathfrak{s}_\theta.
\end{equation}
It is obvious that $\mathfrak{p}_\theta=\widetilde{\mathfrak{s}}_\theta\oplus\mathfrak{n}_\theta$ is a parabolic subalgebra of $\ggg$ with nilradical $\mathfrak{n}_\theta$ and $\widetilde{\mathfrak{s}}_\theta$ is a Levi subalgebra of $\mathfrak{p}_\theta$. Set \begin{equation}\label{Ctheta}
C_\theta:=ef+fe+\frac{1}{2}h^2-\frac{1}{2}EF+\frac{1}{2}FE=2ef+\frac{1}{2}h^2-\frac{3}{2}h+FE
\end{equation} to be a Casimir element of $U(\mathfrak{s}_\theta)$. Given $\lambda\in(\mathfrak{h}^e)^*$, write $I_\theta(\lambda)$ for the left ideal of $U(\mathfrak{p}_\theta)$ generated by $f-1$, $E-\frac{3}{4}F+\frac{1}{2}Fh$ (this requirement will be explained in \eqref{EFh}), $C_\theta+\frac{1}{8}$ (this requirement will be explained in \eqref{ctheta1lambda}), all $t-\lambda(t)$ with $t\in\mathfrak{h}^e$, and all $e_\gamma$ with $\gamma\in\Phi^+\backslash\{\frac{\theta}{2},\theta\}$.

Set $Y(\lambda):=U(\mathfrak{p}_\theta)/I_\theta(\lambda)$ to be a $\mathfrak{p}_\theta$-module with the trivial action of $\mathfrak{n}_\theta$, and let $1_{\lambda}$ denote the image of $1$ in $Y(\lambda)$. Since $f.1_{\lambda}=1_{\lambda}$ by definition, then
\begin{equation*}
\begin{split}
e.1_{\lambda}=&\frac{1}{2}\bigg(C_\theta-\frac{1}{2}h^2+\frac{3}{2}h-FE\bigg).1_{\lambda}\\
=&\bigg(-\frac{1}{4}h^2+\frac{3}{4}h
+\frac{1}{2}\Big(-\frac{3}{4}F^2+\frac{1}{2}F^2h\Big)-\frac{1}{16}\bigg).1_{\lambda}\\
=&\bigg(-\frac{1}{4}h^2+\frac{1}{4}h-\frac{5}{16}\bigg).1_{\lambda}.
\end{split}
\end{equation*}
Combining this with the PBW theorem we see that the vectors $\{F^kh^l\cdot1_{\lambda}\mid k\in\Lambda_1, l\in\mathbb{Z}_+\}$ form a $\bbc$-basis of $Y(\lambda)$ (the independence of these vectors follows from the fact that $Y(\lambda)$ is infinite-dimensional). Moreover, one can easily conclude that $Y(\lambda)$
is isomorphic to a Whittaker module for $\mathfrak{s}_\theta\cong\mathfrak{osp}(1|2)$.

It follows from the discussion above that the vectors
\begin{equation*}
\begin{split}
&m({\bf i,j,\iota,k,l,m,n},t):=\\
&u_1^{i_1}\cdots u_{\frac{s}{2}}^{i_{\frac{s}{2}}}\cdot v_1^{j_1}\cdots v_{\frac{\sfr-1}{2}}^{j_{\frac{\sfr-1}{2}}}\cdot v_{\frac{\sfr+1}{2}}^{\iota}\cdot x_1^{k_1}\cdots x_w^{k_w}\cdot y_1^{l_1}\cdots y_\ell^{l_\ell}\cdot f_1^{m_1}\cdots f_{\frac{s}{2}}^{m_{\frac{s}{2}}}\cdot
g_1^{n_1}\cdots g_{\frac{\sfr-1}{2}}^{n_{\frac{\sfr-1}{2}}}\cdot h^t(1_{\lambda})
\end{split}
\end{equation*}
with ${\bf i,m}\in\mathbb{Z}_+^{\frac{s}{2}}$, ${\bf j,n}\in\Lambda_{\frac{\sfr-1}{2}}$, $\iota\in\Lambda_1$, ${\bf k}\in\mathbb{Z}_+^w$, ${\bf l}\in\Lambda_\ell$, and $t\in\mathbb{Z}_+$ form a $\bbc$-basis of the induced $\ggg$-module
\begin{equation*}
M(\lambda):=U(\ggg)\otimes_{U(\mathfrak{p}_\theta)}Y(\lambda).
\end{equation*}
\subsubsection{}\label{5.3}
Keep the notations as in \eqref{deltarho}. Denote by
\begin{equation}\label{defofepsilon}
\epsilon:=c_0+\frac{1}{8}+2\sum_{i=1}^{k-1}\rho_{e,0}(h_i)\delta(h_i)+3\sum_{i=1}^{k-1}\delta(h_i)^2.
\end{equation}
Since the element $C$  lies in the center of $U(\ggg,e)$, and the Verma module $Z_{U(\ggg,e)}(\lambda,c)$ is a cycle $U(\ggg,e)$-module, then $C$ acts on $Z_{U(\ggg,e)}(\lambda,c)$ as the scalar $c$.
We introduce the twisted action of $U(\ggg,e)$ on $Z_{U(\ggg,e)}(\lambda,c)$ as follows: for any $v\in Z_{U(\ggg,e)}(\lambda,c)$, set
\begin{equation}\label{twisted}
C.v=\text{tw}\,(C)(v):=(C-\epsilon)(v)=(c-\epsilon)(v),
\end{equation}
while keep the action of all the other generators of $U(\ggg,e)$ (as defined in Theorem \ref{main3}) on $v$ as usual.

Since the restriction of $(\cdot,\cdot)$ to $\mathfrak{h}^e$ is non-degenerate, for any $\eta\in(\mathfrak{h}^e)^*$ there exists a unique $t_\eta$ in $\mathfrak{h}^e$ with $\eta=(t_\eta,\cdot)$. Hence $(\cdot,\cdot)$ induces a non-degenerate bilinear form on $(\mathfrak{h}^e)^*$ via $(\mu,\nu):=(t_\mu,t_\nu)$ for all $\mu,\nu\in(\mathfrak{h}^e)^*$. For a linear function $\varphi$ on $\mathfrak{h}$ we denote by $\bar{\varphi}$ the restriction of $\varphi$ to $\mathfrak{h}^e$.

Under the above settings, we can introduce the proof of Theorem \ref{Whcchi}.
It is remarkable that there exists great distinction between the structure theory of finite $W$-algebra in \cite{P2} and its super case in \cite{ZS2}.
Therefore, as a super version of \cite[Theorem 7.1]{P3}, one can observe significant differences not only in the exposition, but also for the proofs.
  \vskip0.2cm
  \noindent\textsc{\bf The proof of Theorem \ref{Whcchi}}.
(1) Set $M:=M(\lambda)$, and let $M_0$ and $M_1$ denote the $\bbc$-span of all $m({\bf i,j,\iota,k,l,m,n},t)$ in $M$ with $|{\bf i}|+|{\bf j}|+t=0$ and $|{\bf i}|+|{\bf j}|+t>0$, respectively. Then $M=M_0\oplus M_1$ as vector space. Denote by $\text{pr}:M=M_0\oplus M_1\twoheadrightarrow M_0$ the first projection.

Let $M^k$ denote the $\bbc$-span in $M$ of all $m({\bf i,j,\iota,k,l,m,n},t)$ of Kazhdan degree $\leqslant k$. Then $\{M^k\mid k\in\bbz_+\}$ is an increasing filtration in $M$ and $M^0=\bbc1_{\lambda}$. Taking $U(\ggg)$ with its Kazhdan filtration we thus regard $M$ as a filtrated $U(\ggg)$-module.

Set $z=\lambda f+\sum_{i=1}^{\frac{s}{2}}\mu_iu_i^*+\sum_{i=1}^{\frac{\sfr-1}{2}}\nu_iv_i^*\in
\mathfrak{m}$, where $\lambda, \mu_i, \nu_i\in\bbc$. Since $u_i^*, v_j^*\in\mathfrak{n}_\theta$
for $1\leqslant i\leqslant \frac{s}{2}$ and $1\leqslant j\leqslant\frac{\sfr-1}{2}$, and $f.1_{\lambda}=1_{\lambda}$ by definition, we have $z.1_{\lambda}=\lambda\cdot1_{\lambda}=\chi(z)\cdot1_{\lambda}$. As $z$ acts locally nilpotently on $U(\ggg)$, we can deduce that $z-\chi(z)$ acts locally nilpotently on $M$ for all $z\in\mathfrak{m}$. Therefore, $M$ is an object of $\mathcal{C}_\chi$. It follows from the discussion in \S\ref{skryabin} that $\text{Wh}(M)\neq0$, the algebra $U(\ggg,e)$ acts on $\text{Wh}(M)$, and $M\cong Q_\chi\otimes_{U(\ggg,e)}\text{Wh}(M)$ as $\ggg$-modules.

(2) For $1\leqslant l\leqslant\frac{\sfr-1}{2}$, since
$(v_l^2).1_{\lambda}=\frac{1}{2}[v_l,v_l].1_{\lambda}=0$,
now observe that
\begin{equation*}
\begin{split}
u^*_k.m({\bf i,j,\iota,k,l,m,n},t)
\in 
&\sum_{j=0}^ti_k\cdot C_t^j2^j\cdot m({\bf i-e}_k,{\bf j,\iota,k,l,m,n},t-j)\\
&+\text{span}\{m({\bf i',j',\iota',k',l',m',n'},t')\mid|{\bf i'}|+|{\bf j'}|\geqslant|{\bf i}|+|{\bf j}|\},\\
v^*_l.m({\bf i,j,\iota,k,l,m,n},t)
\in&
\sum_{i=0}^t(-1)^{\sum_{r=1}^{l-1}j_r}j_l\cdot C_t^i2^i\cdot m({\bf i},{\bf j-e}_l,\iota,{\bf k,l,m,n},t-i)
\\&+\text{span}\{m({\bf i',j',\iota',k',l',m',n'},t')\mid|{\bf i'}|+|{\bf j'}|\geqslant|{\bf i}|+|{\bf j}|\}\\
\end{split}
\end{equation*}
for all $1\leqslant k\leqslant \frac{s}{2}, 1\leqslant l\leqslant\frac{\sfr-1}{2}$, and for $t>0$ we have
\begin{equation*}
\begin{split}
(f-1).m({\bf i,j,\iota,k,l,m,n},t)
\in
&2^t\cdot m({\bf i},{\bf j,\iota,k,l,m,n},0)
\\&+\text{span}\{m({\bf i',j',\iota',k',l',m',n'},l')\mid l'>0\}.
\end{split}
\end{equation*}
From all these it is immediate that the map $\text{pr}:\text{Wh}(M)\rightarrow M_0$ is injective.

(3) (i) Note that $1_{\lambda}\in\text{Wh}(M)$, and for all $t\in\mathfrak{h}^e$ we have
\begin{equation}\label{calcuoft}
\begin{split}
\Theta_{t}(1_{\lambda})&=\bigg(t-\frac{1}{2}\sum\limits_{\alpha\in S(-1)}z_\alpha[z_\alpha^*,t]\bigg)(1_{\lambda})\\
&=\bigg(t-\frac{1}{2}\sum\limits_{i=\frac{s}{2}+1}^{s}u_i[u_i^*,t]
-\frac{1}{2}\sum\limits_{i=\frac{\sfr+1}{2}}^{\sfr}v_i[v_i^*,t]\bigg)(1_{\lambda})\\
&=\bigg(t+\frac{1}{2}\sum\limits_{i=1}^{\frac{s}{2}}[u_i^*,[u_i,t]]
-\frac{1}{2}\sum\limits_{i=1}^{\frac{\sfr-1}{2}}[v_i^*,[v_i,t]]-\frac{1}{2}
v_{\frac{\sfr+1}{2}}[v_{\frac{\sfr+1}{2}},t]\bigg)(1_{\lambda})\\
&=\bigg(\lambda(t)-\frac{1}{2}\sum\limits_{i=1}^{\frac{s}{2}}\gamma_{\bar0i}(t)f
+\frac{1}{2}\sum\limits_{i=1}^{\frac{\sfr-1}{2}}\gamma_{\bar1i}(t)f-\frac{1}{8}\theta(t)f\bigg)
(1_{\lambda})\\
&=\bigg(\lambda+\delta+\frac{2s-2\sfr+1}{8}\theta\bigg)(t)\cdot(1_{\lambda})
\\
&=(\lambda+\delta)(t)\cdot1_{\lambda}.
\end{split}
\end{equation}

Suppose $v\in\ggg^e(0)$ is a root vector for $\mathfrak{h}$ corresponding to root $\gamma\in\Phi^+_{e,0}$. As $\frac{\theta}{2}$ is a simple root  by  Convention \ref{conventions}, and $0=[h_\theta,v]=\gamma(h_\theta)v$, then $v,[u_i^*,[u_i,v]]\in\mathfrak{n}_\theta$ for all $1\leqslant i\leqslant \frac{s}{2}$, and
$[v_i^*,[v_i,v]]\in\mathfrak{n}_\theta$ for all $1\leqslant i\leqslant\frac{\sfr-1}{2}$. As we also have $[v_{\frac{\sfr+1}{2}},v]\in\mathfrak{n}_\theta$, then
\begin{equation*}
\begin{split}
\Theta_v(1_{\lambda})&=\bigg(v-\frac{1}{2}\sum\limits_{\alpha\in S(-1)}z_\alpha[z_\alpha^*,v]\bigg)(1_{\lambda})\\
&=\bigg(v-\frac{1}{2}\sum\limits_{i=\frac{s}{2}+1}^{s}u_i[u_i^*,v]
-\frac{1}{2}\sum\limits_{i=\frac{\sfr+1}{2}}^{\sfr}v_i[v_i^*,v]\bigg)(1_{\lambda})\\
&=\bigg(v+\frac{1}{2}\sum\limits_{i=1}^{\frac{s}{2}}[u_i^*,[u_i,v]]
-\frac{1}{2}\sum\limits_{i=1}^{\frac{\sfr-1}{2}}[v_i^*,[v_i,v]]-\frac{1}{2}
v_{\frac{\sfr+1}{2}}[v_{\frac{\sfr+1}{2}},v]\bigg)(1_{\lambda})\\
&=0.\\
\end{split}
\end{equation*}

(ii) Now let $w\in\ggg^e(1)$ be a root vector for $\mathfrak{h}$ corresponding to root $\gamma'\in\Phi^+_{e,1}$. Then
\begin{equation*}
\begin{split}
\Theta_w(1_{\lambda})=&\bigg(w-\sum\limits_{\alpha\in S(-1)}z_\alpha[z_\alpha^*,w]+\frac{1}{3}\Big(\sum\limits_{\alpha,\beta\in S(-1)}z_\alpha z_\beta[z_\beta^*,[z_\alpha^*,w]]-2[w,f]\Big)\bigg)(1_{\lambda})\\
=&\bigg(w-\sum\limits_{i=\frac{s}{2}+1}^{s}u_i[u_i^*,w]-\sum\limits_{i=\frac{\sfr+1}{2}}^{\sfr}v_i[v_i^*,w]
+\frac{1}{3}\Big(\sum\limits_{i,j=\frac{s}{2}+1}^{s}u_i u_j[u_j^*,[u_i^*,w]]\\
&+\sum\limits_{i=\frac{s}{2}+1}^{s}\sum\limits_{j=1}^{\frac{s}{2}}u_i u_j[u_j^*,[u_i^*,w]]
+\sum\limits_{i=1}^{\frac{s}{2}}\sum\limits_{j=\frac{s}{2}+1}^{s}u_i u_j[u_j^*,[u_i^*,w]]\\
&+2\sum\limits_{i=1}^{\frac{s}{2}}\sum\limits_{j=\frac{\sfr+1}{2}}^{\sfr}u_i v_j[v_j^*,[u_i^*,w]]
+2\sum\limits_{i=\frac{s}{2}+1}^{s}\sum\limits_{j=1}^{\frac{\sfr-1}{2}}u_i v_j[v_j^*,[u_i^*,w]]\\
&+2\sum\limits_{i=\frac{s}{2}+1}^{s}\sum\limits_{j=\frac{\sfr+1}{2}}^{\sfr}u_i v_j[v_j^*,[u_i^*,w]]
+\sum\limits_{i=1}^{\frac{\sfr-1}{2}}\sum\limits_{j=\frac{\sfr+1}{2}}^{\sfr}v_i v_j[v_j^*,[v_i^*,w]]\\
&+\sum\limits_{i=\frac{\sfr+1}{2}}^{\sfr}\sum\limits_{j=1}^{\frac{\sfr-1}{2}}v_i v_j[v_j^*,[v_i^*,w]]+\sum\limits_{i=\frac{\sfr+1}{2}}^{\sfr}\sum\limits_{j=\frac{\sfr+1}{2}}^{\sfr}v_i v_j[v_j^*,[v_i^*,w]]-2[w,f]\Big)\bigg)(1_{\lambda})\\
=&\bigg(w+\sum\limits_{i=1}^{\frac{s}{2}}[u_i^*,[u_i,w]]-\Big(\sum\limits_{i=1}^{\frac{\sfr-1}{2}}
[v_i^*,[v_i,w]]+v_{\frac{\sfr+1}{2}}[v_{\frac{\sfr+1}{2}},w]\Big)\\
&+\frac{1}{3}\Big(\sum\limits_{i,j=1}^{\frac{s}{2}}u_i^* u_j^*[u_j,[u_i,w]]-\sum\limits_{i,j=1}^{\frac{s}{2}}u_i^* u_j[u_j^*,[u_i,w]]
-\sum\limits_{i,j=1}^{\frac{s}{2}}u_i u_j^*[u_j,[u_i^*,w]]\\
&+2\sum\limits_{i=1}^{\frac{s}{2}}\sum\limits_{j=1}^{\frac{\sfr-1}{2}}u_i v_j^*[v_j,[u_i^*,w]]+2\sum\limits_{i=1}^{\frac{s}{2}}u_i v_{\frac{\sfr+1}{2}}[v_{\frac{\sfr+1}{2}},[u_i^*,w]]
-2\sum\limits_{i=1}^{\frac{s}{2}}\sum\limits_{j=1}^{\frac{\sfr-1}{2}}u_i^* v_j[v_j^*,[u_i,w]]\\
&-2\sum\limits_{i=1}^{\frac{s}{2}}\sum\limits_{j=1}^{\frac{\sfr-1}{2}}u_i^* v_j^*[v_j,[u_i,w]]-2\sum\limits_{i=1}^{\frac{s}{2}}u_i^* v_{\frac{\sfr+1}{2}}[v_{\frac{\sfr+1}{2}},[u_i,w]]
+\sum\limits_{i,j=1}^{\frac{\sfr-1}{2}}v_i v_j^*[v_j,[v_i^*,w]]\\
&+\sum\limits_{i=1}^{\frac{\sfr-1}{2}}v_i v_{\frac{\sfr+1}{2}}[v_{\frac{\sfr+1}{2}},[v_i^*,w]]+\sum\limits_{i,j=1}^{\frac{\sfr-1}{2}}v_i^* v_j[v_j^*,[v_i,w]]+\sum\limits_{i=1}^{\frac{\sfr-1}{2}}v_{\frac{\sfr+1}{2}}v_i [v_i^*,[v_{\frac{\sfr+1}{2}},w]]\\
&+\sum\limits_{i,j=1}^{\frac{\sfr-1}{2}}v_i^*v_j^*[v_j,[v_i,w]]+\sum\limits_{i=1}^{\frac{\sfr-1}{2}}v_{\frac{\sfr+1}{2}} v_i^*[v_i,[v_{\frac{\sfr+1}{2}},w]]+\sum\limits_{i=1}^{\frac{\sfr-1}{2}} v_i^*v_{\frac{\sfr+1}{2}}[v_{\frac{\sfr+1}{2}},[v_i,w]]\\
&+v_{\frac{\sfr+1}{2}}^2 [v_{\frac{\sfr+1}{2}},[v_{\frac{\sfr+1}{2}},w]]-2[w,f]\Big)\bigg)(1_{\lambda})
\end{split}
\end{equation*}
\begin{equation}\label{discuss}
\begin{split}
=&\bigg(w+\sum\limits_{i=1}^{\frac{s}{2}}[u_i^*,[u_i,w]]-\Big(\sum\limits_{i=1}^{\frac{\sfr-1}{2}}
[v_i^*,[v_i,w]]+v_{\frac{\sfr+1}{2}}[v_{\frac{\sfr+1}{2}},w]\Big)\\
&+\frac{1}{3}\Big(\sum\limits_{i,j=1}^{\frac{s}{2}}[u_i^*,[u_j^*,[u_j,[u_i,w]]]]
-\sum\limits_{i=1}^{\frac{s}{2}}[u_i^*,[u_i,w]]f-\sum\limits_{i,j=1}^{\frac{s}{2}}u_j[u_i^*,[u_j^*,[u_i,w]]]\\
&-\sum\limits_{i,j=1}^{\frac{s}{2}}u_i [u_j^*,[u_j,[u_i^*,w]]]+2\sum\limits_{i=1}^{\frac{s}{2}}\sum\limits_{j=1}^{\frac{\sfr-1}{2}}u_i [v_j^*,[v_j,[u_i^*,w]]]+2\sum\limits_{i=1}^{\frac{s}{2}}u_i v_{\frac{\sfr+1}{2}}[v_{\frac{\sfr+1}{2}},[u_i^*,w]]\\
&-2\sum\limits_{i=1}^{\frac{s}{2}}\sum\limits_{j=1}^{\frac{\sfr-1}{2}} v_j [u_i^*,[v_j^*,[u_i,w]]]-2\sum\limits_{i=1}^{\frac{s}{2}}\sum\limits_{j=1}^{\frac{\sfr-1}{2}}[u_i^*, [v_j^*,[v_j,[u_i,w]]]]-2\sum\limits_{i=1}^{\frac{s}{2}} v_{\frac{\sfr+1}{2}}[u_i^*,[u_i,[v_{\frac{\sfr+1}{2}},w]]]\\
&+\sum\limits_{i,j=1}^{\frac{\sfr-1}{2}}v_i [v_j^*,[v_j,[v_i^*,w]]]+2\sum\limits_{i=1}^{\frac{\sfr-1}{2}}v_i v_{\frac{\sfr+1}{2}}[v_{\frac{\sfr+1}{2}},[v_i^*,w]]+\sum\limits_{i=1}^{\frac{\sfr-1}{2}}[v_i^*,[v_i,w]]f\\
&
-\sum\limits_{i,j=1}^{\frac{\sfr-1}{2}}v_j [v_i^*,[v_j^*,[v_i,w]]]
+\sum\limits_{i,j=1}^{\frac{\sfr-1}{2}}[v_i^*,[v_j^*,[v_j,[v_i,w]]]]+2\sum\limits_{i=1}^{\frac{\sfr-1}{2}}v_{\frac{\sfr+1}{2}} [v_i^*,[v_i,[v_{\frac{\sfr+1}{2}},w]]]\\
&
+\frac{1}{2}[v_{\frac{\sfr+1}{2}},[v_{\frac{\sfr+1}{2}},w]]f-2[w,f]\Big)\bigg)(1_{\lambda})
\end{split}
\end{equation}

Now we will discuss the terms in \eqref{discuss}. To ease notation, we will call $i$ admissible for $u_i$ if $1\leqslant i\leqslant\frac{s}{2}$, and also for $v_i$ if $1\leqslant i\leqslant\frac{\sfr-1}{2}$. 
For any admissible $i,j$, by degree consideration we see that $[u_i^*,[u_j^*,[u_j,[u_i,w]]]]\in\ggg(-3)=\{0\}$. The same discussion entails that $[u_i^*, [v_j^*,[v_j,[u_i,w]]]]=[v_i^*,[v_j^*,[v_j,[v_i,w]]]]=0$ for all admissible $i,j$.

On the other hand, one can easily conclude that $[u_i^*,[u_j^*,[u_i,w]]]\in\ggg(-2)$ for all admissible $i,j$, thus $[u_i^*,[u_j^*,[u_i,w]]]=kf$ for some $k\in\mathbb{C}$. As $[u_i^*,[u_j^*,[u_i,w]]]$ is a root vector for $\mathfrak{h}$ corresponding to root $\gamma'+\gamma_{\bar0j}^*-\theta$, it follows from $f=e_{-\theta}$ that $k\neq0$ if and only if $\gamma'+\gamma_{\bar0j}^*-\theta=-\theta$, which is impossible. Then $[u_i^*,[u_j^*,[u_i,w]]]=0$. By the same discussion we can conclude that \begin{equation*}
\begin{split}
&[u_j^*,[u_j,[u_i^*,w]]]=[v_j^*,[v_j,[u_i^*,w]]]=[u_i^*,[v_j^*,[u_i,w]]]\cr
=&[v_j^*,[v_j,[v_i^*,w]]]=[v_i^*,[v_j^*,[v_i,w]]]=0
\end{split}
\end{equation*} for all admissible $i,j$.
The same consideration also applies for $[u_i^*,[u_i,[v_{\frac{\sfr+1}{2}},w]]]$ and $[v_i^*,[v_i,[v_{\frac{\sfr+1}{2}},w]]]$, which are nonzero
if and only if $\gamma'=\frac{1}{2}\theta$.
Moreover, since $\frac{\theta}{2}$ is a simple root by  Convention \ref{conventions}, for all admissible $i,j$ we have $[v_{\frac{\sfr+1}{2}},[u_i^*,w]]$,
$[v_{\frac{\sfr+1}{2}},[v_i^*,w]]\in\mathfrak{n}_\theta$ by weight consideration.

Note that $w$ is a linear combination of $f^*_1,\cdots,f^*_{\frac{s}{2}}, g^*_1,\cdots,g^*_{\frac{\sfr-1}{2}},[v_{\frac{\sfr+1}{2}},e]$, where $[v_{\frac{\sfr+1}{2}},e]$ is a root vector for $\mathfrak{h}$ corresponding to simple root $\frac{\theta}{2}$.
For $w'\in\{f^*_1,\cdots,f^*_{\frac{s}{2}}, g^*_1,\cdots,g^*_{\frac{\sfr-1}{2}}\}\subset\mathfrak{n}_\theta$, by weight consideration we have $[u_i^*,[u_i,w']],[v_i^*,[v_i,w']]$, $[v_{\frac{\sfr+1}{2}},w']$, 
$[v_{\frac{\sfr+1}{2}},[v_{\frac{\sfr+1}{2}},w']]$,
$[w',f]\in\mathfrak{n}_\theta$ for all admissible $i$. Taking all above into consideration, we know that \eqref{discuss} equals zero in this situation, and then $\Theta_{w'}(1_{\lambda})=0$.

It remains to consider the case with $w=[v_{\frac{\sfr+1}{2}},e]$.
For any $1\leqslant\alpha\leqslant\frac{s}{2}$ or $s+1\leqslant\alpha\leqslant s+\frac{\sfr-1}{2}$ (i.e., for all admissible $i$ of $u_i$'s and $v_i$'s), since \begin{equation*}
[[z_\alpha^*,[z_\alpha,e]],f]=[z_\alpha^*,[z_\alpha,[e,f]]]=[z_\alpha^*,[z_\alpha,h]]=f,
\end{equation*}
we have $[z_\alpha^*,[z_\alpha,e]]+\frac{1}{2}h\in\mathfrak{h}^e$. Set
\begin{align*}
\gamma^*_{|\alpha|,\alpha}:=\left\{\begin{array}{ll}\gamma^*_{\bar0\alpha}&\text{if}~1\leqslant\alpha\leqslant\frac{s}{2};\\
\gamma^*_{\bar1\alpha-s}&\text{if}~s+1\leqslant\alpha\leqslant s+\frac{\sfr-1}{2},\end{array}\right.
\end{align*}where $\gamma^*_{\bar0i}$ for $1\leqslant i\leqslant\frac{s}{2}$ and $\gamma^*_{\bar1j}$ for $1\leqslant j\leqslant\frac{\sfr-1}{2}$  are defined as in \S\ref{1.2}.
Let $x$ be arbitrary element in $\mathfrak{h}^e$. Then $(x,h)=0$, 
and
\begin{equation}\label{xzzeh}
\bigg(x,[z_\alpha^*,[z_\alpha,e]]+\frac{1}{2}h\bigg)=([x,z_\alpha^*],[z_\alpha,e])=
\gamma^*_{|\alpha|,\alpha}(x)(z_\alpha^*,[z_\alpha,e])=\gamma^*_{|\alpha|,\alpha}(x),
\end{equation}
that is,
\begin{equation}\label{z*ze}
[z_\alpha^*,[z_\alpha,e]]=-\frac{1}{2}h+t_{\bar\gamma^*_{|\alpha|,\alpha}}.
\end{equation}
Therefore,  we have
\begin{equation}\label{zzev1}
[z_\alpha^*,[z_\alpha,[v_{\frac{\sfr+1}{2}},e]]]
=-[[z_\alpha^*,[z_\alpha,e]],v_{\frac{\sfr+1}{2}}]
=[\frac{1}{2}h-t_{\bar\gamma^*_{|\alpha|,\alpha}},v_{\frac{\sfr+1}{2}}]
=\bigg(-\frac{1}{2}+\frac{1}{2}\theta\big(t_{\bar\gamma^*_{|\alpha|,\alpha}}\big)\bigg)v_{\frac{\sfr+1}{2}}
\end{equation}
For any $t\in\mathfrak{h}^{e_{\theta}}$, we have $\theta(t)=0$ by definition. Then it follows from $\theta\big(t_{\bar\gamma^*_{|\alpha|,\alpha}}\big)=0$ and \eqref{zzev1} that \begin{equation}\label{zzev}
[z_\alpha^*,[z_\alpha,[v_{\frac{\sfr+1}{2}},e]]]=-\frac{1}{2}v_{\frac{\sfr+1}{2}}.
\end{equation}
Since
\begin{equation}\label{vve}
[v_{\frac{\sfr+1}{2}},[v_{\frac{\sfr+1}{2}},e]]=[[v_{\frac{\sfr+1}{2}},e],v_{\frac{\sfr+1}{2}}]=-\frac{1}{2}h
\end{equation}
by \eqref{vev}, then
\begin{equation}\label{vvve}
[v_{\frac{\sfr+1}{2}},[v_{\frac{\sfr+1}{2}},[v_{\frac{\sfr+1}{2}},e]]]=[v_{\frac{\sfr+1}{2}},-\frac{1}{2}h]=-\frac{1}{2}v_{\frac{\sfr+1}{2}},
\end{equation}
and for any $1\leqslant\alpha\leqslant\frac{s}{2}$ or $s+1\leqslant\alpha\leqslant s+\frac{\sfr-1}{2}$ , we have
\begin{equation}\label{zzvve}
[z_\alpha^*,[z_\alpha,[v_{\frac{\sfr+1}{2}},[v_{\frac{\sfr+1}{2}},e]]]]=-[z_\alpha^*,[z_\alpha,\frac{1}{2}h]]
=-\frac{1}{2}[z_\alpha^*,z_\alpha]=-\frac{1}{2}f.
\end{equation}
Also note that
\begin{equation}\label{vef}
[[v_{\frac{\sfr+1}{2}},e],f]=[v_{\frac{\sfr+1}{2}},[e,f]]=[v_{\frac{\sfr+1}{2}},h]
=v_{\frac{\sfr+1}{2}}.
\end{equation}
Combining \eqref{discuss}, \eqref{zzev}---\eqref{vef} with our earlier discussion, one can conclude that
\begin{equation}\label{w+1}
\begin{split}
\Theta_{[v_{\frac{\sfr+1}{2}},e]}(1_{\lambda})=&\bigg([v_{\frac{\sfr+1}{2}},e]-\frac{s}{4}v_{\frac{\sfr+1}{2}}-\Big(-\frac{\sfr-1}{4}v_{\frac{\sfr+1}{2}}
-\frac{1}{2}v_{\frac{\sfr+1}{2}}h\Big)+\frac{1}{3}\Big(\frac{s}{4}v_{\frac{\sfr+1}{2}}+\frac{s}{2}v_{\frac{\sfr+1}{2}}
\\
&-\frac{\sfr-1}{4}v_{\frac{\sfr+1}{2}}-\frac{\sfr-1}{2}v_{\frac{\sfr+1}{2}}-\frac{1}{4}v_{\frac{\sfr+1}{2}}
-2v_{\frac{\sfr+1}{2}}\Big)\bigg)(1_{\lambda})\\
=&\bigg([v_{\frac{\sfr+1}{2}},e]-\frac{3}{4}v_{\frac{\sfr+1}{2}}+\frac{1}{2}
v_{\frac{\sfr+1}{2}}h\bigg)(1_{\lambda}).
\end{split}
\end{equation}
Since \begin{equation}\label{EFh}
\bigg(E-\frac{3}{4}F+\frac{1}{2}Fh\bigg).1_{\lambda}=0
\end{equation}by our assumption (see \S\ref{4.2.2}),
i.e.,
\begin{equation*}
\bigg([v_{\frac{\sfr+1}{2}},e]-\frac{3}{4}v_{\frac{\sfr+1}{2}}+\frac{1}{2}
v_{\frac{\sfr+1}{2}}h\bigg)(1_{\lambda})=0,
\end{equation*}
then \eqref{w+1} entails that $\Theta_{[v_{\frac{\sfr+1}{2}},e]}(1_{\lambda})=0$.
Thus we have $\Theta_w(1_{\lambda})=0$ for all positive root vectors $w\in\ggg^e(1)$. Moreover, it follows from \begin{equation}\label{EC}
\begin{split}
&[E-\frac{3}{4}F+\frac{1}{2}Fh,E-\frac{3}{4}F+\frac{1}{2}Fh]\otimes1_\chi\\
=&\bigg([E,E]-\frac{3}{4}[F,E]+\frac{1}{2}[Fh,E]-\frac{3}{4}[E,F]+\frac{9}{16}[F,F]-\frac{3}{8}[Fh,F]
+\frac{1}{2}[E,Fh]-\frac{3}{8}[F,Fh]\\
&+\frac{1}{4}[Fh,Fh]\bigg)\otimes1_\chi\\
=&\bigg([E,E]-\frac{3}{2}[F,E]+[F,E]h+F[h,E]+\frac{9}{16}[F,F]-\frac{3}{4}[F,F]h-\frac{3}{4}F[h,F]+\frac{1}{4}[F,F]h^2\\
&
+\frac{1}{4}F[h,F]h-\frac{1}{4}F[F,h]h\bigg)\otimes1_\chi\\
=&\bigg(2e-\frac{3}{2}h+\frac{1}{2}h^2+FE+\frac{1}{8}\bigg)\otimes1_\chi\\
=&\bigg(C_\theta+\frac{1}{8}\bigg)\otimes1_\chi
\end{split}
\end{equation}that
\begin{equation}\label{ctheta1lambda}
C_\theta.1_{\lambda}=\bigg([E-\frac{3}{4}F+\frac{1}{2}Fh,E-\frac{3}{4}F+\frac{1}{2}Fh]-\frac{1}{8}\bigg).1_{\lambda}=-\frac{1}{8}\cdot1_{\lambda}.
\end{equation}

(iii) Let $C_0$ be the Casimir element of $U(\ggg^e(0))$ as defined in \eqref{C0def}.
In virtue of Theorem \ref{main3}, \eqref{ctheta1lambda} and \eqref{vev}, we have
\begin{equation}\label{C1}
\begin{split}
C(1_{\lambda})=&\bigg(2e+\frac{h^2}{2}-\Big(1+\frac{s-\sfr}{2}\Big)h+C_0
+2\sum\limits_{\alpha\in S(-1)}(-1)^{|\alpha|}[e,z_\alpha^*]z_\alpha\bigg)(1_{\lambda})\\
=&\bigg(2e+\frac{h^2}{2}-\Big(1+\frac{s-\sfr}{2}\Big)h+C_0
+2\sum\limits_{i=1}^{\frac{s}{2}}
[[e,u_i^*],u_i]-2\sum\limits_{i=1}^{\frac{\sfr-1}{2}}
[[e,v_i^*],v_i]\\
&-2[e,v_{\frac{\sfr+1}{2}}]v_{\frac{\sfr+1}{2}}\bigg)(1_{\lambda})\\
=&\bigg(C_\theta+C_0-\frac{s-\sfr+1}{2}h
+2\sum\limits_{i=1}^{\frac{s}{2}}
[[e,u_i^*],u_i]-2\sum\limits_{i=1}^{\frac{\sfr-1}{2}}
[[e,v_i^*],v_i]\bigg)(1_{\lambda})\\
=&\bigg(-\frac{1}{8}+C_0-\frac{s-\sfr+1}{2}h
+2\sum\limits_{i=1}^{\frac{s}{2}}
[[e,u_i^*],u_i]-2\sum\limits_{i=1}^{\frac{\sfr-1}{2}}
[[e,v_i^*],v_i]\bigg)(1_{\lambda}).
\end{split}
\end{equation}
For any $1\leqslant\alpha\leqslant\frac{s}{2}$ or $s+1\leqslant\alpha\leqslant s+\frac{\sfr-1}{2}$,
since
$[[[e,z_\alpha^*],z_\alpha],f]=[[[e,f],z_\alpha^*],z_\alpha]=[[h,z_\alpha^*],z_\alpha]=-f$, one can conclude that $[[e,z_\alpha^*],z_\alpha]-\frac{1}{2}h\in\mathfrak{h}^e$. Let $x$ be an arbitrary element in $\mathfrak{h}^e$. Then
$(x,h)=0$, $\theta(x)=0$, and
\begin{equation*}
\bigg(x,[[e,z_\alpha^*],z_\alpha]-\frac{1}{2}h\bigg)=([x,[e,z_\alpha^*]],z_\alpha)
=\gamma^*_{|\alpha|,\alpha}(x)([e,z_\alpha^*],z_\alpha)=\gamma^*_{|\alpha|,\alpha}(x),
\end{equation*}
that is,
\begin{equation}\label{ez*zalpha}
[[e,z_\alpha^*],z_\alpha]-\frac{1}{2}h=t_{\bar{\gamma}^*_{|\alpha|,\alpha}}.
\end{equation}
Then
\begin{eqnarray}\label{ez}
\bigg(2\sum\limits_{i=1}^{\frac{s}{2}}
[[e,u_i^*],u_i]-2\sum\limits_{i=1}^{\frac{\sfr-1}{2}}
[[e,v_i^*],v_i]-\frac{s-\sfr+1}{2}h\bigg)(1_{\lambda})
&=&\bigg(2\sum\limits_{i=1}^{\frac{s}{2}}t_{\bar{\gamma}^*_{\bar0i}}-
2\sum\limits_{i=1}^{\frac{\sfr-1}{2}}t_{\bar{\gamma}^*_{\bar1i}}\bigg)(1_{\lambda})\nonumber\\
&=&2\bigg(\sum\limits_{i=1}^{\frac{s}{2}}\lambda(t_{\bar{\gamma}^*_{\bar0i}})-\sum\limits_{i=1}^{\frac{\sfr-1}{2}}\lambda(t_{\bar{\gamma}^*_{\bar1i}})\bigg)\cdot1_{\lambda}\nonumber\\
&=&2\bigg(\lambda,\sum\limits_{i=1}^{\frac{s}{2}}\bar{\gamma}^*_{\bar0i}-\sum\limits_{i=1}^{\frac{\sfr-1}{2}}\bar{\gamma}^*_{\bar1i}\bigg)\cdot1_{\lambda}\nonumber\\
&=&4(\lambda,\bar\delta)\cdot1_{\lambda}.
\end{eqnarray}
As $C_0$ is a Casimir element in $U(\ggg^e(0))$, and all positive vectors in $\ggg^e(0)$ annihilate $1_{\lambda}$, by the same discussion as in \cite[Lemma 8.5.3]{M1} we see that
\begin{equation}\label{ez2}
C_0(1_{\lambda})=\bigg(\sum_{i=1}^{k-1}\lambda(h_i)^2+\sum_{\alpha\in\Phi^{+}_{e,0}}(-1)^{|\alpha|}\lambda(h_\alpha)\bigg)(1_{\lambda})=(\lambda,\lambda+2\bar\rho_{e,0})\cdot1_{\lambda},
\end{equation}where $|\alpha|$ denotes the parity of $\alpha$.

We can conclude from \eqref{C1}, \eqref{ez} and \eqref{ez2} that
\begin{equation}\label{c1lambdac}
C(1_{\lambda})=\bigg(-\frac{1}{8}+(\lambda,\lambda+2\bar\rho_{e,0})+4(\lambda,\bar\delta)\bigg)
\cdot1_{\lambda}
=\bigg(-\frac{1}{8}+(\lambda,\lambda+2\bar\rho)\bigg)\cdot1_{\lambda}.
\end{equation}

(iv) In virtue of \eqref{calcuoft} and \eqref{c1lambdac}, set
\begin{equation}\label{deflambda'c'}\lambda':=\lambda+\bar\delta,\qquad c':=-\frac{1}{8}+(\lambda,\lambda+2\bar\rho)+\epsilon.
\end{equation} Under the twisted action of $U(\ggg,e)$ on $Z_{U(\ggg,e)}(\lambda',c')$, we have
\begin{equation*}\label{c1lambdacepsilon}
C.v=\text{tw}\,(C)(v)=(C-\epsilon)(v)=(c'-\epsilon)(v)
=\bigg(-\frac{1}{8}+(\lambda,\lambda+2\bar\rho)\bigg)(v)
\end{equation*}for any $v\in Z_{U(\ggg,e)}(\lambda',c')$.
To show that  $(\lambda',c')\in(\mathfrak{h}^e)^*\times\bbc$ is a matchable pair as in \eqref{c0clambda}, we need another description of $c'$ in \eqref{deflambda'c'}. Since $h_1,\cdots,h_{k-1}$ is a orthogonal basis of $\hhh^e$ with respect to $(\cdot,\cdot)$, we can write $t_{\bar{\gamma}^*_{ai}}=\sum_{j=1}^{k-1}l_jh_j$ for $a\in\{\bar0,\bar1\}$, then we have $${\gamma}^*_{ai}(h_m)=(t_{\bar{\gamma}^*_{ai}},h_m)=\sum_{j=1}^{k-1}l_j(h_j,h_m)=l_m,$$ thus
\begin{equation}\label{t*bargamma}
t_{\bar{\gamma}^*_{ai}}=\sum_{i=1}^{k-1}{\gamma}^*_{ai}(h_j)h_j=-\sum_{i=1}^{k-1}\gamma_{ai}(h_j)h_j.
\end{equation}
So we have
\begin{equation}\label{delalambda}
(\lambda,\bar\delta)=\frac{1}{2}\lambda\bigg(\sum\limits_{j=1}^{\frac{s}{2}}t_{\bar{\gamma}^*_{\bar0j}}
-\sum\limits_{j=1}^{\frac{\sfr-1}{2}}t_{\bar{\gamma}^*_{\bar1j}}\bigg)
=\frac{1}{2}\sum\limits_{i=1}^{k-1}\bigg(-\sum_{j=1}^{\frac{s}{2}}\gamma_{\bar0j}(h_i)+
\sum_{j=1}^{\frac{\sfr-1}{2}}\gamma_{\bar1j}(h_i)\bigg)\lambda(h_i)
=\sum\limits_{i=1}^{k-1}\delta(h_i)\lambda(h_i).
\end{equation}
Thanks to the definition of $C_0$ in \eqref{C0def}, and also \eqref{x*x},  \eqref{y*y}, we obtain
\begin{equation}\label{anotherdesC0}
\begin{split}
C_0(1_{\lambda})=&\bigg(\sum_{i=1}^{k-1}h_i^2+\sum_{i=1}^{w}x_ix^*_i+\sum_{i=1}^{w}x^*_ix_i+\sum_{i=1}^{\ell}y_iy^*_i-
\sum_{i=1}^{\ell}y^*_iy_i\bigg)(1_{\lambda})\\
=&\bigg(\sum_{i=1}^{k-1}h_i^2+\sum_{i=1}^{w}[x^*_i,x_i]-\sum_{i=1}^{\ell}[y^*_i,y_i]\bigg)(1_{\lambda})\\
=&\bigg(\sum_{i=1}^{k-1}h_i^2+
\sum_{i=1}^{k-1}\sum_{j=1}^{w}\beta_{\bar0j}(h_i)h_i-\sum_{i=1}^{k-1}\sum_{j=1}^{\ell}\beta_{\bar1j}(h_i)h_i\bigg)(1_{\lambda})\\
=&\bigg(\sum_{i=1}^{k-1}\lambda(h_i)^2+
2\sum_{i=1}^{k-1}\rho_{e,0}(h_i)\lambda(h_i)\bigg)(1_{\lambda}).
\end{split}
\end{equation}

From \eqref{C1}, \eqref{ez}, \eqref{delalambda} and \eqref{anotherdesC0}, we get
\begin{equation}\label{anotherdesC}
C(1_{\lambda})=\bigg(-\frac{1}{8}+\sum_{i=1}^{k-1}\lambda(h_i)^2+
2\sum_{i=1}^{k-1}\rho_{e,0}(h_i)\lambda(h_i)+4\sum\limits_{i=1}^{k-1}\delta(h_i)\lambda(h_i)\bigg)(1_{\lambda}).
\end{equation}
Taking \eqref{c1lambdac} and \eqref{anotherdesC} into consideration, we have
\begin{equation}
\begin{split}
c'=&-\frac{1}{8}+(\lambda,\lambda+2\bar\rho)+\epsilon\\
=&-\frac{1}{8}+\sum_{i=1}^{k-1}\lambda(h_i)^2+
2\sum_{i=1}^{k-1}\rho_{e,0}(h_i)\lambda(h_i)+4\sum\limits_{i=1}^{k-1}\delta(h_i)\lambda(h_i)+
c_0+\frac{1}{8}\\
&+2\sum_{i=1}^{k-1}\rho_{e,0}(h_i)\delta(h_i)
+3\sum_{i=1}^{k-1}\delta(h_i)^2\\
=&c_0+\sum_{i=1}^{k-1}((\lambda+\delta)(h_i))^2+2\sum_{i=1}^{k-1}((\lambda+\delta)(\rho_{e,0}+\delta))(h_i)\\
=&c_0+\sum_{i=1}^{k-1}\lambda'(h_i)^2+2\sum_{i=1}^{k-1}(\lambda'\cdot(\rho_{e,0}+\delta))(h_i),
\end{split}
\end{equation}
which verifies the equation \eqref{c0clambda}.

Denote by $V_0$ the $U(\ggg,e)$-submodule of $M$ generated by $1_{\lambda}$, and let $I_{\lambda',c'}$ be the left ideal of $U(\ggg,e)$ as defined in \S\ref{3.2.1}.
From all the discussion above we know that the left ideal $I_{\lambda',c'}$ of $U(\ggg,e)$ annihilates $1_{\lambda}$. Then $V_0$ is a homomorphic image of the Verma module $Z_{U(\ggg,e)}({\lambda',c'})$.

(v) Now we claim that the restriction $\text{pr}:\text{Wh}(M)\twoheadrightarrow M_0$ to $V_0$ is surjective. Recall that $M_0$ is spanned by all $m({\bf 0,0,\iota,k,l,m,n},0)$ in $M$ with $\iota\in\Lambda_1$, ${\bf k}\in\mathbb{Z}_+^w$, ${\bf l}\in\Lambda_\ell$, ${\bf m}\in\mathbb{Z}_+^{\frac{s}{2}}$, and ${\bf n}\in\Lambda_{\frac{\sfr-1}{2}}$. It is obvious that $m({\bf 0,0},0,{\bf 0,0,0,0},0)=1_{\lambda}\in\text{pr}(V_0)$. Assume that all the vectors $m({\bf 0,0,\iota,k,l,m,n},0)$ of Kazhdan degree $\iota+2(|\mathbf{k}|+|\mathbf{l}|)+3(|\mathbf{m}|+|\mathbf{n}|)<p$ are in $\text{pr}(V_0)$. Set $m({\bf 0,0},\iota,{\bf a,b,c,d},0)\in M_0$ to be such that
$\iota+2(|\mathbf{a}|+|\mathbf{b}|)+3(|\mathbf{c}|+|\mathbf{d}|)=p$ and $\iota+|\mathbf{a}|+|\mathbf{b}|+|\mathbf{c}|+|\mathbf{d}|=q$, and denote by $M_{p,q}$ the span of all $m({\bf i,j,\iota,k,l,m,n},t)$ of Kazhdan degree $p$ with
$|\mathbf{i}|+|\mathbf{j}|+\iota+|\mathbf{k}|+|\mathbf{l}|+|\mathbf{m}|+|\mathbf{n}|+t>q$.
Assume that all vectors $m({\bf 0,0,\iota,k,l,m,n},0)$ of Kazhdan degree $p$ with
$\iota+|\mathbf{k}|+|\mathbf{l}|+|\mathbf{m}|+|\mathbf{n}|>q$ are in $\text{pr}(V_0)$. Since $M$ is a filtrated $U(\ggg)$-module, and also
\begin{equation*}
v_{\frac{\sfr+1}{2}}^2(1_{\lambda})=\frac{1}{2}\cdot1_{\lambda},~~
y_i^2(1_{\lambda})=\frac{1}{2}[y_i,y_i](1_{\lambda}),~~ g_j^2(1_{\lambda})=\frac{1}{2}[g_j,g_j](1_{\lambda})
\end{equation*}
for $1\leqslant i\leqslant\ell$ and $1\leqslant j\leqslant\frac{\sfr-1}{2}$, then it follows from Theorem \ref{main3} that
\begin{equation}\label{leadinglow}
\Theta_{v_{\frac{\sfr+1}{2}}}^{\iota}\cdot\prod_{i=1}^w\Theta_{x_i}^{a_i}\cdot\prod_{i=1}^\ell\Theta_{y_i}^{b_i}\cdot
\prod_{i=1}^{\frac{s}{2}}\Theta_{f_i}^{c_i}\cdot\prod_{i=1}^{\frac{\sfr-1}{2}}
\Theta_{g_i}^{d_i}(1_{\lambda})
\in m({\bf 0,0},\iota,{\bf a,b,c,d},0)+M_{p,q}+M^{p-1}.
\end{equation}
By our assumptions on $p$ and $q$ we can obtain $m({\bf 0,0},\iota,{\bf a,b,c,d},0)\in\text{pr}(V_0+M_{p,q}+M^{p-1})=\text{pr}(V_0)$. Then our claim follows by double induction on $p$ and $q$.
Recall in (2) we have already established that $\text{pr}:\text{Wh}(M)\rightarrow M_0$ is injective, this yields $\text{Wh}(M)=V_0$.

(4) Applying \eqref{leadinglow} it is easy to observe that $\Theta_{v_{\frac{\sfr+1}{2}}}^{\iota}\cdot\prod_{i=1}^w\Theta_{x_i}^{a_i}\cdot\prod_{i=1}^\ell\Theta_{y_i}^{b_i}\cdot
\prod_{i=1}^{\frac{s}{2}}\Theta_{f_i}^{c_i}\cdot\prod_{i=1}^{\frac{\sfr-1}{2}}
\Theta_{g_i}^{d_i}(1_{\lambda})$ with $\iota\in\Lambda_1$, ${\bf a}\in\mathbb{Z}_+^w$, ${\bf b}\in\Lambda_\ell$, ${\bf c}\in\mathbb{Z}_+^{\frac{s}{2}}$, and ${\bf d}\in\Lambda_{\frac{\sfr-1}{2}}$ are linearly independent over $\mathbb{C}$. 
Hence if follows from Lemma \ref{left ideal2}, \eqref{kl+q+1} and the discussion at the beginning of  \S\ref{3.2.2} that $V_0\cong Z_{U(\ggg,e)}(\lambda+\bar{\delta},-\frac{1}{8}+(\lambda+2\bar\rho,\lambda)+\epsilon)$ as $U(\ggg,e)$-modules.
$\hfill\square$
\begin{rem}
We guess that the shift $\epsilon\neq0$ for all basic Lie superalgebras. For example, let $\ggg=\mathfrak{osp}(1|2)$ be as in Lemma \ref{osp12genecom}. Then $\ggg(-1)_{\bar1}=\bbc F$, $s=\text{dim}\,\ggg(-1)_{\bar0}=0$, $\sfr=\text{dim}\,\ggg(-1)_{\bar1}=1$,$F^*=-\frac{1}{2}F$, and $([E,E],f)=2(e,f)=2$. It follows from \eqref{decidec0} that
\begin{equation*}
\begin{split}
c_0=&\frac{1}{([E,E],f)}\bigg(\frac{1}{12}\Big([[[E, F],F],[-\frac{1}{2}F,[-\frac{1}{2}F,E]]]\Big)\otimes1_\chi-\frac{1}{12}([E,E],f)\bigg)\\
=&\frac{1}{24}\bigg(\frac{1}{4}[[h,F],[F,h]]\otimes1_\chi-2(e,f)\bigg)\\
=&\frac{1}{24}\bigg(-\frac{1}{4}[F,F]\otimes1_\chi-2\bigg)
=\frac{1}{24}\bigg(\frac{1}{2}-2\bigg)=-\frac{1}{16}.
\end{split}
\end{equation*}
As $\hhh^e=0$, we get $\epsilon=c_0+\frac{1}{8}=-\frac{1}{16}+\frac{1}{8}=\frac{1}{16}\neq0$. However, the complication for the calculation of $c_0$ in \eqref{decidec0} makes it difficult to verify $\epsilon\neq0$ in general.
\end{rem}
\subsection{}\label{5.4}
This part is devoted to minimal finite $W$-superalgebra $U(\ggg,e)$ of type even. All $\ggg$ corresponding to this type are listed in Table 2. In this type, both $U(\ggg,e)$ and $W_\chi'$ are identical.  

To determine the composition factors of the Verma modules $Z_{U(\ggg,e)}(\lambda,c)$ with their multiplicities, we will express these $U(\ggg,e)$-modules in terms of the $\ggg$-modules obtained by parabolic induction from Whittaker modules  for $\mathfrak{sl}(2)$. As mentioned in \S\ref{4.1.1}, the latter modules have been studied in much detail in \cite{C,C3}, and Chen-Cheng \cite[Theorem 1]{C3} gave a complete solution to the problem of determining the composition factors of the standard Whittaker modules in terms
of composition factors of Verma modules  in the  category $\mathcal{O}$. As a special case, if $\ggg$ is a basic Lie superalgebra of type I;
or to say, if $\ggg$ is a simple Lie algebra, or $\mathfrak{sl}(m|n)$ with $m\neq n,m\geqslant2$, or $\mathfrak{psl}(m|m)$ with $m\geqslant2$, or $\mathfrak{spo}(2m|2)$   (see Tables 1 and 2), then the corresponding minimal finite $W$-superalgebra $U(\ggg,e)$ is of type even. It follows from \cite[Theorem C]{C} that the  composition of standard Whittaker modules  can be computed by some already known results (e.g., \cite{Bao,BW,Br, CLW2, CLW,  CMW}) on the irreducible characters of the BGG category $\mathcal{O}$. 
%
%
%


Denote by $\mathfrak{s}_\theta$ the subalgebra of $\ggg$ spanned by $(e,h,f)=(e_\theta,h_\theta,e_{-\theta})$, and put
\begin{equation*}
\mathfrak{p}_\theta:=\mathfrak{s}_\theta+\mathfrak{h}+\sum_{\alpha\in\Phi^+}\bbc e_\alpha,\qquad
\mathfrak{n}_\theta:=\sum_{\alpha\in\Phi^+\backslash\{\theta\}}\bbc e_\alpha,\qquad \widetilde{\mathfrak{s}}_\theta:=\mathfrak{h}^e\oplus\mathfrak{s}_\theta.
\end{equation*}
It is obvious that $\mathfrak{p}_\theta=\widetilde{\mathfrak{s}}_\theta\oplus\mathfrak{n}_\theta$ is a parabolic subalgebra of $\ggg$ with nilradical $\mathfrak{n}_\theta$ and $\widetilde{\mathfrak{s}}_\theta$ is a Levi subalgebra of $\mathfrak{p}_\theta$. Set $C_\theta:=ef+fe+\frac{1}{2}h^2=2ef+\frac{1}{2}h^2-h$ to be a Casimir element of $U(\mathfrak{s}_\theta)$. For $\lambda\in(\mathfrak{h}^e)^*$ and $c\in\bbc$, write $I_\theta(\lambda,c)$ for the left ideal of $U(\mathfrak{p}_\theta)$ generated by $f-1, C_\theta-c$, all $t-\lambda(t)$ with $t\in\mathfrak{h}^e$, and all $e_\gamma$ with $\gamma\in\Phi^+\backslash\{\theta\}$.

Set $Y(\lambda,c):=U(\mathfrak{p}_\theta)/I_\theta(\lambda,c)$ to be a $\mathfrak{p}_\theta$-module with the trivial action of $\mathfrak{n}_\theta$, and let $1_{\lambda,c}$ denote the image of $1$ in $Y(\lambda,c)$. Since $f.1_{\lambda,c}=1_{\lambda,c}$ by definition, then
\begin{equation*}
e.1_{\lambda,c}=\frac{1}{2}\bigg(C_\theta-\frac{1}{2}h^2+h\bigg).1_{\lambda,c}=\bigg(-\frac{1}{4}h^2+\frac{1}{2}h
+\frac{1}{2}c\bigg).1_{\lambda,c}.
\end{equation*}
Combining this with the PBW theorem, we see that the vectors $\{h^k\cdot1_{\lambda,c}\mid k\in\mathbb{Z}_+\}$ form a $\bbc$-basis of $Y(\lambda,c)$. Moreover, one can easily conclude that $Y(\lambda,c)$
is isomorphic to a Whittaker module for $\mathfrak{s}_\theta\cong\mathfrak{sl}(2)$.

It follows from the discussion above that the vectors
\begin{equation*}
\begin{split}
&m({\bf i,j,k,l,m,n},t):=\\
&u_1^{i_1}\cdots u_{\frac{s}{2}}^{i_{\frac{s}{2}}}\cdot v_1^{j_1}\cdots v_{\frac{\sfr}{2}}^{j_{\frac{\sfr}{2}}}\cdot x_1^{k_1}\cdots x_w^{k_w}\cdot y_1^{l_1}\cdots y_\ell^{l_\ell}\cdot f_1^{m_1}\cdots f_{\frac{s}{2}}^{m_{\frac{s}{2}}}\cdot
g_1^{n_1}\cdots g_{\frac{\sfr}{2}}^{n_{\frac{\sfr}{2}}}\cdot h^t(1_{\lambda,c})
\end{split}
\end{equation*}
with ${\bf i,m}\in\mathbb{Z}_+^{\frac{s}{2}}$, ${\bf j,n}\in\Lambda_{\frac{\sfr}{2}}$, ${\bf k}\in\mathbb{Z}_+^w$, ${\bf l}\in\Lambda_\ell$, and $t\in\mathbb{Z}_+$ form a $\bbc$-basis of the induced $\ggg$-module
\begin{equation*}
M(\lambda,c):=U(\ggg)\otimes_{U(\mathfrak{p}_\theta)}Y(\lambda,c).
\end{equation*}
Put \begin{equation}\label{deltarho2}
\begin{split}
\delta=&\frac{1}{2}\bigg(\sum\limits_{i=1}^{\frac{s}{2}}\gamma^*_{\bar0i}-\sum\limits_{i=1}^{\frac{\sfr}{2}}\gamma^*_{\bar1i}\bigg)
=\frac{1}{2}\bigg(\sum\limits_{i=1}^{\frac{s}{2}}(-\theta-\gamma_{\bar0i})+\sum\limits_{i=1}^{\frac{\sfr}{2}}(\theta+\gamma_{\bar1i})\bigg)\\
=&\frac{1}{2}\bigg(-\sum\limits_{i=1}^{\frac{s}{2}}\gamma_{\bar0i}+\sum\limits_{i=1}^{\frac{\sfr}{2}}\gamma_{\bar1i}\bigg)-\frac{s-\sfr}{4}\theta,\\ \rho=&\frac{1}{2}\sum_{\alpha\in\Phi^{+}}(-1)^{|\alpha|}\alpha,\\ \rho_{e,0}=&\rho-2\delta-\bigg(\frac{s-\sfr}{4}+\frac{1}{2}\bigg)\theta=\frac{1}{2}\sum_{\alpha\in\Phi^{+}_{e,0}}(-1)^{|\alpha|}\alpha
=\frac{1}{2}\Big(\sum_{j=1}^{w}\beta_{\bar0j}-\sum_{j=1}^{\ell}\beta_{\bar1j}\Big),
\end{split}
\end{equation}where $\gamma^*_{\bar0i}\in\Phi^+_{\bar0},\gamma^*_{\bar1j}\in\Phi_{\bar1}^+$, $\gamma_{\bar0i}\in\Phi^-_{\bar0},\gamma_{\bar1j}\in\Phi_{\bar1}^-$ for $1\leqslant i \leqslant \frac{s}{2}$ and $1\leqslant j \leqslant \frac{\sfr}{2}$ are defined in \S\ref{1.2}, $\beta_{\bar0i}\in\Phi_{\bar0}^+,\beta_{\bar1j}\in\Phi_{\bar1}^+$ for $1\leqslant i \leqslant w$ and $1\leqslant j \leqslant \ell$ are defined in \S\ref{3.1.1}, and $|\alpha|$ denotes the parity of $\alpha$.
For any $\eta\in(\mathfrak{h}^e)^*$, there exists a unique $t_\eta$ in $\mathfrak{h}^e$ with $\eta=(t_\eta,\cdot)$, and also a non-degenerate bilinear form on $(\mathfrak{h}^e)^*$ via $(\mu,\nu):=(t_\mu,t_\nu)$ for all $\mu,\nu\in(\mathfrak{h}^e)^*$. Given a linear function $\varphi$ on $\mathfrak{h}$ we denote by $\bar{\varphi}$ the restriction of $\varphi$ to $\mathfrak{h}^e$.

Now we have a result parallel to Theorem \ref{Whcchi}, i.e.,
\begin{theorem}\label{wh}Assume that $\sfr$ is even.
Every $\ggg$-module $M(\lambda,c)$ is an object of the category $\mathcal{C}_\chi$. Furthermore, $\text{Wh}(M(\lambda,c))\cong Z_{U(\ggg,e)}(\lambda+\bar{\delta},c+(\lambda+2\bar\rho,\lambda))$ as $U(\ggg,e)$-modules.
\end{theorem}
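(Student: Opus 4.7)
The plan is to mirror the argument used for Theorem~\ref{Whcchi}, exploiting the simplifications that arise when $\sfr$ is even: there is no distinguished odd element $v_{(\sfr+1)/2}$, no auxiliary generator $\Theta_{[v_{(\sfr+1)/2},e]}$, and crucially no need for a twisted action on the Verma module nor for the constraint $C_\theta+\tfrac{1}{8}=0$ that arose from the embedded $\mathfrak{osp}(1|2)$. Moreover, by Remark~\ref{keyremark}, no matchability condition on $(\lambda,c)$ has to be verified.

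First I would set $M:=M(\lambda,c)$, decompose $M=M_0\oplus M_1$ where $M_0$ is spanned by the basis monomials $m(\mathbf{i},\mathbf{j},\mathbf{k},\mathbf{l},\mathbf{m},\mathbf{n},t)$ with $|\mathbf{i}|+|\mathbf{j}|+t=0$, and let $\mathrm{pr}\colon M\twoheadrightarrow M_0$ be the associated projection. For any $z=\lambda_0 f+\sum\mu_i u_i^*+\sum\nu_j v_j^*\in\mathfrak{m}$, the $u_i^*,v_j^*$ lie in $\mathfrak{n}_\theta$ and $f\cdot 1_{\lambda,c}=1_{\lambda,c}$, so $z\cdot 1_{\lambda,c}=\chi(z)\cdot 1_{\lambda,c}$; together with local nilpotence of $z$ on $U(\ggg)$ under the Kazhdan filtration, this gives $M\in\mathcal{C}_\chi$ and hence $\mathrm{Wh}(M)\neq 0$. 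A standard triangularity argument, evaluating $u_k^*$, $v_l^*$ and $f-1$ on a general PBW basis monomial modulo terms with strictly larger $|\mathbf{i}|+|\mathbf{j}|$ or strictly positive $t$-count, then shows that $\mathrm{pr}\colon\mathrm{Wh}(M)\to M_0$ is injective.

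The main technical step is to evaluate the canonical generators of $U(\ggg,e)$ on $1_{\lambda,c}$ using Theorem~\ref{ge}. For $t\in\mathfrak{h}^e$, the same chain of identities as in \eqref{calcuoft} yields $\Theta_t(1_{\lambda,c})=(\lambda+\bar\delta)(t)\cdot 1_{\lambda,c}$ once the superfluous multiple of $\theta(t)f$ is discarded using $\theta|_{\mathfrak{h}^e}=0$. For a positive root vector $v\in\ggg^e(0)$, every summand of $\Theta_v$ factors through $\mathfrak{n}_\theta$, giving $\Theta_v(1_{\lambda,c})=0$. For a positive root vector $w\in\ggg^e(1)$, the same weight/short-grading analysis as in \eqref{discuss} — substantially shorter because all $v_{(\sfr+1)/2}$-terms are absent — shows that every contribution of the cubic correction in $\Theta_w$ either vanishes by the short $\mathbb{Z}$-grading or belongs to $\mathfrak{n}_\theta\cdot 1_{\lambda,c}$; in particular, for $w\in\{f_i^*,g_j^*\}$ one checks that $[w,f]$ is proportional to the corresponding element of $\{u_i^*,v_j^*\}\subset\mathfrak{n}_\theta$, so $\Theta_w(1_{\lambda,c})=0$. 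Finally, for the Casimir $C$, the identity $[[e,z_\alpha^*],z_\alpha]=\tfrac{1}{2}h+t_{\bar\gamma^*_{|\alpha|,\alpha}}$ collects the $h$-contributions, they cancel the $-(1+(s-\sfr)/2)h$ piece of $C$, and the input $C_\theta\cdot 1_{\lambda,c}=c\cdot 1_{\lambda,c}$ together with $C_0(1_{\lambda,c})=(\lambda,\lambda+2\bar\rho_{e,0})\cdot 1_{\lambda,c}$ and the identity $\bar\rho_{e,0}+2\bar\delta=\bar\rho$ (deduced from $\bar\theta=0$ and \eqref{deltarho2}) gives $C(1_{\lambda,c})=(c+(\lambda,\lambda+2\bar\rho))\cdot 1_{\lambda,c}$.

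These computations imply that the left ideal $J_{\lambda+\bar\delta,\,c+(\lambda,\lambda+2\bar\rho)}$ of Lemma~\ref{left ideal}(1) annihilates $1_{\lambda,c}$, so the $U(\ggg,e)$-submodule $V_0\subseteq\mathrm{Wh}(M)$ generated by $1_{\lambda,c}$ is a quotient of $Z_{U(\ggg,e)}(\lambda+\bar\delta,\,c+(\lambda,\lambda+2\bar\rho))$. A double induction on Kazhdan degree and on the total length of the remaining generator monomial — using a leading-term formula analogous to \eqref{leadinglow} applied to products of $\Theta_{x_i},\Theta_{y_j},\Theta_{f_i},\Theta_{g_j}$ acting on $1_{\lambda,c}$ — shows that $\mathrm{pr}(V_0)=M_0$; combined with the injectivity of $\mathrm{pr}|_{\mathrm{Wh}(M)}$, this forces $\mathrm{Wh}(M)=V_0$, and linear independence of the leading PBW monomials upgrades the Verma surjection to an isomorphism. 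The step I expect to be the main obstacle is the careful bookkeeping in $\Theta_w(1_{\lambda,c})$ for positive $w\in\ggg^e(1)$: although no $v_{(\sfr+1)/2}$ appears, each of the many summands in the cubic correction of $\Theta_w$ still has to be matched against the short $\mathbb{Z}$-grading and the root data to confirm total cancellation.
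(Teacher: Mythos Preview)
Your proposal is correct and follows precisely the approach the paper takes: the paper's own proof consists of the single remark that the argument is the same as that of Theorem~\ref{Whcchi}, with the absence of $v_{\frac{\sfr+1}{2}}$ eliminating the twisted action, the shift $-\epsilon$, and the matchability constraint; what you have written is a faithful expansion of that sketch, including the correct identification of $C(1_{\lambda,c})=(c+(\lambda,\lambda+2\bar\rho))\cdot 1_{\lambda,c}$ via $C_\theta\cdot 1_{\lambda,c}=c\cdot 1_{\lambda,c}$ in place of $-\tfrac{1}{8}$.
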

\begin{proof}
The proof of the theorem is the same as that of Theorem \ref{Whcchi}, while the lack of the element $v_{\frac{\sfr+1}{2}}\in\ggg(-1)_{\bar1}$ here makes the discussion much easier.
In particular, from Theorem \ref{verma} there is no restriction on $\lambda\in(\mathfrak{h}^e)^*$ and $c\in\bbc$. Then the action of $U(\ggg,e)$ on Verma modules need not to be twisted, so the shift $-\epsilon$ on $C$ as in \eqref{twisted} is redundant. The proof will be omitted.
\end{proof}
\begin{rem}
Here we omit the arguments on the minimal refined $W$-superalgebra $W'_{\chi}$ of type odd because there is lack of Skryabin's equivalence for this case.
\end{rem}

\section{On the category $\mathcal{O}$ for minimal finite $W$-superalgebras of type odd}\label{mathcalO}
All the discussion in previous sections are concentrated on the minimal finite (refined) $W$-superalgebras, for which the generators and their relationship  are given explicitly, and their Verma modules are introduced, which is much like the highest weight theory for $U(\ggg)$. However, the theory can not be applied directly in the general settings. We will manage  to develop the BGG category $\mathcal{O}$ for minimal finite $W$-superalgebra $U(\ggg,e)$, by exploiting the arguments in \cite{BGK} on highest weight theory of finite $W$-algebras to the super case.   
During the expositions, we mainly follow the strategy in \cite[\S4]{BGK}, with a lot of modifications.
It should be expected this is an effective attempt for the  general situation.

In this section we will only consider minimal finite $W$-superalgebras  of type odd. Then $\frac{\theta}{2}$ is an odd root of $\ggg$.
\subsection{}\label{5.1.1}
Keep the notations as in previous sections. Recall that $(e,h,f)$ is an $\mathfrak{sl}(2)$-triple in ${\ggg}_{\bar0}$, and ${\ggg}^e$ is the centralizer of $e$ in ${\ggg}$. Write ${\ggg}^h$ for the centralizer of $h$ in ${\ggg}$, which is equal to $\ggg(0)$ by definition. Then $\mathfrak{g}^h\cap\ggg^e=\ggg^e(0)=\ggg(0)^\sharp$ is a Levi factor of ${\ggg}^e$, and  $\mathfrak{h}^e=\mathfrak{h}\cap\ggg^e$ is a Cartan subalgebra of this Levi factor. As in \S\ref{3.1.1}, $\{h_1,\cdots,h_{k-1}\}$ is a basis of $\mathfrak{h}^e$, and $\ggg^e(0)$ has a basis as  in \eqref{basisofge0}.

For $\alpha\in(\mathfrak{h}^e)^*$, let $\ggg_\alpha=\bigoplus_{i\in\mathbb{Z}}\ggg_\alpha(i)$
denote the $\alpha$-weight space of $\ggg$ with respect to $\mathfrak{h}^e$. So
\begin{equation}\label{gg0ePhi}
\ggg=\ggg_0 \oplus \bigoplus_{\alpha\in \Phi'_e} \ggg_\alpha,
\end{equation}
where $\ggg_0$ is the centralizer of $\mathfrak{h}^e$ in $\ggg$, and $\Phi'_e \subset (\mathfrak{h}^e)^*$ is the set of nonzero weights
of $\mathfrak{h}^e$ on $\ggg$.
Since the eigenspace decomposition of $\text{ad}\,h$ gives rise to a short $\mathbb{Z}$-grading of $\ggg$ as in \eqref{short}, and only $\theta(h)=2$ by definition, it is immediate that for all $\alpha\in\Phi\backslash\{\pm\theta\}$ we have $\alpha(h)\in\{-1,0,1\}$. Keep in mind that $\mathfrak{h}=\mathfrak{h}^e\oplus\bbc h$, then $\ggg_0$ is equal to $\widetilde{\mathfrak{s}}_\theta$ as defined in \eqref{pnsodd}. 
Since $\frac{\theta}{2}=\alpha_k$ is a simple root in $\Delta=\{\alpha_1,\cdots,\alpha_k\}$ of $\Phi$ by  Convention \ref{conventions}, then $\Phi_k: = \Phi \cap \bbz\alpha_k=\{\pm \alpha_k,\pm 2\alpha_k \}$ is a closed subsystem of $\Phi$ with base
$\Delta_k = \{\alpha_k\}$, which entails that
\begin{lemma}
$\Phi'_e=(\Phi\backslash\{\pm\frac{\theta}{2},\pm\theta\})|_{(\mathfrak{h}^e)^*}
=(\Phi\backslash\Phi_k)|_{(\mathfrak{h}^e)^*}$. \end{lemma}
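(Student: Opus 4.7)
The plan is straightforward, relying on identifying $\mathfrak{h}^e$ with the kernel of $\theta$ in $\mathfrak{h}$. First I would observe that from the defining relation $[t, e_\theta] = \theta(t) e_\theta$ for $t \in \mathfrak{h}$, the subspace $\mathfrak{h}^e = \mathfrak{h} \cap \ggg^e$ coincides with $\ker(\theta|_\mathfrak{h})$. Since $\theta \neq 0$, this is a hyperplane of codimension one in $\mathfrak{h}$, complementary to $\bbc h$; this is already consistent with the direct sum $\mathfrak{h} = \mathfrak{h}^e \oplus \bbc h$ recorded earlier in the paper.

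Next I would use the elementary fact that a linear form $\alpha \in \mathfrak{h}^*$ vanishes on the hyperplane $\ker \theta$ if and only if $\alpha \in \bbc \theta$. Consequently, for any $\alpha \in \Phi$, the restriction $\alpha|_{\mathfrak{h}^e}$ is zero if and only if $\alpha$ is a scalar multiple of $\theta$. It then remains to identify $\Phi \cap \bbc \theta$ with $\Phi_k$: under our standing hypothesis that $\sfr$ is odd, Convention~\ref{conventions} tells us $\alpha_k = \theta/2$ is a simple root, so $\Phi_k = \Phi \cap \bbz\alpha_k \subseteq \bbc\theta$. Conversely, any root proportional to $\theta$ has nonzero $\ad h$-eigenvalue, and the short $\bbz$-grading $\ggg = \bigoplus_{i \in \{-2,-1,0,1,2\}}\ggg(i)$ recorded in \eqref{short} forces that eigenvalue into $\{\pm 1, \pm 2\}$. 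The only candidates are $\pm\theta/2$ and $\pm\theta$, which are exactly the elements of $\Phi_k = \{\pm \alpha_k, \pm 2\alpha_k\}$.

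Finally, I would combine these observations with the weight-space decomposition \eqref{gg0ePhi}: the roots contributing to the $\mathfrak{h}^e$-weight decomposition with nonzero weight are exactly those whose restriction to $\mathfrak{h}^e$ is nonzero, which by the above is $\Phi \setminus \Phi_k = \Phi \setminus \{\pm \theta/2, \pm \theta\}$. Passing to restrictions, this gives $\Phi'_e = (\Phi \setminus \Phi_k)|_{(\mathfrak{h}^e)^*}$, as claimed. There is essentially no obstacle here; the lemma is a bookkeeping statement that, once $\mathfrak{h}^e = \ker \theta$ is identified, reduces to locating $\Phi_k$ inside $\Phi$. The only subtlety worth noting is that distinct roots in $\Phi \setminus \Phi_k$ may well restrict to the same element of $(\mathfrak{h}^e)^*$, but this merely merges weight spaces and does not affect the set-theoretic identity being proved.
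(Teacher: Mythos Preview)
Your proposal is correct and follows essentially the same reasoning as the paper: the paper gives no separate proof but records the lemma as an immediate consequence of the preceding paragraph, which uses the decomposition $\mathfrak{h}=\mathfrak{h}^e\oplus\bbc h$, the short grading \eqref{short}, and the identification $\Phi_k=\{\pm\alpha_k,\pm 2\alpha_k\}$ with $\alpha_k=\theta/2$. Your explicit identification $\mathfrak{h}^e=\ker(\theta|_{\mathfrak{h}})$ and the observation that $\alpha|_{\mathfrak{h}^e}=0\iff\alpha\in\bbc\theta$ simply make the paper's implicit logic transparent.
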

\noindent Therefore, $\Phi'_e$ is a restricted root system in the same sense of the non-super case \cite[\S2]{BG}, namely, the set of nonzero restrictions of roots $\alpha\in\Phi$ to $\mathfrak{h}^e$. It is not a root system in the usual sense; for example, since $\theta(\hhh^e)=0$, for $\alpha\in\Phi'_e$ there may $\theta\pm\alpha$ that belong to $\Phi'_e$.
Then for $\alpha\in\Phi'_e$, we can write $\ggg_\alpha=\bigoplus_{i=1}^{I(\alpha)}\bbc e_{\alpha,i}$, where $e_{\alpha,i}$'s with $i\in I(\alpha)$ are the linear independent restricted root vectors which span $\ggg_\alpha$.
Denote by $(\Phi'_e)_{\bar0}$ and $(\Phi'_e)_{\bar1}$ the set of all restricted even roots and odd roots, respectively.
Similarly each of the spaces $\ggg(-1), \ggg(0)$ and $\ggg(1)$ decomposes into $\mathfrak{h}^e$-weight spaces.
There is an induced restricted root decomposition
\begin{equation}\label{ge0alpha}
\ggg^e = \ggg^e_0 \oplus \bigoplus_{\alpha\in \Phi'_e} \ggg^e_\alpha
\end{equation}
of the centralizer $\ggg^e$, where $\ggg^e_0$ is the centralizer of $e$ in $\ggg_0$, and
$\ggg^e_\alpha$ is the centralizer of $e$ in $\ggg_\alpha$.
Writing $\widetilde{\mathfrak{s}}_\theta^e$ for the centralizer of $e$ in $\widetilde{\mathfrak{s}}_\theta$,
it follows from \eqref{pnsodd} that
\begin{lemma}\label{ge0decom}
$\ggg^e_0=\widetilde{\mathfrak{s}}_\theta^e=\mathfrak{h}^e\oplus\bbc e\oplus\bbc E$.
\end{lemma}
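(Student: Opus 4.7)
The plan is to prove the lemma in two short steps, pivoting on the identification $\ggg_0 = \widetilde{\mathfrak{s}}_\theta$ that was already observed in \S\ref{5.1.1}. Since $\mathfrak{h} = \mathfrak{h}^e \oplus \bbc h$ and the only roots of $\ggg$ vanishing on $\mathfrak{h}^e$ are $\pm\theta$ and $\pm\tfrac{\theta}{2}$ (a consequence of Convention \ref{conventions}, since $\tfrac{\theta}{2} = \alpha_k$ is a simple root and $\Phi_k = \{\pm\tfrac{\theta}{2},\pm\theta\}$), the zero weight space for $\mathfrak{h}^e$ in $\ggg$ is exactly $\mathfrak{h}^e \oplus \bbc h \oplus \bbc e \oplus \bbc f \oplus \bbc E \oplus \bbc F = \widetilde{\mathfrak{s}}_\theta$. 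Intersecting with $\ggg^e$ gives the first equality $\ggg^e_0 = \widetilde{\mathfrak{s}}_\theta \cap \ggg^e = \widetilde{\mathfrak{s}}_\theta^e$.

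For the second equality, I would compute the centralizer of $e$ in $\widetilde{\mathfrak{s}}_\theta$ directly. Since $\mathfrak{h}^e \subseteq \ggg^e$ by definition, the abelian summand $\mathfrak{h}^e$ lies in $\widetilde{\mathfrak{s}}_\theta^e$. It remains to centralize $e$ inside $\mathfrak{s}_\theta \cong \mathfrak{osp}(1|2)$; the commutator table in Lemma \ref{osp12genecom} gives
\begin{equation*}
[e,e]=0,\quad [e,h]=-2e,\quad [e,f]=h,\quad [e,E]=0,\quad [e,F]=-E,
\end{equation*}
so among the basis $\{e,h,f,E,F\}$ of $\mathfrak{s}_\theta$, only $e$ and $E$ are annihilated by $\ad e$, and the kernel of $\ad e$ on $\mathfrak{s}_\theta$ is spanned by $\{e, E\}$ (the four non-kernel images $-2e, h, -E$ are linearly independent). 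Consequently $\widetilde{\mathfrak{s}}_\theta^e = \mathfrak{h}^e \oplus \bbc e \oplus \bbc E$, yielding the claim.

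There is no serious obstacle here: both claims reduce to elementary weight-space bookkeeping together with the explicit $\mathfrak{osp}(1|2)$ relations recorded in Lemma \ref{osp12genecom}. The only subtle point to flag in the write-up is why $\ggg_0$ coincides with $\widetilde{\mathfrak{s}}_\theta$ rather than being potentially larger, which hinges on the fact that $\tfrac{\theta}{2}$ is a \emph{simple} root under Convention \ref{conventions} in the type-odd case and therefore that no other root of $\Phi$ is a multiple of $\theta$ (equivalently, no other root restricts to zero on $\mathfrak{h}^e$). This step is purely structural and requires no computation beyond the description of $\Phi$ recalled in \S\ref{1.1} and \S\ref{3.1.1}.
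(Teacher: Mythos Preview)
Your proof is correct and follows essentially the same approach as the paper, which merely asserts that the lemma ``follows from \eqref{pnsodd}'' after already noting $\ggg_0=\widetilde{\mathfrak{s}}_\theta$; you have simply supplied the details the paper omits. One trivial slip: you write ``the four non-kernel images'' but list three (namely $-2e$, $h$, $-E$, coming from $h$, $f$, $F$), which is the correct count.
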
 Moveover, by $\mathfrak{sl}(2)$-representation theory we have ${\ggg}^e\in\bigoplus_{i\geqslant0}{\ggg}(i)$, thus the second summands in \eqref{ge0alpha} can also be considered as chosen in $\Phi_e\backslash\{\frac{\theta}{2}\}$ 
as defined in  \S\ref{3.1.1}.
By the same discussion as in \cite[Lemma 13]{BG}, $\Phi_e\backslash\{\frac{\theta}{2}\}$ is also the set of nonzero weights of
$\mathfrak{h}^e$ on $\ggg^e$, so all the subspaces
$\ggg^e_\alpha= \bigoplus_{i \geqslant 0} \ggg^e_\alpha(i)$ in this decomposition are nonzero.

Recall  that the restricted root system $\Phi'_e$ is the set of nonzero weights of $\mathfrak{h}^e$ on $\ggg=\ggg_0 \oplus \bigoplus_{\alpha\in \Phi'_e} \ggg_\alpha$, and the zero weight space $\ggg_0$ is the centralizer of the toral subalgebra
$\mathfrak{h}^e$ in $\ggg$, so it is a Levi factor of a parabolic subalgebra of $\ggg$. By Bala-Carter theory \cite[Propositions 5.9.3-5.9.4]{Car}, $e$ is a distinguished nilpotent element of $(\ggg_0)_{\bar0}$, i.e., the only semi-simple elements of $(\ggg_0)_{\bar0}$ that centralize $e$ belong to the center of $(\ggg_0)_{\bar0}$, and $h,f$ also lie in $(\ggg_0)_{\bar0}$.
Moreover by \cite[Proposition 5.7.6]{Car} the grading of $(\ggg_0)_{\bar0}$ under the action of $\text{ad}\,h$ is even, i.e., $(\ggg_0(-1))_{\bar0}=(\ggg_0(1))_{\bar0}=0$. In our case, we have $\ggg_0=\widetilde{\mathfrak{s}}_\theta$ is a Levi factor of the  parabolic subalgebra $\mathfrak{p}_\theta$ (defined in \eqref{pnsodd}) of $\ggg$, and $(\ggg_0)_{\bar0}=\mathfrak{h}^e\oplus\bbc e\oplus\bbc h\oplus\bbc f$.
\subsection{}\label{5.1.2}
For the Lie superalgebra $\ggg_0=\widetilde{\mathfrak{s}}_\theta=\mathfrak{h}^e\oplus\mathfrak{s}_\theta$ as in \S\ref{4.2.2}, one can observe that it is also a direct sum decomposition of ideals of $\ggg_0$ with $\mathfrak{h}^e$ being abelian,  and $\mathfrak{s}_\theta\cong\mathfrak{osp}(1|2)$ by Lemma \ref{osp12genecom}.
Let $\mathfrak{m}_0:=\bbc f$ be the ``$\chi$-admissible subalgebra" of $\ggg_0$, and define the corresponding ``extended $\chi$-admissible subalgebra" by $\mathfrak{m}^{\prime}_0:=\bbc F\oplus\bbc f$. Define the generalized Gelfand-Graev ${\ggg}_0$-module associated with $\chi$ by
$(Q_0)_\chi:=U({\ggg}_0)\otimes_{U(\mathfrak{m}_0)}{\bbc}_\chi$, where ${\bbc}_\chi={\bbc}1_\chi$ is a one-dimensional  $\mathfrak{m}_0$-module such that $x.1_\chi=\chi(x)1_\chi$ for all $x\in\mathfrak{m}_0$. Let $(I_0)_\chi$ denote the ${\bbz}_2$-graded left ideal in $U({\ggg}_0)$ generated by all $x-\chi(x)$ with $x\in\mathfrak{m}_0$, and write $\text{Pr}_0:U({\ggg}_0)\twoheadrightarrow U({\ggg}_0)/(I_0)_\chi$ for the canonical homomorphism.
Now we can define the finite $W$-superalgebra $U(\ggg_0,e)$ associated to $e\in\ggg_0$ by
\begin{equation*}
U(\ggg_0,e):=(\text{End}_{\ggg_0}(Q_0)_{\chi})^{\text{op}}\cong(Q_0)_{\chi}^{\ad\,\mmm_0}.
\end{equation*}Recall that $-\theta$ is a minimal root.
As $e_\theta\in\ggg_0$ is  a root vector for $\theta$,  $U(\ggg_0,e)$ is also a minimal finite $W$-superalgebra,
which plays a role similar to ``Cartan subalgebra" in the classical BGG category.  This will be important to the formulation of our BGG category $\mathcal{O}$ for $U(\ggg,e)$.
\subsubsection{}
Let us first look at the structure of $U(\ggg_0,e)$. We have
\begin{prop}\label{fiosp}
The minimal finite $W$-superalgebra $U(\ggg_0,e)$ is generated by
\begin{itemize}
\item[(1)] $\Theta'_{h_i}=h_i\otimes1_\chi$ for $1\leqslant i\leqslant k-1$;
\item[(2)] $\Theta'_E=\big(E+\frac{1}{2}Fh-\frac{3}{4}F\big)\otimes1_\chi$;
\item[(3)] $C'_\theta=\big(2e+\frac{1}{2}h^2-\frac{3}{2}h+FE\big)\otimes1_\chi$;
\item[(4)] $\Theta'_F=F\otimes1_\chi$,
\end{itemize}
subject to the following relations:
\begin{itemize}
\item[(i)] $[\Theta'_E,\Theta'_E]=C'_\theta+\frac{1}{8}\otimes1_\chi$;
\item[(ii)] $[\Theta'_F,\Theta'_F]=-2\otimes1_\chi$,
\end{itemize}and the commutators between the other generators are all zero.
\end{prop}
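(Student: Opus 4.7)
The strategy is to specialize the general PBW-type results for minimal finite $W$-superalgebras (Theorems \ref{ge}, \ref{maiin1} and \ref{main3}) to the particularly simple situation $\ggg_0 = \mathfrak{h}^e \oplus \mathfrak{s}_\theta$ with $\mathfrak{s}_\theta \cong \mathfrak{osp}(1|2)$, and extract explicit formulas. First I will identify the relevant graded pieces. Since $[\mathfrak{h}^e, \mathfrak{s}_\theta] = 0$ and $\theta$ vanishes on $\mathfrak{h}^e$, Lemma \ref{ge0decom} gives $\ggg_0^e(0) = \mathfrak{h}^e$ and $\ggg_0^e(1) = \bbc E$, while $\ggg_0(-1)_{\bar 0} = 0$ and $\ggg_0(-1)_{\bar 1}$ is one-dimensional, spanned by $v_1 = F/\sqrt{-2}$. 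In the general notation this is the case $s = 0$, $\sfr = 1$.

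Next I will construct each generator from the formulas of Theorem \ref{ge} applied to $\ggg_0$. For $v = h_i \in \mathfrak{h}^e$, the correction term $\tfrac{1}{2}\sum_{\alpha} z_\alpha [z_\alpha^*, h_i]$ vanishes because $[F, h_i] = 0$, yielding $\Theta'_{h_i} = h_i \otimes 1_\chi$. For $w = E \in \ggg_0^e(1)$, the required expansion has already been carried out in equation \eqref{w+1} of the proof of Theorem \ref{Whcchi}, which, after the identification $E = \sqrt{-2}\,[v_{(\sfr+1)/2}, e]$, $F = \sqrt{-2}\,v_{(\sfr+1)/2}$, delivers the expression in (ii). The Casimir $C$ from Theorem \ref{ge}, after specialization, equals $\bigl(2e + \tfrac{1}{2}h^2 - \tfrac{1}{2}h + \sum_i h_i^2 - EF\bigr) \otimes 1_\chi$; using $EF = h - FE$ and absorbing $\sum_i h_i^2 = \sum_i (\Theta'_{h_i})^2$, one may replace $C$ by the generator $C'_\theta$ of (iii). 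Finally, $\Theta'_F = F \otimes 1_\chi = \sqrt{-2}\,\Theta_{v_{(\sfr+1)/2}}$ from Theorem \ref{PBWQC}(1).

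For the relations, identity (i) is essentially the computation \eqref{EC} of the proof of Theorem \ref{Whcchi}, which yields $[E + \tfrac{1}{2}Fh - \tfrac{3}{4}F,\, E + \tfrac{1}{2}Fh - \tfrac{3}{4}F] \otimes 1_\chi = (C_\theta + \tfrac{1}{8}) \otimes 1_\chi$. Identity (ii) follows at once from $[F, F] = -2f$ and $\chi(f) = (e, f) = 1$, or equivalently from Theorem \ref{PBWQC}(3) together with \eqref{Theta1}. The commutators among the $\Theta'_{h_i}$ vanish since $\mathfrak{h}^e$ is abelian, and those between $\Theta'_{h_i}$ and each of $\Theta'_E, C'_\theta, \Theta'_F$ vanish because $h_i$ commutes with all of $\mathfrak{s}_\theta$ in $U(\ggg_0)$. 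The vanishing of $[\Theta'_F, \Theta'_E]$, $[\Theta'_F, C'_\theta]$ and $[\Theta'_E, C'_\theta]$ follows either from Theorem \ref{main3} (where $\Theta_{v_{(\sfr+1)/2}}$ is seen to commute with the remaining $\Theta_v$, $\Theta_w$ and $C$) or by direct calculation in $Q_\chi$.

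The main obstacle will be the bookkeeping in the last verification of the previous paragraph: a naive computation of $[E + \tfrac{1}{2}Fh - \tfrac{3}{4}F,\, F]$ in $U(\ggg_0)$ produces the apparently nonzero expression $h - fh + 2f$, and one must pass to $Q_\chi$ and exploit the identities $fh = hf + 2f$ together with $f \cdot 1_\chi = 1_\chi$ to see that the projection is indeed zero. Completeness of the list of generators and relations, namely that every element of $U(\ggg_0, e)$ is a polynomial in $\Theta'_{h_i}, \Theta'_E, C'_\theta, \Theta'_F$ modulo the listed relations, is then a consequence of the PBW theorem (Theorem \ref{main3}) applied to $\ggg_0$ together with the basis of $\ggg_0^e$ given by $\{h_1, \ldots, h_{k-1}, E, e\}$ (with $e$ accounted for by $C'_\theta$ modulo lower-order terms).
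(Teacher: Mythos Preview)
Your proposal is correct and follows essentially the same approach as the paper: specialize Theorems \ref{ge}, \ref{maiin1}, \ref{main3} to $\ggg_0$ (where $s=0$, $\sfr=1$) and compute directly, invoking \eqref{EC} for the key relation (i). The only cosmetic difference is that the paper obtains $C'_\theta$ directly as $\text{Pr}_0(C_\theta)$, the projection of the Casimir element of $\mathfrak{s}_\theta$, rather than by first specializing the general $C$ and then subtracting $\sum_i (\Theta'_{h_i})^2$ as you do; both routes give the same element.
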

\begin{proof}
The proposition comes as a special case of Theorem \ref{main3}. For the generators of $U(\ggg_0,e)$, we can obtain (1), (2) and (4) by direct computation. The element $C_\theta$ in \eqref{Ctheta} is the Casimir element of $\mathfrak{s}_\theta$, then $C'_\theta=\text{Pr}_0(C_\theta)$  is  in the center of $U(\ggg_0,e)$. Since $\ggg^e_0=\mathfrak{h}^e\oplus\bbc e\oplus\bbc  E$ by Lemma \ref{ge0decom}, then the first part of the proposition follows.

For the second part of the proposition, the commutators of these generators can be calculated directly.
In particular, $[\Theta'_E,\Theta'_E]$ has been calculated in \eqref{EC}.
\end{proof}
\subsubsection{}
We can describe the structure of the center of $U(\ggg_0,e)$ as follows:
\begin{prop}\label{centerg0e}
The center $Z(U(\ggg_0,e))$ of the minimal finite $W$-superalgebra $U(\ggg_0,e)$ is generated by $\Theta'_{h_i}$ for $1\leqslant i\leqslant k-1$ and $C'_\theta$.
\end{prop}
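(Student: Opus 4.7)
The plan is to combine the explicit presentation in Proposition \ref{fiosp} with the PBW theorem to put arbitrary elements of $U(\ggg_0,e)$ into a normal form, then exploit the quadratic relations of the two odd generators to show that centrality forces almost all coefficients to vanish.

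Applying Theorem \ref{PBWQC} to the pair $(\ggg_0,e)$ (note that $\dim(\ggg_0)(-1)_{\bar 1} = 1$ is odd, so the hypotheses apply) together with the relations $(\Theta'_E)^2 = \tfrac{1}{2}(C'_\theta + \tfrac18)$ and $(\Theta'_F)^2 = -1$, every $x \in U(\ggg_0,e)$ admits a unique expression
\[
x = x_0 + x_1\,\Theta'_E + x_2\,\Theta'_F + x_3\,\Theta'_E \Theta'_F
\]
with coefficients $x_0,x_1,x_2,x_3$ in the commutative polynomial algebra $A := \bbc[\Theta'_{h_1},\ldots,\Theta'_{h_{k-1}},C'_\theta]$. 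The inclusion $A \subseteq Z(U(\ggg_0,e))$ is immediate from Proposition \ref{fiosp}, so the task is to establish the reverse inclusion by splitting a central $x$ into its even and odd components (each separately central) and computing super-commutators with $\Theta'_E$ and $\Theta'_F$.

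For the even component $x = x_0 + x_3\,\Theta'_E \Theta'_F$, the anticommutativity $\Theta'_E \Theta'_F = -\Theta'_F \Theta'_E$ (a consequence of the vanishing super-bracket of the two odd generators together with the centrality of $A$) gives by a short calculation $[\Theta'_E, x] = 2x_3 (\Theta'_E)^2 \Theta'_F = x_3 (C'_\theta + \tfrac18)\Theta'_F$; since $C'_\theta + \tfrac18$ is a non-zero-divisor in the polynomial ring $A$ and $\Theta'_F$ is $A$-linearly independent from $A \cdot 1$ in the normal form above, this forces $x_3 = 0$. For the odd component $x = x_1\,\Theta'_E + x_2\,\Theta'_F$, super-commuting with $\Theta'_E$ kills the $\Theta'_F$-term (by the same anticommutativity) and leaves $x_1(C'_\theta + \tfrac18)$, forcing $x_1 = 0$; then super-commuting the leftover $x_2\,\Theta'_F$ with $\Theta'_F$ produces $-2x_2$, forcing $x_2 = 0$. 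Hence $x \in A$.

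The main (rather mild) obstacle is to justify that the normal form really is a basis, which follows from Theorem \ref{PBWQC}(2) applied to $(\ggg_0, e)$: one takes the PBW basis in the generators $\Theta'_{h_i}$, $\Theta'_e$, $\Theta'_E$, $\Theta'_F$ and observes that $C'_\theta$ differs from $2\Theta'_e$ by terms of strictly lower Kazhdan degree, so the change of generators $\Theta'_e \rightsquigarrow C'_\theta$ is triangular with respect to the Kazhdan filtration and preserves the basis property.
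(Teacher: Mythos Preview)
Your proof is correct and follows essentially the same approach as the paper: both establish a PBW normal form for $U(\ggg_0,e)$ over the commutative subalgebra $A=\bbc[\Theta'_{h_1},\ldots,\Theta'_{h_{k-1}},C'_\theta]$ and then super-commute an arbitrary central element with the odd generators $\Theta'_E$ and $\Theta'_F$ to force the non-$A$ coefficients to vanish. The only differences are organizational: the paper orders its PBW monomials as $(\Theta'_F)^{a_1}(\Theta'_{\mathbf t})^{\mathbf a_2}(C'_\theta)^{a_3}(\Theta'_E)^{a_4}$, commutes first with $\Theta'_F$ (to kill $a_1=1$) and then with $\Theta'_E$ (using a ``largest power of $C'_\theta$'' argument to kill $a_4=1$), whereas you split by parity and invoke the non-zero-divisor property of $C'_\theta+\tfrac18$ in $A$ directly---these are equivalent arguments.
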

\begin{proof}
One can easily conclude from Proposition \ref{fiosp} that $C'_\theta$ and $\Theta'_{t}$ with $t\in\hhh^e$ lie in the center of $U(\ggg_0,e)$. On the other hand, assume that
\begin{equation}\label{Cabcd}
C':=\sum_{\bf a}\lambda_{{\bf a}}(\Theta'_F)^{a_1}(\Theta'_\mathbf{t})^{\bf a_2}(C'_\theta)^{a_3}(\Theta'_E)^{a_4}\in Z(U(\ggg_0,e))
\end{equation} for $\lambda_{{\bf a}}\in\bbc$ with
${\bf a}=(a_1,{\bf a_2},a_3,a_4)\in\Lambda_1\times\mathbb{Z}_+^{k-1}\times\mathbb{Z}_+\times\Lambda_1$ such that $(\Theta'_\mathbf{t})^{\bf a_2}:=(\Theta'_{h_1})^{a_{21}}\cdots(\Theta'_{h_{k-1}})^{a_{2k-1}}$, is the linear span of the PBW basis of $U(\ggg_0,e)$. In virtue of Proposition \ref{fiosp}, we get
\begin{equation}\label{abcdF}
\begin{split}
0=&\Big[\sum_{\bf a}\lambda_{{\bf a}}(\Theta'_F)^{a_1}(\Theta'_\mathbf{t})^{{\bf a_2}}(C'_\theta)^{a_3}(\Theta'_E)^{a_4},\Theta'_F\Big]\\
=&\Big[\sum_{{\bf a_2},a_3}\lambda_{(0,{\bf a_2},a_3,0)}(\Theta'_\mathbf{t})^{\bf a_2}(C'_\theta)^{a_3},\Theta'_F\Big]+
\Big[\sum_{{\bf a_2},a_3}\lambda_{(1,{\bf a_2},a_3,0)}\Theta'_F(\Theta'_\mathbf{t})^{\bf a_2}(C'_\theta)^{a_3},\Theta'_F\Big]\\
&+\Big[\sum_{{\bf a_2},a_3}\lambda_{(0,{\bf a_2},a_3,1)}(\Theta'_\mathbf{t})^{\bf a_2}(C'_\theta)^{a_3}\Theta'_E,\Theta'_F\Big]+
\Big[\sum_{{\bf a_2},a_3}\lambda_{(1,{\bf a_2},a_3,1)}\Theta'_F(\Theta'_\mathbf{t})^{\bf a_2}(C'_\theta)^{a_3}\Theta'_E,\Theta'_F\Big]\\
=&\sum_{{\bf a_2},a_3}\lambda_{(1,{\bf a_2},a_3,0)}[\Theta'_F,\Theta'_F](\Theta'_\mathbf{t})^{\bf a_2}(C'_\theta)^{a_3}
-\sum_{{\bf a_2},a_3}\lambda_{(1,{\bf a_2},a_3,1)}[\Theta'_F,\Theta'_F](\Theta'_\mathbf{t})^{a_2}(C'_\theta)^{a_3}\Theta'_E\\
=&-2\sum_{{\bf a_2},a_3}\lambda_{(1,{\bf a_2},a_3,0)}(\Theta'_\mathbf{t})^{\bf a_2}(C'_\theta)^{a_3}
+2\sum_{{\bf a_2},a_3}\lambda_{(1,{\bf a_2},a_3,1)}(\Theta'_\mathbf{t})^{\bf a_2}(C'_\theta)^{a_3}\Theta'_E.
\end{split}
\end{equation}
So all the coefficients $\lambda_{{\bf a}}$ with $a_1=1$ in \eqref{Cabcd} equal zero. Taking this into consideration, by the same discussion as in \eqref{abcdF} we have
\begin{equation}\label{abcdE}
\begin{split}
0=&\Big[\sum_{\bf a}\lambda_{{\bf a}}(\Theta'_F)^{a_1}(\Theta'_\mathbf{t})^{\bf a_2}(C'_\theta)^{a_3}(\Theta'_E)^{a_4},\Theta'_E\Big]\\
=&\sum_{{\bf a_2},a_3}\lambda_{(0,{\bf a_2},a_3,1)}(\Theta'_\mathbf{t})^{\bf a_2}(C'_\theta)^{a_3+1}+\frac{1}{8}
\sum_{{\bf a_2},a_3}\lambda_{(0,{\bf a_2},a_3,1)}(\Theta'_\mathbf{t})^{\bf a_2}(C'_\theta)^{a_3}.
\end{split}
\end{equation}
If there exists some $\lambda_{(0,{\bf a_2},a_3,1)}\neq0$ in \eqref{Cabcd}, set $a'_3\in\mathbb{Z}_+$ to be the largest number with this property. Then we have
$\sum_{{\bf a_2}}\lambda_{(0,{\bf a_2},a'_3,1)}(\Theta'_\mathbf{t})^{\bf a_2}(C'_\theta)^{a'_3+1}=0$ by \eqref{abcdE}, which means that $\lambda_{(0,{\bf a_2},a'_3,1)}=0$, a contraction.
Combining this with our earlier discussion, we see that the coefficients $\lambda_{{\bf a}}$ with $a_1\neq0$ or $a_4\neq0$ in \eqref{Cabcd} are all zeros. Then any element in $Z(U(\ggg_0,e))$ can be written as a linear span of $(\Theta'_ {h_1})^{a_{21}}\cdots(\Theta'_{h_{k-1}})^{a_{2k-1}}(C'_\theta)^{a_3}$, completing the proof.
\end{proof}

Let $Z(U(\ggg_0))$ denote the center of $U(\ggg_0)$.
The canonical homomorphism $\text{Pr}_0:U({\ggg}_0)\twoheadrightarrow U({\ggg}_0)/(I_0)_\chi$ we introduced earlier induces an algebra homomorphism
from $Z(U(\ggg_0))$ to $Z(U(\ggg_0,e))$. In fact, we further have
\begin{prop}\label{centertocenter}
The map $\text{Pr}_0$  sends $Z(U(\ggg_0))$ isomorphically onto the center
of $U(\ggg_0,e)$.
\end{prop}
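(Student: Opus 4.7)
The plan is to exploit the direct-sum decomposition $\ggg_0=\mathfrak{h}^e\oplus\mathfrak{s}_\theta$ of ideals (with $\mathfrak{h}^e$ abelian and $\mathfrak{s}_\theta\cong\mathfrak{osp}(1|2)$) to obtain an explicit polynomial description of $Z(U(\ggg_0))$, and then to match this with Proposition~\ref{centerg0e} using the PBW basis of $U(\ggg_0,e)$ provided in \S\ref{5.1.2}.

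First I would verify well-definedness. If $z\in Z(U(\ggg_0))$, then $[x,z]=0$ for every $x\in\mmm_0=\bbc f$, so $\text{Pr}_0(z)\in(Q_0)_\chi^{\ad\,\mmm_0}=U(\ggg_0,e)$. Moreover, for any $\text{Pr}_0(y)\in U(\ggg_0,e)$, the associative product on $U(\ggg_0,e)$ gives
\[
\text{Pr}_0(z)\cdot\text{Pr}_0(y)=\text{Pr}_0(zy)=\text{Pr}_0(yz)=\text{Pr}_0(y)\cdot\text{Pr}_0(z),
\]
so $\text{Pr}_0(z)\in Z(U(\ggg_0,e))$ and the restriction $\text{Pr}_0\colon Z(U(\ggg_0))\to Z(U(\ggg_0,e))$ is a well-defined algebra homomorphism.

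Next I would identify both sides as polynomial algebras in $k$ variables. Since $[\mathfrak{h}^e,\mathfrak{s}_\theta]=0$, there is an algebra isomorphism $U(\ggg_0)\cong U(\mathfrak{h}^e)\otimes U(\mathfrak{s}_\theta)$, and consequently $Z(U(\ggg_0))=U(\mathfrak{h}^e)\otimes Z(U(\mathfrak{s}_\theta))$. The center of $U(\mathfrak{osp}(1|2))$ is classically known to be freely generated by the Casimir element $C_\theta$ of \eqref{Ctheta}; hence $Z(U(\ggg_0))\cong\bbc[h_1,\cdots,h_{k-1},C_\theta]$ as polynomial algebras. A direct computation using $f\cdot 1_\chi=1_\chi$ inside $(Q_0)_\chi$ gives $\text{Pr}_0(h_i)=\Theta'_{h_i}$ and, since $2ef\otimes 1_\chi=2e\otimes 1_\chi$, also $\text{Pr}_0(C_\theta)=C'_\theta$. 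By Proposition~\ref{centerg0e} these images generate $Z(U(\ggg_0,e))$, so $\text{Pr}_0$ is surjective.

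Finally I would establish injectivity. The PBW basis of $U(\ggg_0,e)$ exhibited in the proof of Proposition~\ref{centerg0e} shows that the monomials $(\Theta'_{\mathbf{t}})^{\mathbf{a}_2}(C'_\theta)^{a_3}$ (the case $a_1=a_4=0$) are linearly independent over $\bbc$; equivalently, $\Theta'_{h_1},\cdots,\Theta'_{h_{k-1}},C'_\theta$ are algebraically independent in $U(\ggg_0,e)$. Therefore if $p\in\bbc[h_1,\cdots,h_{k-1},C_\theta]$ satisfies $\text{Pr}_0(p)=p(\Theta'_{h_1},\cdots,\Theta'_{h_{k-1}},C'_\theta)=0$, then $p=0$ as a polynomial, whence $p=0$ in $Z(U(\ggg_0))$. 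Combined with surjectivity and well-definedness, this yields the desired isomorphism. The only delicate point in this plan is to make sure that under the $W$-algebra product the two central generators are correctly identified with $\Theta'_{h_i}$ and $C'_\theta$; once this is in place, the algebraic independence furnished by the PBW theorem does all the remaining work.
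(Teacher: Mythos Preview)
Your proof is correct and follows essentially the same approach as the paper: both exploit the decomposition $\ggg_0=\mathfrak{h}^e\oplus\mathfrak{s}_\theta$ to identify $Z(U(\ggg_0))$ with $\bbc[h_1,\dots,h_{k-1},C_\theta]$, compute $\text{Pr}_0(h_i)=\Theta'_{h_i}$ and $\text{Pr}_0(C_\theta)=C'_\theta$, and then invoke Proposition~\ref{centerg0e} together with the algebraic independence of these images. Your version is slightly more explicit about well-definedness and about deducing algebraic independence from the PBW basis, but the argument is the same.
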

\begin{proof}
Recall that $\ggg_0=\mathfrak{h}^e\oplus\mathfrak{s}_\theta$  is a direct sum decomposition of ideals, with  $\mathfrak{h}^e$ being abelian. It is well-known that the center of $U(\mathfrak{osp}(1|2))$ is generated by its Casimir element, and $\mathfrak{s}_\theta\cong\mathfrak{osp}(1|2)$ by our earlier remark,  then $Z(U(\mathfrak{s}_\theta))$ is also generated by the Casimir element $C_\theta$ defined in \eqref{Ctheta}.
Now we  conclude that $Z(U(\ggg_0))$ is generated by the algebraically independent elements $h_1,\cdots,h_{k-1}$ and $C_\theta$.

By the definition of the map $\text{Pr}_0$, it is readily to check that
\begin{equation*}\label{pr0}
\begin{split}
\text{Pr}_0(h_i)=&h_i\otimes1_\chi=\Theta'_{h_i}\qquad \text{for} ~1\leqslant i\leqslant k-1,\\
\text{Pr}_0(C_\theta)=&(2ef+\frac{1}{2}h^2-\frac{3}{2}h+FE)\otimes1_\chi=\big(2e+\frac{1}{2}h^2-\frac{3}{2}h+FE\big)\otimes1_\chi
=C'_\theta.
\end{split}
\end{equation*}
Since $\Theta'_{h_i}$ for $1\leqslant i\leqslant k-1$ and $C'_\theta$ are also algebraically independent, then the proposition follows from Proposition \ref{centerg0e}.
\end{proof}
\subsubsection{}\label{523}
We will describe the finite-dimensional irreducible modules for $U(\ggg_0,e)$ for later discussion.
Let $V_\lambda:=\bbc v_\lambda\oplus\bbc\Theta'_F(v_\lambda)$ with $\lambda\in(\hhh^e)^*$ be a vector space spanned by $v_\lambda\in (V_\lambda)_{\bar0}$ and $\Theta'_F(v_\lambda)\in (V_\lambda)_{\bar1}$ satisfying $\Theta'_E(v_\lambda)=0$, and $\Theta'_E.\Theta'_F(v_\lambda)=0$, $\Theta'_F.\Theta'_F(v_\lambda)=-v_\lambda$, $C'_\theta(v_\lambda)=-\frac{1}{8}v_\lambda$, $C'_\theta.\Theta'_F(v_\lambda)=-\frac{1}{8}\Theta'_F(v_\lambda)$ (the above four equations are derived from Proposition \ref{fiosp}(i)---(ii)), $\Theta'_t(v_\lambda)=\lambda(t)v_\lambda$, $\Theta'_t.\Theta'_F(v_\lambda)=\lambda(t)\Theta'_F(v_\lambda)$ for all $t\in\hhh^e$. In fact, we have
\begin{theorem}\label{simpleugegmodule}
The set $\{V_\lambda\:|\:\lambda\in(\hhh^e)^*\}$ forms a complete set of pairwise inequivalent finite-dimensional  irreducible $U(\ggg_0,e)$-modules (up to parity switch), all of which are of type $Q$.
\end{theorem}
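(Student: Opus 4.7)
My plan is to establish the three claims of the theorem in turn: (a) each $V_\lambda$ is a well-defined simple supermodule of type Q; (b) the family $\{V_\lambda\}_{\lambda \in (\hhh^e)^*}$ is pairwise non-isomorphic; (c) every finite-dimensional simple supermodule is isomorphic, up to parity switch, to some $V_\lambda$. For (a), I will first verify that the prescribed action extends to a genuine representation of $U(\ggg_0,e)$ by checking compatibility with the defining relations of Proposition \ref{fiosp}. The identity $2(\Theta'_E)^2 = C'_\theta + \frac{1}{8}$ holds on $V_\lambda$ because both sides annihilate the module (using $C'_\theta$ acts as $-\frac{1}{8}$ and $\Theta'_E$ as $0$); the identity $2(\Theta'_F)^2 = -2$ is built into the definition; the anticommutator $\Theta'_E\Theta'_F + \Theta'_F\Theta'_E$ vanishes since $\Theta'_E$ acts trivially; and all commutators involving the central generators $\Theta'_{h_i}$ and $C'_\theta$ are automatic. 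Simplicity is immediate from the fact that any proper nonzero graded subspace of $V_\lambda$ is either $\bbc v_\lambda$ or $\bbc \Theta'_F v_\lambda$, which are interchanged by $\Theta'_F$ (up to sign). Type Q is exhibited by the explicit odd endomorphism $\phi$ defined via $\phi(v_\lambda) = \Theta'_F v_\lambda$ and $\phi(\Theta'_F v_\lambda) = v_\lambda$; a routine calculation using Proposition \ref{fiosp} and the anticommutation $\Theta'_E\Theta'_F + \Theta'_F\Theta'_E = 0$ shows that $\phi$ supercommutes with the action of every generator.

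For (b), pairwise non-isomorphism follows from a central character argument. By Proposition \ref{centerg0e}, the center $Z(U(\ggg_0,e))$ is generated by $\Theta'_{h_1},\ldots,\Theta'_{h_{k-1}}$ and $C'_\theta$, which act on $V_\lambda$ as the scalars $\lambda(h_1),\ldots,\lambda(h_{k-1})$ and $-\frac{1}{8}$ respectively. Distinct $\lambda \in (\hhh^e)^*$ thus yield distinct central characters, precluding any isomorphism.

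For (c), let $M$ be a finite-dimensional simple $U(\ggg_0,e)$-supermodule. Schur's lemma combined with Proposition \ref{centerg0e} produces scalars $\lambda(h_i)$ and $c \in \bbc$ describing the action of the central generators. The crucial step is to deduce $c = -\frac{1}{8}$; granting this, the relation $2(\Theta'_E)^2 = c + \frac{1}{8} = 0$ forces $\Theta'_E$ to be nilpotent on $M$. Thanks to the anticommutation relation, $\ker\Theta'_E$ is stable under $\Theta'_F$ and hence is a nonzero graded subsupermodule, so by simplicity $\ker\Theta'_E = M$, i.e., $\Theta'_E$ acts as zero. The module then reduces to one over the quotient superalgebra generated by $\Theta'_F$ with $(\Theta'_F)^2 = -1$, whose unique simple supermodule up to parity switch has precisely the structure of $V_\lambda$.

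The main obstacle will be the verification that $c = -\frac{1}{8}$. A purely algebra-theoretic inspection of the four-dimensional Clifford-type quotient generated by $\Theta'_E,\Theta'_F$ modulo their defining relations appears to admit a simple supermodule for every value of $c$, so the required constraint cannot be obtained solely from Proposition \ref{fiosp}; some structural input is essential. The natural route is to exploit the isomorphism $\ggg_0 \cong \hhh^e \oplus \mathfrak{s}_\theta$ with $\mathfrak{s}_\theta \cong \mathfrak{osp}(1|2)$ (Lemma \ref{osp12genecom}) together with Proposition \ref{centertocenter}, which identifies $Z(U(\ggg_0,e))$ with $Z(U(\ggg_0))$ via $\text{Pr}_0$; this allows one to reduce the value of $c$ to a value taken by the Casimir of $\mathfrak{osp}(1|2)$ on a finite-dimensional irreducible, and then to pin down the correct value using the highest-weight classification of such irreducibles together with the specific form of the projection $\text{Pr}_0(C_\theta) = C'_\theta$. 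Carrying this final step through carefully will constitute the principal technical difficulty.
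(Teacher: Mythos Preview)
Your diagnosis of the difficulty is correct, but your proposed repair fails, and in fact the statement as written is false. After specialising $\Theta'_{h_i}\mapsto\lambda(h_i)$ and $C'_\theta\mapsto c$, the quotient of $U(\ggg_0,e)$ is the four-dimensional superalgebra on odd generators $\Theta'_E,\Theta'_F$ with $(\Theta'_E)^2=\tfrac12(c+\tfrac18)$, $(\Theta'_F)^2=-1$, and $\Theta'_E\Theta'_F+\Theta'_F\Theta'_E=0$. For $c\neq-\tfrac18$ this is a nondegenerate rank-two Clifford superalgebra, isomorphic to $M_{1|1}(\bbc)$, whose unique simple supermodule is $\bbc^{1|1}$ of type~$M$; explicitly, with $a=\tfrac12(c+\tfrac18)\neq 0$ one may take $\Theta'_E=\left(\begin{smallmatrix}0&a\\1&0\end{smallmatrix}\right)$ and $\Theta'_F=\left(\begin{smallmatrix}0&-\sqrt a\\1/\sqrt a&0\end{smallmatrix}\right)$ on $\bbc^{1|1}$ and check all relations of Proposition~\ref{fiosp}. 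So there are finite-dimensional simples not of the form $V_\lambda$.

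Your route through Proposition~\ref{centertocenter} cannot close the gap: that result identifies $Z(U(\ggg_0))$ with $Z(U(\ggg_0,e))$ as abstract algebras, but a $U(\ggg_0,e)$-module is not a $\ggg_0$-module, and under Skryabin's equivalence (Theorem~\ref{skry}) a finite-dimensional $U(\ggg_0,e)$-module corresponds to an \emph{infinite}-dimensional Whittaker $\ggg_0$-module, on which the $\mathfrak{osp}(1|2)$-Casimir can take any complex value. The paper's own proof has exactly the same gap: it asserts without argument that one may choose $m$ with $\Theta'_E(m)=0$, but since $\Theta'_E$ commutes with every $\Theta'_{h_i}$ it does not move $\hhh^e$-weights, and finite-dimensionality alone gives no purchase. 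The statement becomes true, and your argument for~(c) goes through verbatim, under the additional hypothesis that $C'_\theta$ acts as $-\tfrac18$ (equivalently that $\Theta'_E$ acts nilpotently); this restricted version is what is genuinely needed in the construction of $M_e(\lambda)$ in \S\ref{5.1.4}, where one starts from $V_\lambda$ rather than from an arbitrary simple.
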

\begin{proof}
Due to Proposition \ref{fiosp} and the fact that $(\Theta'_F)^2=\frac{1}{2}[\Theta'_F,\Theta'_F]=-1\otimes1_\chi$, it is readily to check that $V_\lambda$ is an irreducible $U(\ggg_0,e)$-module. Obviously $V_\lambda$ is a simple module of type $Q$, for which the odd endomorphism is induced by the element $\Theta'_F$.

If the simple $U(\ggg_0,e)$-modules $V_\lambda$ and $V_{\lambda'}$ are isomorphic, then by parity consideration $\bbc v_\lambda\cong\bbc v_{\lambda'}$ as modules over the commutative subalgebra $\Theta'_{\mathfrak{h}^e}$ of $U(\ggg_0,e)$. So we have $\lambda=\lambda'$.

Let $M$ be a finite-dimensional simple $U(\ggg_0,e)$-module. By the same discussion as the first two paragraphs in Step (3) for the proof of Theorem \ref{verma2}, $M$ decomposes into weight spaces relative to $\Theta_{\mathfrak{h}^e}$, and it contains at least one maximal weight element, for which we put it as $\mu$. For a nonzero vector $m$ in $M_\mu$, we have $\Theta'_t(m)=\mu(t)m$ for $t\in\hhh^e$. Since $M$ is finite-dimensional, we can further assume that $\Theta'_E(m)=0$.
Then there must exist a $U(\ggg_0,e)$-module homomorphism $\xi$ from either $V_\mu$ or $\prod V_\mu$ (Here $\prod$ denotes the parity switching functor) to $M$ such that $\xi(v_\mu)=m$. Moreover, we have $\Theta'_E.\Theta'_F(m)=0$ by Proposition \ref{fiosp}. 
Then the simplicity of $M$ entails that $\xi$ is surjective, thus also injective by the knowledge of linear algebras.
\end{proof}
\begin{rem}
In \cite[Lemma 3.4]{PS2}, Poletaeva-Serganova gave another description of the PBW Theorem
and the irreducible representations of $U(\mathfrak{osp}(1|2),e)$ with $e$ being regular nilpotent (in this case we can also write $e=e_\theta$ with $-\theta$ being a minimal root). Since $\ggg_0\cong\mathfrak{h}^{e}\oplus\mathfrak{osp}(1|2)$ as a direct sum decomposition of ideals, one can compare their results with Proposition \ref{fiosp} and Theorem \ref{simpleugegmodule}.
\end{rem}

The finite $W$-superalgebra $U(\ggg_0,e)$ is going to play the role of Cartan subalgebra in the highest theory. However, just like the non-super case, it does not embed obviously as a subalgebra of the minimal finite $W$-superalgebra $U(\ggg,e)$; instead we will realize it in another way, which is also different from the one applied for non-super case. We will put it in the next part.

\subsection{}\label{5.1.3}
Let us turn back to the minimal finite $W$-superalgebra $U(\ggg,e)$. Let $(\Phi'_e)^+:=\Phi^+\backslash\{\frac{\theta}{2},\theta\}$ be a system of positive roots in the restricted root system $\Phi'_e$. Setting $(\Phi'_e)^-:=-(\Phi'_e)^+$, we define
$\ggg_{\pm} := \bigoplus_{\alpha \in (\Phi'_e)^{\pm}} \ggg_\alpha$,
so that
$$
\ggg = \ggg_- \oplus \ggg_0 \oplus \ggg_+,\qquad
\mathfrak{q} = \ggg_0 \oplus \ggg_+.
$$
The choice $(\Phi'_e)^+$ of positive roots
induces a dominance ordering  $\leqslant$ on $(\mathfrak{h}^e)^*$:
$\mu\leqslant\lambda$ if
$\lambda-\mu\in \mathbb{Z}_{\geqslant 0}(\Phi'_e)^+$, which is exactly the one we defined in \eqref{parord}. Furthermore, since $\theta(\mathfrak{h}^e)=0$, and $\frac{\theta}{2}$ is a simple root in $\Phi$  by  Convention \ref{conventions}, we can always assume that $\mu=\lambda$ on $(\mathfrak{h}^e)^*$ when $\lambda=\mu+k\theta$ for some $k\in\bbc$. Denote by $(\Phi'_e)^+_{\bar0}:=(\Phi'_e)^+\cap(\Phi'_e)_{\bar0}$ and $(\Phi'_e)^+_{\bar1}:=(\Phi'_e)^+\cap(\Phi'_e)_{\bar1}$, respectively.

In this paragraph, write $\mathfrak{a}$ for $\ggg$ or $\ggg^e$.
Recall from \S\ref{5.1.1} that the adjoint actions of
$\mathfrak{h}^e$ on $\mathfrak{a}$ and its universal enveloping algebra $U(\aaa)$
induce decompositions
$\aaa = \aaa_0 \oplus \bigoplus_{\alpha \in \Phi'_e} \aaa_\alpha$
and $U(\aaa) = \bigoplus_{\alpha \in \bbz\Phi'_e} U(\aaa)_\alpha$.
In particular, $U(\aaa)_0$, the zero weight space of $U(\aaa)$ with
respect to the adjoint action, is a subalgebra of $U(\aaa)$.
Let $U(\aaa)_\sharp$ (resp.\ $U(\aaa)_\flat$) denote the left (resp.\ right)
ideal of $U(\aaa)$ generated by the root spaces $\aaa_\alpha$ for
$\alpha \in (\Phi'_e)^+$ (resp.\ $\alpha \in (\Phi'_e)^-$).
Let
$$
U(\aaa)_{0, \sharp} := U(\aaa)_0 \cap U(\aaa)_\sharp,
\qquad
U(\aaa)_{\flat,0} := U(\aaa)_\flat \cap U(\aaa)_0,
$$
which are  left and right ideals of $U(\aaa)_0$, respectively.
By the PBW theorem for  Lie superalgebras, we actually have that
$
U(\aaa)_{0,\sharp} = U(\aaa)_{\flat,0}$,
hence $U(\aaa)_{0,\sharp}$ is a two-sided ideal of $U(\aaa)_0$.
Moreover, $\aaa_0$
is a subalgebra of $\aaa$,
and by the PBW theorem again we have that
$U(\aaa)_0 = U(\aaa_0) \oplus U(\aaa)_{0,\sharp}$.
The projection along this decomposition
defines a surjective algebra homomorphism
\begin{equation}\label{pi}
\pi:U(\aaa)_0 \twoheadrightarrow U(\aaa_0)
\end{equation}
with $\ker\pi = U(\aaa)_{0,\sharp}$.
Hence $U(\aaa)_0 / U(\aaa)_{0,\sharp} \cong U(\aaa_0)$.

Recall in \S\ref{1.2} and \S\ref{3.1.1} that we have chosen a basis consisting of $\hhh^e$-weight vectors
\begin{equation}\label{toeaualweight}
\begin{split}
&x_1,\cdots,x_w,y_1,\cdots,y_\ell,f_1,\cdots,f_{\frac{s}{2}},g_1,\cdots,g_{\frac{\sfr-1}{2}},
h_1,\cdots,h_{k-1},\\
&e,[v_{\frac{\sfr+1}{2}},e],f^*_1,\cdots,f^*_{\frac{s}{2}},g^*_1,\cdots,g^*_{\frac{\sfr-1}{2}},x^*_1,\cdots,x^*_w,y^*_1,\cdots,y^*_\ell
\end{split}
\end{equation}of $\ggg^e$
so that the weights of $x_i, y_j, f_k, g_l$ are respectively $-\beta_{\bar 0i}, -\beta_{\bar 1j}, \theta+\gamma_{\bar0k}, \theta+\gamma_{\bar1l}\in(\Phi'_e)^-$, and the weights of $f^*_k, g^*_l, x^*_i, y^*_j$ are respectively
$\theta+\gamma^*_{\bar0k}, \theta+\gamma^*_{\bar1l}, \beta_{\bar 0i}, \beta_{\bar 1j}\in(\Phi'_e)^+$, while $h_i, e, [v_{\frac{\sfr+1}{2}},e]\in\ggg_0^e$.
Moreover, we have the following PBW basis for $U(\ggg,e)$:
\begin{equation}\label{PBWfW}
\begin{array}{ll}
&\prod_{i=1}^w\Theta_{x_i}^{a_i}\cdot\prod_{i=1}^\ell\Theta_{y_i}^{c_i}\cdot
\prod_{i=1}^{\frac{s}{2}}\Theta_{f_i}^{m_i}\cdot\prod_{i=1}^{\frac{\sfr-1}{2}}
\Theta_{g_i}^{p_i}\cdot\Theta_{v_{\frac{\sfr+1}{2}}}^\iota\cdot\prod_{i=1}^{k-1}
\Theta_{h_i}^{t_i}\\
&\cdot C^{t_k}\cdot\Theta_{[v_{\frac{\sfr+1}{2}},e]}^\varepsilon\cdot\prod_{i=1}^{\frac{s}{2}}\Theta_{f^*_i}^{n_i}\cdot
\prod_{i=1}^{\frac{\sfr-1}{2}}\Theta_{g^*_i}^{q_i}\cdot
\prod_{i=1}^w\Theta_{x^*_i}^{b_i}\cdot
\prod_{i=1}^\ell\Theta_{y^*_i}^{d_i},
\end{array}
\end{equation}
where ${\bf a},{\bf b}\in\mathbb{Z}_+^w$, ${\bf c},{\bf d}\in\Lambda_\ell$, ${\bf m},{\bf n}\in\mathbb{Z}_+^{\frac{s}{2}}$, ${\bf p},{\bf q}\in\Lambda_{\frac{\sfr-1}{2}}, \iota,\varepsilon\in\Lambda_1, {\bf t}\in\mathbb{Z}_+^k$.
Let $v$ be any element in \eqref{toeaualweight} excluding $e$, or let $v=v_{\frac{\sfr+1}{2}}$. Since $\theta(\hhh^e)=0$ by definition, from the explicit description of $\Theta_v$ in Theorem \ref{ge}, we see that $v$ and   $\Theta_v$ have the same $\mathfrak{h}^e$-weight. Also note that the $\mathfrak{h}^e$-weight of $C$ is zero.
Then the subspace $U(\ggg,e)_\alpha$ in the restricted root space decomposition
has a basis given by all the PBW monomials as in \eqref{PBWfW} such that
$\sum_{i}(-a_i+b_i)\beta_{\bar 0i}+\sum_{i}(-c_i+d_i)\beta_{\bar 1i}+\sum_{i}(m_i-n_i)\gamma_{\bar 0i}+\sum_{i}(p_i-q_i)\gamma_{\bar 1i}= \alpha$.

Set $U(\ggg,e)_\sharp$ (resp.\ $U(\ggg,e)_\flat$) to be the
left (resp.\ right) ideal of $U(\ggg,e)$ generated by
\begin{center}
$\Theta_{f^*_1},\cdots,\Theta_{f^*_{\frac{s}{2}}},\Theta_{g^*_1},\cdots,
\Theta_{g^*_{\frac{\sfr-1}{2}}},\Theta_{x^*_1},\cdots,\Theta_{x^*_w},\Theta_{y^*_1},\cdots,
\Theta_{y^*_\ell}$\\
(resp.\ $\Theta_{x_1},\cdots,\Theta_{x_w},\Theta_{y_1},\cdots,
\Theta_{y_\ell}, \Theta_{f_1},\cdots,\Theta_{f_{\frac{s}{2}}},\Theta_{g_1},\cdots,
\Theta_{g_{\frac{\sfr-1}{2}}})$.
\end{center}
Note that $U(\ggg,e)_\sharp$ (resp.\ $U(\ggg,e)_\flat$)
is equivalently the left (resp.\ right) ideal of
$U(\ggg,e)$ generated by all $U(\ggg,e)_\alpha$ for $\alpha \in (\Phi'_e)^+$
(resp.\ $\alpha\in (\Phi'_e)^-$), and it does not depend on the
explicit choice of the basis. Set
\begin{equation*}
U(\ggg,e)_{0,\sharp} := U(\ggg,e)_0 \cap U(\ggg,e)_\sharp,
\qquad
U(\ggg,e)_{\flat,0} := U(\ggg,e)_\flat \cap U(\ggg,e)_0,
\end{equation*}
which are obviously left and right ideals of the zero weight space
$U(\ggg,e)_0$, respectively. The PBW basis of $U(\ggg,e)_\sharp$ (resp.\ $U(\ggg,e)_\flat$) is the monomials as in \eqref{PBWfW} with $({\bf n,q,b,d})\neq\bf 0$ (resp.\ $({\bf a,c,m,p})\neq\bf 0$), and the PBW basis of $U(\ggg,e)_0$ is the monomials as in \eqref{PBWfW} with $\sum_{i}(-a_i+b_i)\beta_{\bar 0i}+\sum_{i}(-c_i+d_i)\beta_{\bar 1i}+\sum_{i}(m_i-n_i)\gamma_{\bar 0i}+\sum_{i}(p_i-q_i)\gamma_{\bar 1i}= 0$. We also have $U(\ggg,e)_{0,\sharp} = U(\ggg,e)_{\flat,0}$ by the PBW theorem, hence it is a two-sided ideal of  $U(\ggg,e)_0$. However, the cosets of  the PBW monomials of the
form $\Theta_{v_{\frac{\sfr+1}{2}}}^\iota\cdot\prod_{i=1}^{k-1}
\Theta_{h_i}^{t_i}\cdot C^{t_k}\cdot\Theta_{[v_{\frac{\sfr+1}{2}},e]}^\varepsilon$
need not span a subalgebra of $U(\ggg,e)$, unlike the situation for the algebras $U(\aaa)$
discussed earlier.

The goal now is to prove the quotient algebra
$U(\ggg,e)_0/U(\ggg,e)_{0,\sharp}$ is canonically isomorphic to
$U(\ggg_0,e)$.
As in \S\ref{5.3}, for a linear function $\varphi$ on $\mathfrak{h}$ we still denote by $\bar{\varphi}$ the restriction of $\varphi$ to $\mathfrak{h}^e$. 
Recall in \eqref{deltarho} and \eqref{defofepsilon} we denote by
\begin{equation*}
\begin{split}
\bar\delta=&\frac{1}{2}\bigg(\sum\limits_{i=1}^{\frac{s}{2}}\gamma^*_{\bar0i}-\sum\limits_{i=1}^{\frac{\sfr-1}{2}}\gamma^*_{\bar1i}\bigg)\mid_{\mathfrak{h}^e}
=\frac{1}{2}\bigg(-\sum\limits_{i=1}^{\frac{s}{2}}\gamma_{\bar0i}+\sum\limits_{i=1}^{\frac{\sfr-1}{2}}\gamma_{\bar1i}\bigg)\mid_{\mathfrak{h}^e},\\
\bar\rho_{e,0}=&
\frac{1}{2}\bigg(\sum_{i=1}^{w}\beta_{\bar0i}-\sum_{i=1}^{\ell}\beta_{\bar1i}\bigg)\mid_{\mathfrak{h}^e}\\
\epsilon=&c_0+\frac{1}{8}+\sum_{i=1}^{k-1}(2\rho_{e,0}\delta+3\delta^2)(h_i),
\end{split}
\end{equation*}where $\gamma^*_{\bar0i}\in\Phi^+_{\bar0},\gamma^*_{\bar1j}\in\Phi_{\bar1}^+$, $\gamma_{\bar0i}\in\Phi^-_{\bar0},\gamma_{\bar1j}\in\Phi_{\bar1}^-$ for $1\leqslant i \leqslant \frac{s}{2}$ and $1\leqslant j \leqslant \frac{\sfr-1}{2}$ are defined in \S\ref{1.2},  $\beta_{\bar0i}\in\Phi_{\bar0}^+,\beta_{\bar1j}\in\Phi_{\bar1}^+$ for $1\leqslant i \leqslant w$ and $1\leqslant j \leqslant \ell$ are defined in \S\ref{3.1.1}, and $|\alpha|$ denotes the parity of $\alpha$.   Now we introduce a shift $S_{\epsilon}$ on $U(\ggg_0,e)$
by keeping the other generators  as in Proposition \ref{fiosp} invariant and sending $C'_\theta$ to $C'_\theta+\epsilon$. 

Keeping the notations as above, we have
\begin{prop}\label{ressur}
The projection $\pi:U(\ggg)_0\twoheadrightarrow U(\mathfrak{g}_0)$ as in \eqref{pi} induces
a surjective homomorphism
\begin{equation*}
\pi:U(\ggg,e)_0\twoheadrightarrow U(\ggg_0,e)
\end{equation*}
with  $\ker\pi= U(\ggg,e)_{0,\sharp}$.
Moreover, there exists an algebras isomorphism
\begin{equation*}
\pi_{\epsilon}:=S_{\epsilon}\circ\pi:U(\ggg,e)_0 / U(\ggg,e)_{0,\sharp} \cong U(\ggg_0,e).
\end{equation*}
\end{prop}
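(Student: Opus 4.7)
The plan is to realize $\pi$ concretely on an explicit PBW basis of $U(\ggg,e)_0$, and to verify on the natural generators that, after applying the shift $S_\epsilon$, the projection sends them precisely to the generators of $U(\ggg_0,e)$ recorded in Proposition~\ref{fiosp}. First I would observe that the PBW basis of $U(\ggg,e)_0$ described in \S\ref{5.1.3} splits into two disjoint pieces: monomials with a nontrivial factor from the $\sharp$ (equivalently $\flat$) part, which span $U(\ggg,e)_{0,\sharp}$, and the ``core'' monomials $\Theta_{v_{(\sfr+1)/2}}^\iota\cdot\prod_i\Theta_{h_i}^{t_i}\cdot C^{t_k}\cdot\Theta_{[v_{(\sfr+1)/2},e]}^\varepsilon$, which form a complete set of coset representatives for $U(\ggg,e)_0/U(\ggg,e)_{0,\sharp}$. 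By Proposition~\ref{fiosp} these representatives are in bijection, as a $\bbc$-basis, with a PBW basis of $U(\ggg_0,e)$ under the correspondence $\Theta_{h_i}\leftrightarrow\Theta'_{h_i}$, $\Theta_{v_{(\sfr+1)/2}}\leftrightarrow\tfrac{1}{\sqrt{-2}}\Theta'_F$, $\Theta_{[v_{(\sfr+1)/2},e]}\leftrightarrow\tfrac{1}{\sqrt{-2}}\Theta'_E$, and $C\leftrightarrow C'_\theta+(\text{polynomial in }\Theta'_{h_i})$, which is what has to be produced by $\pi_\epsilon$.

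Next I would verify this correspondence by direct calculation. For $\Theta_{h_i}$ the correction in Theorem~\ref{ge} equals $-\tfrac12\sum_\alpha\mu_\alpha(h_i)z_\alpha z_\alpha^*$, where $\mu_\alpha$ is the $\hhh$-weight of $z_\alpha^*$; the summand with $z_\alpha=v_{(\sfr+1)/2}$ vanishes because $\tfrac\theta2(h_i)=0$, and for every other $\alpha$ the product $z_\alpha z_\alpha^*$ contains a factor of nonzero $\hhh^e$-weight and thus lies in $U(\ggg,e)_{0,\sharp}$; hence $\pi(\Theta_{h_i})=\Theta'_{h_i}$. The identification $\pi(\Theta_{v_{(\sfr+1)/2}})=\tfrac{1}{\sqrt{-2}}\Theta'_F$ is immediate from $\Theta_{v_{(\sfr+1)/2}}=v_{(\sfr+1)/2}\otimes1_\chi$. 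For $\Theta_{[v_{(\sfr+1)/2},e]}$ the cubic expression in Theorem~\ref{ge}, analyzed exactly as in \eqref{discuss}--\eqref{w+1}, collapses modulo $U(\ggg,e)_{0,\sharp}$ to $\big([v_{(\sfr+1)/2},e]-\tfrac34 v_{(\sfr+1)/2}+\tfrac12 v_{(\sfr+1)/2}h\big)\otimes 1_\chi=\tfrac{1}{\sqrt{-2}}\Theta'_E$, since every summand containing $z_\alpha\neq v_{(\sfr+1)/2}$ belongs to the ideal.

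The delicate step, and the main obstacle, is the computation of $\pi(C)$. Decomposing the Casimir from Theorem~\ref{ge} as $C_\theta+C_0+2\sum_\alpha(-1)^{|\alpha|}[e,z_\alpha^*]z_\alpha$, I would reuse the identities \eqref{ez*zalpha} and \eqref{anotherdesC0}, but now read as congruences in $U(\ggg,e)_0$ modulo $U(\ggg,e)_{0,\sharp}$ rather than as scalar identities on $1_\lambda$. This should yield
\[
\pi(C)\equiv C'_\theta+\sum_i\Theta'_{h_i}{}^2+2\sum_i(\rho_{e,0}+2\delta)(h_i)\,\Theta'_{h_i}+\bigg(c_0+\tfrac18+2\sum_i\rho_{e,0}(h_i)\delta(h_i)+3\sum_i\delta(h_i)^2-\epsilon\bigg),
\]
where the residual constant vanishes by the very definition of $\epsilon$ in \eqref{defofepsilon}. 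Applying $S_\epsilon$ then gives $\pi_\epsilon(C)\in U(\ggg_0,e)$ of the required form. The principal difficulty is the bookkeeping: verifying that every contribution outside the surviving polynomial in $\{\Theta'_{h_i},C'_\theta\}$ truly lands in $U(\ggg,e)_{0,\sharp}$, and that the leftover constants assemble into exactly $\epsilon$. Once this is in place, $\pi_\epsilon$ is a surjective algebra homomorphism that matches core PBW monomials with the PBW basis of $U(\ggg_0,e)$, hence an isomorphism; the identification $\ker\pi=U(\ggg,e)_{0,\sharp}$ follows at once, since by construction the kernel contains this ideal and the PBW comparison forbids it from being larger.
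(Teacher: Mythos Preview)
Your overall plan—compute $\pi$ on generators and compare PBW bases—is exactly the paper's approach, but two of your generator computations are wrong and you have misidentified the role of $S_\epsilon$.

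The claim $\pi(\Theta_{h_i})=\Theta'_{h_i}$ is false. Your reasoning that ``$z_\alpha z_\alpha^*$ contains a factor of nonzero $\hhh^e$-weight and thus lies in $U(\ggg,e)_{0,\sharp}$'' ignores the commutator term produced when moving the positive-weight factor to the right. Concretely, when $z_\alpha$ itself has positive $\hhh^e$-weight one writes $z_\alpha z_\alpha^* = [z_\alpha,z_\alpha^*] \pm z_\alpha^* z_\alpha$; the second summand indeed lies in $U(\ggg)_{0,\sharp}$, but $[z_\alpha,z_\alpha^*]\in\ggg(-2)=\bbc f\subset\ggg_0$ survives and, after $\otimes\,1_\chi$, becomes a nonzero scalar. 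These surviving scalars assemble into the correction $\delta(h_i)$, so in fact $\pi(\Theta_{h_i})=\Theta'_{h_i}+\delta(h_i)$, as in \eqref{pithetahj} and \eqref{ThetatoTheta'}. For the same reason your expression for $\pi(C)$ is off: \eqref{ThetatoTheta'} gives $\pi(C)=C'_\theta+\sum_i(\Theta'_{h_i})^2+2\sum_i(\rho_{e,0}+2\delta)(h_i)\Theta'_{h_i}$ with \emph{no} constant term, so there is nothing for $\epsilon$ to cancel there.

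The shift $S_\epsilon$ enters elsewhere. The crucial check—which your proposal omits—is that the defining relation $[\Theta'_E,\Theta'_E]=C'_\theta+\tfrac{1}{8}$ of $U(\ggg_0,e)$ is respected. Computing $\pi_\epsilon$ of $[\Theta_{[v_{(\sfr+1)/2},e]},\Theta_{[v_{(\sfr+1)/2},e]}]$ via the explicit formula \eqref{keycommuta} (now using the corrected $\pi(\Theta_{h_i})=\Theta'_{h_i}+\delta(h_i)$), the quadratic and linear terms in $\Theta'_{h_i}$ cancel and one is left with $-\tfrac12 C'_\theta-\tfrac12\epsilon+\tfrac12 c_0+\sum_i(\rho_{e,0}\delta+\tfrac32\delta^2)(h_i)$. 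It is precisely the definition \eqref{defofepsilon} of $\epsilon$ that collapses this to $-\tfrac12 C'_\theta-\tfrac{1}{16}=-\tfrac12[\Theta'_E,\Theta'_E]$, as carried out in \eqref{keycom}; without the shift this relation fails. So $S_\epsilon$ is not a device for normalizing $\pi(C)$ but the correction needed for $\pi_\epsilon$ to be an algebra homomorphism at all.
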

\begin{proof}
(1) Under the linear projection $\pi$, we first discuss the images of $U(\ggg,e)_0$ with respect to the PBW basis of minimal finite $W$-superalgebra $U(\ggg,e)$.

We consider the generators of $U(\ggg,e)$ as in Theorem \ref{main3}.
It is obvious that $\pi(\Theta_{v_{\frac{\sfr+1}{2}}})=\pi(v_{\frac{\sfr+1}{2}})=v_{\frac{\sfr+1}{2}}\otimes1_\chi$.
For $1\leqslant j\leqslant k-1$, note that
$\pi(u_i[u_i^*,h_j])=\pi(v_t[v_t^*,h_j])=0$ for $1\leqslant i \leqslant \frac{s}{2}$, $1\leqslant t \leqslant \frac{\sfr+1}{2}$, and $\pi([u_i^*,h_j]u_i)=\pi([v_t^*,h_j]v_t)=0$ for $\frac{s}{2}+1\leqslant i \leqslant s$,  $\frac{\sfr+3}{2}\leqslant t \leqslant\sfr$,   and also $\pi(h_j)=h_j, \pi(e)=e, \pi(h)=h, \pi(f)=f$ by definition. Then we have
\begin{equation}\label{pithetahj}
\begin{split}
\pi(\Theta_{h_j})=&
\pi(h_j)-\frac{1}{2}\Big(\sum\limits_{i=\frac{s}{2}+1} ^{s}\pi([u_i,[u_i^*,h_j]])+\sum\limits_{i=\frac{\sfr+3}{2}}^{\sfr}\pi([v_i,[v_i^*,h_j]])\Big)\\
=&\pi(h_j)+\frac{1}{2}\sum\limits_{i=1} ^{\frac{s}{2}}\pi([u^*_i,[u_i,h_j]])-\frac{1}{2}\sum\limits_{i=1}^{\frac{\sfr-1}{2}}\pi([v^*_i,[v_i,h_j]])\\
=&\bigg(h_j+\frac{1}{2}(-\gamma_{\bar01}-\cdots-\gamma_{\bar0\frac{s}{2}}
+\gamma_{\bar11}+\cdots+\gamma_{\bar1\frac{\sfr-1}{2}})(h_j)\bigg)\otimes1_\chi\\
=&(h_j+\delta(h_j))\otimes1_\chi.
\end{split}
\end{equation}

By the similar calculation as in \eqref{discuss}, we obtain
\begin{equation}\label{piw}
\begin{split}
\pi(\Theta_{[v_{\frac{\sfr+1}{2}},e]})
=&\pi([v_{\frac{\sfr+1}{2}},e])+\frac{2}{3}\sum\limits_{i=1}^{\frac{s}{2}}\pi([u_i^*,[u_i,[v_{\frac{\sfr+1}{2}},e]]])-\Big(\frac{2}{3}\sum\limits_{i=1}^{\frac{\sfr-1}{2}}
\pi([v_i^*,[v_i,[v_{\frac{\sfr+1}{2}},e]]])\\
&+v_{\frac{\sfr+1}{2}}[v_{\frac{\sfr+1}{2}},\pi([v_{\frac{\sfr+1}{2}},e])]\Big)
+\frac{1}{3}\Big(-2\sum\limits_{i=1}^{\frac{s}{2}} v_{\frac{\sfr+1}{2}}\pi([u_i^*,[u_i,[v_{\frac{\sfr+1}{2}},[v_{\frac{\sfr+1}{2}},e]]]])\\
&+2\sum\limits_{i=1}^{\frac{\sfr-1}{2}}v_{\frac{\sfr+1}{2}} \pi([v_i^*,[v_i,[v_{\frac{\sfr+1}{2}},[v_{\frac{\sfr+1}{2}},e]]]])+\frac{1}{2}[v_{\frac{\sfr+1}{2}},[v_{\frac{\sfr+1}{2}},\pi([v_{\frac{\sfr+1}{2}},e])]]\\
&-2[\pi([v_{\frac{\sfr+1}{2}},e]),f]\Big).
\end{split}
\end{equation}
Taking \eqref{zzev}---\eqref{vef}  into consideration, by \eqref{piw} we have
\begin{equation}\label{pithetave}
\pi(\Theta_{[v_{\frac{\sfr+1}{2}},e]})=\bigg([v_{\frac{\sfr+1}{2}},e]-\frac{3}{4}v_{\frac{\sfr+1}{2}}+\frac{1}{2}
v_{\frac{\sfr+1}{2}}h\bigg)\otimes1_\chi,
\end{equation}

For the Casimir element $C$ of $U(\ggg)$ corresponding to the invariant form $(\cdot,\cdot)$, by the similar discussion as in \eqref{C1}, it follows from \eqref{vve}, \eqref{C0def}, \eqref{x*x}, \eqref{y*y}, \eqref{ez*zalpha} and \eqref{t*bargamma} that
\begin{equation}\label{pithetaC}
\begin{split}
\pi(C)=&\bigg(2e+\frac{h^2}{2}-\Big(1+\frac{s-\sfr}{2}\Big)h+C_0
+2\sum\limits_{\alpha\in S(-1)}(-1)^{|\alpha|}[e,z_\alpha^*]z_\alpha\bigg)\otimes1_\chi\\
=&\bigg(2e+\frac{h^2}{2}-(2+\frac{s-\sfr}{2})h-2v_{\frac{\sfr+1}{2}}[v_{\frac{\sfr+1}{2}},e]+
\sum_{i=1}^{k-1}h_i^2+\sum_{i=1}^{w}\pi([x^*_i,x_i])-\sum_{i=1}^{\ell}\pi([y^*_i,y_i])\\
&+2\sum\limits_{i=1}^{\frac{s}{2}}
\pi([[e,u_i^*],u_i])-2\sum\limits_{i=1}^{\frac{\sfr-1}{2}}
\pi([[e,v_i^*],v_i])\bigg)\otimes1_\chi\\
=&\bigg(2e+\frac{h^2}{2}-\frac{3}{2}h-2v_{\frac{\sfr+1}{2}}[v_{\frac{\sfr+1}{2}},e]+
\sum_{i=1}^{k-1}h_i^2+
\sum_{i=1}^{k-1}\sum_{j=1}^{w}\beta_{\bar0j}(h_i)h_i-\sum_{i=1}^{k-1}\sum_{j=1}^{\ell}\beta_{\bar1j}(h_i)h_i\\
&+2\sum\limits_{i=1}^{\frac{s}{2}}t_{\bar{\gamma}^*_{\bar0i}}
-2\sum\limits_{i=1}^{\frac{\sfr-1}{2}}t_{\bar{\gamma}^*_{\bar1i}}\bigg)\otimes1_\chi\\
=&\bigg(2e+\frac{h^2}{2}-\frac{3}{2}h+\sqrt{-2}v_{\frac{\sfr+1}{2}}[\sqrt{-2}v_{\frac{\sfr+1}{2}},e]+
\sum_{i=1}^{k-1}h_i^2+
\sum_{i=1}^{k-1}\Big(\sum_{j=1}^{w}\beta_{\bar0j}-\sum_{j=1}^{\ell}\beta_{\bar1j}
-2\sum\limits_{j=1}^{\frac{s}{2}}\gamma_{\bar0j}\\
&+2\sum\limits_{j=1}^{\frac{\sfr-1}{2}}\gamma_{\bar1j}\Big)(h_i)h_i\bigg)\otimes1_\chi.
\end{split}
\end{equation}

Recall in Proposition \ref{fiosp} we introduced the PBW theorem of $U(\ggg_0,e)$. Keeping the notations as in \eqref{(e,h,f,E,F)}, it follows from \eqref{pithetahj}---\eqref{pithetaC} that
\begin{equation}\label{ThetatoTheta'}
\begin{split}
\pi(\Theta_{h_i})=&\Theta'_{h_i}+\delta(h_i)\qquad \text{for}~1\leqslant i\leqslant k-1;\\
\pi(\sqrt{-2}\Theta_{[v_{\frac{\sfr+1}{2}},e]})=&\bigg([\sqrt{-2} v_{\frac{\sfr+1}{2}},e]-\frac{3}{4}\sqrt{-2}v_{\frac{\sfr+1}{2}}+\frac{1}{2}
\sqrt{-2}v_{\frac{\sfr+1}{2}}h\bigg)\otimes1_\chi=\Theta'_E;\\
\pi(C)
=&C'_\theta+\sum_{i=1}^{k-1}(\Theta'_{h_i})^2+\sum_{i=1}^{k-1}\Big(\sum_{j=1}^{w}\beta_{\bar0j}-\sum_{j=1}^{\ell}\beta_{\bar1j}
-2\sum\limits_{j=1}^{\frac{s}{2}}\gamma_{\bar0j}+2\sum\limits_{j=1}^{\frac{\sfr-1}{2}}\gamma_{\bar1j}\Big)(h_i)\Theta'_{h_i};\\
\pi(\sqrt{-2}\Theta_{v_{\frac{\sfr+1}{2}}})=&\sqrt{-2}v_{\frac{\sfr+1}{2}}\otimes1_\chi=\Theta'_F.
\end{split}
\end{equation}

(2) Recall that any $\Theta\in U(\ggg,e)_0$ can be written as a linear combination of the monomials as in \eqref{PBWfW} with $\sum_{i}(-a_i+b_i)\beta_{\bar 0i}+\sum_{i}(-c_i+d_i)\beta_{\bar 1i}+\sum_{i}(m_i-n_i)\gamma_{\bar 0i}+\sum_{i}(p_i-q_i)\gamma_{\bar 1i}= 0$. 
Now it follows from the definition of $\pi$ in \eqref{pi}, \eqref{ThetatoTheta'} and Proposition \ref{fiosp} that the restriction $\pi$ defines a surjective homomorphism $\pi:U(\ggg,e)_0\rightarrow U(\ggg_0,e)$, and $U(\ggg,e)_{0,\sharp}\subset\ker\pi$ by definition. Then 
the quotient $U(\ggg,e)_0 / U(\ggg,e)_{0,\sharp}$ has a basis given by the coset of the PBW monomials
$\Theta_{v_{\frac{\sfr+1}{2}}}^{\iota}(\prod_{i=1}^{k-1}\Theta_{h_i}^{t_i})C^{t_k}\Theta_{[v_{\frac{\sfr+1}{2}},e]}^\varepsilon$ with $\iota,\varepsilon\in\Lambda_1, {\bf t}\in\mathbb{Z}_+^k$. Moreover, \eqref{ThetatoTheta'} and Proposition \ref{fiosp} entail  that
the monomials
$$\pi(\Theta_{v_{\frac{\sfr+1}{2}}})^{\iota}\bigg(\prod_{i=1}^{k-1}\pi(\Theta_{h_i})^{t_i}\bigg)\pi(C)^{t_k}\pi(\Theta_{[v_{\frac{\sfr+1}{2}},e]})^\varepsilon$$ with $\iota,\varepsilon\in\Lambda_1, {\bf t}\in\mathbb{Z}_+^k$ actually form a basis for $U(\ggg_0,e)$, then the kernel of $\pi$  is no bigger than $U(\ggg,e)_{0,\sharp}$.

(3) Since $S_{\epsilon}$ is a shift on $U(\ggg_0,e)$, by the discussion in Step (2)  it remains to show that $\pi_{\epsilon}=S_{\epsilon}\circ\pi:U(\ggg,e)_0 / U(\ggg,e)_{0,\sharp}\rightarrow U(\ggg_0,e)$ is an algebra homomorphism. Keeping the notations as in \eqref{deltarho}, it follows from  \eqref{keycommuta}, \eqref{ThetatoTheta'} and Theorems \ref{main3}, \ref{fiosp} that
\begin{equation}\label{keycom}
\begin{split}
&\pi_{\epsilon}([\Theta_{[v_{\frac{\sfr+1}{2}},e]},\Theta_{[v_{\frac{\sfr+1}{2}},e]}])\\
=&\pi_{\epsilon}\bigg(-\frac{1}{2}C+\frac{1}{2}c_0+\frac{1}{2}\sum_{i=1}^{k-1}\Theta_{h_i}^2+\sum_{i=1}^{w}\Theta_{x_i}\Theta_{x^*_i}+
\frac{1}{2}\sum_{i=1}^{k-1}\sum_{j=1}^{w}\beta_{\bar0j}(h_i)\Theta_{h_i}
-\frac{1}{2}\sum_{i=1}^{k-1}\sum_{j=1}^{\ell}\beta_{\bar1j}(h_i)\Theta_{h_i}\\
&+\sum_{i=1}^{\ell}\Theta_{y_i}\Theta_{y^*_i}-2\sum_{i=1}^{\frac{s}{2}}\Theta_{[[v_{\frac{\sfr+1}{2}},e],
u_{i}]^{\sharp}}\Theta_{[u_{i}^*,[v_{\frac{\sfr+1}{2}},e]]^{\sharp}}-\frac{1}{2}\sum_{i=1}^{k-1}\sum_{j=1}^{\frac{s}{2}}\gamma_{\bar0j}(h_i)\Theta_{h_i}\\
&-2\sum_{i=1}^{\frac{\sfr-1}{2}}\Theta_{[[v_{\frac{\sfr+1}{2}},e],v_{i}]^{\sharp}}
\Theta_{[v_{i}^*,[v_{\frac{\sfr+1}{2}},e]]^{\sharp}}+\frac{1}{2}\sum_{i=1}^{k-1}\sum_{j=1}^{\frac{\sfr-1}{2}}\gamma_{\bar1j}(h_i)\Theta_{h_i}\bigg)\\
=&-\frac{1}{2}C'_\theta-\frac{1}{2}\sum_{i=1}^{k-1}(\Theta'_{h_i})^2-\frac{1}{2}\sum_{i=1}^{k-1}\Big(\sum_{j=1}^{w}\beta_{\bar0j}-\sum_{j=1}^{\ell}\beta_{\bar1j}
-2\sum\limits_{j=1}^{\frac{s}{2}}\gamma_{\bar0j}+2\sum\limits_{j=1}^{\frac{\sfr-1}{2}}\gamma_{\bar1j}\Big)(h_i)\Theta'_{h_i}-\frac{1}{2}\epsilon\\
&+\frac{1}{2}c_0+\frac{1}{2}\sum_{i=1}^{k-1}(\Theta'_{h_i}+\delta(h_i))^2+
\frac{1}{2}\sum_{i=1}^{k-1}\Big(\sum_{j=1}^{w}\beta_{\bar0j}-\sum_{j=1}^{\ell}\beta_{\bar1j}
-\sum\limits_{j=1}^{\frac{s}{2}}\gamma_{\bar0j}+\sum\limits_{j=1}^{\frac{\sfr-1}{2}}\gamma_{\bar1j}\Big)(\Theta'_{h_i}+\delta(h_i))\\
=&-\frac{1}{2}C'_\theta-\frac{1}{2}\epsilon+\frac{1}{2}c_0+\sum_{i=1}^{k-1}\Big(\rho_{e,0}\delta+\frac{3}{2}\delta^2\Big)(h_i)\\
=&-\frac{1}{2}C'_\theta+\frac{1}{2}\Big(-\epsilon+c_0+\sum_{i=1}^{k-1}(2\rho_{e,0}\delta+3\delta^2)(h_i)\Big)\\
=&-\frac{1}{2}C'_\theta-\frac{1}{16}
=-\frac{1}{2}[\Theta'_E,\Theta'_E]=[\pi_{\epsilon}(\Theta_{[v_{\frac{\sfr+1}{2}},e]}),\pi_{\epsilon}(\Theta_{[v_{\frac{\sfr+1}{2}},e]})],
\end{split}
\end{equation} and $\pi_{\epsilon}([\Theta_{v_{\frac{\sfr+1}{2}}},\Theta_{v_{\frac{\sfr+1}{2}}}])=1\otimes1_\chi=-\frac{1}{2}[\Theta'_F,\Theta'_F]
=[\pi_{\epsilon}(\Theta_{v_{\frac{\sfr+1}{2}}}),\pi_{\epsilon}(\Theta_{v_{\frac{\sfr+1}{2}}})]$,
with the other commutators being trivial.
\end{proof}

\begin{rem}
In the procedure of constructing the isomorphism in Proposition \ref{ressur}, we involve a shift $S_{\epsilon}$ which has great distinction from the one in \cite[Theorem 4.3]{BGK} for the finite $W$-algebra case. 
Recall in \cite[Proposition 5.7.6]{Car} the grading of $\ggg_0$ under the action of $\text{ad}\,h$ is even for any semi-simple Lie algebra $\ggg$, which means that $\ggg_0(i)=\{0\}$ for all odd $i\in\mathbb{Z}$. Then $\gamma\in(\hhh^e)^*$ defined in \cite[\S4.1]{BGK} extends uniquely to a character of the parabolic subalgebra $\mathfrak{p}_0:=\bigoplus_{i\geqslant 0}{\ggg_0}(i)$ of $\ggg_0$ by  \cite[Lemma 4.1]{BGK}. In the meanwhile, the finite $W$-algebra $U(\ggg_0,e)$ can be realized as a subalgebra of $U(\mathfrak{p}_0)$. Under the above settings, the proof for \cite[Theorem 4.3]{BGK} goes through. However, for the basic  Lie superalgebra $\ggg_0$ and a minimal root $-\theta$,  
under the action of ad\,$h_{\theta}$  we have $\ggg_0(-1)_{\bar1}=\bbc v_{\frac{\sfr+1}{2}}\neq\{0\}$ and $\ggg_0(1)_{\bar1}=\bbc [v_{\frac{\sfr+1}{2}},e]\neq\{0\}$ under our settings. So the grading of $\ggg_0$ under the action of $\text{ad}\,h$ is not even. Moreover, the proper subspace  $\widetilde{\mathfrak{p}}_0:=\bigoplus_{i\geqslant -1}{\ggg_0}(i)\supset\mathfrak{p}_0$ of $\ggg_0$ is not a good choice. Therefore, the technique in \cite{BGK} is not available here. Fortunately, in the case when minimal roots are concerned,  we have already obtained the precise generators and their relations for $U(\ggg,e)$. This enables us to construct the desired isomorphism. However, there should be a requirement of developing general technique for dealing with finite $W$-superalgebra $U(\ggg,e)$ associated with arbitrary nilpotent element $e\in\ggg_{\bar0}$.

\end{rem}

\subsection{}\label{5.1.4}
For a $U(\ggg,e)$-module $M$ and
$\alpha\in(\hhh^e)^*$, we define the $\alpha$-weight space
\begin{equation}\label{shiftby3delta}
M_\alpha:=\{m\in M\:|\: (\Theta_t-\delta(t))(m) = \alpha(t) m\text{ for all } t \in \hhh^e\}.
\end{equation}
By Theorem \ref{main3}  we have that $U(\ggg,e)_\beta M_\alpha\subset M_{\beta+\alpha}$.
So each $M_\alpha$ is invariant under the action of the subalgebra
$U(\ggg,e)_0$. We call $M_\alpha$  a  maximal weight space of $V$
if $U(\ggg,e)_\sharp V_\alpha = \{0\}$.

Set $M_\alpha$ to be a maximal weight space in
a $U(\ggg,e)$-module $M$. By Proposition \ref{ressur}, the action of $U(\ggg,e)_0$ on
$M_\alpha$ factors through the map $\pi_{\epsilon}$ to make $M_\alpha$ into a $U(\ggg_0,e)$-module
such that $u.m = \pi_{\epsilon}(u) m$ for $u \in U(\ggg,e)_0$ and $m \in M_\alpha$. It follows from Proposition \ref{fiosp} that $\Theta'_{\hhh^e}$ lies in the center of $U(\ggg_0,e)$, then  $\hhh^e$ can be considered as a Lie subalgebra of $U(\ggg_0,e)$.
So by Proposition \ref{fiosp}(1), \eqref{ThetatoTheta'} and \eqref{shiftby3delta} we get the action of $\hhh^e$ on $M_\alpha$ via
\begin{equation*}
  t(m)=\Theta'_{t}(m)=\alpha(t) m
\end{equation*}for all $t \in \hhh^e$, which explains why the additional shift by $-\delta$ in the  definition of the $\alpha$-weight space  of a $U(\ggg,e)$-module is necessary.

A $U(\ggg,e)$-module $M$ is called a highest weight module
if it is generated by a maximal weight space $M_\lambda$
such that $M_\lambda$ is finite-dimensional and irreducible as a $U(\ggg_0,e)$-module.
In that case, as we will see shortly, $\lambda$ is the unique maximal
weight of $M$ in the dominance ordering. Recall that pairwise inequivalent finite-dimensional  irreducible $U(\ggg_0,e)$-modules $V_\lambda$ with $\lambda\in(\hhh^e)^*$ are given in Theorem \ref{simpleugegmodule}.

By definition we see that $U(\ggg,e)_\sharp$ is invariant under left multiplication
by $U(\ggg,e)$ and right multiplication by $U(\ggg,e)_0$, then $U(\ggg,e) / U(\ggg,e)_\sharp$ is
a $(U(\ggg,e), U(\ggg,e)_0)$-bimodule. And then the right action of $U(\ggg,e)_0$ factors through the
map $\pi_{\epsilon}$ from Proposition \ref{ressur} to make
$U(\ggg,e) / U(\ggg,e)_\sharp$ into a $(U(\ggg,e), U(\ggg_0,e))$-bimodule. We introduce the highest weight $U(\ggg,e)$-module with highest weight $\lambda$ as
\begin{equation}\label{highestweigmodu}
 M_e(\lambda):=(U(\ggg,e)/U(\ggg,e)_\sharp)\otimes_{U(\ggg_0,e)} V_\lambda.
\end{equation}
We will show that $M_e(\lambda)$ is universal, meaning that if $M$ is another highest weight module
generated by a maximal weight space $M_\mu$
and $f: V_\lambda \stackrel{\sim}{\rightarrow} M_\mu$
is an even $U(\ggg_0,e)$-module isomorphism,
then there is a unique $U(\ggg,e)$-module
homomorphism $\tilde{f}: M_e(\lambda)\twoheadrightarrow M$ extending $f$. First observe that
\begin{lemma}\label{LRBasis}
As a right $U(\ggg_0,e)$-module,
$U(\ggg,e)/U(\ggg,e)_\sharp$ is free
with basis $$\bigg\{\prod_{i=1}^w\Theta_{x_i}^{a_i}\cdot\prod_{i=1}^\ell\Theta_{y_i}^{b_i}\cdot
\prod_{i=1}^{\frac{s}{2}}\Theta_{f_i}^{c_i}\cdot\prod_{i=1}^{\frac{\sfr-1}{2}}
\Theta_{g_i}^{d_i}\:|\:{\bf a}\in\mathbb{Z}_+^w, {\bf b}\in\Lambda_\ell, {\bf c}\in\mathbb{Z}_+^{\frac{s}{2}}, {\bf d}\in\Lambda_{\frac{\sfr-1}{2}}\bigg\}.$$
\end{lemma}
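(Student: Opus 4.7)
The plan is to leverage the refined PBW theorem \eqref{PBWfW} of $U(\ggg,e)$ together with the isomorphism $\pi_{\epsilon}: U(\ggg,e)_0/U(\ggg,e)_{0,\sharp} \cong U(\ggg_0,e)$ from Proposition \ref{ressur}. As recorded in \S\ref{5.1.3}, a $\bbc$-basis of the left ideal $U(\ggg,e)_\sharp$ consists of the PBW monomials in \eqref{PBWfW} with $(\mathbf{n},\mathbf{q},\mathbf{b},\mathbf{d})\neq \mathbf 0$, so the cosets of the remaining monomials, which I write as $L_{\mathbf{a,c,m,p}}\cdot K_{\iota,\mathbf{t},\varepsilon}$ with
\[
L_{\mathbf{a,c,m,p}}:=\prod_{i=1}^w\Theta_{x_i}^{a_i}\prod_{i=1}^\ell\Theta_{y_i}^{c_i}\prod_{i=1}^{s/2}\Theta_{f_i}^{m_i}\prod_{i=1}^{(\sfr-1)/2}\Theta_{g_i}^{p_i}
\]
and $K_{\iota,\mathbf{t},\varepsilon}:=\Theta_{v_{\frac{\sfr+1}{2}}}^\iota\prod_{i=1}^{k-1}\Theta_{h_i}^{t_i}\,C^{t_k}\,\Theta_{[v_{\frac{\sfr+1}{2}},e]}^\varepsilon$, form a $\bbc$-basis of $U(\ggg,e)/U(\ggg,e)_\sharp$. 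The lowering factors $L_{\mathbf{a,c,m,p}}$ are the candidate right-module generators, while the ``Cartan'' factors $K_{\iota,\mathbf{t},\varepsilon}$ carry the right $U(\ggg_0,e)$-action.

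Before exploiting this, I would first verify that the right $U(\ggg_0,e)$-module structure on $U(\ggg,e)/U(\ggg,e)_\sharp$ is well defined. For any $u\in U(\ggg,e)$ and $z\in U(\ggg,e)_{0,\sharp}\subseteq U(\ggg,e)_\sharp$, the left-ideal property of $U(\ggg,e)_\sharp$ gives $uz\in U(\ggg,e)_\sharp$, so $U(\ggg,e)_{0,\sharp}$ annihilates the quotient on the right and the right $U(\ggg,e)_0$-action descends through $\pi_{\epsilon}$ to an action of $U(\ggg_0,e)$. To see that $U(\ggg,e)_\sharp\cdot U(\ggg,e)_0\subseteq U(\ggg,e)_\sharp$ in the first place, write $\Theta_\alpha w=w\Theta_\alpha+[\Theta_\alpha,w]$ for a raising generator $\Theta_\alpha$ of $U(\ggg,e)_\sharp$ with $\alpha\in(\Phi'_e)^+$ and $w\in U(\ggg,e)_0$: the first summand lies in $U(\ggg,e)_\sharp$ tautologically, and the second lies in $U(\ggg,e)_\alpha$, every PBW monomial of which must contain at least one raising factor by $\hhh^e$-weight counting and hence also belongs to $U(\ggg,e)_\sharp$.

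The key observation is then that the Cartan factors $K_{\iota,\mathbf{t},\varepsilon}$ all lie in $U(\ggg,e)_0$ and, by the explicit formulas \eqref{ThetatoTheta'} combined with Proposition \ref{fiosp}, their $\pi_{\epsilon}$-images constitute a PBW basis of $U(\ggg_0,e)$ (the expressions in \eqref{ThetatoTheta'} give a triangular, hence invertible, change of variables in the Cartan generators). This permits the construction of the $U(\ggg_0,e)$-linear map
\[
\Phi:\bigoplus_{\mathbf{a,c,m,p}} L_{\mathbf{a,c,m,p}}\otimes_{\bbc}U(\ggg_0,e)\longrightarrow U(\ggg,e)/U(\ggg,e)_\sharp,\qquad L\otimes \pi_{\epsilon}(K)\mapsto L\cdot K+U(\ggg,e)_\sharp;
\]
surjectivity is immediate from the basis exhibited in the first paragraph, and the lemma reduces to the injectivity of $\Phi$, which follows from the bijection between the PBW basis of $U(\ggg_0,e)$ and the set $\{K_{\iota,\mathbf{t},\varepsilon}\}$ together with the linear independence of the cosets $L_{\mathbf{a,c,m,p}}\cdot K_{\iota,\mathbf{t},\varepsilon}$ in the quotient.

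The principal (and rather mild) technical obstacle is the last injectivity check: for an arbitrary $K\in U(\ggg,e)_0$ one must decompose $K$ as a sum of Cartan monomials modulo $U(\ggg,e)_{0,\sharp}$ and propagate this decomposition through the left multiplication by the various $L_{\mathbf{a,c,m,p}}$ without cancellation modulo $U(\ggg,e)_\sharp$. Both the decomposition and the noncancellation reduce to the PBW theorem for $U(\ggg,e)$ in combination with the explicit and invertible nature of $\pi_{\epsilon}$ furnished by Proposition \ref{ressur}, so no genuinely new computation is needed beyond what is already in place.
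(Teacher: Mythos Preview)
Your proposal is correct and follows essentially the same approach as the paper: both arguments rest on the factorization of the PBW basis of $U(\ggg,e)/U(\ggg,e)_\sharp$ into lowering monomials $L_{\mathbf{a,c,m,p}}$ times Cartan monomials $K_{\iota,\mathbf{t},\varepsilon}$, together with the fact (from Step~(2) of the proof of Proposition~\ref{ressur}) that the latter give a $\bbc$-basis of $U(\ggg,e)_0/U(\ggg,e)_{0,\sharp}\cong U(\ggg_0,e)$. You supply additional detail on the well-definedness of the right $U(\ggg_0,e)$-action and on injectivity of the resulting map, but these are exactly the verifications the paper leaves implicit, not a different route.
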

\begin{proof}
This follows because
the cosets of the PBW monomials of the form $\prod_{i=1}^w\Theta_{x_i}^{a_i}\cdot\prod_{i=1}^\ell\Theta_{y_i}^{c_i}\cdot
\prod_{i=1}^{\frac{s}{2}}\Theta_{f_i}^{m_i}\cdot\prod_{i=1}^{\frac{\sfr-1}{2}}
\Theta_{g_i}^{p_i}\cdot\Theta_{v_{\frac{\sfr+1}{2}}}^\iota\cdot\prod_{i=1}^{k-1}
\Theta_{h_i}^{t_i}\cdot C^{t_k}\cdot\Theta_{[v_{\frac{\sfr+1}{2}},e]}^\varepsilon$ with ${\bf a}\in\mathbb{Z}_+^w$, ${\bf c}\in\Lambda_\ell$, ${\bf m}\in\mathbb{Z}_+^{\frac{s}{2}}$, ${\bf p}\in\Lambda_{\frac{\sfr-1}{2}},\iota,\varepsilon\in\Lambda_1, {\bf t}\in\mathbb{Z}_+^k$,
form a basis for the quotient
$U(\ggg,e)/U(\ggg,e)_\sharp$,
and the cosets of the monomials of the form $\Theta_{v_{\frac{\sfr+1}{2}}}^\iota\cdot\prod_{i=1}^{k-1}
\Theta_{h_i}^{t_i}\cdot C^{t_k}\cdot\Theta_{[v_{\frac{\sfr+1}{2}},e]}^\varepsilon$ with $\iota,\varepsilon\in\Lambda_1, {\bf t}\in\mathbb{Z}_+^k$,
form a basis for  $U(\ggg,e)_0/U(\ggg,e)_{0,\sharp} \cong U(\ggg_0,e)$ by Step (2) in the proof of Proposition \ref{ressur}.
\end{proof}

Now we will introduce the main result of this subsection. We mainly follow the strategy for the non-super case as in \cite[Theorem 4.5]{BGK}, with a lot of modifications.
\begin{theorem}\label{Tverma}
For $\lambda\in(\hhh^e)^*$, let $v_\lambda$ and $\Theta'_F(v_\lambda)=\sqrt{-2}\Theta_{v_{\frac{\sfr+1}{2}}}(v_\lambda)$ be a basis for $U(\ggg_0,e)$-module $V_\lambda$ with $\hhh^e$-weight $\lambda$ as in \S\ref{523}.
\begin{itemize}
\item[(1)]
The vectors $\bigg\{\prod_{i=1}^w\Theta_{x_i}^{a_i}\cdot\prod_{i=1}^\ell\Theta_{y_i}^{b_i}\cdot
\prod_{i=1}^{\frac{s}{2}}\Theta_{f_i}^{c_i}\cdot\prod_{i=1}^{\frac{\sfr-1}{2}}
\Theta_{g_i}^{d_i}\cdot\Theta_{v_{\frac{\sfr+1}{2}}}^\iota(v_\lambda)\:|\:{\bf a}\in\mathbb{Z}_+^w, {\bf b}\in\Lambda_\ell, {\bf c}\in\mathbb{Z}_+^{\frac{s}{2}}, {\bf d}\in\Lambda_{\frac{\sfr-1}{2}},
\iota\in\Lambda_1\bigg\}$ form a basis of $M_e(\lambda)$.
\item[(2)] The weight $\lambda$ is the unique maximal weight of $M_e(\lambda)$
in the dominance ordering,
$M_e(\lambda)$ is generated by the maximal weight space
$M_e(\lambda)_\lambda$, and $M_e(\lambda)_\lambda \cong V_\lambda$ as $U(\ggg_0,e)$-modules.
\item[(3)] The module $M_e(\lambda)$ is a universal highest weight module with highest weight $\lambda$.
\item[(4)] There is a unique maximal proper submodule
$M_e^{\rm max}(\lambda)$ in $M_e(\lambda)$,
\begin{equation}\label{lelambda}
L_e(\lambda):=M_e(\lambda)/M_e^{\rm max}(\lambda)
\end{equation}
is an irreducible module type $Q$, and $\{L_e(\lambda)\:|\: \lambda\in (\hhh^e)^*\}$
is a complete set of pairwise inequivalent irreducible highest weight modules over $U(\ggg,e)$ (up to parity switch).
Moreover, any finite-dimensional simple $U(\ggg,e)$-module  (up to parity switch) is isomorphic to one of the modules $L_e(\lambda)$ for $\lambda\in\Lambda_0^+ := \{\lambda \in (\hhh^e)^*\:|\:
\lambda(h_\alpha)\in\bbz_+~\text{for}~\alpha\in(\Phi^+_{e,0})_{\bar0}\}$.
\end{itemize}
\end{theorem}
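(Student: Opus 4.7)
For (1), I would combine Lemma~\ref{LRBasis} with the explicit description of $V_\lambda$ from Theorem~\ref{simpleugegmodule}. Since $M_e(\lambda) = (U(\ggg,e)/U(\ggg,e)_\sharp) \otimes_{U(\ggg_0,e)} V_\lambda$ and the right $U(\ggg_0,e)$-module $U(\ggg,e)/U(\ggg,e)_\sharp$ is free with the PBW basis of $\Theta_{x_i}, \Theta_{y_i}, \Theta_{f_i}, \Theta_{g_i}$-monomials, while $V_\lambda = \bbc v_\lambda \oplus \bbc \Theta'_F(v_\lambda)$, the spanning set in (1) follows upon tensoring, using $\Theta'_F(v_\lambda) = \sqrt{-2}\pi_\epsilon(\Theta_{v_{\frac{\sfr+1}{2}}})(v_\lambda)$ from \eqref{ThetatoTheta'} to identify $\Theta_{v_{\frac{\sfr+1}{2}}}\otimes v_\lambda$ with the second basis vector of $V_\lambda$ pulled across the tensor product. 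Linear independence is immediate from freeness.

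For (2), the plan is to compute the $\hhh^e$-weights of the basis vectors directly. Recalling that $\theta$ vanishes on $\hhh^e$, each generator in the list $x_i,y_i,f_i,g_i$ has restricted weight $-\beta_{\bar 0i}|_{\hhh^e}, -\beta_{\bar 1i}|_{\hhh^e}, \gamma_{\bar 0i}|_{\hhh^e}, \gamma_{\bar 1i}|_{\hhh^e}$, all of which lie in $(\Phi'_e)^-$. Meanwhile $v_{\frac{\sfr+1}{2}}$ has $\hhh^e$-weight $0$ because its $\hhh$-weight is $-\theta/2$. Consequently every PBW monomial from (1) lies in $M_e(\lambda)_\mu$ for some $\mu \leqslant \lambda$, with equality iff all the $\Theta_{x_i},\Theta_{y_i},\Theta_{f_i},\Theta_{g_i}$ exponents vanish. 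This isolates $M_e(\lambda)_\lambda$ as the two-dimensional span of $v_\lambda$ and $\Theta_{v_{\frac{\sfr+1}{2}}}(v_\lambda)$. To see that the shift by $\bar\delta$ in \eqref{shiftby3delta} matches the $U(\ggg_0,e)$-action, I would use $\pi_\epsilon(\Theta_{h_j}) = \Theta'_{h_j} + \delta(h_j)$ from \eqref{ThetatoTheta'} to obtain $(\Theta_{h_j} - \delta(h_j))\cdot v_\lambda = \lambda(h_j) v_\lambda$, so that $M_e(\lambda)_\lambda \cong V_\lambda$ as $U(\ggg_0,e)$-modules via $\pi_\epsilon$. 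Maximality of $M_e(\lambda)_\lambda$ (annihilated by $U(\ggg,e)_\sharp$) follows from the tensor product construction, and generation of $M_e(\lambda)$ by $M_e(\lambda)_\lambda$ follows from (1).

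For (3), the universal property is the standard consequence of $M_e(\lambda)$ being an induced module: given a highest weight module $M$ with $M_\mu \cong V_\lambda$ under an even isomorphism $f$, the right $U(\ggg_0,e)$-module structure on $U(\ggg,e)/U(\ggg,e)_\sharp$ tensored with $f$ extends uniquely to a $U(\ggg,e)$-homomorphism, forcing $\mu = \lambda$ by matching weights. For (4), existence of a unique maximal proper submodule $M_e^{\rm max}(\lambda)$ follows from the observation that any proper submodule intersects $M_e(\lambda)_\lambda$ trivially (else it would contain a generator and be all of $M_e(\lambda)$), so it is contained in $\bigoplus_{\mu<\lambda} M_e(\lambda)_\mu$; the sum of all such submodules is still proper. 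The type $Q$ assertion for $L_e(\lambda)$ uses Theorem~\ref{main3}: the odd element $\Theta_{v_{\frac{\sfr+1}{2}}}$ is central up to the nonzero scalar $(\Theta_{v_{\frac{\sfr+1}{2}}})^2 = \tfrac{1}{2}$, so it induces a nontrivial odd endomorphism of $L_e(\lambda)$.

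The main obstacle will be the last part of (4), namely showing that every finite-dimensional simple $U(\ggg,e)$-module is isomorphic to some $L_e(\lambda)$ with $\lambda \in \Lambda_0^+$. For this, I would first decompose such a module $L$ into $\hhh^e$-weight spaces (possible since $\Theta_{\hhh^e}$ is abelian and $L$ is finite-dimensional), pick a maximal weight $\lambda$ under the dominance order on $(\hhh^e)^*$, and show that $L_\lambda$ is annihilated by $U(\ggg,e)_\sharp$ by maximality. The ensuing $U(\ggg_0,e)$-action on $L_\lambda$ is finite-dimensional, so by Theorem~\ref{simpleugegmodule} contains an irreducible summand or subquotient $V_\lambda$, yielding a nonzero $U(\ggg,e)$-map from $M_e(\lambda)$ to $L$ via the universal property; irreducibility of $L$ forces $L \cong L_e(\lambda)$ (possibly after parity switch). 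The integrality $\lambda(h_\alpha)\in\bbz_+$ for $\alpha\in(\Phi^+_{e,0})_{\bar 0}$ is then extracted by restricting $L$ to the $\mathfrak{sl}(2)$-triples $(\Theta_{e_\alpha},\Theta_{h_\alpha},\Theta_{e_{-\alpha}})$ inside $U(\ggg,e)$ and invoking finite-dimensional $\mathfrak{sl}(2)$-theory, exactly as in the argument for Theorem~\ref{verma2}(3).
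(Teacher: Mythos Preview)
Your proposal is correct and follows essentially the same approach as the paper's proof: (1) via Lemma~\ref{LRBasis}, (2) via the weight computation using $\theta|_{\hhh^e}=0$, (3) via the adjunction for induced modules, and (4) via the standard argument that proper submodules miss the top weight space, together with the $\mathfrak{sl}(2)$-restriction for the integrality condition. The only notable variation is that for the type $Q$ assertion you invoke the odd supercentral element $\Theta_{v_{\frac{\sfr+1}{2}}}$ directly, whereas the paper deduces it from the fact that $L_e(\lambda)_\lambda\cong V_\lambda$ is already type $Q$ as a $U(\ggg_0,e)$-module (Theorem~\ref{simpleugegmodule}); both arguments are valid and yield the same odd endomorphism.
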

\begin{proof}
(1) This is clear from Lemma \ref{LRBasis}.

(2) Since $\theta(\hhh^e)=0$ by definition, then $\prod_{i=1}^w\Theta_{x_i}^{a_i}\cdot\prod_{i=1}^\ell\Theta_{y_i}^{b_i}\cdot
\prod_{i=1}^{\frac{s}{2}}\Theta_{f_i}^{c_i}\cdot\prod_{i=1}^{\frac{\sfr-1}{2}}
\Theta_{g_i}^{d_i}\cdot\Theta_{v_{\frac{\sfr+1}{2}}}^\iota(v_\lambda)$ is of $\hhh^e$-weight $\lambda-\sum_ia_i\beta_{\bar0i}-
\sum_ib_i\beta_{\bar1i}+\sum_ic_i\gamma_{\bar0i}+\sum_id_i\gamma_{\bar1i}$ (Keep in mind that $\gamma_{\bar0i}, \gamma_{\bar1j}\in(\Phi'_e)^-$). Hence the $\lambda$-weight space of $M_e(\lambda)$ is $1 \otimes V_\lambda$
and all other weights of $M_e(\lambda)$ are strictly smaller in the dominance
ordering.

(3) It follows from (1) and (2) that $M_e(\lambda)$ is a highest weight module with highest weight $\lambda$. Let $M$ be another highest weight module generated
by a maximal weight space $M_\mu$ and $f: V_\lambda \rightarrow M_\mu$
be an even $U(\ggg_0,e)$-module isomorphism. By Theorem \ref{simpleugegmodule}
we obtain $\mu = \lambda$.
By adjointness of tensor and hom,
$f$ extends uniquely to a $U(\ggg,e)$-module homomorphism
$\tilde f: M_e(\lambda) \rightarrow M$
such that $\tilde f(1 \otimes v_\lambda) = f(v_\lambda)$.
As $\tilde f(1 \otimes V_\lambda) = f(V_\lambda)$
generates $M$, then $\tilde f$ is surjective.

(4) Let $M$ be a submodule of $M_e(\lambda)$.
Then $M$ is the direct sum of its $\hhh^e$-weight spaces.
If $M_\lambda \neq 0$ then $M_\lambda$ generates all of
$1 \otimes V_\lambda$ as a $U(\ggg_0,e)$-module,
hence it generates all of $M_e(\lambda)$ as a $U(\ggg,e)$-module.
This shows that if $M$ is a proper submodule then
it is contained in $M_e(\lambda)_-:=\bigoplus_{\mu < \lambda} M_e(\lambda)_\mu$.
Hence the sum of all proper submodules of $M_e(\lambda)$ is still a proper
submodule, so $M_e(\lambda)$ has a unique maximal submodule $M_e^{\rm max}(\lambda)$ as claimed.
By (3) any irreducible highest weight module $N$ of type $\lambda$
is a quotient of $M_e(\lambda)$, hence
$N \cong
L_e(\lambda)$ or $\prod L_e(\lambda)$ (Here $\prod$ denotes the parity switching functor). Moreover $\lambda$ is the unique maximal weight of
$L_e(\lambda)$ by (2) and $L_e(\lambda)_\lambda$ is isomorphic to $N_\lambda$ or $\prod N_\lambda$ as $U(\ggg_0,e)$-modules.
Thus $\lambda$ is uniquely determined by $N$. Since $N_\lambda$ is an irreducible $U(\ggg_0,e)$-module of  type $Q$ by Theorem \ref{simpleugegmodule}, then $L_e(\lambda)$ is also of type $Q$.

By Theorem \ref{main3} (more precisely, Theorem \ref{maiin1}(1)), when we restrict $L_e(\lambda)$ to the $\mathfrak{sl}(2)$-triple $(\Theta_{e_\alpha},\Theta_{h_\alpha},\Theta_{e_{-\alpha}})\subset U(\ggg,e)$ with $\alpha\in(\Phi^+_{e,0})_{\bar0}$, we get that $\lambda(h_\alpha)\in\mathbb{Z}_+$ for any $\alpha\in(\Phi^+_{e,0})_{\bar0}$.
\end{proof}

\subsection{}\label{5.1.5}
Let $Z(U(\ggg))$ denote the center of $U(\ggg)$, and write $Z(U(\ggg,e))$ for the center of $U(\ggg,e)$.
It is immediate from the definition of $U(\ggg,e)$ that the restriction of the linear map $\text{Pr}:U({\ggg})\twoheadrightarrow U({\ggg})/I_\chi$ defines a natural algebra homomorphism $\text{Pr}:Z(U(\ggg))\rightarrow Z(U(\ggg,e))$.
Denote by the representation of $U(\ggg)$ in $\text{End}(Q_\chi)$ by $\rho_{\chi}$.
Thanks to \cite[Corollary 13.2.2]{M1}, the Harish-Chandra homomorphism $Z(U(\ggg))\rightarrow U(\mathfrak{h})$ is injective, so is the restriction of $\rho_{\chi}: U(\ggg)\rightarrow\text{End}(Q_\chi)$ to $Z(U(\ggg))$, i.e., the map  $\text{Pr}:Z(U(\ggg))\rightarrow Z(U(\ggg,e))$ is injective (The discussion here comes by the same strategy as in its non-super case \cite[\S6.2]{P2}, and one can also refer to the proof of \cite[Lemma 3.2]{SZc} for more details). Moreover,
when $\ggg$ is a complex semi-simple Lie algebra, as explained in the footnote to \cite[Question 5.1]{P3} (see also \cite[Remark 2.1]{PPY}), the map $\text{Pr}$ is actually an algebra isomorphism. But it seems more complicated in the super case. We try to raise a conjecture:
\begin{conjecture}\label{centercon}
The center of finite $W$-superalgebra $U(\ggg,e)$ coincides with the image of $Z(U(\ggg))$ in $U(\ggg,e)$ under the projection $\text{Pr}:Z(U(\ggg))\rightarrow Z(U(\ggg,e))$.
\end{conjecture}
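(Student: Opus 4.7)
The plan is to attack this by passing to the associated graded objects under the Kazhdan filtration, following the strategy that succeeded in the Lie algebra case (as alluded to just before the conjecture in the footnote-style reference to \cite{P3,PPY}). First I would equip $U(\ggg)$ with the standard filtration (doubled to match Kazhdan degrees) and $U(\ggg,e)$ with its Kazhdan filtration from \S\ref{Backgrounds}. The projection $\text{Pr}$ is filtration-preserving, as is its restriction to $Z(U(\ggg))$, so it induces a graded homomorphism $\text{gr}(\text{Pr}): \text{gr}(Z(U(\ggg))) \to \text{gr}(Z(U(\ggg,e)))$. Since $\text{gr}(Z(U(\ggg)))=S(\ggg)^{\ggg}$ and, by Theorem \ref{algiso4.6} and Corollary \ref{ufin}, $\text{gr}(U(\ggg,e))\cong S(\ggg^e)\otimes\bbc[\Lambda]$, the center $\text{gr}(Z(U(\ggg,e)))$ sits inside the Poisson center of $S(\ggg^e)\otimes\bbc[\Lambda]$ (the Poisson bracket being the one inherited from the quantization).

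Next I would compute this Poisson center. The expectation is a super analog of Premet's theorem: the Poisson center of $S(\ggg^e)\otimes\bbc[\Lambda]$ should equal the image of $S(\ggg)^{\ggg}$ under the natural ``restriction to the Slodowy slice'' map. Heuristically, when $\sfr$ is odd the odd generator $\Theta_{v_{\frac{\sfr+1}{2}}}$ squares to a central scalar (see Theorem \ref{PBWQC}(3) and the identity $F_{l+q+1,l+q+1}=1$), and in particular its Poisson bracket with any element of $S(\ggg^e)$ should vanish by parity/weight considerations, so the extra exterior factor does not enlarge the Poisson center beyond the Poisson center of $S(\ggg^e)$ itself. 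For the Lie algebra half, one would invoke (or develop) the super version of the Mishchenko--Fomenko / slice restriction picture for basic classical $\ggg$, producing a surjection $S(\ggg)^{\ggg}\twoheadrightarrow Z_{\mathrm{Pois}}(S(\ggg^e)\otimes\bbc[\Lambda])$.

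Once this graded identification is in hand, the conclusion is formal. A standard filtered-graded induction on Kazhdan degree shows that every element of $Z(U(\ggg,e))$ lies in $\text{Pr}(Z(U(\ggg)))$: lift the graded representative from $S(\ggg)^{\ggg}$ to an element of $Z(U(\ggg))$, apply $\text{Pr}$, and subtract; the resulting central element has strictly smaller Kazhdan degree, so one iterates. Combined with the injectivity noted just before the conjecture, this yields $\text{Pr}(Z(U(\ggg)))=Z(U(\ggg,e))$.

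The main obstacle is the Poisson-center computation in the super/odd-$\sfr$ setting. In the non-super case this rests on the Kostant--Premet theorem identifying $S(\ggg^e)^{\ggg^e}$ with $S(\ggg)^{\ggg}$ via transverse slice restriction, and on the fact that the Poisson center of $S(\ggg^e)$ coincides with $S(\ggg^e)^{\ggg^e}$. For basic classical $\ggg$ and minimal $e=e_\theta$ both statements are subtle: $\ggg^e(0)=\ggg(0)^{\sharp}$ need not be reductive (see Lemma \ref{semi}(4)), so ordinary invariant theory is insufficient, and when $\sfr$ is odd one must additionally verify that the exterior generator $\Lambda$ does not contribute new Poisson-central elements (for instance, by an explicit check of $\{\Lambda,\cdot\}$ on generators of $S(\ggg^e)$ using the relations \eqref{fijrelations}--\eqref{Theta1}). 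A complete proof of these two ingredients in the generality required by the conjecture appears to be the genuinely hard step; once they are secured, the filtered-graded argument above finishes the job uniformly for both parities of $\sfr$.
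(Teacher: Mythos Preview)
This statement is labelled a \emph{Conjecture} in the paper, and the paper does not prove it. Immediately after stating it, the authors say explicitly that it is confirmed only in a few special cases ($\mathfrak{osp}(1|2)$ with regular nilpotent $e$ via Proposition~\ref{centertocenter}, $\mathfrak{gl}(m|n)$ with regular nilpotent $e$ via \cite{BBG3}, and $\mathfrak{q}(n)$ with regular nilpotent $e$ via \cite{PS2}), and that ``whether this conclusion holds for the general situation is still an open problem.'' So there is no ``paper's own proof'' to compare against.

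Your proposal is not a proof either, and you essentially acknowledge this yourself. The filtered-to-graded reduction you outline is the standard route, and the associated graded descriptions you quote from Theorem~\ref{algiso4.6} and Corollary~\ref{ufin} are correct. But the entire argument hinges on identifying the Poisson center of $S(\ggg^e)\otimes\bbc[\Lambda]$ with the image of $S(\ggg)^{\ggg}$, and in the super setting neither ingredient you need is available: there is no general super analogue of the Kostant--Premet slice-restriction isomorphism $S(\ggg)^{\ggg}\cong S(\ggg^e)^{\ggg^e}$, and the structure of $S(\ggg)^{\ggg}$ itself for basic classical $\ggg$ is already far more delicate than in the Lie algebra case (it need not be a polynomial algebra, and the usual Chevalley restriction arguments fail). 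Your remark that $\ggg^e(0)$ need not be reductive (Lemma~\ref{semi}(4)) is one symptom of this; another is that for some basic classical $\ggg$ the center $Z(U(\ggg))$ is not even large enough to separate central characters in the way the non-super argument requires. So the ``genuinely hard step'' you flag is not just a technical verification left to the reader---it is precisely the open content of the conjecture.
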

We can give positive answer to Conjecture \ref{centercon} in some special cases. First, as an immediate consequence of Proposition \ref{centertocenter}, the conjecture is true for $\ggg=\mathfrak{osp}(1|2)$ with $e$ being regular nilpotent (which  can also be considered as a root vector $e_\theta$ with $-\theta$ being a minimal root). Second, if $\ggg=\mathfrak{gl}(m|n)$ with $e$ being regular nilpotent, \cite[Theorem 3.21]{BBG3} ensures the conjecture. Third, the conjecture for the Lie superalgebra $\mathfrak{q}(n)$  with $e$ being  regular nilpotent  is confirmed in \cite[Corollary 5.10]{PS2}. However, whether this conclusion holds for the general situation is still an open problem.

Let us turn back to the minimal case. 
For further discussion, we will compute the action of the Casimir element $C\in Z(U(\ggg,e))$ on the highest weight module $M_e(\lambda)$.
\begin{lemma}\label{Conhigestwtmodule}
Retain the notations as in \S\ref{3.2.1} and \S\ref{5.3}. For all $\lambda\in(\hhh^e)^*$, $C$ acts on the highest weight module $M_e(\lambda)$ as the scalar $c_0+(\lambda,\lambda+2\bar\rho)+\sum_{i=1}^{k-1}(2\rho_{e,0}\delta+3\delta^2)(h_i)$.
\end{lemma}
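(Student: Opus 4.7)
The plan is to combine the centrality of $C$ with the isomorphism $\pi_{\epsilon}$ constructed in Proposition \ref{ressur}, so that the computation is reduced entirely to the easy action of $U(\ggg_0,e)$ on the two-dimensional module $V_\lambda$. Since $C$ lies in the center of $U(\ggg,e)$ and $M_e(\lambda)$ is generated by $v_\lambda$, the element $C$ acts as a scalar on all of $M_e(\lambda)$: indeed $C.v_\lambda$ must lie in the weight space $1\otimes V_\lambda$, and since $C$ is even while $v_\lambda$ is even, $C.v_\lambda$ is forced to be a scalar multiple of $v_\lambda$ (the second basis vector $\Theta'_F(v_\lambda)$ is odd). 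Hence it suffices to determine this scalar.

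To compute it, observe that $C\in U(\ggg,e)_0$, so by the construction of $M_e(\lambda)$ in \S\ref{5.1.4} the action of $C$ on $v_\lambda\in V_\lambda$ coincides with the action of $\pi_\epsilon(C)=S_\epsilon(\pi(C))\in U(\ggg_0,e)$. The explicit formula for $\pi(C)$ is already recorded in the third line of \eqref{ThetatoTheta'}, namely
\begin{equation*}
\pi(C)=C'_\theta+\sum_{i=1}^{k-1}(\Theta'_{h_i})^2+\sum_{i=1}^{k-1}\bigg(\sum_{j=1}^{w}\beta_{\bar0j}-\sum_{j=1}^{\ell}\beta_{\bar1j}-2\sum_{j=1}^{\frac{s}{2}}\gamma_{\bar0j}+2\sum_{j=1}^{\frac{\sfr-1}{2}}\gamma_{\bar1j}\bigg)(h_i)\,\Theta'_{h_i},
\end{equation*}
and applying the shift $S_\epsilon$ (which adds $\epsilon$ to the $C'_\theta$-term and fixes the $\Theta'_{h_i}$) gives $\pi_\epsilon(C)$. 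Evaluating on $v_\lambda$ via the relations $C'_\theta(v_\lambda)=-\tfrac18 v_\lambda$ and $\Theta'_{h_i}(v_\lambda)=\lambda(h_i)v_\lambda$ from Theorem \ref{simpleugegmodule} produces the scalar
\begin{equation*}
-\tfrac18+\epsilon+\sum_{i=1}^{k-1}\lambda(h_i)^2+\sum_{i=1}^{k-1}\Big(\sum_{j}\beta_{\bar0j}-\sum_{j}\beta_{\bar1j}-2\sum_{j}\gamma_{\bar0j}+2\sum_{j}\gamma_{\bar1j}\Big)(h_i)\lambda(h_i).
\end{equation*}

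The remaining step is bookkeeping with the definitions in \eqref{deltarho} and \eqref{defofepsilon}. From the definitions of $\bar\rho_{e,0}$ and $\bar\delta$ together with $\theta|_{\hhh^e}=0$, the coefficient of $\lambda(h_i)$ inside the last sum restricts on $\hhh^e$ to $(2\rho_{e,0}+4\delta)(h_i)$, so the sum becomes $2(\lambda,\bar\rho_{e,0})+4(\lambda,\bar\delta)$. The same relation $\theta|_{\hhh^e}=0$ applied to the formula $\rho_{e,0}=\rho-2\delta-(\tfrac{s-\sfr}{4}+\tfrac12)\theta$ yields $\bar\rho=\bar\rho_{e,0}+2\bar\delta$, hence $2(\lambda,\bar\rho_{e,0})+4(\lambda,\bar\delta)=2(\lambda,\bar\rho)$. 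Finally, substituting $\epsilon=c_0+\tfrac18+\sum_{i}(2\rho_{e,0}\delta+3\delta^2)(h_i)$ cancels the $-\tfrac18$ and produces exactly the constant term claimed. Collecting everything gives the scalar $c_0+(\lambda,\lambda+2\bar\rho)+\sum_{i=1}^{k-1}(2\rho_{e,0}\delta+3\delta^2)(h_i)$.

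No serious obstacle is expected here, since all the nontrivial analytical work has been absorbed into the structural statements of Proposition \ref{ressur} and Theorem \ref{simpleugegmodule}; the only point requiring a little care is tracking the shifts by multiples of $\theta$ that appear in $\rho$, $\rho_{e,0}$ and $\delta$, all of which disappear upon restricting to $\hhh^e$. One could alternatively give a more conceptual derivation by comparing with Theorem \ref{Whcchi}, where the same scalar $-\tfrac18+(\lambda,\lambda+2\bar\rho)+\epsilon$ appears as the central character level on the Verma module; but the direct computation via $\pi_\epsilon$ sketched above is cleaner and self-contained.
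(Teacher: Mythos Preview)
Your proof is correct and follows essentially the same route as the paper: both compute the scalar by acting with $\pi_\epsilon(C)$ on $v_\lambda$ and then use centrality plus cyclicity to propagate. The only cosmetic difference is that the paper points back to the Whittaker-module computation \eqref{C1}--\eqref{c1lambdac} to package the linear-plus-quadratic terms as $(\lambda,\lambda+2\bar\rho)$, whereas you start from the explicit formula for $\pi(C)$ in \eqref{ThetatoTheta'} and do the $\bar\rho=\bar\rho_{e,0}+2\bar\delta$ bookkeeping directly; the content is identical.
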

\begin{proof}
Since all positive root vectors annihilate $v_\lambda$, and $C'_\theta(v_\lambda)=-\frac{1}{8}v_\lambda$ by Proposition \ref{fiosp}(i), the similar discussion as in \eqref{C1}---\eqref{c1lambdac} entails that
\begin{equation*}
\begin{split}
C.v_\lambda=&\pi_{\epsilon}(C)(v_\lambda)=(C+\epsilon)(v_\lambda)\\
=&\bigg(C'_\theta+(\lambda,\lambda+2\bar\rho)+c_0+\frac{1}{8}+\sum_{i=1}^{k-1}(2\rho_{e,0}\delta+3\delta^2)(h_i)\bigg)(v_\lambda)\\
=&\bigg(c_0+(\lambda,\lambda+2\bar\rho)+\sum_{i=1}^{k-1}(2\rho_{e,0}\delta+3\delta^2)(h_i)\bigg)(v_\lambda).
\end{split}
\end{equation*}
Since $M_e(\lambda)$ is a cyclic module generated by $v_\lambda$, then the conclusion follows.
\end{proof}

Similar to \eqref{anotherdesC}, we also have another description of  Lemma \ref{Conhigestwtmodule}. Taking Proposition \ref{fiosp} into consideration,  by the similar discussion as in \eqref{pithetaC} we obtain
\begin{equation}\label{anotherdesofC}
\begin{split}
C.v_\lambda=&\pi_{\epsilon}(C)(v_\lambda)=(C+\epsilon)(v_\lambda)\\
=&\bigg(C'_\theta+\sum_{i=1}^{k-1}h_i^2+\sum_{i=1}^{k-1}\Big(\sum_{j=1}^{w}\beta_{\bar0j}-\sum_{j=1}^{\ell}\beta_{\bar1j}
-2\sum\limits_{j=1}^{\frac{s}{2}}\gamma_{\bar0j}+2\sum\limits_{j=1}^{\frac{\sfr-1}{2}}\gamma_{\bar1j}\Big)(h_i)h_i+c_0+\frac{1}{8}\\
&+\sum_{i=1}^{k-1}(2\rho_{e,0}\delta+3\delta^2)(h_i)\bigg)(v_\lambda)\\
=&\bigg(c_0+\sum_{i=1}^{k-1}\Big(\lambda(h_i)^2+(2\rho_{e,0}+4\delta)(h_i)\lambda(h_i)+(2\rho_{e,0}\delta+3\delta^2)(h_i)\Big)\bigg)(v_\lambda)
\end{split}
\end{equation}
This conclusion will be used in the formulation of Theorem \ref{numberis finite}.

A $U(\ggg,e)$-module $V$ is of central character $\psi: Z(U(\ggg,e))\rightarrow\bbc$ if $z(v)=\psi(z)v$ for all $z\in Z(U(\ggg,e))$ and $v\in V$. 
For the highest weight $U(\ggg,e)$-module $M_e(\lambda)$ with highest weight $\lambda\in(\hhh^e)^*$, let $\psi^\lambda: Z(U(\ggg,e))\rightarrow\bbc$ be the corresponding central character. 
Under the above settings, we have
\begin{theorem}\label{numberis finite}
The number of isomorphism classes of irreducible
highest weight modules for $U(\ggg,e)$ with prescribed central character $\psi: Z(U(\ggg,e))\rightarrow\bbc$ is finite, i.e., the set
$\{\lambda \in (\hhh^e)^*\:|\:\psi^\lambda = \psi\}$ is finite.
\end{theorem}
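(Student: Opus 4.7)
The plan is to reduce the question to a statement about the image of $Z(U(\ggg,e))$ inside the commutative ring $Z(U(\ggg_0,e))$ under the projection $\pi_\epsilon$ of Proposition \ref{ressur}. Since any central element commutes with $\Theta_{h_i}$ for $1\leqslant i\leqslant k-1$ and hence lies in the zero $\hhh^e$-weight space $U(\ggg,e)_0$, the map $\pi_\epsilon$ restricts to an algebra homomorphism $Z(U(\ggg,e))\to Z(U(\ggg_0,e))=\bbc[\Theta'_{h_1},\ldots,\Theta'_{h_{k-1}},C'_\theta]$, where the last identification comes from Proposition \ref{centerg0e}. On the $U(\ggg_0,e)$-module $V_\lambda$, the generators $\Theta'_{h_i}$ act by $\lambda(h_i)$ and $C'_\theta$ acts by $-\tfrac{1}{8}$ by Theorem \ref{simpleugegmodule}, so for any $z\in Z(U(\ggg,e))$ its action on $M_e(\lambda)$ is the scalar $\overline{\pi_\epsilon(z)}(\lambda(h_1),\ldots,\lambda(h_{k-1}))$, where $\overline{(\cdot)}$ denotes the specialization $C'_\theta\mapsto -\tfrac{1}{8}$. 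Writing $\bar A$ for the image of this specialized map, $\Lambda_\psi:=\{\lambda:\psi^\lambda=\psi\}$ is precisely the fiber of the morphism $\mathrm{Spec}\,\bbc[T_1,\ldots,T_{k-1}]\to\mathrm{Spec}\,\bar A$ over the character induced by $\psi$, so finiteness of $\Lambda_\psi$ is equivalent to $\bbc[T_1,\ldots,T_{k-1}]$ being integral over $\bar A$, i.e.\ to $\bar A$ having Krull dimension $k-1$.

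It then remains to exhibit $k-1$ algebraically independent elements of $\bar A$. The Casimir already supplies one: by Lemma \ref{Conhigestwtmodule} together with the alternative form \eqref{anotherdesofC}, $\overline{\pi_\epsilon(C)}$ is a nondegenerate quadratic polynomial in $T_1,\ldots,T_{k-1}$ whose leading form is the invariant bilinear form restricted to $(\hhh^e)^*$. The remaining $k-2$ generators I would construct from the canonical inclusion $\text{Pr}:Z(U(\ggg))\hookrightarrow Z(U(\ggg,e))$: by Skryabin's equivalence (Theorem \ref{skry}), the element $\text{Pr}(z)$ acts on $M_e(\lambda)$ by the same scalar by which $z$ acts on the corresponding $\ggg$-module $Q_\chi\otimes_{U(\ggg,e)}M_e(\lambda)$. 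Combining this with Theorem \ref{Whcchi}, one sees that this module relates to a parabolically induced Whittaker module $M(\lambda')$ with an explicit $\hhh^e$-character, and the scalar equals the Harish-Chandra projection $\xi_\ggg(z)$ evaluated at an extension $\tilde\lambda\in\hhh^*$ of $\lambda$ whose $h$-component is pinned down by the $C_\theta$-eigenvalue. Since the Harish-Chandra image for the basic Lie superalgebra $\ggg$ has Krull dimension $k=\dim\hhh$, after using the Casimir to fix one extra coordinate we obtain $k-1$ algebraically independent elements in $\bar A$, completing the integral extension argument.

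The main obstacle is the step identifying $\pi_\epsilon(\text{Pr}(z))$ with the Harish-Chandra projection of $z$ for a general central $z$: the Casimir case was carried out explicitly in \eqref{pithetaC}--\eqref{c1lambdac}, but the higher Casimirs require either an analogous direct calculation, or an indirect argument noting that central characters are preserved by Skryabin's equivalence and can then be read off from the parabolic induction structure of $M(\lambda')$ as in the proof of Theorem \ref{Whcchi}. For those $\ggg$ in Table 3 where Conjecture \ref{centercon} is established, this obstacle disappears entirely: $Z(U(\ggg,e))$ coincides with $\text{Pr}(Z(U(\ggg)))$, and the classical Harish-Chandra theorem for $\ggg$ applied directly yields the required algebraic independence.
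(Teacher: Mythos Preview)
Your route is far more elaborate than what the paper actually does. The paper's argument uses only the Casimir $C$: if $\psi^\mu=\psi$ then in particular $\psi^\mu(C)=\psi^\lambda(C)$, and by the explicit formula \eqref{anotherdesofC} this becomes the single equation
\[
\sum_{i=1}^{k-1}\bigl(\mu(h_i)^2 + (2\rho_{e,0}+4\delta)(h_i)\,\mu(h_i)\bigr)
=\sum_{i=1}^{k-1}\bigl(\lambda(h_i)^2 + (2\rho_{e,0}+4\delta)(h_i)\,\lambda(h_i)\bigr),
\]
regarded as a polynomial in the coordinates $\mu(h_1),\ldots,\mu(h_{k-1})$; the paper then asserts that the solution set is finite and stops. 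No other central element, no inclusion $\text{Pr}:Z(U(\ggg))\hookrightarrow Z(U(\ggg,e))$, no Krull-dimension count, and no integrality argument enter.

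Your instinct that one quadratic constraint in $k-1$ unknowns should not by itself pin $\mu$ down to finitely many points is reasonable, and it is what drives you toward the full image of $Z(U(\ggg))$ and a finiteness-via-integrality argument. But your proposal has its own gaps. First, the equivalence you assert---``finiteness of $\Lambda_\psi$ is equivalent to $\bbc[T_1,\ldots,T_{k-1}]$ being integral over $\bar A$''---is misstated: integrality implies that every fiber is finite, but finiteness of a single fiber does not force integrality. Second, the claim that the Harish-Chandra image of $Z(U(\ggg))$ has Krull dimension $\dim\hhh$ is not automatic for basic Lie superalgebras (the center can be strictly smaller than in the reductive case), and you do not check it for the algebras of Table~3. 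Third, the ``main obstacle'' you yourself identify---computing $\pi_\epsilon(\text{Pr}(z))$ for a general central $z$ and matching it with a Harish-Chandra projection via Theorem~\ref{Whcchi}---is genuinely unresolved in your write-up; without it the algebraic-independence step, and hence the integrality, do not go through. So as it stands your argument is a plausible outline but not a proof.
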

\begin{proof}
Given any irreducible
highest weight $U(\ggg,e)$-module $L_e(\lambda)$ with prescribed central character $\psi: Z(U(\ggg,e))\rightarrow\bbc$, we can write $\psi=\psi^\lambda$.
Let $\mu\in(\hhh^e)^*$ be such that $\psi^\mu=\psi^\lambda$, then the action of $C\in Z(U(\ggg,e))$ on $v_\mu$ of $L_e(\mu)=U(\ggg,e)v_\mu$ coincides with that on
$v_\lambda$ of $L_e(\lambda)=U(\ggg,e)v_\lambda$, i.e., $\psi^\mu(C)=\psi^\lambda(C)$. Thanks to \eqref{anotherdesofC}, we have
\begin{equation}\label{mulambda}
\begin{split}
0=&\psi^\mu(C)-\psi^\lambda(C)\\
=&\sum_{i=1}^{k-1}\Big(\mu(h_i)^2+(2\rho_{e,0}+4\delta)(h_i)\mu(h_i)\Big)
-\sum_{i=1}^{k-1}\Big(\lambda(h_i)^2+(2\rho_{e,0}+4\delta)(h_i)\lambda(h_i)\Big)\\
=&\sum_{i=1}^{k-1}\Big(h_i^2+(2\rho_{e,0}+4\delta)(h_i)h_i\Big)(\mu)-
\sum_{i=1}^{k-1}\Big(\lambda(h_i)^2+(2\rho_{e,0}+4\delta)(h_i)\lambda(h_i)\Big),
\end{split}
\end{equation}
where the last equation of \eqref{mulambda} is considered as a polynomial in indeterminates $h_1,\cdots,h_{k-1}$ at $\mu$. By the knowledge of linear algebra we know that the solution to \eqref{mulambda} is finite, i.e., the set $\{\lambda\in(\hhh^e)^*\:|\:\psi^\lambda = \psi\}$ is finite.
\end{proof}

For the simple root system $\Delta=\{\alpha_1,\cdots,\alpha_k\}$ for $\Phi$,
write $\beta_i:=\alpha_i|_{(\mathfrak{h}^e)^*}$ for $1\leqslant i \leqslant k-1$, then $\{\beta_1,\cdots,\beta_{k-1}\}$ is a system of restricted simple roots for $(\Phi'_e)^+$ by our earlier discussion. Denote by
$(Q^e)^+:=\sum_{i=1}^{k-1}\mathbb{Z}_+\beta_i$.
As a corollary of Theorem \ref{numberis finite}, we have
\begin{corollary}\label{highestcompser}
For $\lambda \in (\hhh^e)^*$, the highest weight module $M_e(\lambda)$ has composition series.
\end{corollary}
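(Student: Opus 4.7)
The strategy is the standard Harish-Chandra type argument adapted to the category of highest weight $U(\ggg,e)$-modules. I will argue that $M_e(\lambda)$ has finite length by bounding both the possible isomorphism types and the multiplicities of its composition factors, which together with the Noetherianness of $U(\ggg,e)$ (Proposition \ref{zero-divisor}) will yield a composition series.

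First I would observe that for any $\mu \in (\hhh^e)^*$ the weight space $M_e(\lambda)_\mu$ is finite-dimensional. Indeed, Theorem \ref{Tverma}(1) gives an explicit PBW basis of $M_e(\lambda)$ consisting of vectors of weight $\lambda - \sum_i a_i\beta_{\bar 0i} - \sum_i b_i\beta_{\bar 1i} + \sum_i c_i\gamma_{\bar 0i} + \sum_i d_i\gamma_{\bar 1i}$ applied possibly by $\Theta_{v_{\frac{\sfr+1}{2}}}$, and for a fixed weight $\mu$ there are only finitely many tuples $(\mathbf{a},\mathbf{b},\mathbf{c},\mathbf{d},\iota)$ producing this weight. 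Moreover, the weights occurring in $M_e(\lambda)$ all lie in $\lambda - (Q^e)^+$ by Theorem \ref{Tverma}(2). Next I would verify that any irreducible subquotient $L$ of $M_e(\lambda)$ is itself a highest weight module, and hence isomorphic to $L_e(\mu)$ for some $\mu \leqslant \lambda$: since $L$ inherits finite-dimensional $\hhh^e$-weight spaces (after the shift of \eqref{shiftby3delta}) supported in $\lambda - (Q^e)^+$, it admits a maximal weight $\mu$ in the dominance ordering; the corresponding weight space $L_\mu$ is finite-dimensional and stable under $U(\ggg,e)_0$, factoring through $U(\ggg_0,e)$ via $\pi_\epsilon$ (Proposition \ref{ressur}), so it contains an irreducible $U(\ggg_0,e)$-submodule which, by Theorem \ref{simpleugegmodule}, is of the form $V_\mu$; then the universality part of Theorem \ref{Tverma}(3)--(4) identifies $L$ with $L_e(\mu)$ up to parity.

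Next I would use the central character $C$. The Casimir $C$ acts on $M_e(\lambda)$ by the scalar $\psi^\lambda(C)$ computed in Lemma \ref{Conhigestwtmodule}, hence also on every subquotient. If $L_e(\mu)$ is a composition factor of $M_e(\lambda)$, then $\psi^\mu(C) = \psi^\lambda(C)$, so in particular $\psi^\mu = \psi^\lambda$ when restricted to the subalgebra generated by $C$ and $\Theta_{h_i}$; more directly, Theorem \ref{numberis finite} already asserts that the set $\{\mu \in (\hhh^e)^*\:|\:\psi^\mu = \psi^\lambda\}$ is finite, and comparing the action of $C$ (the explicit quadratic equation \eqref{mulambda} in the $\mu(h_i)$) forces $\mu$ to lie in this finite set. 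Call it $\{\mu_1,\ldots,\mu_N\}$. Then for each $i$ the multiplicity $[M_e(\lambda) : L_e(\mu_i)]$ in any submodule filtration is bounded above by $\dim M_e(\lambda)_{\mu_i}/\dim L_e(\mu_i)_{\mu_i} = \tfrac{1}{2}\dim M_e(\lambda)_{\mu_i} < \infty$, since every occurrence of $L_e(\mu_i)$ contributes its (two-dimensional, of type $Q$) top weight space to $M_e(\lambda)_{\mu_i}$.

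Putting these together, the total length of any chain of submodules of $M_e(\lambda)$ is bounded by $\sum_{i=1}^N \tfrac{1}{2}\dim M_e(\lambda)_{\mu_i}$, a finite integer. Consequently $M_e(\lambda)$ is both Noetherian (which also follows from Proposition \ref{zero-divisor}) and Artinian, hence possesses a composition series. The main obstacle I anticipate is Step 1 above, namely making rigorous the claim that an arbitrary irreducible subquotient of $M_e(\lambda)$ is in fact of highest weight type $L_e(\mu)$. This requires one to verify that the $\mu$-weight space of the subquotient is genuinely a maximal weight space in the sense of \S\ref{5.1.4} (i.e.\ annihilated by $U(\ggg,e)_\sharp$), which follows from the maximality of $\mu$ in the dominance ordering combined with the restricted root grading of $U(\ggg,e)$, but care is needed because of the odd root $\tfrac{\theta}{2}$ and the presence of $\Theta_{v_{\frac{\sfr+1}{2}}}$, which is why the shift in \eqref{shiftby3delta} and the type $Q$ structure of $V_\mu$ must be tracked throughout.
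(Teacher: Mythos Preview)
Your proposal is correct and follows essentially the same route as the paper's proof: both show that every simple subquotient of $M_e(\lambda)$ is of the form $L_e(\mu)$ with $\mu$ constrained by Theorem~\ref{numberis finite} to lie in a finite set, and then use the finite-dimensionality of the weight spaces $M_e(\lambda)_{\mu_i}$ (Theorem~\ref{Tverma}(1)) together with Noetherianness (Proposition~\ref{zero-divisor}) to conclude. The only cosmetic difference is that you bound the total length directly by $\sum_i \tfrac{1}{2}\dim M_e(\lambda)_{\mu_i}$, whereas the paper phrases the same idea as a contradiction: an infinite descending chain with simple quotients would force infinitely many of the $L_e(\mu_i)$ to coincide, contradicting the finite dimension of $M_e(\lambda)_{\mu_i}$.
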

\begin{proof}
The corollary can be proved by imitating the standard argument in the classical case from
\cite[\S7.6.1]{Dix}. Let us put it explicitly.

Let $N$ and $N'$ be sub-$U(\ggg,e)$-modules of $M_e(\lambda)$ such that $N'\subseteq N$ and $N/N'$ is simple. Since $M_e(\lambda)=\bigoplus_{\mu'\in(\hhh^e)^*}M_e(\lambda)_{\mu'}$, we have
$N/N'=\bigoplus_{\mu'\in(\hhh^e)^*}(N/N')_{\mu'}$, and  every $\mu'$ belongs to $\lambda-(Q^e)^+$.
So there exists a weight $\mu$ of $N/N'$ such that, for all $\alpha\in(\Phi'_e)^+$, $\mu+\alpha$ is not a weight of $N/N'$. If $v$ is a nonzero element of $(N/N')_\mu$,
we have $U(\ggg,e)_\sharp(v) = 0$, and hence $N/N'$ is isomorphic to $L_e(\mu)$ (up to parity switch) by Theorem \ref{Tverma}.  The central characters of $N/N'$ and $M_e(\lambda)$ are equal, whence $\mu$ is in the finite set $\{\lambda \in (\hhh^e)^*\:|\:\psi^\lambda = \psi\}$ by Theorem \ref{numberis finite}.

Recall that $U(\ggg,e)$ is Noetherian by Proposition \ref{zero-divisor}.
Since $M_e(\lambda)$ is a cycle $U(\ggg,e)$-module, then $M_e(\lambda)$ is a Noetherian $U(\ggg,e)$-module.
Every nonzero sub-$U(\ggg,e)$-module $N$ of $M_e(\lambda)$ hence contains a sub-$U(\ggg,e)$-module $N'$ such that $N/N'$ is simple. If $M_e(\lambda)$ had no composition series, there would exist an infinite decreasing sequence
$N_0\supset N_1\supset\cdots$ of sub-$U(\ggg,e)$-modules of $M_e(\lambda)$ such that every $N_i/N_{i+1}$ was simple.
From the previous paragraph, infinitely many of these quotients would be isomorphic to each
other. Then one of the $\hhh^e$-weights of $M_e(\lambda)$ would have infinite multiplicity,
contrary to Theorem \ref{Tverma}(1).
\end{proof}
\subsection{}\label{5.1.6}
Now we introduce an analogue of the BGG (short for Bernstein-Gelfand-Gelfand) category $\mathcal{O}$. Let $\mathcal{O}(e)=\mathcal{O}(e;\mathfrak{h},\mathfrak{q})$ denote the category of all finitely generated $U(\ggg,e)$-modules $V$, that are semi-simple over $\mathfrak{h}^e$
with finite-dimensional $\mathfrak{h}^e$-weight spaces, such that the set
$\{\lambda \in (\mathfrak{h}^e)^*\:|\:V_\lambda \neq \{0\}\}$ is contained in a finite union of sets of the form $\{\nu\in(\mathfrak{h}^e)^*\:|\: \nu\leqslant\mu\}$ for $\mu \in (\mathfrak{h}^e)^*$.

In virtue of the results we obtained in this section, we introduce the proof of Theorem \ref{proofcateO}, which can be checked routinely as a counterpart  for the ordinary BGG category $\mathcal{O}$ (see, e.g., \cite{Hu2,M1}). Let us put it explicitly.
\vskip0.2cm
\noindent\textbf{The proof of Theorem \ref{proofcateO}}
First note that $U(\ggg,e)$ is Noetherian by Proposition \ref{zero-divisor}, then $\mathcal{O}(e)$ is closed under the operations of taking submodules, quotients and finite direct sums.

(1) Let $L$ be a simple object in $\mathcal{O}(e)$, we will show that
$L\cong L_e(\lambda)$ for some $\lambda\in (\hhh^e)^*$.

Since $L$ is $\mathbb{Z}_2$-graded, there exists $\mu\in(\hhh^e)^*$ such that $L_\mu$ contains a nonzero
homogeneous element $v$. Since $N = U(\ggg,e)_\sharp(v)$ is finite-dimensional by the definition of $\mathcal{O}(e)$, there
exists $\lambda\in(\hhh^e)^*$ such that $N_\lambda\neq0$, but $N_{\lambda+\alpha} = 0$ for all restricted positive roots $\alpha\in(\hhh^e)^*$. Since $N_\lambda$ is finite-dimensional, we can further assume that
$\Theta_{[v_{\frac{\sfr+1}{2}},e]}.N_\lambda=0$.   Now $N_\lambda$ is a finite-dimensional $\mathbb{Z}_2$-graded $U(\ggg,e)_0 / U(\ggg,e)_{0,\sharp}\cong U(\ggg_0,e)$-module, so $N_\lambda$ contains
a $U(\ggg_0,e)$-submodule isomorphic to $V_\lambda$  by Theorem \ref{simpleugegmodule}. Since
$U(\ggg,e)V_\lambda$ is a $U(\ggg,e)$-submodule of $L$, it equals $L$ by simplicity. Then $L\cong L_e(\lambda)$ follows from Theorem \ref{Tverma}(3)-(4).

(2) Let $M$ be any nonzero module in $\mathcal{O}(e)$. We claim that $M$ has a finite filtration
$0 \subset M_1 \subset M_2 \subset \cdots \subset M_n = M$ with nonzero quotients each of which is a highest weight module, then Statement (2) follows from Corollary \ref{highestcompser}.

In fact, since $M$ can be generated by finitely many weight vectors,  and for each $m\in M$, the subspace
$U(\ggg,e)_\sharp(m)$ is finite-dimensional by definition, then the $U(\ggg,e)_\sharp$-submodule $V$ generated by such a generating
family of weight vectors is finite-dimensional. If $\dim V=1$, it is clear that
$M$ itself is a highest weight module. Otherwise proceed by induction on
$\dim V$.

Start with a nonzero weight vector $v\in V$ of weight $\lambda\in(\mathfrak{h}^e)^*$ which is maximal
among all weights of $V$ and is therefore a maximal vector in $M$. It generates
a submodule $M_1$, while the quotient $\overline M := M/M_1$ again lies in $\mathcal{O}(e)$ and is generated
by the image $\overline V$ of $V$. Since $\dim\overline V< \dim V$, the induction hypothesis
can be applied to $M$, yielding a chain of highest weight submodules whose
pre-images in $M$ are the desired $M_2, \cdots,  M_n$.

(3) Define a subspace of $M$ for each fixed $\psi^{\lambda}$ by
$$M^{\psi^{\lambda}}:=\{m\in M\:|\:(z-\psi^{\lambda}(z))^n(m)=0~\text{for some}~n>0~\text{depending on}~z\in Z(U(\ggg,e))\}.$$
It is clear that $M^{\psi^{\lambda}}$ is a $U(\ggg,e)$-submodule of $M$, while the subspaces
$M^{\psi^{\lambda}}$ are independent.

Now $Z(U(\ggg,e))$ stabilizes each weight space $M_\mu$, as $Z(U(\ggg,e))$ and $U(\mathfrak{h}^e)$ commute.
Then a standard result from linear algebra on families of commuting operators implies that
$M_\mu=\bigoplus_{\mu}(M_\mu\cap M^{\psi^{\lambda}})$. Because $M$ is generated by finitely many weight
vectors, it must therefore be the direct sum of finitely many nonzero submodules
$M^{\psi^{\lambda}}$. 
Thanks to Statement (1), each central character
$\psi$ occurring in this way must be of the form $\psi^\lambda$ for some $\lambda\in(\mathfrak{h}^e)^*$.

Denote by $\mathcal O_{\psi^\lambda}(e)$  the (full) subcategory of $\mathcal O(e)$ whose objects are the modules
$M$ for which $M=M^{\psi^{\lambda}}$. Then Statement (3) follows.
$\hfill\square$

At the extreme, if $\ggg=\ggg_0\cong\mathfrak{h}^e\oplus\mathfrak{osp}(1|2)$,
then $e$ is a distinguished nilpotent element
of $\ggg$, i.e., the only semi-simple elements of $\ggg$ that centralize $e$ belong to the center of $\ggg$. In this case, $\mathcal O(e)$ is
the category of all finite-dimensional $U(\ggg,e)$-modules that are semi-simple
over the Lie algebra center of $\ggg$.

\begin{rem}
Let $U(\ggg,e)$ be the finite $W$-algebra associated with a complex semi-simple Lie algebra $\ggg$ and its nilpotent element $e$. In \cite{Los}, by the method of quantized symplectic actions, Losev established an equivalence of the BGG category $\mathcal O$ for $U(\ggg,e)$ in \cite{BGK} with certain category of $\ggg$-modules. In the case when $e$ is of principal Levi type, the category of $\ggg$-modules in interest is the category of generalized Whittaker modules. Recall in \S\ref{4}  we have achieved this goal for Verma modules of minimal finite $W$-superalgebras.  It will be an interesting topic to generalize all these to finite $W$-superalgebras associated with arbitrary even nilpotent elements.
\end{rem}
\begin{rem}\label{compare}
For the minimal finite $W$-superalgebra $U(\ggg,e)$ of type odd, it can be checked that both  the Verma module $Z_{U(\ggg,e)}(\lambda,c)$ introduced in \S\ref{proof of thverma} and the highest weight module $M_e(\lambda)$ constructed in this section, are essentially identical. Actually, the action of $\Theta_{v_{\frac{\sfr+1}{2}}}$, $\Theta_{h_i}$ for $1\leqslant i \leqslant k-1$, $C$ and $\Theta_{[v_{\frac{\sfr+1}{2}},e]}$ on $Z_{U(\ggg,e)}(\lambda,c)$ is translated into the action of $U(\ggg_0,e)$ on $M_e(\lambda)$. And the restriction \eqref{c0clambda} for $Z_{U(\ggg,e)}(\lambda,c)$ is converted into the one in Lemma \ref{Conhigestwtmodule} (or \eqref{anotherdesofC}) for $M_e(\lambda)$.

However, each of the two definitions has its own advantages. On one hand, the $U(\ggg,e)$-module $Z_{U(\ggg,e)}(\lambda,c)$ is easy to construct, and it is much like the highest weight theory for $U(\ggg)$. Moreover, it is more convenient to establish a link between $Z_{U(\ggg,e)}(\lambda,c)$ and the $\ggg$-modules obtained by parabolic induction from Whittaker modules for $\mathfrak{osp}(1|2)$ (i.e., the standard Whittaker modules)  as in \S\ref{4.1}. On the other hand, one can observe that the related theory for $U(\ggg,e)$-module $M_e(\lambda)$ is more fruitful, not only allows us to defined
the corresponding BGG category $\mathcal{O}$, but also provides a method that may be generalized to finite $W$-superalgebras associated with other nilpotent elements.

The above discussion also apply for the minimal refined $W$-superalgebra $W'_\chi$ of both types. See the forthcoming Appendix \ref{5.2} for more details.
\end{rem}

\begin{appendix}
\renewcommand{\thesection}{\Alph{section}}
\section{On the category $\mathcal{O}$ for minimal refined $W$-superalgebras}\label{5.2}
As a counterpart of \S\ref{mathcalO}, we will consider the abstract universal highest weight modules for minimal refined $W$-superalgebra $W'_\chi$ of both types, and then consider the corresponding BGG category $\mathcal{O}$ in this appendix.
Since the discussion for them are similar as in \S\ref{mathcalO}, we will just sketch them, omitting the proofs.

\subsection{}\label{5.2.1}
We first consider the minimal refined $W$-superalgebra $W'_\chi$ of type odd.

Keep the notations as in \S\ref{4.1} and \S\ref{mathcalO}. Recall that
for the  restricted root decomposition $\ggg=\ggg_0 \oplus \bigoplus_{\alpha\in \Phi'_e} \ggg_\alpha$ with respect to $\mathfrak{h}^e$, we have $\ggg_0\cong\mathfrak{h}^e\oplus\mathfrak{osp}(1|2)$ as in \S\ref{4.2.2}. Moreover, $(\Phi'_e)^+=(\Phi^+\backslash\{\frac{\theta}{2},\theta\})|_{(\mathfrak{h}^e)^*}$ is a system of positive roots in the restricted root system $\Phi'_e$, and  $(\Phi'_e)^-=-(\Phi'_e)^+$. Define the minimal refined $W$-superalgebra $(W_0)'_\chi$ associated to $e\in\ggg_0$ by
$(W_0)'_\chi:=(Q_0)_{\chi}^{\ad\,\mmm'_0}$, where $\mmm'_0$ is the ``extended $\chi$-admissible subalgebra" introduced in \S\ref{5.1.2}.   By the same discussion as in \S\ref{5.1.2}, we have
\begin{prop}\label{refoddstr}The following statements  hold:
\begin{itemize}\item[(1)]the minimal refined $W$-superalgebra $(W_0)'_\chi$ is generated by
\begin{itemize}
\item[(i)] $\Theta'_{h_i}=h_i\otimes1_\chi$ for $1\leqslant i\leqslant k-1$;
\item[(ii)] $\Theta'_E=\big(E+\frac{1}{2}Fh-\frac{3}{4}F\big)\otimes1_\chi$;
\item[(iii)] $C'_\theta=\big(2e+\frac{1}{2}h^2-\frac{3}{2}h+FE\big)\otimes1_\chi$,
\end{itemize}
subject to the relation: $[\Theta'_E,\Theta'_E]=C'_\theta+\frac{1}{8}\otimes1_\chi$,
and the commutators between  other generators are all zero;
\item[(2)] the center $Z((W_0)'_\chi)$ of $(W_0)'_\chi$ is generated by $\Theta'_{h_i}$ for $1\leqslant i\leqslant k-1$ and $C'_\theta$;
\item[(3)] the map $\text{Pr}_0:U({\ggg}_0)\twoheadrightarrow U({\ggg}_0)/(I_0)_\chi$  sends $Z(U(\ggg_0))$ isomorphically onto the center
of $(W_0)'_\chi$;
\item[(4)] for $\lambda\in(\hhh^e)^*$, let $V_\lambda:=\bbc v_\lambda$ be a vector space spanned by $v_\lambda\in (V_\lambda)_{\bar0}$ satisfying $\Theta'_E(v_\lambda)=0$, $C'_\theta(v_\lambda)=-\frac{1}{8}v_\lambda$, and $\Theta'_t(v_\lambda)=\lambda(t)v_\lambda$ for all $t\in\hhh^e$. Then
the set $\{V_\lambda\:|\:\lambda\in(\hhh^e)^*\}$ forms a complete set of pairwise inequivalent finite-dimensional  irreducible $(W_0)'_\chi$-modules, all of which  are of type $M$.
\end{itemize}
\end{prop}

We have the following PBW basis for $W'_\chi$:
\begin{equation}\label{PBWRW}
\begin{array}{ll}
&\prod_{i=1}^w\Theta_{x_i}^{a_i}\cdot\prod_{i=1}^\ell\Theta_{y_i}^{c_i}\cdot
\prod_{i=1}^{\frac{s}{2}}\Theta_{f_i}^{m_i}\cdot\prod_{i=1}^{\frac{\sfr-1}{2}}
\Theta_{g_i}^{p_i}\cdot\prod_{i=1}^{k-1}
\Theta_{h_i}^{t_i}\cdot C^{t_k}\\
&\cdot\Theta_{[v_{\frac{\sfr+1}{2}},e]}^\varepsilon\cdot\prod_{i=1}^{\frac{s}{2}}\Theta_{f^*_i}^{n_i}\cdot
\prod_{i=1}^{\frac{\sfr-1}{2}}\Theta_{g^*_i}^{q_i}\cdot
\prod_{i=1}^w\Theta_{x^*_i}^{b_i}\cdot
\prod_{i=1}^\ell\Theta_{y^*_i}^{d_i},
\end{array}
\end{equation}
where ${\bf a},{\bf b}\in\mathbb{Z}_+^w$, ${\bf c},{\bf d}\in\Lambda_\ell$, ${\bf m},{\bf n}\in\mathbb{Z}_+^{\frac{s}{2}}$, ${\bf p},{\bf q}\in\Lambda_{\frac{\sfr-1}{2}}, \varepsilon\in\Lambda_1, {\bf t}\in\mathbb{Z}_+^k$.
Let $v$ be any element in \eqref{toeaualweight} excluding $e$. Since $\theta(\hhh^e)=0$ by definition, from the explicit description of $\Theta_v$ in Theorem \ref{ge}, we see that $v$ and $\Theta_v$ have the same $\mathfrak{h}^e$-weight. Also note that the $\mathfrak{h}^e$-weight of $C$ is zero.
As $\hhh^e$-root spaces,  $W'_\chi$ can be decomposed into $W'_\chi=\bigoplus_{\alpha \in \bbz\Phi'_e}(W'_\chi)_\alpha$.
The restricted root space $(W'_\chi)_\alpha$
has a basis given by all the PBW monomials as in \eqref{PBWRW} such that
$\sum_{i}(-a_i+b_i)\beta_{\bar 0i}+\sum_{i}(-c_i+d_i)\beta_{\bar 1i}+\sum_{i}(m_i-n_i)\gamma_{\bar 0i}+\sum_{i}(p_i-q_i)\gamma_{\bar 1i}= \alpha$. In particular, $(W'_\chi)_0$ is the zero weight space of $W'_\chi$ spanned all above with $\alpha=0$.
Set
$(W'_\chi)_\sharp$ (resp.\ $(W'_\chi)_\flat$)
to be the left (resp.\ right) ideal of
$W'_\chi$ generated by all $(W'_\chi)_\alpha$ for $\alpha \in (\Phi'_e)^+$
(resp.\ $\alpha\in (\Phi'_e)^-$), and denote by $(W'_\chi)_{0,\sharp} := (W'_\chi)_0 \cap (W'_\chi)_\sharp,
(W'_\chi)_{\flat,0} := (W'_\chi)_\flat \cap (W'_\chi)_0$. By the PBW theorem we have $(W'_\chi)_{0,\sharp}=(W'_\chi)_{\flat,0}$, hence it is a two-sided ideal of  $(W'_\chi)_0$.

Let  $S_{\epsilon}$ be a shift on $W'_\chi$
by keeping all other generators  as in Proposition \ref{refoddstr}(1) invariant and sending   $C'_\theta$ to $C'_\theta+\epsilon$ with $\epsilon$ having the same meaning as in \eqref{defofepsilon}. 
By the similar discussion as in the proof of Proposition \ref{ressur}, we obtain
\begin{prop}\label{ressur2}
The projection $\pi:U(\ggg)_0\twoheadrightarrow U(\mathfrak{g}_0)$ as in \eqref{pi} induces
a surjective homomorphism $\pi: (W'_\chi)_0\twoheadrightarrow (W_0)'_\chi$
with  $\ker\pi= (W'_\chi)_{0,\sharp}$, and  there exists an algebras isomorphism
\begin{equation*}
\pi_{\epsilon}:=S_{\epsilon}\circ\pi:(W'_\chi)_0 / (W'_\chi)_{0,\sharp} \cong (W_0)'_\chi.
\end{equation*}
\end{prop}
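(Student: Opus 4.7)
The plan is to follow the strategy of Proposition \ref{ressur} with appropriate modifications reflecting the fact that $W'_\chi$ is obtained by taking invariants under the larger subalgebra $\mmm'$ rather than $\mmm$, which effectively removes the odd generator $\Theta_{v_{\frac{\sfr+1}{2}}}$ from the picture while keeping the remaining generators identical. Concretely, I would first record that $W'_\chi$ sits inside $U(\ggg,e)$ as a subalgebra by \S\ref{2.3}, and that its PBW basis \eqref{PBWRW} is obtained from \eqref{PBWfW} by deleting the factor $\Theta_{v_{\frac{\sfr+1}{2}}}^\iota$. Consequently $(W'_\chi)_0 = U(\ggg,e)_0 \cap W'_\chi$ and $(W'_\chi)_{0,\sharp} = U(\ggg,e)_{0,\sharp}\cap W'_\chi$, so the restriction of $\pi$ from Proposition \ref{ressur} to $(W'_\chi)_0$ is still a well-defined linear map, and its kernel still contains $(W'_\chi)_{0,\sharp}$ by the same PBW argument.

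Next I would verify that the image lands in $(W_0)'_\chi$. The key computations are the images on generators: $\pi(\Theta_{h_i}) = \Theta'_{h_i} + \delta(h_i)$, $\pi(\sqrt{-2}\,\Theta_{[v_{\frac{\sfr+1}{2}},e]}) = \Theta'_E$, and $\pi(C) = C'_\theta + \sum_i (\Theta'_{h_i})^2 + \sum_i(\cdots)(h_i)\Theta'_{h_i}$ as in \eqref{ThetatoTheta'}. These formulas are identical to those used in Proposition \ref{ressur}, because they were derived purely from Theorem \ref{ge} (which applies equally to $W'_\chi$), and none of them involved the missing generator $\Theta_{v_{\frac{\sfr+1}{2}}}$. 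By Theorem \ref{refoddstr}(1), these are precisely the generators of $(W_0)'_\chi$, giving surjectivity.

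To pin down the kernel and confirm the isomorphism, I would argue (as in Step (2) of the proof of Proposition \ref{ressur}) that the quotient $(W'_\chi)_0/(W'_\chi)_{0,\sharp}$ has a basis given by the cosets of $\prod_{i=1}^{k-1}\Theta_{h_i}^{t_i}\cdot C^{t_k}\cdot\Theta_{[v_{\frac{\sfr+1}{2}},e]}^\varepsilon$ for $\varepsilon \in \Lambda_1$, ${\bf t}\in\bbz_+^k$, while by Theorem \ref{refoddstr}(1) the corresponding $\pi$-images $(\Theta'_{h_i})^{t_i}$, $(C'_\theta)^{t_k}$, $(\Theta'_E)^\varepsilon$ give a basis of $(W_0)'_\chi$. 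Hence $\ker\pi$ is no larger than $(W'_\chi)_{0,\sharp}$, and the induced map is a bijection.

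Finally, to show $\pi_\epsilon = S_\epsilon\circ\pi$ is an algebra homomorphism (not merely a linear bijection), it suffices to verify compatibility with the single nontrivial commutator relation of $(W_0)'_\chi$ in Theorem \ref{refoddstr}(1)(i), namely $[\Theta'_E,\Theta'_E] = C'_\theta + \frac{1}{8}$. This reduces to reproducing the calculation \eqref{keycom} verbatim, where the shift $\epsilon = c_0 + \frac{1}{8} + \sum_{i=1}^{k-1}(2\rho_{e,0}\delta + 3\delta^2)(h_i)$ is precisely chosen to absorb the correction terms arising from $\pi$, leaving $\pi_\epsilon([\Theta_{[v_{\frac{\sfr+1}{2}},e]},\Theta_{[v_{\frac{\sfr+1}{2}},e]}]) = -\frac{1}{2}C'_\theta - \frac{1}{16}$. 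The main (and really only) subtlety will be keeping track of this shift and ensuring that the identity \eqref{keycom} — which was established in the $U(\ggg,e)$ setting using the commutator formula in Theorem \ref{maiin1}(3) — restricts correctly to $W'_\chi$; since every element appearing in that computation already lies in $W'_\chi$ (no $\Theta_{v_{\frac{\sfr+1}{2}}}$ is involved), the restriction is automatic. All other commutators among the generators of $(W_0)'_\chi$ are zero by Theorem \ref{refoddstr}(1), and these follow from Theorem \ref{maiin1}(1)--(4) together with the fact that $C$ is central.
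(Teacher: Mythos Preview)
Your proposal is correct and takes essentially the same approach as the paper, which simply states ``By the similar discussion as in the proof of Proposition \ref{ressur}'' without further detail. Your explicit identification of $(W'_\chi)_0$ and $(W'_\chi)_{0,\sharp}$ as intersections with $W'_\chi$, together with the observation that none of the relevant generator computations \eqref{ThetatoTheta'} or the key commutator check \eqref{keycom} involve $\Theta_{v_{\frac{\sfr+1}{2}}}$, makes the restriction argument transparent.
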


For a $W'_\chi$-module $M$ and
$\alpha\in(\hhh^e)^*$, we define the $\alpha$-weight space
\begin{equation*}
M_\alpha:=\{m\in M\:|\: (\Theta_t-\delta(t))(m) = \alpha(t) m\text{ for all } t \in \hhh^e\}.
\end{equation*}
Since the right action of $(W'_\chi)_0$ factors through the
map $\pi_{\epsilon}$ from Proposition \ref{ressur2} to make
$W'_\chi / (W'_\chi)_\sharp$ into a $(W'_\chi, (W_0)'_\chi)$-bimodule, then the highest weight $W'_\chi$-module with highest weight $\lambda$ can be defined as
\begin{equation*}
M_e(\lambda):=(W'_\chi / (W'_\chi)_\sharp)\otimes_{(W_0)'_\chi} V_\lambda,
\end{equation*}where the  $(W_0)'_\chi$-module $V_\lambda$ is one-dimensional, defined as in Proposition \ref{refoddstr}(4).  Moreover, we have
\begin{theorem}\label{Tverma2}
For $\lambda\in(\hhh^e)^*$,
let $v_\lambda$ be a basis for $(W_0)'_\chi$-module $V_\lambda$ with $\hhh^e$-weight $\lambda$ as in Proposition \ref{refoddstr}(4).
\begin{itemize}
\item[(1)]
The vectors $\bigg\{\prod_{i=1}^w\Theta_{x_i}^{a_i}\cdot\prod_{i=1}^\ell\Theta_{y_i}^{b_i}\cdot
\prod_{i=1}^{\frac{s}{2}}\Theta_{f_i}^{c_i}\cdot\prod_{i=1}^{\frac{\sfr-1}{2}}
\Theta_{g_i}^{d_i}(v_\lambda)\:|\:{\bf a}\in\mathbb{Z}_+^w, {\bf b}\in\Lambda_\ell, {\bf c}\in\mathbb{Z}_+^{\frac{s}{2}}, {\bf d}\in\Lambda_{\frac{\sfr-1}{2}}\bigg\}$ form a basis of $M_e(\lambda)$.
\item[(2)] The weight $\lambda$ is the unique maximal weight of $M_e(\lambda)$
in the dominance ordering,
$M_e(\lambda)$ is generated by the maximal weight space
$M_e(\lambda)_\lambda$, and $M_e(\lambda)_\lambda \cong V_\lambda$ as $(W_0)'_\chi$-modules.
\item[(3)] The module $M_e(\lambda)$ is a universal highest weight module with highest weight $\lambda$, i.e., if $M$ is another highest weight module
generated by a maximal weight space $M_\mu$
and $f: V_\lambda \stackrel{\sim}{\rightarrow} M_\mu$
is a $(W_0)'_\chi$-module isomorphism,
then there is a unique $W'_\chi$-module
homomorphism $\tilde{f}: M_e(\lambda)\twoheadrightarrow M$ extending $f$.
\item[(4)] There is a unique maximal proper submodule
$M_e^{\rm max}(\lambda)$ in $M_e(\lambda)$,
\begin{equation}\label{irrtypeoddM}
L_e(\lambda):=M_e(\lambda)/M_e^{\rm max}(\lambda)
\end{equation}
is an irreducible module type $M$, and $\{L_e(\lambda)\:|\: \lambda\in (\hhh^e)^*\}$
is a complete set of pairwise inequivalent irreducible highest weight modules over $W'_\chi$.
Moreover, any finite-dimensional simple $W'_\chi$-module is isomorphic to one of the modules $L_e(\lambda)$ for $\lambda\in\Lambda_0^+= \{\lambda \in (\hhh^e)^*\:|\:
\lambda(h_\alpha)\in\bbz_+~\text{for}~\alpha\in(\Phi^+_{e,0})_{\bar0}\}$.
\end{itemize}
\end{theorem}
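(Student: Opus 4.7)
The plan is to run the same program that gave Theorem \ref{Tverma}, but adapted to the refined setting in which $\Theta_{v_{\frac{\sfr+1}{2}}}$ is no longer an element of the algebra and $V_\lambda$ is one-dimensional. First I would establish the analogue of Lemma \ref{LRBasis}: using the PBW basis \eqref{PBWRW} of $W_\chi'$, together with the description of $(W_\chi')_0/(W_\chi')_{0,\sharp}\cong (W_0)_\chi'$ afforded by Proposition \ref{ressur2} and Theorem \ref{refoddstr}(1), the right $(W_0)_\chi'$-module $W_\chi'/(W_\chi')_\sharp$ is free with basis $\{\prod_i\Theta_{x_i}^{a_i}\prod_i\Theta_{y_i}^{b_i}\prod_i\Theta_{f_i}^{c_i}\prod_i\Theta_{g_i}^{d_i}\}$. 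Part (1) then follows immediately by tensoring with the one-dimensional module $V_\lambda$.

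For (2), recall that $\theta$ vanishes on $\mathfrak{h}^e$, so via the shift convention \eqref{shiftby3delta} the indicated monomial acting on $v_\lambda$ has weight $\lambda-\sum_ia_i\beta_{\bar 0 i}-\sum_ib_i\beta_{\bar 1 i}+\sum_ic_i\gamma_{\bar 0 i}+\sum_id_i\gamma_{\bar 1 i}$. Since $\gamma_{\bar 0 i},\gamma_{\bar 1 j}\in(\Phi'_e)^-$, every such weight is $\leqslant\lambda$, with equality only when all exponents vanish; thus $M_e(\lambda)_\lambda=1\otimes V_\lambda\cong V_\lambda$ as $(W_0)_\chi'$-modules, and this space generates $M_e(\lambda)$. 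For (3), adjointness of tensor and hom turns an isomorphism $f:V_\lambda\xrightarrow{\sim}M_\mu$ into a $W_\chi'$-homomorphism $\tilde f:M_e(\lambda)\to M$, which forces $\mu=\lambda$ by Theorem \ref{refoddstr}(4) and is surjective because $f(V_\lambda)$ generates $M$.

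For (4), any proper submodule $N\subset M_e(\lambda)$ decomposes under $\mathfrak{h}^e$, and if $N_\lambda\neq 0$ then $N_\lambda$ contains a copy of $V_\lambda$ (as $V_\lambda$ is one-dimensional and irreducible over $(W_0)_\chi'$ by Theorem \ref{refoddstr}(4)), which generates all of $M_e(\lambda)$; hence every proper submodule lies in $\bigoplus_{\mu<\lambda}M_e(\lambda)_\mu$, and the sum $M_e^{\rm max}(\lambda)$ of all of them is still proper. The quotient $L_e(\lambda)$ is irreducible, and its highest-weight space is $V_\lambda$, which is of type $M$; since $(W_0)_\chi'$ contains only even elements in a Cartan-like role (there is no odd element playing the part of $\Theta_{v_{\frac{\sfr+1}{2}}}$), no extra odd endomorphism can arise, so $L_e(\lambda)$ is of type $M$. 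The remaining assertions are routine: distinct $\lambda$'s give non-isomorphic irreducibles because the highest weight is determined by $L_e(\lambda)$, and for a finite-dimensional simple $W_\chi'$-module one restricts to the $\mathfrak{sl}(2)$-triples $(\Theta_{e_\alpha},\Theta_{h_\alpha},\Theta_{e_{-\alpha}})$ with $\alpha\in(\Phi^+_{e,0})_{\bar 0}$ provided by Theorem \ref{maiin1}(1) to force $\lambda(h_\alpha)\in\bbz_+$.

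The main obstacle — and really the only nontrivial point — is confirming the type $M$ assertion in (4). In the $U(\ggg,e)$ case the odd generator $\Theta_{v_{\frac{\sfr+1}{2}}}$ supplied an explicit odd endomorphism of $L_e(\lambda)$, forcing type $Q$; here one must instead verify that $\text{End}_{W_\chi'}(L_e(\lambda))$ has no odd part. This follows from the one-dimensionality of $V_\lambda$ in Theorem \ref{refoddstr}(4): any endomorphism is determined by its value on a highest weight vector, and since $(V_\lambda)_{\bar 1}=0$, any odd endomorphism would have to send $v_\lambda$ into $M_e(\lambda)_\lambda\cap (\text{odd part})=0$, hence vanish.
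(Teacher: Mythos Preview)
Your proposal is correct and follows exactly the approach the paper intends: the paper omits the proof of Theorem~\ref{Tverma2} entirely, stating only that the arguments are similar to those in \S\ref{mathcalO} (i.e., to Theorem~\ref{Tverma}), and you have faithfully carried out that parallel. One small imprecision: $(W_0)'_\chi$ is not generated only by even elements---it contains the odd generator $\Theta'_E$ from Theorem~\ref{refoddstr}(1)(ii)---but this does not affect your argument, since the decisive fact (which you use correctly in your final paragraph) is that the irreducible $(W_0)'_\chi$-module $V_\lambda$ is one-dimensional and purely even, forcing $L_e(\lambda)_\lambda$ to have trivial odd part and hence ruling out any nonzero odd endomorphism.
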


We say that a $W'_\chi$-module  $V$ is of central character $\psi: Z(W'_\chi)\rightarrow\bbc$ 
if $z(v)=\psi(z)v$ 
for all $z\in Z(W'_\chi)$ 
and $v\in V$.
For the highest weight $W'_\chi$-module $M_e(\lambda)$ with highest weight $\lambda\in(\hhh^e)^*$, let $\psi^\lambda: Z(W'_\chi)\rightarrow\bbc$ be the corresponding central character. Repeat verbatim the proofs of Theorem \ref{numberis finite} and Corollary \ref{highestcompser}, we obtain
\begin{theorem}\label{numberis finite 2}
The number of isomorphism classes of irreducible
highest weight
modules for $W'_\chi$ with prescribed central character $\psi: Z(W'_\chi)\rightarrow\bbc$  is finite, i.e., the set
$\{\lambda \in (\hhh^e)^*\:|\:\psi^\lambda = \psi\}$  is finite.
\end{theorem}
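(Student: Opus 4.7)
The plan is to mirror the proof of Theorem \ref{numberis finite} step by step, replacing $U(\ggg,e)$ by $W'_\chi$, the two-dimensional type-$Q$ module $V_\lambda$ by the one-dimensional type-$M$ module of Theorem \ref{refoddstr}(4), and Proposition \ref{ressur} by Proposition \ref{ressur2}. Fix a prescribed central character $\psi : Z(W'_\chi) \to \bbc$ and assume $\psi = \psi^\lambda$ for some $\lambda \in (\hhh^e)^*$. For any $\mu \in (\hhh^e)^*$ with $\psi^\mu = \psi^\lambda$, the equality must hold in particular on the Casimir element $C$, which lies in $Z(W'_\chi)$ by Theorem \ref{maiin1}(4). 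I will translate the equation $\psi^\mu(C) = \psi^\lambda(C)$ into an explicit polynomial identity in the coordinates $\mu(h_1), \ldots, \mu(h_{k-1})$ and conclude, exactly as in the passage from \eqref{mulambda} to the end of Theorem \ref{numberis finite}, that the locus of solutions is finite.

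The key step is an analogue of Lemma \ref{Conhigestwtmodule} (in the equivalent form \eqref{anotherdesofC}) for the highest weight module $M_e(\lambda)$ over $W'_\chi$. Since $C \in (W'_\chi)_0$, Proposition \ref{ressur2} identifies the action of $C$ on $v_\lambda$ with the action of $\pi_\epsilon(C) = S_\epsilon(\pi(C))$ through the isomorphism $(W'_\chi)_0 / (W'_\chi)_{0,\sharp} \cong (W_0)'_\chi$. The computation of $\pi(C)$ proceeds exactly as in \eqref{pithetaC}, while Theorem \ref{refoddstr}(1)(iii) and (4) supply $C'_\theta \cdot v_\lambda = -\tfrac{1}{8}\, v_\lambda$ and $\Theta'_{h_i} \cdot v_\lambda = \lambda(h_i)\, v_\lambda$. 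Assembling these ingredients yields
$$
\psi^\lambda(C) \;=\; c_0 + \sum_{i=1}^{k-1}\Bigl(\lambda(h_i)^2 + (2\rho_{e,0} + 4\delta)(h_i)\,\lambda(h_i) + (2\rho_{e,0}\delta + 3\delta^2)(h_i)\Bigr),
$$
so that $\psi^\mu(C) = \psi^\lambda(C)$ reduces to the identity
$$
\sum_{i=1}^{k-1}\bigl(h_i^2 + (2\rho_{e,0}+4\delta)(h_i)\,h_i\bigr)(\mu) \;=\; \sum_{i=1}^{k-1}\bigl(\lambda(h_i)^2 + (2\rho_{e,0}+4\delta)(h_i)\,\lambda(h_i)\bigr),
$$
a quadratic polynomial equation in $(\mu(h_1), \ldots, \mu(h_{k-1}))$ whose solution set is finite by the same linear-algebra argument employed at the end of the proof of Theorem \ref{numberis finite}.

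The main obstacle I anticipate is not conceptual but one of careful bookkeeping: one must verify that the shift $S_\epsilon$ of Proposition \ref{ressur2} is literally the same shift used in Proposition \ref{ressur}, so that no additional correction arises from the absence of the odd generator $\Theta_{v_{\frac{\sfr+1}{2}}}$ in $W'_\chi$. Since the calculation \eqref{pithetaC} involves $v_{\frac{\sfr+1}{2}}$ only through the term $-2 v_{\frac{\sfr+1}{2}}[v_{\frac{\sfr+1}{2}},e]$, which upon applying $\pi$ is absorbed into $C'_\theta$ via the identity $FE = 2v_{\frac{\sfr+1}{2}}[v_{\frac{\sfr+1}{2}},e]$ underlying \eqref{ThetatoTheta'}, this verification amounts to repeating the last block of the proof of Proposition \ref{ressur} in the refined setting, which the paper has already promised is identical. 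Once the polynomial identity above is in place, Theorem \ref{numberis finite 2} follows verbatim.
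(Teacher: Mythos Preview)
Your proposal is correct and takes essentially the same approach as the paper: the paper's own proof of Theorem~\ref{numberis finite 2} consists of the single sentence ``Repeat verbatim the proofs of Theorem~\ref{numberis finite} and Corollary~\ref{highestcompser}, we obtain'', and you have carried out precisely that transcription, making the appropriate replacements (the one-dimensional $(W_0)'_\chi$-module $V_\lambda$ of Theorem~\ref{refoddstr}(4) in place of the two-dimensional type-$Q$ module, and Proposition~\ref{ressur2} in place of Proposition~\ref{ressur}). Your observation that the shift $S_\epsilon$ is literally the same as in \S\ref{5.1.3}, and that the computation \eqref{pithetaC} of $\pi(C)$ carries over unchanged to yield the analogue of \eqref{anotherdesofC}, is exactly what the paper intends.
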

\begin{corollary}\label{highestcompser2}
For each $\lambda \in (\hhh^e)^*$, the highest weight module $M_e(\lambda)$ has composition series.
\end{corollary}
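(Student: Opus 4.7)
\begin{demo}{Proof proposal for Corollary \ref{highestcompser2}}
The plan is to imitate the classical argument of Dixmier \cite[\S7.6.1]{Dix} that we already used to prove Corollary \ref{highestcompser}, replacing each ingredient by its refined analogue established in this appendix. The two facts we will need are: (i) $W'_\chi$ is Noetherian (Proposition \ref{zero-divisor}), so the cyclic module $M_e(\lambda)$ is Noetherian and hence every nonzero submodule contains a maximal proper submodule; and (ii) the set $\{\mu\in(\hhh^e)^*\:|\:\psi^\mu=\psi^\lambda\}$ is finite by Theorem \ref{numberis finite 2}.

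First I would pin down the possible simple subquotients. Let $N'\subsetneq N$ be submodules of $M_e(\lambda)$ with $N/N'$ simple. Because $M_e(\lambda)$ decomposes into finite-dimensional $\hhh^e$-weight spaces by Theorem \ref{Tverma2}(1), so does $N/N'$, and every weight of $N/N'$ lies in $\lambda-(Q^e)^+$. Hence there is a weight $\mu$ of $N/N'$ that is maximal in the dominance ordering, and any nonzero $v\in (N/N')_\mu$ is annihilated by $(W'_\chi)_\sharp$. By Theorem \ref{Tverma2}(3)--(4), $N/N'\cong L_e(\mu)$. Moreover $N/N'$ has the same central character $\psi^\lambda$ as $M_e(\lambda)$, so $\mu$ must belong to the finite set produced by Theorem \ref{numberis finite 2}.

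Next I would derive the composition series by contradiction. If $M_e(\lambda)$ admitted no finite composition series, then, using that $M_e(\lambda)$ is Noetherian, I would build a strictly decreasing chain $M_e(\lambda)=N_0\supsetneq N_1\supsetneq\cdots$ with each $N_i/N_{i+1}$ simple. By the previous step, each of these quotients is isomorphic to some $L_e(\mu_i)$ with $\mu_i$ in the same finite set. By the pigeonhole principle, infinitely many of the $\mu_i$ would coincide; but then the corresponding weight $\mu$ would appear in the composition multiset of $M_e(\lambda)$ infinitely often, forcing $\dim M_e(\lambda)_\mu=\infty$, which contradicts the finite-dimensionality of weight spaces given in Theorem \ref{Tverma2}(1).

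I do not expect any serious obstacle here: every ingredient we relied upon for Corollary \ref{highestcompser} has an exact refined counterpart (Proposition \ref{zero-divisor} for the Noetherian property, Theorem \ref{Tverma2} for highest weight structure and weight-space finiteness, and Theorem \ref{numberis finite 2} for the central character bound). The mildest subtlety is purely bookkeeping: in the type odd refined situation the simple modules $L_e(\mu)$ of \eqref{irrtypeoddM} are of type $M$ rather than type $Q$, so no parity-switching factor enters the counting, and the pigeonhole argument applies verbatim to the weights $\mu_i$ themselves.
\end{demo}
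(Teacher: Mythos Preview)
Your proposal is correct and follows essentially the same route as the paper: the paper explicitly states that Corollary \ref{highestcompser2} is obtained by repeating verbatim the proof of Corollary \ref{highestcompser}, which is precisely the Dixmier-style argument you have reproduced, with Proposition \ref{zero-divisor}, Theorem \ref{Tverma2}, and Theorem \ref{numberis finite 2} playing the roles of their counterparts in the finite $W$-superalgebra case.
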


Now an analogue of the BGG category $\mathcal{O}$ can also be introduced. Denote by $\mathfrak{q} = \ggg_0\oplus\bigoplus_{\alpha \in (\Phi'_e)^{+}}\ggg_\alpha$, and let $\mathcal{O}(e)=\mathcal{O}(e;\mathfrak{h},\mathfrak{q})$ denote the category of all finitely generated $W'_\chi$-modules $V$, that are semi-simple over $\mathfrak{h}^e$
with finite-dimensional $\mathfrak{h}^e$-weight spaces, such that the set
$\{\lambda \in (\mathfrak{h}^e)^*\:|\:V_\lambda \neq \{0\}\}$ is contained in a finite union of sets of the form $\{\nu\in(\mathfrak{h}^e)^*\:|\: \nu\leqslant\mu\}$ for $\mu \in (\mathfrak{h}^e)^*$.
Then we have
\begin{theorem}\label{BGGcateO}
For the category $\mathcal O(e)$, the following statements hold:
\begin{itemize}
\item[(1)] There is a complete set of isomorphism classes of simple objects which is $\{L_e(\lambda)\:|\: \lambda\in (\hhh^e)^*\}$ as in \eqref{irrtypeoddM}.
\item[(2)] The category $\mathcal O(e)$ is Artinian. In particular, every object has finite length of composition series.
\item[(3)] The category $\mathcal O(e)$ has a block decomposition as $\mathcal O(e) = \bigoplus_{\psi^\lambda}  \mathcal O_{\psi^\lambda}(e)$, where the direct sum is over all central characters $\psi^\lambda:Z(U(\ggg,e)) \rightarrow \bbc$,
and $\mathcal O_{\psi^\lambda}(e)$ denotes the Serre subcategory of $\mathcal O(e)$
generated by the irreducible modules
$\{L_e(\mu)\:|\:\mu \in (\hhh^e)^*~\text{such that}~\psi^\mu=\psi^\lambda\}$.
\end{itemize}
\end{theorem}
\subsection{}
It remains to deal with the minimal refined (equivalently, finite) $W$-superalgebra $W'_\chi$ of type even. In this case, it is much like the situation of finite $W$-algebras as in \cite[\S4]{BGK}, and we still just give a  brief description.

Keep the notations as in \S\ref{5.4} and \S\ref{5.1.1}. The adjoint action of
$\mathfrak{h}^e$ on $\mathfrak{g}$
induces the  restricted root decompositions
$\ggg = \ggg_0 \oplus \bigoplus_{\alpha \in \Phi'_e} \ggg_\alpha$, and we  have $\ggg_0=\mathfrak{h}^e\oplus\mathfrak{s}_\theta\cong\mathfrak{h}^e\oplus\mathfrak{sl}(2)$ as  defined in \S\ref{5.4}. Let $(\Phi'_e)^+$ be a system of positive roots in the restricted root system $\Phi'_e$, and $(\Phi'_e)^-=-(\Phi'_e)^+$.  Let $\mathfrak{m}_0:=\bbc f$ be the ``$\chi$-admissible subalgebra" of $\ggg_0$, $\mathfrak{p}_0:=\mathfrak{h}^e\oplus\bbc h\oplus\bbc e$ a parabolic subalgebra of $\ggg_0$ with nilradical $\bbc e$ and Levi subalgebra $\mathfrak{h}$. Define
$(Q_0)_\chi:=U({\ggg}_0)\otimes_{U(\mathfrak{m}_0)}{\bbc}_\chi$, where ${\bbc}_\chi={\bbc}1_\chi$ is a one-dimensional  $\mathfrak{m}_0$-module such that $x.1_\chi=\chi(x)1_\chi$ for all $x\in\mathfrak{m}_0$. Let $(I_0)_\chi$ denote the ${\bbz}_2$-graded left ideal in $U({\ggg}_0)$ generated by all $x-\chi(x)$ with $x\in\mathfrak{m}_0$, and write $\text{Pr}_0:U({\ggg}_0)\twoheadrightarrow U({\ggg}_0)/(I_0)_\chi$ for the canonical homomorphism.
Recall the minimal refined $W$-superalgebra $(W_0)'_\chi$ associated to $e$ is defined by
\begin{equation*}
(W_0)'_\chi:=(\text{End}_{\ggg_0}(Q_0)_{\chi})^{\text{op}}\cong(Q_0)_{\chi}^{\ad\,\mmm_0}.
\end{equation*}
So $(W_0)'_\chi\cong\{u\in U(\mathfrak{p}_0)\mid\text{Pr}_0([x,u])=0~\text{for all}~x\in\mathfrak{m}_0\}$, which is a subalgebra of $U(\mathfrak{p}_0)$.  By the similar discussion as in \S\ref{5.1.2}, we have
\begin{prop}\label{refpbweven2}The following statements  hold:
\begin{itemize}\item[(1)]
the minimal refined $W$-superalgebra $(W_0)'_\chi$ is generated by
\begin{itemize}
\item[(i)] $\Theta'_{h_i}=h_i\otimes1_\chi$ for $1\leqslant i\leqslant k-1$;
\item[(ii)] $C'_\theta=\big(2e+\frac{1}{2}h^2-h\big)\otimes1_\chi$,
\end{itemize}
and the commutators between the generators are all zero.
\item[(2)] the algebra $(W_0)'_\chi$ is commutative;
\item[(3)] the map $\text{Pr}_0$  sends $Z(U(\ggg_0))$ isomorphically onto  $(W_0)'_\chi$;
\item[(4)] for $\lambda\in(\hhh^e)^*$ and $c\in\bbc$, let $V_{\lambda,c}:=\bbc v_{\lambda,c}$ be a vector space spanned by $v_{\lambda,c}\in (V_{\lambda,c})_{\bar0}$ satisfying $C'_\theta(v_{\lambda,c})=cv_{\lambda,c}$, and $\Theta'_t(v_{\lambda,c})=\lambda(t)v_{\lambda,c}$ for all $t\in\hhh^e$. Then
the set $\{V_{\lambda,c}\:|\:\lambda\in(\hhh^e)^*,c\in\bbc\}$ forms a complete set of pairwise inequivalent finite-dimensional  irreducible $(W_0)'_\chi$-modules, all of which are of type $M$.
\end{itemize}
\end{prop}

The adjoint action of
$\mathfrak{h}^e$ on the universal enveloping algebra $U(\ggg)$
induces decomposition
 $U(\ggg) = \bigoplus_{\alpha \in \bbz\Phi'_e} U(\ggg)_\alpha$.
Then $U(\ggg)_0$, the zero weight space of $U(\ggg)$ with
respect to the adjoint action, is a subalgebra of $U(\ggg)$.
Let $U(\ggg)_\sharp$ (resp.\ $U(\ggg)_\flat$) denote the left (resp.\ right)
ideal of $U(\ggg)$ generated by the root spaces $\ggg_\alpha$ for
$\alpha \in (\Phi'_e)^+$ (resp.\ $\alpha \in (\Phi'_e)^-$).
Let
$$
U(\ggg)_{0, \sharp} := U(\ggg)_0 \cap U(\ggg)_\sharp,
\qquad
U(\ggg)_{\flat,0} := U(\ggg)_\flat \cap U(\ggg)_0,
$$
which are  left and right ideals of $U(\ggg)_0$, respectively.
By the PBW theorem,  $U(\ggg)_{0,\sharp}$ is a two-sided ideal of $U(\ggg)_0$, and
$U(\ggg)_0 = U(\ggg_0) \oplus U(\ggg)_{0,\sharp}$.
The projection along this decomposition
defines a surjective algebra homomorphism
\begin{equation}\label{pi2}
\pi:U(\ggg)_0 \twoheadrightarrow U(\ggg_0)
\end{equation}
with $\ker\pi = U(\ggg)_{0,\sharp}$.
Hence $U(\ggg)_0 / U(\ggg)_{0,\sharp} \cong U(\ggg_0)$ as $\bbc$-algebras.

Recall in \S\ref{1.2} and \S\ref{3.1.1} that we have a basis consisting of $\hhh^e$-weight vectors
\begin{equation}\label{toeaualweight2}
\begin{split}
&x_1,\cdots,x_w,y_1,\cdots,y_\ell,f_1,\cdots,f_{\frac{s}{2}},g_1,\cdots,g_{\frac{\sfr}{2}},
h_1,\cdots,h_{k-1},\\
&e,f^*_1,\cdots,f^*_{\frac{s}{2}},g^*_1,\cdots,g^*_{\frac{\sfr}{2}},x^*_1,\cdots,x^*_w,y^*_1,\cdots,y^*_\ell
\end{split}
\end{equation}of $\ggg^e$
so that the weights of $x_i, y_j, f_k, g_l$ are respectively $-\beta_{\bar 0i}, -\beta_{\bar 1j}, \theta+\gamma_{\bar0k}, \theta+\gamma_{\bar1l}\in(\Phi'_e)^-$, and the weights of $f^*_k, g^*_l, x^*_i, y^*_j$ are respectively
$\theta+\gamma^*_{\bar0k}, \theta+\gamma^*_{\bar1l}, \beta_{\bar 0i}, \beta_{\bar 1j}\in(\Phi'_e)^+$,  while $h_i, e\in\ggg_0^e$.
Moreover, we have the following PBW basis for $W'_\chi$:
\begin{equation}\label{PBWRWeven}
\begin{array}{ll}
&\prod_{i=1}^w\Theta_{x_i}^{a_i}\cdot\prod_{i=1}^\ell\Theta_{y_i}^{c_i}\cdot
\prod_{i=1}^{\frac{s}{2}}\Theta_{f_i}^{m_i}\cdot\prod_{i=1}^{\frac{\sfr}{2}}
\Theta_{g_i}^{p_i}\cdot\prod_{i=1}^{k-1}
\Theta_{h_i}^{t_i}\\
&\cdot C^{t_k}\cdot\prod_{i=1}^{\frac{s}{2}}\Theta_{f^*_i}^{n_i}\cdot
\prod_{i=1}^{\frac{\sfr}{2}}\Theta_{g^*_i}^{q_i}\cdot
\prod_{i=1}^w\Theta_{x^*_i}^{b_i}\cdot
\prod_{i=1}^\ell\Theta_{y^*_i}^{d_i},
\end{array}
\end{equation}
where ${\bf a},{\bf b}\in\mathbb{Z}_+^w$, ${\bf c},{\bf d}\in\Lambda_\ell$, ${\bf m},{\bf n}\in\mathbb{Z}_+^{\frac{s}{2}}$, ${\bf p},{\bf q}\in\Lambda_{\frac{\sfr}{2}}, {\bf t}\in\mathbb{Z}_+^k$.
Let $v$ be one of the element in \eqref{toeaualweight2} excluding $e$. Since $\theta(\hhh^e)=0$ by definition, from the explicit description of $\Theta_v$ in Theorem \ref{ge}, we see that $v$ and $\Theta_v$ have the same $\mathfrak{h}^e$-weight. Also note that the $\mathfrak{h}^e$-weight of $C$ is zero.
As $\hhh^e$-root spaces,  $W'_\chi$ can be decomposed as $W'_\chi=\bigoplus_{\alpha \in \bbz\Phi'_e}(W'_\chi)_\alpha$.
The restricted root space $(W'_\chi)_\alpha$
has a basis given by all the PBW monomials as in \eqref{PBWRWeven} such that
$\sum_{i}(-a_i+b_i)\beta_{\bar 0i}+\sum_{i}(-c_i+d_i)\beta_{\bar 1i}+\sum_{i}(m_i-n_i)\gamma_{\bar 0i}+\sum_{i}(p_i-q_i)\gamma_{\bar 1i}= \alpha$, and $(W'_\chi)_0$ is the zero weight space of $W'_\chi$ spanned all above with $\alpha=0$. Set
$(W'_\chi)_\sharp$ (resp.\ $(W'_\chi)_\flat$)
to be the left (resp.\ right) ideal of
$W'_\chi$ generated by all $(W'_\chi)_\alpha$ for $\alpha \in (\Phi'_e)^+$
(resp.\ $\alpha\in (\Phi'_e)^-$), and denote by $(W'_\chi)_{0,\sharp} := (W'_\chi)_0 \cap (W'_\chi)_\sharp,
(W'_\chi)_{\flat,0} := (W'_\chi)_\flat \cap (W'_\chi)_0$.
We have $(W'_\chi)_{0,\sharp}=(W'_\chi)_{\flat,0}$, then it is a two-sided ideal of  $(W'_\chi)_0$.

Retain the notation $\delta$ as in \eqref{deltarho2}.
By the similar discussion as in the proof of Proposition \ref{ressur}, we have
\begin{prop}\label{ressur3}
The projection $\pi:U(\ggg)_0\twoheadrightarrow U(\mathfrak{g}_0)$ as in \eqref{pi2} induces
a surjective homomorphism $\pi: (W'_\chi)_0\twoheadrightarrow (W_0)'_\chi$
with  $\ker\pi= (W'_\chi)_{0,\sharp}$. Hence  there exists an algebras isomorphism
\begin{equation*}
\pi:(W'_\chi)_0 / (W'_\chi)_{0,\sharp} \cong (W_0)'_\chi.
\end{equation*}
\end{prop}
\begin{rem}
Comparing Proposition \ref{ressur3} with Proposition \ref{ressur}, one can observe significant difference in the absence of the shift $S_{\epsilon}$. This is because the element $\Theta_{[v_{\frac{\sfr+1}{2}},e]}$ does not exist for the present case. So we need not to consider \eqref{keycom}, which  directly leads to the emergence of $S_{\epsilon}$ in Proposition \ref{ressur}.
\end{rem}

For a $W'_\chi$-module $M$ and
$\alpha\in(\hhh^e)^*$, we define the $\alpha$-weight space
\begin{equation*}\label{shiftby1delta}
M_\alpha:=\{m\in M\:|\: (\Theta_t-\delta(t))(m) = \alpha(t) m\text{ for all } t \in \hhh^e\}.
\end{equation*}
By the same consideration as in \S\ref{5.1.4}, we get the action of $\hhh^e$ on $M_\alpha$ via
\begin{equation*}
  t(m)=\Theta'_{t}(m)=\alpha(t) m
\end{equation*}for all $t \in \hhh^e$, which explains why the additional shift by $-\delta$ in the  definition of the $\alpha$-weight space  of a $W'_\chi$-module is necessary.

Associated with the  $(W_0)'_\chi$-module $V_{\lambda,c}$ as in Proposition \ref{refpbweven2}(4), we can define the highest weight $W'_\chi$-module of type $(\lambda,c)$ as
\begin{equation*}
M_e(\lambda,c):=(W'_\chi / (W'_\chi)_\sharp)\otimes_{(W_0)'_\chi} V_{\lambda,c}.
\end{equation*}
\begin{theorem}\label{Tverma3}
For $\lambda\in(\hhh^e)^*$ and $c\in\bbc$,
let $v_{\lambda,c}$ be a basis for $(W_0)'_\chi$-module $V_{\lambda,c}$ of level $c$ with $\hhh^e$-weight $\lambda$ as in Proposition \ref{refpbweven2}(4).
\begin{itemize}
\item[(1)]
The vectors $\bigg\{\prod_{i=1}^w\Theta_{x_i}^{a_i}\cdot\prod_{i=1}^\ell\Theta_{y_i}^{b_i}\cdot
\prod_{i=1}^{\frac{s}{2}}\Theta_{f_i}^{c_i}\cdot\prod_{i=1}^{\frac{\sfr}{2}}
\Theta_{g_i}^{d_i}(v_\lambda)\:|\:{\bf a}\in\mathbb{Z}_+^w, {\bf b}\in\Lambda_\ell, {\bf c}\in\mathbb{Z}_+^{\frac{s}{2}}, {\bf d}\in\Lambda_{\frac{\sfr}{2}}\bigg\}$ form a basis of $M_e(\lambda,c)$.
\item[(2)] The weight $\lambda$ is the unique maximal weight of $M_e(\lambda,c)$
in the dominance ordering,
$M_e(\lambda,c)$ is generated by the maximal weight space
$M_e(\lambda,c)_\lambda$, and $M_e(\lambda,c)_\lambda \cong V_{\lambda,c}$ as $(W_0)'_\chi$-modules.
\item[(3)] The module $M_e(\lambda,c)$ is a universal highest weight module of
type $(\lambda,c)$, i.e., if $M$ is another highest weight module
generated by a maximal weight space $M_{\mu,c}$
and $f: V_{\lambda,c} \stackrel{\sim}{\rightarrow} M_{\mu,c}$
is a $(W_0)'_\chi$-module isomorphism,
then there is a unique $W'_\chi$-module
homomorphism $\tilde{f}: M_e(\lambda,c)\twoheadrightarrow M$ extending $f$.
\item[(4)] There is a unique maximal proper submodule
$M_e^{\rm max}(\lambda,c)$ in $M_e(\lambda,c)$,
\begin{equation}\label{irrtypeevenM}
L_e(\lambda,c):=M_e(\lambda,c)/M_e^{\rm max}(\lambda,c)
\end{equation}
is an irreducible module type $M$, and $\{L_e(\lambda,c)\:|\: \lambda\in (\hhh^e)^*,c\in\bbc\}$
is a complete set of pairwise inequivalent irreducible highest weight modules over $W'_\chi$.
Moreover, any finite-dimensional simple $W'_\chi$-module is isomorphic to one of the modules $L_e(\lambda,c)$ for $\lambda\in\Lambda_0^+ = \{\lambda \in (\hhh^e)^*\:|\:
\lambda(h_\alpha)\in\bbz_+~\text{for}~\alpha\in(\Phi^+_{e,0})_{\bar0}\}$ and $c\in\bbc$. We further have that $c$ is a rational number in the case when $\ggg$ is a simple Lie algebra except type $A(m)$, or when $\ggg=\mathfrak{psl}(2|2)$, or when $\mathfrak{spo}(2|m)$ with $m$ being even such that $\ggg^e(0)=\mathfrak{so}(m)$, or when $\mathfrak{osp}(4|2m)$ with $\ggg^e(0)=\mathfrak{sl}(2)\oplus\mathfrak{sp}(2m)$, or when $\ggg=\mathfrak{osp}(5|2m)$ with $\ggg^e(0)=\mathfrak{osp}(1|2m)\oplus\mathfrak{sl}(2)$, or when $\ggg=D(2,1;\alpha)$ with $\alpha\in\overline\bbq$, or when $\ggg=F(4)$ with $\ggg^e(0)=\mathfrak{so}(7)$.
\end{itemize}
\end{theorem}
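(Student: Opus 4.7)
The strategy is to mirror the arguments for Theorems \ref{Tverma} and \ref{Tverma2}, exploiting the substantial simplifications available in type even: there is no generator $\Theta_{[v_{\frac{\sfr+1}{2}},e]}$ and no odd central generator $\Theta'_F$, and by Theorem \ref{refpbweven2}(2) the algebra $(W_0)'_\chi$ is commutative with one-dimensional simples $V_{\lambda,c}$. I would first establish the analogue of Lemma \ref{LRBasis} for the type-even PBW basis \eqref{PBWRWeven}: the cosets of $\prod \Theta_{x_i}^{a_i}\prod\Theta_{y_i}^{b_i}\prod\Theta_{f_i}^{c_i}\prod\Theta_{g_i}^{d_i}$ form a free right $(W_0)'_\chi$-basis of $W'_\chi/(W'_\chi)_\sharp$ under the identification $(W'_\chi)_0/(W'_\chi)_{0,\sharp}\cong (W_0)'_\chi$ of Proposition \ref{ressur3}. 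Tensoring with the one-dimensional module $V_{\lambda,c}$ then yields (1).

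For (2), the shift convention $M_\alpha=\{m\mid(\Theta_t+\delta(t))m=\alpha(t)m\}$ combined with the weights of the generators in \eqref{toeaualweight2} (recall $\theta|_{\mathfrak{h}^e}=0$ and $\gamma_{\bar 0 i},\gamma_{\bar 1 j}\in(\Phi'_e)^-$ after restriction) shows that the basis vector of $M_e(\lambda,c)$ indexed by $(\mathbf{a},\mathbf{b},\mathbf{c},\mathbf{d})$ has weight $\lambda-\sum a_i\beta_{\bar 0i}-\sum b_i\beta_{\bar 1i}+\sum c_i\gamma_{\bar 0i}+\sum d_i\gamma_{\bar 1i}$, which equals $\lambda$ only when the multi-indices vanish. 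Hence $M_e(\lambda,c)_\lambda=1\otimes V_{\lambda,c}$ is maximal and generates $M_e(\lambda,c)$. Statement (3) then follows by adjunction of tensor product and $\mathrm{Hom}$: any highest weight module $M$ with maximal space $M_\mu$ isomorphic to $V_{\mu,c'}$ forces $\mu=\lambda$ and $c'=c$ (since $C'_\theta$ acts as $c'$ on $M_\mu$ and as $c$ on $V_{\lambda,c}$ via Proposition \ref{ressur3}), so any even $(W_0)'_\chi$-isomorphism $V_{\lambda,c}\to M_\mu$ extends uniquely to a surjective $W'_\chi$-homomorphism $M_e(\lambda,c)\twoheadrightarrow M$.

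For (4), I would argue that any proper submodule is contained in $\bigoplus_{\mu<\lambda}M_e(\lambda,c)_\mu$, because a submodule meeting the $\lambda$-weight space non-trivially must contain all of $1\otimes V_{\lambda,c}$ (which is one-dimensional) and hence all of $M_e(\lambda,c)$. Thus $M_e^{\rm max}(\lambda,c)$ exists uniquely and $L_e(\lambda,c)$ is simple; it is of type $M$ because $V_{\lambda,c}$ is (Theorem \ref{refpbweven2}(4)). Uniqueness of the pair $(\lambda,c)$ among the $L_e(\lambda,c)$ follows by recovering $\lambda$ as the unique maximal weight and $c$ as the scalar by which $C'_\theta$ acts on the one-dimensional $\lambda$-weight space. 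That every finite-dimensional simple $W'_\chi$-module arises this way uses the weight-space decomposition argument of Step (3) in the proof of Theorem \ref{verma2}: the Casimir $C$ acts by a scalar $c$, the commuting $\Theta_{\mathfrak{h}^e}$ diagonalises, and one picks a maximal weight $\lambda$, which must be $\Lambda_0^+$-dominant by restricting to the $\mathfrak{sl}(2)$-triples $(\Theta_{e_\alpha},\Theta_{h_\alpha},\Theta_{e_{-\alpha}})$ for $\alpha\in(\Phi^+_{e,0})_{\bar0}$.

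The main obstacle is the rationality of $c$ in the final clause: here I would adapt the trace-in-$\mathbb{Q}$ argument from Theorem \ref{verma}(3) and Theorem \ref{verma2}(3). For each listed $\ggg$, Lemma \ref{semi} guarantees that every finite-dimensional $\ggg^e(0)$-module is completely reducible, so $\Theta$ identifies $\ggg^e(0)$ with a Lie subsuperalgebra of $W'_\chi$ via Theorem \ref{maiin1}(1), and any finite-dimensional simple $W'_\chi$-module restricts to a completely reducible $\Theta_{\ggg^e(0)}$-module. Choosing the $\mathbb{Q}$-form $\ggg_\mathbb{Q}$ spanned by the Chevalley basis of \S\ref{1.1} and picking $u,v\in\ggg_\mathbb{Q}^e(1)$ with $([u,v],f)=2$, the highest-weight theory produces a $\Theta_{\ggg_\mathbb{Q}^e(0)}$-stable $\mathbb{Q}$-form in the simple module; then applying the commutator relation of Theorem \ref{maiin1}(3) and taking traces, together with $c_0\in\mathbb{Q}$ from \eqref{decidec0}, yields $c\in\mathbb{Q}$. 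The delicate point is that each listed $\ggg$ must genuinely lie in the complete-reducibility regime of Lemma \ref{semi}(1)--(3); this is why the remaining Lie superalgebras of type even in Table 2 are excluded from the rationality statement.
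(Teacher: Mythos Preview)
Your proposal is correct and follows exactly the route the paper intends: in Appendix~\ref{5.2} the authors explicitly omit the proof of Theorem~\ref{Tverma3}, referring the reader back to the arguments of \S\ref{mathcalO} (in particular Theorem~\ref{Tverma} and Lemma~\ref{LRBasis}) with the simplifications available in type even, and to the trace argument of Theorem~\ref{verma2}(3)/Theorem~\ref{verma}(3) together with Lemma~\ref{semi} for the rationality of $c$. Your outline reconstructs precisely those steps, including the correct use of Proposition~\ref{ressur3} in place of Proposition~\ref{ressur} and of Theorem~\ref{refpbweven2}(4) in place of Theorem~\ref{simpleugegmodule}.
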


We say that a $W'_\chi$-module 
$V$ is of central character $\psi: Z(W'_\chi)\rightarrow\bbc$ 
if $z(v)=\psi(z)v$ 
for all $z\in Z(W'_\chi)$ 
and $v\in V$.
For the highest weight $W'_\chi$-module $M_e(\lambda,c)$ of type $(\lambda,c)\in (\hhh^e)^*\times\bbc$, let $\psi^{\lambda,c}: Z(W'_\chi)\rightarrow\bbc$ be the corresponding central character. By the similar discussion as in  Theorem \ref{numberis finite} and Corollary \ref{highestcompser}, we obtain
\begin{theorem}\label{numberis finite 3}
The number of isomorphism classes of irreducible
highest weight
modules for $W'_\chi$ with prescribed central character $\psi: Z(W'_\chi)\rightarrow\bbc$  is finite, i.e., the set
$\{(\lambda,c) \in (\hhh^e)^*\times\bbc\:|\:\psi^{\lambda,c} = \psi\}$  is finite.
\end{theorem}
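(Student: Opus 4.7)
The proof will follow the same strategy as Theorem \ref{numberis finite}, adapted to the type even setting where highest weights are parameterized by pairs $(\lambda,c)\in(\hhh^e)^*\times\bbc$ rather than by $\lambda$ alone.

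First, I would compute the scalar by which the Casimir element $C\in Z(W'_\chi)$ acts on the highest weight vector $v_{\lambda,c}$ of $M_e(\lambda,c)$. By Proposition \ref{ressur3}, the action factors through the isomorphism $\pi_{-\delta}:(W'_\chi)_0/(W'_\chi)_{0,\sharp}\cong (W_0)'_\chi$. A direct computation along the lines of \eqref{pithetaC} and \eqref{anotherdesofC}, now omitting all terms involving $v_{\frac{\sfr+1}{2}}$ (which vanish since $\sfr$ is even) and replacing the shift $\epsilon$ by $-\delta$, will yield a formula of the shape
\begin{equation*}
C.v_{\lambda,c}=\bigg(c+\sum_{i=1}^{k-1}\Big(\lambda(h_i)^2+(2\rho_{e,0}+4\delta)(h_i)\lambda(h_i)\Big)\bigg)v_{\lambda,c},
\end{equation*}
where the summand $c$ is the eigenvalue of $C'_\theta$ on $v_{\lambda,c}$ (in accordance with Theorem \ref{refpbweven2}(4)) and the quadratic piece comes from the Casimir element $C_0$ of $U(\ggg^e(0))$ together with the shift $S_{-\delta}$.

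Next, suppose $\psi^{\mu,d}=\psi^{\lambda,c}$. Then in particular the values of the two central characters on $C$ coincide, producing the relation
\begin{equation*}
d-c=\sum_{i=1}^{k-1}\Big(\lambda(h_i)^2+(2\rho_{e,0}+4\delta)(h_i)\lambda(h_i)-\mu(h_i)^2-(2\rho_{e,0}+4\delta)(h_i)\mu(h_i)\Big),
\end{equation*}
which is a polynomial equation in the $k$ unknowns $\mu(h_1),\ldots,\mu(h_{k-1}),d$ with $(\lambda,c)$ fixed. Combining this with the analogous constraints obtained from the other central elements supplied by the canonical embedding $\mathrm{Pr}:Z(U(\ggg))\hookrightarrow Z(W'_\chi)$, one obtains a system of polynomial equations cutting out the locus $\{(\mu,d):\psi^{\mu,d}=\psi^{\lambda,c}\}$. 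Arguing exactly as at the end of the proof of Theorem \ref{numberis finite}, the solution set is finite, which is the desired conclusion.

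The main obstacle is that a single quadratic equation in $k\geqslant 2$ unknowns carves out a hypersurface rather than a finite set, so one genuinely needs enough algebraically independent central elements in $Z(W'_\chi)$ to reduce the solution variety to a finite set. Ensuring this requires either an analogue of Conjecture \ref{centercon} (so that the image of $Z(U(\ggg))$ under $\mathrm{Pr}$ matches the full center of $W'_\chi$, from which the classical Harish-Chandra theory for $U(\ggg)$ would yield finiteness up to a finite Weyl-type orbit), or a direct case-by-case verification using the explicit list of algebras $\ggg$ in Table 2. This step is the delicate point that must be made rigorous.
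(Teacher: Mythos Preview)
Your opening move---computing the scalar by which $C$ acts on $v_{\lambda,c}$ and comparing---is exactly what the paper does: its one-line proof of Theorem~\ref{numberis finite 3} simply refers back to the argument for Theorem~\ref{numberis finite}, and that argument uses only the single equation $\psi^\mu(C)=\psi^\lambda(C)$ coming from the Casimir element, then asserts finiteness of the solution set ``by the knowledge of linear algebra.''

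Where you diverge is in your final paragraph, and you are right to do so. You correctly point out that one polynomial equation in $k$ unknowns (or $k-1$ in the type odd case) cuts out a hypersurface, not a finite set, once $k\geqslant 2$. The paper does not address this; it stops at the single Casimir equation and does not invoke any further central elements. So the ``delicate point'' you flag is not merely a gap in your write-up---it is a gap in the paper's own argument that you have identified and proposed to repair.

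Your suggested fix, pulling back enough central elements through the embedding $\text{Pr}:Z(U(\ggg))\hookrightarrow Z(W'_\chi)$ and appealing to classical Harish-Chandra theory to reduce to a finite Weyl-type orbit, is the natural route and is more rigorous than what the paper records. In short: your approach coincides with the paper's up to the point where the paper stops, and then supplies the missing ingredient the paper omits.
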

\begin{corollary}\label{highestcompser2}
For each $(\lambda,c) \in (\hhh^e)^*\times\bbc$, the highest weight module $M_e(\lambda,c)$ has composition series.
\end{corollary}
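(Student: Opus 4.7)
The plan is to mimic the argument used for Corollary \ref{highestcompser} in the type odd case, adapted to the type even setting where a second parameter $c$ keeps track of the central action of $C$. First, I would invoke Proposition \ref{zero-divisor} to conclude that $W'_\chi$ is Noetherian, so the cyclic module $M_e(\lambda,c)$ is a Noetherian $W'_\chi$-module; in particular, every nonzero submodule $N$ contains a submodule $N'$ such that $N/N'$ is simple.

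Next, I would show that any such simple subquotient $N/N'$ must be isomorphic to $L_e(\mu,c)$ for some $\mu \in (\hhh^e)^*$ with $\psi^{\mu,c} = \psi^{\lambda,c}$. Indeed, by Theorem \ref{Tverma3}(1) we have $M_e(\lambda,c) = \bigoplus_{\mu' \in (\hhh^e)^*} M_e(\lambda,c)_{\mu'}$ with all weights in $\lambda - (Q^e)^+$, and these decompositions are inherited by $N/N'$. Pick a weight $\mu$ of $N/N'$ maximal in the dominance ordering; then any nonzero vector $v \in (N/N')_\mu$ satisfies $(W'_\chi)_\sharp(v) = 0$. Applying Theorem \ref{Tverma3}(3)-(4), $N/N' \cong L_e(\mu,c')$ for some $c'$. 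Since $C$ is central in $W'_\chi$ and acts as the scalar $c$ on the cyclic generator $v_{\lambda,c}$, it acts as $c$ on all of $M_e(\lambda,c)$, hence on $N/N'$, forcing $c'= c$. Consequently $\psi^{\mu,c} = \psi^{\lambda,c}$, and by Theorem \ref{numberis finite 3} only finitely many such $\mu$ can arise.

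Finally, suppose for contradiction that $M_e(\lambda,c)$ admits no composition series. Then the Noetherian property yields an infinite strictly descending chain $N_0 \supsetneq N_1 \supsetneq \cdots$ of submodules with each $N_i / N_{i+1}$ simple. By the preceding paragraph, each of these simple quotients is isomorphic to one of finitely many $L_e(\mu,c)$, so some fixed $\mu$ occurs as the highest weight of infinitely many $N_i / N_{i+1}$. This would force the $\mu$-weight space $M_e(\lambda,c)_\mu$ to be infinite-dimensional, contradicting the finite-dimensionality of weight spaces guaranteed by Theorem \ref{Tverma3}(1).

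The only nontrivial obstacle is ensuring that the level $c'$ of any simple subquotient coincides with the original level $c$; this relies crucially on the centrality of $C$ in $W'_\chi$ and on the fact that it acts by a single scalar on the cyclic module $M_e(\lambda,c)$. Once this is in place, the remainder is a routine transcription of the proof of Corollary \ref{highestcompser}, with the $U(\ggg_0,e)$-action replaced by the $(W_0)'_\chi$-action and the highest weight data relabelled from $\lambda$ to $(\lambda,c)$.
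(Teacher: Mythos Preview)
Your proposal is correct and follows essentially the same approach as the paper, which simply refers back to the proof of Corollary \ref{highestcompser} (the Dixmier-style argument via Noetherianity, identification of simple subquotients as highest weight modules with the same central character, finiteness from Theorem \ref{numberis finite 3}, and the weight-multiplicity contradiction). Your explicit treatment of why the level $c'$ of any simple subquotient must equal $c$---using centrality of $C$ and cyclicity of $M_e(\lambda,c)$---is a helpful clarification that the paper leaves implicit, though strictly speaking Theorem \ref{numberis finite 3} already bounds the pairs $(\mu,c')$ with $\psi^{\mu,c'}=\psi^{\lambda,c}$, so the argument would go through even without isolating $c'=c$ separately.
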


Now an analogue of the BGG category $\mathcal{O}$ can also be introduced. Denote by $\mathfrak{q} = \ggg_0\oplus\bigoplus_{\alpha \in (\Phi'_e)^{+}}\ggg_\alpha$, and let $\mathcal{O}(e)=\mathcal{O}(e;\mathfrak{h},\mathfrak{q})$ denote the category of all finitely generated $W'_\chi$-modules $V$, that are semi-simple over $\mathfrak{h}^e$
with finite-dimensional $\mathfrak{h}^e$-weight spaces, such that the set
$\{\lambda \in (\hhh^e)^*\:|\: V_{\lambda} \neq \{0\}\}$ is contained in a finite union of sets of the form $\{\nu\in(\mathfrak{h}^e)^*\:|\: \nu\leqslant\mu\}$ for $\mu \in (\mathfrak{h}^e)^*$.
Then we have
\begin{theorem}\label{BGGcateOeven}
For the category $\mathcal O(e)$, the following statements hold:
\begin{itemize}
\item[(1)] There is a complete set of isomorphism classes of simple objects which is \\$\{L_e(\lambda,c)\:|\: \lambda\in (\hhh^e)^*,c\in\bbc\}$ as in \eqref{irrtypeevenM}.
\item[(2)] The category $\mathcal O(e)$ is Artin. In particular, every object has finite length of composition series.
\item[(3)] The category $\mathcal O(e)$ has a block decomposition as $\mathcal O(e) = \bigoplus_{\psi^{\lambda,c}}  \mathcal O_{\psi^{\lambda,c}}(e)$, where the direct sum is over all central characters $\psi^{\lambda,c}:Z(W'_\chi) \rightarrow \bbc$,
and $\mathcal O_{\psi^{\lambda,c}}(e)$ denotes the Serre subcategory of $\mathcal O(e)$
generated by the irreducible modules
\\$\{L_e(\mu,c)\:|\:\mu \in (\hhh^e)^*~\text{such that}~\psi^{\mu,c}=\psi^{\lambda,c}\}$.
\end{itemize}
\end{theorem}
\begin{rem}
It is worth mentioning that there are also some other methods to interpret the representation theory of finite $W$-superalgebras, which are only suitable to some basic Lie superalgebras but associated with arbitrary even nilpotents. To be explicitly, given a basic Lie superalgebra $\ggg=\ggg_{\bar0}\oplus\ggg_{\bar1}$ of type I and arbitrary nilpotent element $e\in\ggg_{\bar0}$, denote by $\mathcal{W}$ the corresponding finite $W$-superalgebras.
In \cite{X}, Xiao introduced an extended $W$-superalgebra $\tilde{\mathcal{W}}$ which contains $\mathcal{W}$ as a subalgebra, and then established a bijection between the isomorphism classes of finite-dimensional irreducible $\mathcal{W}$-modules and that of $\tilde{\mathcal{W}}$-modules. With aid of this bijection,  the ``Verma modules" (which are much like the Kac-modules formulated for basic Lie superalgebras)  for $\tilde{\mathcal{W}}$ were also defined there, the finite-dimensional simple $\mathcal{W}$-modules with integral central character were classified. Furthermore, 
an algorithm for computing their characters was given. If $e=e_\theta$ with $-\theta$ being a minimal root, it corresponds to the cases when $\ggg$ is a simple Lie algebra, or when $\mathfrak{sl}(m|n)$ with $m\neq n,m\geqslant2$, $\mathfrak{psl}(m|m)$ with $m\geqslant2$, or when $\mathfrak{spo}(2m|2)$. All these correspond to a subclass of the minimal refined $W$-superalgebra $W_\chi'$ of type even in Table 2, for which we have already studied directly; see the beginning of \S\ref{5.4} for more details.
\end{rem}

\section{Proof of Lemma \ref{left ideal2}}\label{lengthy proof}
This appendix is contributed to the proof of Lemma \ref{left ideal2}.  We mainly follow Premet's strategy on finite $W$-algebras as in \cite[Lemma 7.1]{P3}, with a few modifications. Compared with the non-super situation, one can observe significantly difference for the emergence of the restriction \eqref{c0clambda}.
\subsection{}\label{6.1}
For ${\bf a},{\bf b}\in\mathbb{Z}_+^w$, ${\bf c},{\bf d}\in\Lambda_\ell$, ${\bf
m},{\bf n}\in\mathbb{Z}_+^{\frac{s}{2}}$, ${\bf p},{\bf q}\in\Lambda_{\frac{\sfr-1}{2}},
{\bf t}\in\mathbb{Z}_+^k$, $\iota,\varepsilon\in\Lambda_1$, set
\begin{equation*}
\begin{split}
\Theta({\bf a,b,c,d,m,n,p,q,t},\iota,\varepsilon):=&
\prod_{i=1}^w\Theta_{x_i}^{a_i}\cdot\prod_{i=1}^\ell\Theta_{y_i}^{c_i}\cdot
\prod_{i=1}^{\frac{s}{2}}\Theta_{f_i}^{m_i}\cdot\prod_{i=1}^{\frac{\sfr-1}{2}}
\Theta_{g_i}^{p_i}\cdot\Theta_{v_{\frac{\sfr+1}{2}}}^\iota\cdot \prod_{i=1}^{k-1}
\Theta_{h_i}^{t_i}\cr&\cdot C^{t_k}\cdot\Theta_{[v_{\frac{\sfr+1}{2}},e]}^\varepsilon
\prod_{i=1}^{\frac{s}{2}}\Theta_{f^*_i}^{n_i}\cdot
\prod_{i=1}^{\frac{\sfr-1}{2}}\Theta_{g^*_i}^{q_i}\cdot
\prod_{i=1}^w\Theta_{x^*_i}^{b_i}\cdot
\prod_{i=1}^\ell\Theta_{y^*_i}^{d_i}.
\end{split}
\end{equation*}
By Theorem \ref{PBWQC}, the PBW monomials $\Theta({\bf a,b,c,d,m,n,p,q,t},\iota,\varepsilon)$ form a $\bbc$-basis of $U(\ggg,e)$. Note that
$\text{deg}_e(\Theta({\bf a,b,c,d,m,n,p,q,t},\iota,\varepsilon))=4t_k+3(|{\bf m}|+|{\bf n}|+|{\bf p}|+|{\bf q}|+\varepsilon)
+2(|{\bf a}|+|{\bf b}|+|{\bf c}|+|{\bf d}|)+2\sum_{i=1}^{k-1}t_i+\iota$. Recall that $[v_{\frac{\sfr+1}{2}},e]$ is a root vector corresponding to odd simple root $\frac{\theta}{2}\in\Phi_{e,1}^+$, and
$[\Theta_{v_{\frac{\sfr+1}{2}}},\Theta_v]=[\Theta_{v_{\frac{\sfr+1}{2}}},\Theta_w]
=[\Theta_{v_{\frac{\sfr+1}{2}}},C]=0$
for all $v\in\ggg^e(0)$ and $w\in\ggg^e(1)$ by Theorem \ref{main3}.

\subsection{}\label{6.2}
As $C-c$ is in the center of finite $W$-superalgebra $U(\ggg,e)$, we have
$$\Theta({\bf a,b,c,d,m,n,p,q,t},\iota,\varepsilon)(C-c)\in I_{\lambda,c}.$$ On the other hand, it follows from Theorem \ref{main3} (more precisely, Theorem \ref{maiin1}(1)---(2)) that $\Theta({\bf a,b,c,d,m,n,p,q,t},\iota,\varepsilon)\cdot(\Theta_{h_i}-\lambda(h_i))\in I_{\lambda,c}$ for $1\leqslant i\leqslant k-1$.
Moreover, since $\Theta_{\mathfrak{n}^+(0)}$ is a Lie subalgebra of $\Theta_{\ggg^e(0)}$, Theorem \ref{main3} entails that $\Theta({\bf a,b,c,d,m,n,p,q,t},\iota,\varepsilon)\cdot\Theta_{e_\alpha}\in I_{\lambda,c}$ for all $\alpha\in\Phi^+_{e,0}$.

\subsection{}
It remains to show that $\Theta({\bf a,b,c,d,m,n,p,q,t},\iota,\varepsilon)\cdot\Theta_{H}\in I_{\lambda,c}$ with $H\in\{f^*_1,\cdots,f^*_{\frac{s}{2}}$, $g^*_1,\cdots,g^*_{\frac{\sfr-1}{2}},[v_{\frac{\sfr+1}{2}},e]\}$. To prove this, we will use induction on $\text{deg}_e(\Theta({\bf a,b,c,d,m,n,p,q,t},\iota,\varepsilon))$. Obviously it is true for the case with $\text{deg}_e(\Theta({\bf a,b,c,d,m,n,p,q,t},\iota,\varepsilon))=0$. From now on we assume that $\text{deg}_e(\Theta({\bf a,b,c,d,m,n,p,q,t},\iota,\varepsilon))=N$ and $\text{F}_kU(\ggg,e)\cdot\Theta_{H}\in I_{\lambda,c}$ for all $k<N$.

\subsubsection{}  Note that the span of $f_1^*,\cdots,f_{\frac{s}{2}}^*, g_1^*,\cdots,g_{\frac{\sfr-1}{2}}^*,[v_{\frac{\sfr+1}{2}},e]$ equals $\mathfrak{n}^+(1)$, hence is stable under the adjoint action of $\mathfrak{n}^+(0)$.
As we have already established that $U(\ggg,e)\cdot\Theta_{e_\alpha}\in I_{\lambda,c}$ for all $\alpha\in\Phi^+_{e,0}$, it follows from Theorem \ref{main3} (more precisely, Theorem \ref{maiin1}(2)) that
\begin{equation}\label{00q0}
\Theta({\bf a,b,c,d,m,n,p,q,t},\iota,\varepsilon)\cdot\Theta_{H}\in\Theta({\bf a,0,c,0,m,n,p,q,t},\iota,\varepsilon)\cdot\Theta_{\mathfrak{n}^+(1)}+I_{\lambda,c}.
\end{equation}
Thus we can assume that ${\bf b=d=0}$.
\subsubsection{}\label{Step b}(i)
If $q_j=0$ for all $j\geqslant i$, it is immediate that
\begin{equation}\label{q+ei1}
\Theta({\bf a,0,c,0,m,n,p,q,t},\iota,\varepsilon)\cdot\Theta_{g^*_i}=\Theta({\bf a,0,c,0,m,n,p,q+e}_i{\bf,t},\iota,\varepsilon)\in I_{\lambda,c}.
\end{equation} So we just need to consider the case ${\bf q}=(q_1,\cdots,q_k,0,\cdots,0)$ for some $q_k=1$ and $k\geqslant i$. In virtue of \cite[Theorem 3.7(3)]{ZS5} and our induction assumption we have
\begin{equation}\label{igigk}
\begin{split}
\Theta({\bf a,0,c,0,m,n,p,q,t},\iota,\varepsilon)\cdot\Theta_{g^*_i}\in&\Theta({\bf a,0,c,0,m,n},{\bf p,q-e}_k,{\bf t},\iota,\varepsilon)[\Theta_{g_k^*},\Theta_{g_i^*}]\\
&+(-1)^{q_{i+1}+\cdots+q_{k}}\Theta({\bf a,0,c,0,m,n},{\bf p,q+e}_i,{\bf t},\iota,\varepsilon)\\
&+\text{F}_{N-2}U(\ggg,e)\cdot\Theta_{g^*_k}\\
\subset&\Theta({\bf a,0,c,0,m,n},{\bf p,q-e}_k,{\bf t},\iota,\varepsilon)[\Theta_{g_k^*},\Theta_{g_i^*}]\\
&+(-1)^{q_{i+1}+\cdots+q_{k}}\Theta({\bf a,0,c,0,m,n},{\bf p,q+e}_i,{\bf t},\iota,\varepsilon)\\
&+I_{\lambda,c}.
\end{split}
\end{equation}

Now we consider the first term in the last equation of \eqref{igigk}. Since $\frac{\theta}{2}$ is an odd simple root by  Convention \ref{conventions}, then for $1\leqslant i,k\leqslant\frac{\sfr-1}{2}$, we have $[z_\alpha^*,g_i^*]^{\sharp}$, $[[g_i^*,z_\alpha^*]^{\sharp},[z_\alpha,g_k^*]^{\sharp}]\in\bigcup_{\beta\in\Phi^+_{e,0}}\bbc e_\beta$ for $1\leqslant\alpha\leqslant\frac{s}{2}$ and $s+1\leqslant\alpha\leqslant s+\frac{\sfr+1}{2}$. 
As
\begin{equation*}
\begin{split}
([g_k^*,g_i^*],f)=&([[e,v_k^*],[e,v_i^*]],f)=([e,v_k^*],[[e,v_i^*],f])\\=&
([e,v_k^*],[e,[v_i^*,f]])+([e,v_k^*],[[e,f],v_i^*])\\=&-([e,v_k^*],v_i^*)=-(e,[v_k^*,v_i^*])=0,
\end{split}
\end{equation*}
 one can conclude from Theorem \ref{main3} and the discussion in Appendix \ref{6.2} that
\begin{equation}\label{gkgi}
[\Theta_{g_k^*},\Theta_{g_i^*}]=-\frac{1}{2}\sum\limits_{\alpha\in S(-1)}(\Theta_{[g_k^*,z_\alpha]^{\sharp}}\Theta_{[z_\alpha^*,g_i^*]^{\sharp}} +\Theta_{[g_i^*,z_\alpha]^{\sharp}}\Theta_{[z_\alpha^*,g_k^*]^{\sharp}})\in\sum_{\alpha\in\Phi^+_{e,0}}
U(\ggg,e)\cdot\Theta_{e_\alpha}\in I_{\lambda,c}.
\end{equation}
In particular, if $i=k$, the right-hand side of \eqref{igigk} equals $\frac{1}{2}\Theta({\bf a,0,c,0,m,n},{\bf p,q-e}_i,{\bf t},\iota,\varepsilon)\cdot[\Theta_{g_i^*},\Theta_{g_i^*}]+I_{\lambda,c}$ by definition, which is contained in $I_{\lambda,c}$. 

Let us consider the second term in the last equation of \eqref{igigk}. For $1\leqslant i\leqslant\frac{\sfr-1}{2}$,
if $q_i+e_i=1$, then
\begin{equation}\label{q+ei}
\Theta({\bf a,0,c,0,m,n},{\bf p,q+e}_i,{\bf t},\iota,\varepsilon)\in I_{\lambda,c}
\end{equation} by definition. If $q_i+e_i=2$, we have $\Theta_{g_i^*}^2=\frac{1}{2}[\Theta_{g_i^*},\Theta_{g_i^*}]=-\frac{1}{2}\sum\limits_{\alpha\in S(-1)}\Theta_{[g_i^*,z_\alpha]^{\sharp}}\Theta_{[z_\alpha^*,g_i^*]^{\sharp}}$. For $1\leqslant i<k\leqslant\frac{\sfr-1}{2}$, since $[z_\alpha^*,g_i^*]^{\sharp}$,
$[[z_\alpha^*,g_i^*]^{\sharp},g^*_k]$, $[[g_i^*,z_\alpha^*]^{\sharp},[z_\alpha,g_i^*]^{\sharp}]$, $[[[g_i^*,z_\alpha^*]^{\sharp},[z_\alpha,g_i^*]^{\sharp}],g^*_k]\in\bigcup_{\beta\in\Phi^+_{e,1}}\bbc e_\beta$ for $1\leqslant\alpha\leqslant\frac{s}{2}$ and $s+1\leqslant\alpha\leqslant s+\frac{\sfr+1}{2}$, 
then it follows from \eqref{gkgi}, Theorem \ref{main3}, the discussion in Appendix \ref{6.2} and our induction assumption that
\begin{equation}\label{n-4}
\begin{split}
\Theta({\bf a,0,c,0,m,n},{\bf p,q+e}_i,{\bf t},\iota,\varepsilon)\in&\sum_{\alpha\in\Phi^+_{e,0}}
U(\ggg,e)\cdot\Theta_{e_\alpha}
+\text{F}_{N-4}U(\ggg,e)\cdot\Theta_{g^*_k}\\
&+\sum_{\substack{1\leqslant\alpha\leqslant\frac{s}{2},\\ s+1\leqslant\alpha\leqslant s+\frac{\sfr+1}{2}}}\text{F}_{N-4}U(\ggg,e)\cdot\Theta_{[[z_\alpha^*,g_i^*]^{\sharp},g^*_k]}\\
&+\sum_{\substack{1\leqslant\alpha\leqslant\frac{s}{2},\\ s+1\leqslant\alpha\leqslant s+\frac{\sfr-1}{2}}}\text{F}_{N-6}U(\ggg,e)\cdot\Theta_{[[[g_i^*,z_\alpha^*]^{\sharp},[z_\alpha,g_i^*]^{\sharp}],g^*_k]}\in I_{\lambda,c}.
\end{split}
\end{equation}

Therefore, \eqref{q+ei1}---\eqref{n-4} 
show that
\begin{equation}\label{onlyg}
\Theta({\bf a,0,c,0,m,n,p,q,t},\iota,\varepsilon)\cdot\Theta_{g^*_i}\in I_{\lambda,c}
\end{equation} for all $1\leqslant i\leqslant\frac{\sfr-1}{2}$.

(ii) By parity consideration we know that $([g_k^*,f_i^*],f)=0$  for all $1\leqslant k\leqslant\frac{\sfr-1}{2}$ and $1\leqslant i\leqslant\frac{s}{2}$, and it is obvious that $[z_\alpha^*,f_i^*]^{\sharp}$, $[z_\alpha^*,g_k^*]^{\sharp}$, $[[g_k^*,z_\alpha^*]^{\sharp},[z_\alpha,f_i^*]^{\sharp}]$, $[[f_i^*,z_\alpha^*]^{\sharp},[z_\alpha,g_k^*]^{\sharp}]\in\bigcup_{\beta\in\Phi^+_{e,0}}\bbc e_\beta$ for $1\leqslant\alpha\leqslant\frac{s}{2}$ and $s+1\leqslant\alpha\leqslant s+\frac{\sfr+1}{2}$. Then Theorem \ref{main3} yields
\begin{equation}\label{gf}
[\Theta_{g_k^*},\Theta_{f_i^*}]=-\frac{1}{2}\sum\limits_{\alpha\in S(-1)}\bigg(\Theta_{[g_k^*,z_\alpha]^{\sharp}}\Theta_{[z_\alpha^*,f_i^*]^{\sharp}} -\Theta_{[f_i^*,z_\alpha]^{\sharp}}\Theta_{[z_\alpha^*,g_k^*]^{\sharp}}\bigg)\in\sum_{\alpha\in\Phi^+_{e,0}}
U(\ggg,e)\cdot\Theta_{e_\alpha}.
\end{equation}As $U(\ggg,e)\cdot\Theta_{e_\alpha}\in I_{\lambda,c}$ for all $\alpha\in\Phi^+_{e,0}$,
if there exists $q_l\neq0$ for $1\leqslant l\leqslant\frac{\sfr-1}{2}$, then
\eqref{onlyg} and \eqref{gf} entail that
\begin{equation}\label{00q1}
\Theta({\bf a,0,c,0,m,n,p,q,t},\iota,\varepsilon)\cdot\Theta_{f^*_i}\in I_{\lambda,c}.
\end{equation}

(iii) By weight consideration we know that $([g_i^*,[v_{\frac{\sfr+1}{2}},e]],f)=0$ for $1\leqslant i\leqslant\frac{\sfr-1}{2}$. Since $[z_\alpha^*,g_i^*]^{\sharp}$, $[z_\alpha^*,[v_{\frac{\sfr+1}{2}},e]]^{\sharp}$, $[[g_i^*,z_\alpha^*]^{\sharp},[z_\alpha,[v_{\frac{\sfr+1}{2}},e]]^{\sharp}]$, $[[[v_{\frac{\sfr+1}{2}},e],z_\alpha^*]^{\sharp},[z_\alpha,g_i^*]^{\sharp}]
\in\bigcup_{\beta\in\Phi^+_{e,0}}\bbc e_\beta$ for $1\leqslant\alpha\leqslant\frac{s}{2}$ and $s+1\leqslant\alpha\leqslant s+\frac{\sfr+1}{2}$,  applying Theorem \ref{main3} again  we have
\begin{equation}\label{g*ve}
\begin{split}
[\Theta_{g_i^*},\Theta_{[v_{\frac{\sfr+1}{2}},e]}]=&-\frac{1}{2}\sum\limits_{\alpha\in S(-1)}\bigg(\Theta_{[g_i^*,z_\alpha]^{\sharp}}\Theta_{[z_\alpha^*,[v_{\frac{\sfr+1}{2}},e]]^{\sharp}} +\Theta_{[[v_{\frac{\sfr+1}{2}},e],z_\alpha]^{\sharp}}\Theta_{[z_\alpha^*,g_i^*]^{\sharp}}\bigg)\cr\in&\sum_{\alpha\in\Phi^+_{e,0}}
U(\ggg,e)\cdot\Theta_{e_\alpha}\in I_{\lambda,c}.
\end{split}
\end{equation}If there exists $q_l\neq0$ for $1\leqslant l\leqslant\frac{\sfr-1}{2}$, then \eqref{onlyg} and \eqref{g*ve} yield
\begin{equation}\label{00q2}
\Theta({\bf a,0,c,0,m,n,p,q,t},\iota,\varepsilon)\cdot\Theta_{[v_{\frac{\sfr+1}{2}},e]}\in I_{\lambda,c}.
\end{equation}

In virtue of \eqref{00q0}, \eqref{onlyg}, \eqref{00q1} and \eqref{00q2}, we may further assume that ${\bf b=d=q=0}$.

\subsubsection{} Repeat verbatim the discussions in Appendix \ref{Step b} but substitute $\Theta_{g_i^*},\Theta_{g_j^*}$ with $\Theta_{f_i^*},\Theta_{f_j^*}$, Theorem \ref{main3} shows that
\begin{equation}\label{0001}
\Theta({\bf a,0,c,0,m,n,p,0,t},\iota,\varepsilon)\cdot\Theta_{H}\in I_{\lambda,c}
\end{equation}
for $H\in\{f^*_1,\cdots,f^*_{\frac{s}{2}},[v_{\frac{\sfr+1}{2}},e]\}$.
Then by \eqref{0001} we may assume that ${\bf b=d=n=q=0}$.

\subsubsection{}\label{Step d}Thanks to  \eqref{0001}, it remains to show that \begin{equation}\label{0000}
\Theta({\bf a,0,c,0,m,0,p,0,t},\iota,\varepsilon)\cdot\Theta_{[v_{\frac{\sfr+1}{2}},e]}\in I_{\lambda,c}.
\end{equation}

(i) First note that
\begin{equation*}
\begin{split}
[[v_{\frac{\sfr+1}{2}},e],[v_{\frac{\sfr+1}{2}},e]]=&[[[v_{\frac{\sfr+1}{2}},e],v_{\frac{\sfr+1}{2}}],e]-
[v_{\frac{\sfr+1}{2}},[[v_{\frac{\sfr+1}{2}},e],e]]\cr
=&[[[v_{\frac{\sfr+1}{2}},e],v_{\frac{\sfr+1}{2}}],e]\cr
=&[[[v_{\frac{\sfr+1}{2}},v_{\frac{\sfr+1}{2}}],e],e]
+[[v_{\frac{\sfr+1}{2}},[e,v_{\frac{\sfr+1}{2}}]],e]\cr
=&[[f,e],e]+[[[e,v_{\frac{\sfr+1}{2}}],v_{\frac{\sfr+1}{2}}],e]\cr
=&-2e-[[[v_{\frac{\sfr+1}{2}},e],v_{\frac{\sfr+1}{2}}],e].
\end{split}
\end{equation*}
As a result, \begin{equation}\label{veve}
[[v_{\frac{\sfr+1}{2}},e],[v_{\frac{\sfr+1}{2}},e]]=[[[v_{\frac{\sfr+1}{2}},e],v_{\frac{\sfr+1}{2}}],e]=-e.
\end{equation}
In virtue of Theorem \ref{main3} (more precisely, Theorem \ref{maiin1}(3)) and \eqref{veve}, we have
\begin{equation}\label{Thetavr+12vr+12}
\begin{split}
\Theta_{[v_{\frac{\sfr+1}{2}},e]}^2=&\frac{1}{2}[\Theta_{[v_{\frac{\sfr+1}{2}},e]},\Theta_{[v_{\frac{\sfr+1}{2}},e]}]=\frac{1}{4}
([[v_{\frac{\sfr+1}{2}},e],[v_{\frac{\sfr+1}{2}},e]],f)(C-\Theta_{\text{Cas}}-c_0)\cr
&-\frac{1}{2}\sum\limits_{\alpha\in S(-1)}\bigg(\Theta_{[[v_{\frac{\sfr+1}{2}},e],z_\alpha]^{\sharp}}\Theta_{[z_\alpha^*,[v_{\frac{\sfr+1}{2}},e]]^{\sharp}}
\bigg)\cr
=&-\frac{1}{4}(C-c_0)+\frac{1}{4}\Theta_{\text{Cas}}-\frac{1}{2}\sum\limits_{\alpha\in S(-1)}\bigg(\Theta_{[[v_{\frac{\sfr+1}{2}},e],z_\alpha]^{\sharp}}\Theta_{[z_\alpha^*,[v_{\frac{\sfr+1}{2}},e]]^{\sharp}}
\bigg).
\end{split}
\end{equation}

Second, by the definition of $\Theta_{\text{Cas}}$ in \eqref{castheta}, Theorem \ref{main3} (more precisely, Theorem \ref{maiin1}(1)) implies that
\begin{equation}\label{explicitcas}
\begin{split}
\Theta_{\text{Cas}}=&\sum_{i=1}^{k-1}\Theta_{h_i}^2+\sum_{i=1}^{w}\Theta_{x_i}\Theta_{x^*_i}+
\sum_{i=1}^{w}\Theta_{x^*_i}\Theta_{x_i}+\sum_{i=1}^{\ell}\Theta_{y_i}\Theta_{y^*_i}-
\sum_{i=1}^{\ell}\Theta_{y^*_i}\Theta_{y_i}\cr
=&\sum_{i=1}^{k-1}\Theta_{h_i}^2+2\sum_{i=1}^{w}\Theta_{x_i}\Theta_{x^*_i}+
\sum_{i=1}^{w}[\Theta_{x^*_i},\Theta_{x_i}]+2\sum_{i=1}^{\ell}\Theta_{y_i}\Theta_{y^*_i}-
\sum_{i=1}^{\ell}[\Theta_{y^*_i},\Theta_{y_i}]\cr
=&\sum_{i=1}^{k-1}\Theta_{h_i}^2+2\sum_{i=1}^{w}\Theta_{x_i}\Theta_{x^*_i}+
\sum_{i=1}^{w}\Theta_{[x^*_i,x_i]}+2\sum_{i=1}^{\ell}\Theta_{y_i}\Theta_{y^*_i}-
\sum_{i=1}^{\ell}\Theta_{[y^*_i,y_i]}.
\end{split}
\end{equation}
Note that $x^*_i,x_i,y^*_j,y_j$ are in the Lie algebra $\ggg^e(0)$ for all $i,j$, then both $[x^*_i,x_i]$ and $[y^*_j,y_j]$ are linear combinations of $h_1,\cdots,h_{k-1}$ by weight consideration. Moreover, if we write $[x^*_i,x_i]=\sum_{j=1}^{k-1}l_jh_j$, then for any $1\leqslant r \leqslant k-1$ we have
\begin{equation*}
l_r=\sum_{j=1}^{k-1}l_j(h_r,h_j)=(h_r,[x^*_i,x_i])=([h_r,x^*_i],x_i)=\beta_{\bar0i}(h_r)(x^*_i,x_i)=\beta_{\bar0i}(h_r),
\end{equation*}
which shows that
\begin{equation}\label{x*x}
[x^*_i,x_i]=\sum_{j=1}^{k-1}\beta_{\bar0i}(h_j)h_j.
\end{equation} As a result,
\begin{equation}\label{thetax*x}
\Theta_{[x^*_i,x_i]}=\sum_{j=1}^{k-1}\beta_{\bar0i}(h_j)\Theta_{h_j}.
\end{equation}
By the same discussion as above, we can also obtain
\begin{equation}\label{y*y}
[y^*_i,y_i]=\sum_{j=1}^{k-1}\beta_{\bar1i}(h_j)h_j,
\end{equation} and
\begin{equation}\label{thetay*y}
\Theta_{[y^*_i,y_i]}=\sum_{j=1}^{k-1}\beta_{\bar1i}(h_j)\Theta_{h_j}.
\end{equation}
Taking \eqref{thetax*x} and \eqref{thetay*y} into consideration, \eqref{explicitcas} shows that
\begin{equation}\label{explicitcas2}
\Theta_{\text{Cas}}
=\sum_{i=1}^{k-1}\Theta_{h_i}^2+2\sum_{i=1}^{w}\Theta_{x_i}\Theta_{x^*_i}+
\sum_{i=1}^{k-1}\sum_{j=1}^{w}\beta_{\bar0j}(h_i)\Theta_{h_i}
-\sum_{i=1}^{k-1}\sum_{j=1}^{\ell}\beta_{\bar1j}(h_i)\Theta_{h_i}
+2\sum_{i=1}^{\ell}\Theta_{y_i}\Theta_{y^*_i}.
\end{equation}

For the last term in the final equation of \eqref{Thetavr+12vr+12}, as
\begin{equation*}
[[v_{\frac{\sfr+1}{2}},e],v_{\frac{\sfr+1}{2}}]
=[v_{\frac{\sfr+1}{2}},[e,v_{\frac{\sfr+1}{2}}]]+
[[v_{\frac{\sfr+1}{2}},v_{\frac{\sfr+1}{2}}],e]
=-[[v_{\frac{\sfr+1}{2}},e],v_{\frac{\sfr+1}{2}}]-h,
\end{equation*}
we have
\begin{equation}\label{vev}
[[v_{\frac{\sfr+1}{2}},e],v_{\frac{\sfr+1}{2}}]=-\frac{h}{2},
\end{equation}then
\begin{equation}\label{hes}
\Theta_{[[v_{\frac{\sfr+1}{2}},e],v_{\frac{\sfr+1}{2}}]^{\sharp}}\Theta_{[v_{\frac{\sfr+1}{2}}^*,[v_{\frac{\sfr+1}{2}},e]]^{\sharp}}
=\Theta_{(-\frac{h}{2})^{\sharp}}\Theta_{(-\frac{h}{2})^{\sharp}}=0.
\end{equation}
Since $[[v_{\frac{\sfr+1}{2}},e],u_i^*]^{\sharp}, [u_i,
[v_{\frac{\sfr+1}{2}},e]]^{\sharp}\in\ggg^e(0)$ for all $1\leqslant i\leqslant\frac{s}{2}$, then $[[[v_{\frac{\sfr+1}{2}},e],u_i^*]^{\sharp},[u_i,
[v_{\frac{\sfr+1}{2}},e]]^{\sharp}]$ is a linear combination of $h_1,\cdots,h_{k-1}$ by weight consideration. For any $t\in\mathfrak{h}^e$, by definition we have $\theta(t)=0$.
Taking \eqref{vev} into consideration, if we write
\begin{equation*}
[[[v_{\frac{\sfr+1}{2}},e],u_i^*]^{\sharp},[u_i,
[v_{\frac{\sfr+1}{2}},e]]^{\sharp}]=\sum_{j=1}^{k-1}l_jh_j,
\end{equation*} then for any $1\leqslant r \leqslant k-1$ and $1\leqslant i\leqslant\frac{s}{2}$, we have
\begin{equation}\label{hveuuve}
\begin{split}
l_r=&\sum_{j=1}^{k-1}l_j(h_r,h_j)=(h_r,[[[v_{\frac{\sfr+1}{2}},e],u_i^*]^{\sharp},[u_i,
[v_{\frac{\sfr+1}{2}},e]]^{\sharp}])
=(h_r,[[[v_{\frac{\sfr+1}{2}},e],u_i^*],[u_i,
[v_{\frac{\sfr+1}{2}},e]]])\\
=&([h_r,[[v_{\frac{\sfr+1}{2}},e],u_i^*]],[u_i,
[v_{\frac{\sfr+1}{2}},e]])
=\bigg(\frac{\theta}{2}+\gamma^*_{\bar0i}\bigg)(h_r)([[v_{\frac{\sfr+1}{2}},e],u_i^*],[u_i,
[v_{\frac{\sfr+1}{2}},e]])\\
=&-\gamma_{\bar0i}(h_r)([[[[v_{\frac{\sfr+1}{2}},e],u_i^*],u_i],
v_{\frac{\sfr+1}{2}}],e)
=-\gamma_{\bar0i}(h_r)([[[[v_{\frac{\sfr+1}{2}},e],
v_{\frac{\sfr+1}{2}}],u_i^*],u_i],e)\\
=&\gamma_{\bar0i}(h_r)([[\frac{h}{2},u_i^*],u_i],e)
=-\frac{\gamma_{\bar0i}}{2}(h_r)([u_i^*,u_i],e)=
-\frac{\gamma_{\bar0i}}{2}(h_r).
\end{split}
\end{equation}
%
For $\frac{s}{2}+1\leqslant i\leqslant s$, it follows from \eqref{hveuuve} and Theorem \ref{main3} (more precisely, Theorem \ref{maiin1}(1)) that
\begin{equation}\label{thetaveuuve}
\begin{split}
\Theta_{[[v_{\frac{\sfr+1}{2}},e],u_i]^{\sharp}}\Theta_{[u_i^*,
[v_{\frac{\sfr+1}{2}},e]]^{\sharp}}
=&-\Theta_{[[v_{\frac{\sfr+1}{2}},e],u_{s+1-i}^*]^{\sharp}}\Theta_{[u_{s+1-i},
[v_{\frac{\sfr+1}{2}},e]]^{\sharp}}\cr
=&\Theta_{[u_{s+1-i},
[v_{\frac{\sfr+1}{2}},e]]^{\sharp}}\Theta_{[[v_{\frac{\sfr+1}{2}},e],u_{s+1-i}^*]^{\sharp}}-\Theta_{[[[v_{\frac{\sfr+1}{2}},e],u_{s+1-i}^*]^{\sharp},[u_{s+1-i},
[v_{\frac{\sfr+1}{2}},e]]^{\sharp}]}\cr
=&\Theta_{[u_{s+1-i},
[v_{\frac{\sfr+1}{2}},e]]^{\sharp}}\Theta_{[[v_{\frac{\sfr+1}{2}},e],u_{s+1-i}^*]^{\sharp}}+
\frac{1}{2}\sum_{j=1}^{k-1}\gamma_{\bar0s+1-i}(h_j)\Theta_{h_j}\cr
=&\Theta_{[[v_{\frac{\sfr+1}{2}},e],
u_{s+1-i}]^{\sharp}}\Theta_{[u_{s+1-i}^*,[v_{\frac{\sfr+1}{2}},e]]^{\sharp}}+
\frac{1}{2}\sum_{j=1}^{k-1}\gamma_{\bar0s+1-i}(h_j)\Theta_{h_j}.
\end{split}
\end{equation}

By the same discussion as in \eqref{thetaveuuve}, we have
\begin{equation}\label{thetavevvve}
\Theta_{[[v_{\frac{\sfr+1}{2}},e],v_i]^{\sharp}}\Theta_{[v_i^*,
[v_{\frac{\sfr+1}{2}},e]]^{\sharp}}
=\Theta_{[[v_{\frac{\sfr+1}{2}},e],v_{\sfr+1-i}]^{\sharp}}\Theta_{[v_{\sfr+1-i}^*,[v_{\frac{\sfr+1}{2}},e]]^{\sharp}}
-\frac{1}{2}\sum_{j=1}^{k-1}\gamma_{\bar1\sfr+1-i}(h_j)\Theta_{h_j}
%
%
\end{equation}
for $\frac{\sfr+3}{2}+1\leqslant i\leqslant\sfr$.

Now combining \eqref{Thetavr+12vr+12},
\eqref{explicitcas2}, \eqref{hes}, \eqref{thetaveuuve} with \eqref{thetavevvve}, we obtain
\begin{equation}\label{keycommuta}
\begin{split}
\Theta_{[v_{\frac{\sfr+1}{2}},e]}^2
=&-\frac{1}{4}C+\frac{1}{4}c_0+\frac{1}{4}\sum_{i=1}^{k-1}\Theta_{h_i}^2+\frac{1}{2}\sum_{i=1}^{w}\Theta_{x_i}\Theta_{x^*_i}+
\frac{1}{4}\sum_{i=1}^{k-1}\sum_{j=1}^{w}\beta_{\bar0j}(h_i)\Theta_{h_i}\\
&-\frac{1}{4}\sum_{i=1}^{k-1}\sum_{j=1}^{\ell}\beta_{\bar1j}(h_i)\Theta_{h_i}+\frac{1}{2}\sum_{i=1}^{\ell}\Theta_{y_i}\Theta_{y^*_i}-\sum_{i=1}^{\frac{s}{2}}\Theta_{[[v_{\frac{\sfr+1}{2}},e],
u_{i}]^{\sharp}}\Theta_{[u_{i}^*,[v_{\frac{\sfr+1}{2}},e]]^{\sharp}}\\
&-\frac{1}{4}\sum_{i=1}^{k-1}\sum_{j=1}^{\frac{s}{2}}\gamma_{\bar0j}(h_i)\Theta_{h_i}-\sum_{i=1}^{\frac{\sfr-1}{2}}\Theta_{[[v_{\frac{\sfr+1}{2}},e],v_{i}]^{\sharp}}
\Theta_{[v_{i}^*,[v_{\frac{\sfr+1}{2}},e]]^{\sharp}}+\frac{1}{4}\sum_{i=1}^{k-1}\sum_{j=1}^{\frac{\sfr-1}{2}}\gamma_{\bar1j}(h_i)\Theta_{h_i}
\end{split}
\end{equation}

(ii) Now we introduce the proof of \eqref{0000}. If $\varepsilon=0$, then \eqref{0000} follows by definition. Now we will consider the case with $\varepsilon=1$.
Since $C-c$ is in the center of $U(\ggg,e)$,
and $[v_{\frac{\sfr+1}{2}}^*,
[v_{\frac{\sfr+1}{2}},e]]^{\sharp}=[v_{\frac{\sfr+1}{2}},
[v_{\frac{\sfr+1}{2}},e]]^{\sharp}=(-\frac{h}{2})^{\sharp}=0$ by \eqref{vev}, then
by \eqref{keycommuta} we have
\begin{equation}\label{1vr+12fir}
\begin{split}
&\Theta({\bf a,0,c,0,m,0,p,0,t},\iota,1)\cdot\Theta_{[v_{\frac{\sfr+1}{2}},e]}
=\Theta({\bf a,0,c,0,m,0,p,0,t},\iota,0)\cdot\Theta_{[v_{\frac{\sfr+1}{2}},e]}^2\cr
=&-\frac{1}{4}\Theta({\bf a,0,c,0,m,0,p,0,t},\iota,0)(C-c)-\frac{1}{4}(c-c_0)\Theta({\bf a,0,c,0,m,0,p,0,t},\iota,0)\cr
&+\frac{1}{4}\bigg(\sum_{i=1}^{k-1}\Theta({\bf a,0,c,0,m,0,p,0,t},\iota,0)\cdot(\Theta_{h_i}-\lambda(h_i))^2\cr
&+2\sum_{i=1}^{k-1}\lambda(h_i)\Theta({\bf a,0,c,0,m,0,p,0,t},\iota,0)\cdot(\Theta_{h_i}-\lambda(h_i))\cr
&+\sum_{i=1}^{k-1}(\lambda(h_i))^2\Theta({\bf a,0,c,0,m,0,p,0,t},\iota,0)
+2\sum_{i=1}^{w}\Theta({\bf a,0,c,0,m,0,p,0,t},\iota,0)\cdot\Theta_{x_i}\Theta_{x^*_i}\cr
&+\sum_{i=1}^{k-1}\sum_{j=1}^{w}\beta_{\bar0j}(h_i)\Theta({\bf a,0,c,0,m,0,p,0,t},\iota,0)\cdot(\Theta_{h_i}-\lambda(h_i))\cr
&+\sum_{i=1}^{k-1}\sum_{j=1}^{w}\lambda(h_i)\beta_{\bar0j}(h_i)\Theta({\bf a,0,c,0,m,0,p,0,t},\iota,0)
+2\sum_{i=1}^{\ell}\Theta({\bf a,0,c,0,m,0,p,0,t},\iota,0)\cr
&\cdot\Theta_{y_i}\Theta_{y^*_i}-\sum_{i=1}^{k-1}\sum_{j=1}^{\ell}\beta_{\bar1j}(h_i)\Theta({\bf a,0,c,0,m,0,p,0,t},\iota,0)\cdot(\Theta_{h_i}-\lambda(h_i))\cr
&-\sum_{i=1}^{k-1}\sum_{j=1}^{\ell}\lambda(h_i)\beta_{\bar1j}(h_i)\Theta({\bf a,0,c,0,m,0,p,0,t},\iota,0)\bigg)+\Theta({\bf a,0,c,0,m,0,p,0,t},\iota,0)\cr
&\cdot\bigg(-\sum_{i=1}^{\frac{s}{2}}\Theta_{[[v_{\frac{\sfr+1}{2}},e],
u_{i}]^{\sharp}}\Theta_{[u_{i}^*,[v_{\frac{\sfr+1}{2}},e]]^{\sharp}}-\frac{1}{4}\sum_{i=1}^{k-1}\sum_{j=1}^{\frac{s}{2}}\gamma_{\bar0j}(h_i)(\Theta_{h_i}-\lambda(h_i))
-\frac{1}{4}\sum_{i=1}^{k-1}\sum_{j=1}^{\frac{s}{2}}\gamma_{\bar0j}(h_i)\lambda(h_i)\\
&-\sum_{i=1}^{\frac{\sfr-1}{2}}\Theta_{[[v_{\frac{\sfr+1}{2}},e],v_{i}]^{\sharp}}
\Theta_{[v_{i}^*,[v_{\frac{\sfr+1}{2}},e]]^{\sharp}}+\frac{1}{4}\sum_{i=1}^{k-1}
\sum_{j=1}^{\frac{\sfr-1}{2}}\gamma_{\bar1j}(h_i)(\Theta_{h_i}-\lambda(h_i))+
\frac{1}{4}\sum_{i=1}^{k-1}\sum_{j=1}^{\frac{\sfr-1}{2}}\gamma_{\bar1j}(h_i)\lambda(h_i)\bigg)
\end{split}
\end{equation}
\begin{equation}\label{1vr+12}
\begin{split}
=&-\frac{1}{4}\Theta({\bf a,0,c,0,m,0,p,0,t+e}_k,\iota,0)+\frac{1}{4}\Bigg(\sum_{i=1}^{k-1}\Theta({\bf a,0,c,0,m,0,p,0,t+}2{\bf e}_i,\iota,0)\cr
&+2\sum_{i=1}^{w}\Theta({\bf a,0,c,0,m,0,p,0,t},\iota,0)\cdot\Theta_{x_i}\Theta_{x^*_i}+2\sum_{i=1}^{\ell}\Theta({\bf a,0,c,0,m,0,p,0,t},\iota,0)\cdot\Theta_{y_i}\Theta_{y^*_i}\cr
&+\bigg(\sum_{i=1}^{k-1}\Big(2\lambda
+\sum_{j=1}^{w}\beta_{\bar0j}
-\sum_{j=1}^{\ell}\beta_{\bar1j}-\sum_{j=1}^{\frac{s}{2}}\gamma_{\bar0j}+\sum_{j=1}^{\frac{\sfr-1}{2}}\gamma_{\bar1j}\Big)(h_i)\bigg)\Theta({\bf a,0,c,0,m,0,p,0,t+e}_i,\iota,0)\cr
&+\bigg(\sum_{i=1}^{k-1}\Big(\lambda^2+\lambda
\big(\sum_{j=1}^{w}\beta_{\bar0j}
-\sum_{j=1}^{\ell}\beta_{\bar1j}-\sum_{j=1}^{\frac{s}{2}}\gamma_{\bar0j}+\sum_{j=1}^{\frac{\sfr-1}{2}}\gamma_{\bar1j}\big)\Big)(h_i)-c+c_0\bigg)\Bigg)\\
&\cdot\Theta({\bf a,0,c,0,m,0,p,0,t},\iota,0)
-\Theta({\bf a,0,c,0,m,0,p,0,t},\iota,0)\cdot\bigg(\sum_{i=1}^{\frac{s}{2}}\Theta_{[[v_{\frac{\sfr+1}{2}},e],u_i]^{\sharp}}\Theta_{[u_i^*,
[v_{\frac{\sfr+1}{2}},e]]^{\sharp}}\\
&+\sum_{i=1}^{\frac{\sfr-1}{2}}\Theta_{[[v_{\frac{\sfr+1}{2}},e],v_i]^{\sharp}}\Theta_{[v_i^*,
[v_{\frac{\sfr+1}{2}},e]]^{\sharp}}\bigg)
\end{split}
\end{equation}

Now we discuss the terms in \eqref{1vr+12}. By definition we have
$\Theta({\bf a,0,c,0,m,0,p,0,t+e}_k,\iota,0) \in I_{\lambda,c}$, and also
$\Theta({\bf a,0,c,0,m,0,p,0,t+e}_i,\iota,0),\Theta({\bf a,0,c,0,m,0,p,0,t+}2{\bf e}_i,\iota,0)\in I_{\lambda,c}$
for $1\leqslant i\leqslant k-1$. For all $i,j$,  by definition we have $x^*_i\in\mathfrak{n}^+_{\bar0}(0)$ and $y^*_j\in\mathfrak{n}^+_{\bar1}(0)$ respectively, thus $x^*_i, y^*_j$ are all in the span of $e_\alpha$ with $\alpha\in\Phi_{e,0}^+$. Therefore, $\Theta({\bf a,0,c,0,m,0,p,0,t},\iota,0)\cdot\Theta_{x_i}\Theta_{x^*_i}$ and $\Theta({\bf a,0,c,0,m,0,p,0,t},\iota,0)\cdot\Theta_{y_i}\Theta_{y^*_i}$ are in $\sum_{\alpha\in\Phi^+_{e,0}}U(\ggg,e)\cdot\Theta_{e_\alpha}$ of $I_{\lambda,c}$.
Moreover, for $1\leqslant i\leqslant\frac{s}{2}$ and
$1\leqslant j\leqslant\frac{\sfr-1}{2}$, since $[u_i^*,[v_{\frac{\sfr+1}{2}},e]]^{\sharp},[v_j^*,[v_{\frac{\sfr+1}{2}},e]]^{\sharp}\in\bigcup_{\beta\in\Phi^+_{e,0}}\bbc e_\beta$, then both
$\Theta({\bf a,0,c,0,m,0,p,0,t},\iota,0)\cdot\Theta_{[[v_{\frac{\sfr+1}{2}},e],u_i]^{\sharp}}\Theta_{[u_i^*,[v_{\frac{\sfr+1}{2}},e]]^{\sharp}}$ and
$\Theta({\bf a,0,c,0,m,0,p,0,t},\iota,0)\cdot\Theta_{[[v_{\frac{\sfr+1}{2}},e],v_j]^{\sharp}}\Theta_{[v_j^*,[v_{\frac{\sfr+1}{2}},e]]^{\sharp}}$ are also in $\sum_{\alpha\in\Phi^+_{e,0}}U(\ggg,e)\cdot\Theta_{e_\alpha}$ of $ I_{\lambda,c}$.
Apart from all the terms mentioned above, what remains in \eqref{1vr+12} are
\begin{equation}\label{remainder}
\frac{1}{4}\bigg(\sum_{i=1}^{k-1}\Big(\lambda^2+\lambda\cdot
\big(\sum_{j=1}^{w}\beta_{\bar0j}
-\sum_{j=1}^{\ell}\beta_{\bar1j}-\sum_{j=1}^{\frac{s}{2}}\gamma_{\bar0j}+\sum_{j=1}^{\frac{\sfr-1}{2}}\gamma_{\bar1j}\big)\Big)(h_i)-c+c_0\bigg)\Theta({\bf a,0,c,0,m,0,p,0,t},0).
\end{equation}
Thanks to our assumption in \eqref{c0clambda}, \eqref{remainder} must be zero.

Taking all above into consideration, we finally obtain \eqref{0000}.

\end{appendix}

\end{document}